\DeclareMathOperator*{\argmin}{arg\,min}
\title{Online Nonconvex Bilevel Optimization with Bregman Divergences}
\author{%
  Jason Bohne \\
  Department of Applied Mathematics and Statistics\\
  Stony Brook University\\
  Stony Brook, NY 11794\\
  \texttt{jason.bohne@stonybrook.edu} \\
   \And David Rosenberg\\
   Bloomberg \\
   Toronto, ON M5J 2S1\\
   \texttt{drosenberg44@bloomberg.net}\\
   \And Gary Kazantsev\\
   Bloomberg \\
   New York City, NY 10022\\
   \texttt{gkazantsev@bloomberg.net}
   \And
  Pawe\l\ Polak \\
  Department of Applied Mathematics and Statistics\\
   Institute for Advanced Computational Science \\
  Stony Brook University\\
    Stony Brook, NY 11794\\
  \texttt{pawel.polak@stonybrook.edu} \\
}
\newtheorem{theorem}{Theorem}[section]
\newtheorem{lemma}[theorem]{Lemma}
\newtheoremstyle{assumptionstyle} 
  {3pt} 
  {3pt} 
  {\itshape} 
  {} 
  {\bfseries} 
  {.} 
  {.5em} 
  {} 
\theoremstyle{assumptionstyle}
\newtheorem{assumption}{Assumption}
\begin{document}

\maketitle

\begin{abstract}
Bilevel optimization methods are increasingly relevant within machine learning, especially for tasks such as hyperparameter optimization and meta-learning. Compared to the offline setting, online bilevel optimization (OBO) offers a more dynamic framework by accommodating time-varying functions and sequentially arriving data. This study addresses the online nonconvex-strongly convex bilevel optimization problem. In deterministic settings, we introduce a novel online Bregman bilevel optimizer (OBBO) that utilizes adaptive Bregman divergences. We demonstrate that OBBO enhances the known sublinear rates for bilevel local regret  through a novel hypergradient error decomposition that adapts to the underlying geometry of the problem. In stochastic contexts, we introduce the first stochastic online bilevel optimizer (SOBBO), which employs a window averaging method for updating outer-level variables using a weighted average of recent stochastic approximations of hypergradients. This approach not only achieves sublinear rates of bilevel local regret  but also serves as an effective variance reduction strategy, obviating the need for additional stochastic gradient samples at each timestep. Experiments on online hyperparameter optimization and online meta-learning highlight the superior performance, efficiency, and adaptability of our Bregman-based algorithms compared to established online and offline bilevel benchmarks.
\end{abstract}

\section{Introduction}
Online bilevel optimization (OBO) is a recently introduced strategy that complements traditional offline bilevel optimization approaches. It is tailored for dynamic environments encountered in machine learning tasks such as online hyperparameter optimization \cite{sobow}, online meta-learning \cite{tarzanagh2024online}, domain adaptation \cite{grangier2023adaptive},  and dataset augmentation \cite{mounsaveng2020learning}. The deterministic OBO problem, for parameters $\boldsymbol{\lambda} \in \mathcal{X} \subseteq \mathbb{R}^{d_1}$, $\boldsymbol{\beta} \in \mathbb{R}^{d_2}$, and the online running index $\forall t\in[1,T]$, is defined as:
\begin{align}\label{eq:deterministic_obo}
    &\widehat{\boldsymbol{\lambda}}_t \in \argmin_{\boldsymbol{\lambda} \in \mathcal{X} \subseteq \mathbb{R}^{d_1}} \left\{f_t\left(\boldsymbol{\lambda},\widehat{\boldsymbol{\beta}}_t(\boldsymbol{\lambda})\right) + h(\boldsymbol{\lambda})\right\},
\quad 
    \widehat{\boldsymbol{\beta}}_t(\boldsymbol{\lambda}) \in \argmin_{\boldsymbol{\beta} \in \mathbb{R}^{d_2}} g_t(\boldsymbol{\lambda},\boldsymbol{\beta}),
\end{align}
where $F_t(\boldsymbol{\lambda})\triangleq f_t\left(\boldsymbol{\lambda},\widehat{\boldsymbol{\beta}}_t(\boldsymbol{\lambda})\right)$ is a nonconvex and smooth outer objective function, $h(\boldsymbol{\lambda})$ is a convex and potentially  nonsmooth regularization term, and $g_t(\boldsymbol{\lambda},\boldsymbol{\beta})$ is a smooth inner objective function that is $\mu_g$-strongly convex in $\boldsymbol{\beta}$.

Single-level online optimization problems are often formulated with time-varying objectives that are only accessible through a stochastic oracle, for example, a stochastic gradient oracle in the online learning of an LSTM in \cite{aydore2019dynamic} and in the online learning of a GAN  in \cite{hazan2017efficient}. Similarly,  many machine learning tasks in offline bilevel optimization are structured using stochastic loss functions to capture the inherent randomness and perturbations from using sample batches. Examples include the meta-learning task of \cite{ji2021bilevel} and the data hyper-cleaning task as in \cite{huang2022enhanced}. This motivates us  to introduce the first stochastic online bilevel optimization problem:
\begin{align}\label{eq:stochastic_obo}
    &\widehat{\boldsymbol{\lambda}}_t \in \argmin_{\boldsymbol{\lambda} \in \mathcal{X} \subseteq \mathbb{R}^{d_1}} \left\{\mathbb{E}_{\epsilon}\left[f_t(\boldsymbol{\lambda},\widehat{\boldsymbol{\beta}}_t(\boldsymbol{\lambda}), \epsilon)\right] + h(\boldsymbol{\lambda})\right\},
\quad
    &\widehat{\boldsymbol{\beta}}_t(\boldsymbol{\lambda}) \in \argmin_{\boldsymbol{\beta} \in \mathbb{R}^{d_2}} \mathbb{E}_{\zeta}\left[g_t(\boldsymbol{\lambda},\boldsymbol{\beta}, \zeta)\right]
\end{align}
where $F_{t}(\boldsymbol{\lambda})\triangleq \mathbb{E}_{\epsilon}\left[f_t(\boldsymbol{\lambda}, \widehat{\boldsymbol{\beta}}_t(\boldsymbol{\lambda}), \epsilon)\right]$ is a nonconvex and smooth outer objective function, $h(\boldsymbol{\lambda})$ is a convex and potentially nonsmooth regularization term, and $\mathbb{E}_{\zeta}\left[g_t(\boldsymbol{\lambda}, \boldsymbol{\beta}, \zeta)\right]$ is a smooth inner objective function that is $\mu_g$-strongly convex in $\boldsymbol{\beta}$. Further, $\epsilon$ and $\zeta$ are independent and identically distributed random variables from error distributions $D$ and $D'$, respectively.

An OBO framework is necessary in settings where both the inner and outer objective functions can change over time and data can arrive sequentially in an online manner. One current state-of-the-art algorithm is online alternating gradient descent (OAGD) proposed by \cite{tarzanagh2024online}, which utilizes an alternating gradient descent step to update inner and outer level variables. While OAGD achieves a sublinear bilevel local regret, previous objective functions are required to evaluate the current iterates, resulting in a computationally expensive scheme for the outer level variable gradient step. Alternatively, the single-loop online bilevel optimizer with window averaging (SOBOW) introduced by \cite{sobow} offers a computational improvement compared to OAGD through a clever decomposition of the error of the  hypergradient and a single loop iteration. 

However, while these methods significantly contribute to the development of OBO, they are limited in several ways: (i) they assume smooth outer level objectives, excluding OBO problems where the outer level objective is separable into a smooth and nonsmooth term; (ii) they use Euclidean proximal operators, preventing the application of more sophisticated gradient step techniques utilizing Bregman divergences as explored in the offline bilevel optimization of \cite{huang2022enhanced}; and (iii) they restrict the theoretical discussion to a deterministic setting, whereas stochasticity is commonly explored and assumed in both single level online optimization \cite{hallak2021regret} and offline bilevel optimization \cite{ji2021bilevel}.

Addressing these limitations, our contributions to OBO include introducing a novel family of online Bregman bilevel optimizers (OBBO) that utilize Bregman divergences to solve a larger class of OBO problems. These problems include cases where the outer-level objective is a nonconvex composite function separable into a smooth and potentially nonsmooth term, and the inner-level objective is strongly convex. Due to the application of Bregman divergences to our novel hypergradient error decomposition, we show that OBBO improves the previously best known sublinear rate of bilevel local regret.  Further, our  hypergradient error decomposition  can be a theoretical interest of its own, as we provide a decomposition not expressed in  previous works (\cite{tarzanagh2024online,sobow}) which depend on the variation of outer level variables. Additionally, we formulate the first stochastic OBO problem and provide an algorithm (SOBBO) that achieves a sublinear rate of bilevel local regret. We demonstrate the computational efficiency of SOBBO compared to offline stochastic bilevel algorithms, as it incorporates a variance reduction technique without requiring additional gradient samples.

\section{Related Work}

Online optimization is a well-developed area of research that extends the offline optimization problem to an objective function that can change over time. For a convex time-varying objective, gradient-based methods were proposed within \cite{zinkevich2003} and achieved a sublinear rate of static regret. Improved rates of static regret  are provided in \cite{mcmahan2010adaptive} through utilizing a strongly-convex adaptive regularization function and are further generalized to a nonsmooth regularization term within \cite{pmlr-v15-mcmahan11b}. In non-stationary  and convex environments as in \cite{hall2013dynamical}, \cite{zhao2020dynamic}, and   \cite{besbes2015non}, the static regret performance metric is too optimistic, and instead, a dynamic regret measure should be used to capture the changing comparator sequence. For a time-varying and nonconvex objective function, there has been recent interest in online optimization algorithms that achieve sublinear local regret, a stationary measure introduced within \cite{hazan2017efficient}. Recent work extends local regret  to non-stationary environments, as in \cite{aydore2019dynamic} and stochastic gradient oracles in \cite{hallak2021regret}. The use of Bregman divergences has notably improved rates of regret  in online optimization, as demonstrated by the seminal Adagrad algorithm \cite{JMLR:v12:duchi11a}, and further developed in implicit online learning algorithms in \cite{Kulis2010ImplicitOL}.

Offline bilevel optimization has a rich history with its introduction in  constrained optimization  of \cite{mcgill}. For applications in modern-day machine learning, gradient-based optimization methods are often preferred due to their ease of implementation and scalability. One class of methods is approximate implicit differentiation techniques, detailed in \cite{pedregosa2016hyperparameter} and \cite{lorraine2020optimizing}, which construct a hypergradient approximation through a conjugate gradient or fixed point approach. Another common method is to use iterative differentiation techniques and their efficient computational implementations to form an approximate hypergradient (see e.g., \cite{maclaurin2015gradient} and \cite{franceschi2017forward}). Convergence improvements are achieved in the Bio-BreD algorithm of \cite{huang2022enhanced} through incorporating Bregman divergences in the outer level gradient step. Moreover, implications of the Bregman-based optimizers of \cite{huang2022enhanced} allow for integration of adaptive learning rates and momentum in offline bilevel optimization, such as in the BiAdam framework of \cite{huang2021biadam}. Due to the inherent uncertainty of utilizing sample batches, stochastic formulations of offline bilevel optimization are of recent interest. A stochastic approximation algorithm is introduced in \cite{ghadimi2018approximation} that utilizes a single stochastic gradient sample at each iteration. Convergence rates are improved in Stoc-BiO within \cite{ji2021bilevel} through an improved stochastic hypergradient estimator. Sample-efficient stochastic algorithms of SUSTAIN and STABLE are discussed in \cite{khanduri2021near} and \cite{pmlr-v151-chen22e} by employing momentum-based variance reduction recurrences.

OBO is underdeveloped relative to single-level online optimization and offline bilevel optimization. The online alternating gradient descent algorithm (OAGD) of \cite{tarzanagh2024online} is shown to achieve  sublinear bilevel local regret  for a nonconvex time-varying objective. The single-loop online bilevel optimizer with window averaging (SOBOW) of \cite{sobow} offers a  computational improvement while maintaining a sublinear rate. The general problem formulation of an OBO framework fits time-varying machine learning tasks not within the constraints of offline bilevel optimization, such as learning optimal dataset augmentations in \cite{mounsaveng2020learning} and online adaptation of pre-trained distributions of \cite{grangier2023adaptive}.

\section{Preliminaries}
\subsection{Notations and Assumptions}
Let $\left\|\cdot\right\|$ denote the $\ell_2$ norm for vectors and the spectral norm for matrices, with $\langle \boldsymbol{\beta}_1,\boldsymbol{\beta}_2\rangle$ denoting the inner product between $\boldsymbol{\beta}_1$ and $\boldsymbol{\beta}_2$. For  a function $g_t(\boldsymbol{\lambda},\boldsymbol{\beta})$ and its stochastic variation $g_t(\boldsymbol{\lambda},\boldsymbol{\beta},\zeta)$  we denote the gradient  as $\nabla g_t(\boldsymbol{\lambda},\boldsymbol{\beta})$ and $\nabla g_t(\boldsymbol{\lambda},\boldsymbol{\beta},\zeta) $ respectively, with partial derivatives denoted, for example with respect to $\boldsymbol{\lambda}$,  as  $\nabla_{\boldsymbol{\lambda}} g_t(\boldsymbol{\lambda},\boldsymbol{\beta})$ and $\nabla_{\boldsymbol{\lambda}} g_t(\boldsymbol{\lambda},\boldsymbol{\beta},\zeta)$ respectively. For the deterministic OBO problem, we make the following assumptions that are standard in  OBO \cite{tarzanagh2024online,sobow},  whereas, in the stochastic setting, our assumptions are typical in offline stochastic bilevel optimization of \cite{ghadimi2018approximation,ji2021bilevel}.
\begin{assumption}\label{assump:rel_smoothness} 
For  all $ t\in[1,\ldots,T]$ and $\forall \boldsymbol{\lambda} \in\mathcal{X}$, $\forall\boldsymbol{\beta}\in\mathbb{R}^{d_2}$, we have the following:\\
A1. $\ f_t(\boldsymbol{\lambda},\boldsymbol{\beta}) $ and $\nabla f_t(\boldsymbol{\lambda},\boldsymbol{\beta})  $ are respectively $\ell_{f,0}$-Lipschitz and $\ell_{f,1}$-Lipschitz continuous. \\A2. $\ \nabla g_t(\boldsymbol{\lambda},\boldsymbol{\beta})  $ is $\ell_{g,1}$-Lipschitz continuous.\\
A3. $\ \nabla^2_{\boldsymbol{\lambda},\boldsymbol{\beta}} g_t(\boldsymbol{\lambda},\boldsymbol{\beta})  $ and $\nabla^2_{\boldsymbol{\beta},\boldsymbol{\beta}} g_t(\boldsymbol{\lambda},\boldsymbol{\beta}) $  are both $\ell_{g,2}$-Lipschitz continuous. \\
In the stochastic case  A1.-A3. hold for $f_t(\boldsymbol{\lambda},\boldsymbol{\beta},\epsilon)$ and $g_t(\boldsymbol{\lambda},\boldsymbol{\beta},\zeta)$ for any $\epsilon\sim D$ and $\zeta\sim D'$.
\end{assumption}
\begin{assumption}\label{assump:strong_convex_g_t} $g_t(\boldsymbol{\lambda},\boldsymbol{\beta})$ and its stochastic variation $g_t(\boldsymbol{\lambda},\boldsymbol{\beta},\zeta)$ are $\mu_g$-strongly convex functions in $\boldsymbol{\beta}$ $ \forall t \in [1,T]$  and for all $\boldsymbol{\lambda} \in \mathcal{X}$.
\end{assumption}
All the above smoothness assumptions are standard in OBO  \cite{tarzanagh2024online,sobow} and are required for our convergence analysis. They also imply that the condition number $\kappa_g:=\frac{\ell_{g,1}}{\mu_g}\geq 1$. The next Assumption \ref{assump:unbiased_finite_var} is crucial in the stochastic setting for ensuring the reliability of stochastic optimizers, maintaining that gradient steps converge accurately while managing  stochastic variability \cite{huang2022enhanced}.

\begin{assumption}\label{assump:unbiased_finite_var} 
For all $t \in[1,T]$ and  any $\boldsymbol{\lambda}\in \mathcal{X}$ and $\boldsymbol{\beta}\in\mathbb{R}^{d_2}$ the stochastic partial derivative of  $\nabla_{\boldsymbol{\beta}} g_t(\boldsymbol{\lambda},\boldsymbol{\beta},\zeta)$ satisfies $\mathbb{E}\left[\nabla_{\boldsymbol{\beta}} g_t(\boldsymbol{\lambda},\boldsymbol{\beta},\zeta)\right]=\nabla_{\boldsymbol{\beta}} g_t(\boldsymbol{\lambda},\boldsymbol{\beta}) $ with finite variance $\sigma^2_{g_{\boldsymbol{\beta}}}$. Further the estimated stochastic partial derivative  of $\widetilde{\nabla}f_t(\boldsymbol{\lambda},\boldsymbol{\beta},\mathcal{E})$  has finite variance $\sigma^2_f$.
\end{assumption}
The next two assumptions are common in online nonconvex optimization, where decision variables and a time-varying objective function have natural bounds, see \cite{hazan2017efficient,aydore2019dynamic}. 
\begin{assumption}\label{assump:bounded_hyperparm} The set $\mathcal{X}\subseteq \mathbb{R}^{d_1}$ is closed, convex, and bounded s.t. $\forall \ \boldsymbol{\lambda}_1,\boldsymbol{\lambda}_2 \in \mathcal{X}$,  $\left\|\boldsymbol{\lambda}_1-\boldsymbol{\lambda}_2\right\|\leq S$. 
\end{assumption}
\begin{assumption}\label{assump:bounded_F_t} For all $t\in[1,\ldots,T]$ it holds for finite $Q\in\mathbb{R}$ that $\sup_{\boldsymbol{\lambda}\in \mathcal{X}}\left|F_t(\boldsymbol{\lambda})\right|\leq Q$.
\end{assumption}
The next Assumption \ref{assump:continuity_phi_t} on the continuity and $\rho$-strong convexity of the distance generating function $\phi_t$  is required for constructing well-defined Bregman divergences, \cite{huang2022enhanced,huang2022bregman}. 
\begin{assumption}\label{assump:continuity_phi_t} For all $t\in[1,T]$ the distance generating function $\phi_t(\boldsymbol{\lambda})$ is  continuously differentiable and $\rho$-strongly convex  with respect to $\left\|\cdot\right\|$.
\end{assumption}

\subsection{Bregman Proximal Gradient}\label{defn:bregman_divergence} 
Originally formulated within \cite{BREGMAN1967200}, a Bregman divergence $ \mathcal{D}_{\phi}(\cdot, \cdot)$ offers  a generalization to the squared Euclidean distance and is defined for a continuously differentiable and $\rho$-strongly convex function $\phi(\boldsymbol{\lambda})$ with respect to  $\left\|\cdot\right\|$,  for all $\boldsymbol{\lambda}_1, \boldsymbol{\lambda}_2 \in \mathcal{X}$ as:
\begin{align}\label{eq:bregman_divergence}
    \mathcal{D}_{\phi}(\boldsymbol{\lambda}_2, \boldsymbol{\lambda}_1) := \phi(\boldsymbol{\lambda}_2) - \phi(\boldsymbol{\lambda}_1) - \langle \nabla \phi(\boldsymbol{\lambda}_1), \boldsymbol{\lambda}_2 - \boldsymbol{\lambda}_1 \rangle.
\end{align}
Given a Bregman divergence $\mathcal{D}_{\phi}(\cdot, \cdot)$  defined in \eqref{eq:bregman_divergence}, the proximal gradient step is stated as
\begin{align}\label{eq:outer_step_minimization}
\boldsymbol{\lambda}^{+} = \argmin_{\boldsymbol{\lambda} \in \mathcal{X}} \left\{\langle \boldsymbol{q}, \boldsymbol{\lambda} \rangle + h(\boldsymbol{\lambda}) + \frac{1}{\alpha} \mathcal{D}_{\phi}(\boldsymbol{\lambda}, \boldsymbol{u})\right\},
\end{align}
where  $\phi(\boldsymbol{\lambda})$ is a continuously differentiable and $\rho$-strongly convex function, $h(\boldsymbol{\lambda})$ is a convex and potentially nonsmooth regularization term, $\alpha>0$ is a step size, and $\boldsymbol{q},\boldsymbol{u}\in\mathbb{R}^{d_1}$ are the estimate of the gradient, and current reference point, respectively. Proximal gradient methods in offline bilevel optimization have been shown to improve convergence rates as in the Bio-BreD algorithm of \cite{huang2022enhanced}. Special cases of the gradient update in \eqref{eq:outer_step_minimization} include  projected gradient descent ($\phi(\boldsymbol{\lambda})=\frac{1}{2}\left\|\boldsymbol{\lambda}\right\|^2$, $\mathcal{X}\subseteq\mathbb{R}^{d_1}$, and $h(\boldsymbol{\lambda})=0$), as well as proximal gradient descent ($\phi(\boldsymbol{\lambda})=\frac{1}{2}\left\|\boldsymbol{\lambda}\right\|^2$ and $\mathcal{X}=\mathbb{R}^{d_1}$).  The aforementioned gradient step in \eqref{eq:outer_step_minimization}  can be further extended to a time-varying distance generating function, e.g., $\phi_t(\boldsymbol{\lambda})=\frac{1}{2}\boldsymbol{\lambda}^T\mathbf{H}_t\boldsymbol{\lambda}$ with an adaptive matrix $\mathbf{H}_t$, resulting in an adaptive proximal gradient method with similarities to  Adagrad from \cite{JMLR:v12:duchi11a} and Super-Adam of \cite{huang2021super}.

The proximal gradient step of \eqref{eq:outer_step_minimization} has led to the introduction of a generalized projection from \cite{ghadimi2016mini} defined for a step size $\alpha>0$, $\boldsymbol{q}\in\mathbb{R}^{d_1}$, and $\boldsymbol{u} \in \mathcal{X}$ as
\begin{align}\label{defn:generalized_gradient}
\mathcal{G}_{\mathcal{X}}(\boldsymbol{u},\boldsymbol{q},\alpha) := \frac{1}{\alpha}\left(\boldsymbol{u} - \boldsymbol{\lambda}^+\right).
\end{align} 
 $\mathcal{G}_{\mathcal{X}}(\boldsymbol{\lambda},\nabla f_t(\boldsymbol{\lambda}),\alpha)$ acts as a generalized gradient that simplifies  to $\nabla f_t(\boldsymbol{\lambda})$ if $\mathcal{X}=\mathbb{R}^{d_1}$ and $h(\boldsymbol{\lambda})=0$.

\subsection{Bilevel Local Regret  }
Bilevel local regret is a stationary metric for online bilevel optimization \cite{tarzanagh2024online,sobow} that extends the single-level local regret measure from \cite{hazan2017efficient}. The work of \cite{sobow} in particular defines the bilevel local regret for a window length $w\geq 1$ and a sequence  $\{\boldsymbol{\lambda}_t\}_{t=1}^T$ as
\begin{align}\label{eq:det_sobow_local_regret}
    BLR_{w}(T) := \sum_{t=1}^T \left\|\nabla F_{t,w}(\boldsymbol{\lambda}_t)\right\|^2, \quad  F_{t,w}(\boldsymbol{\lambda}_t) := \frac{1}{w}\sum_{i=0}^{w-1} F_{t-i}(\boldsymbol{\lambda}_{t-i}),
\end{align}
where for simplicity  we have defined $F_{t,w}(\boldsymbol{\lambda}_ t)$ as a time-smoothed  outer level objective with $F_t = 0 \ \forall t \leq 0$. Note, that the time-smoothed outer-level objective is defined as the weighted average of previous objective functions evaluated at the respective outer-level variables. Computationally efficient algorithms can be derived under this definition of bilevel local regret as only the previous evaluations are required, which can be stored in memory, instead of the  objective functions  such as OAGD. A similar notion of local regret has been introduced by \cite{aydore2019dynamic} in online single-level optimization under nonstationary environments. To incorporate the generalized projection of \eqref{defn:generalized_gradient},  we introduce a new bilevel local regret for a window length $w\geq 1$ and sequence   $\{\boldsymbol{\lambda}_t\}_{t=1}^T$  as
\begin{align}\label{eq:det_local_regret}
    BLR_{w}(T) := \sum_{t=1}^T \left\|\mathcal{G}_{\mathcal{X}}(\boldsymbol{\lambda}_t,\nabla F_{t,w}(\boldsymbol{\lambda}_t),\alpha)\right\|^2
\end{align}
In the setting where $\mathcal{X} = \mathbb{R}^{d_1}$, $h(\boldsymbol{\lambda}) = 0$,  and $\phi_t(\boldsymbol{\lambda})=\phi(\boldsymbol{\lambda}) = \frac{1}{2}\left\|\boldsymbol{\lambda}\right\|^2$, our variation of local regret simplifies to the regret measure of \eqref{eq:det_sobow_local_regret}, the metric used in the analysis of SOBOW in  \cite{sobow}.  While novel to OBO, similar convergence metrics are considered in the analysis of offline bilevel algorithms in \cite{ghadimi2018approximation} and \cite{huang2022enhanced}. However, our definition offers an important generalization of bilevel local regret when an adaptive distance generating function $\phi_t(\boldsymbol{\lambda})$ or a non-zero regularization term $h(\boldsymbol{\lambda})$ is present.  

Proposition 1 in \cite{besbes2015non} shows that in nonstationary environments, there always exists a sequence of well-behaved loss functions for which sublinear regret cannot be achieved. Hence, to derive useful regret bounds of online algorithms, further regularity constraints must be imposed on the sequence, such as a sublinear path variation \cite{yang2016tracking}, function variation \cite{besbes2015non}, or gradient variation \cite{pmlr-v23-chiang12}. In nonstationary OBO one proposed regularity constraint is the $p$-th order inner level path variation of optimal decisions from \cite{tarzanagh2024online}, denoted as $H_{p,T}$. A regularity metric on the $p$-th order variation of the evaluations of the outer level objective function across time is suggested by \cite{sobow} and denoted as $V_{p,T}$.
\begin{align}\label{eq:inner_level_variation}
H_{p,T}:=\sum_{t=2}^T\sup_{\boldsymbol{\lambda}\in \mathcal{X}}\left\|\widehat{\boldsymbol{\beta}}_{t-1}(\boldsymbol{\lambda})-\widehat{\boldsymbol{\beta}}_{t}(\boldsymbol{\lambda})\right\|^p, \quad V_{p,T}:=\sum_{t=1}^{T} \sup_{\boldsymbol{\lambda} \in \mathcal{X}}\left|F_{t+1}\left(\boldsymbol{\lambda}\right)-F_{t}\left(\boldsymbol{\lambda}\right)\right|^p
\end{align}
Note  the latter regularity metric, $V_{p,T}$, tracks how  the optimal outer level variable, which is fixed for a given $t\in[1, T]$ ,  can vary over time.  We will be utilizing the regularity metrics of second-order inner-level path variation, $H_{2, T}$,  and first-order variation of the evaluations of the outer level objective, $V_{1, T}$, given in \eqref{eq:inner_level_variation}, and impose a sublinear constraint, that is $H_{2, T}=o(T)$ and $V_{1, T}=o(T)$.

\section{Online Bregman Bilevel Optimizers}
\subsection{Deterministic Algorithm (\textbf{OBBO})}
We begin our section on the deterministic online Bregman bilevel optimizer OBBO with a few useful lemmas on the gradient of the outer level objective $\nabla F_t(\boldsymbol{\lambda})$,  which is often referred as the hypergradient \cite{ghadimi2018approximation}. The first lemma expands the hypergradient with the chain rule followed by an implicit function theorem to get the second equality.
\begin{lemma}\label{lem:hypergrad_estimator_itd} (Lemma 2.1 in \cite{ghadimi2018approximation})
Under Assumption A and C, we have $\forall \boldsymbol{\lambda}\in\mathcal{X}$, $\forall t\in[1,T]$ 
\begin{align}\label{eq:true_hg_val}
    \nabla F_t(\boldsymbol{\lambda})= \nabla_{\boldsymbol{\lambda}} f_t(\boldsymbol{\lambda},\widehat{\boldsymbol{\beta}}_t(\boldsymbol{\lambda}))+ \nabla\widehat{\boldsymbol{\beta}}_t(\boldsymbol{\lambda})\nabla_{\boldsymbol{\beta}}f_t(\boldsymbol{\lambda},\widehat{\boldsymbol{\beta}}_t(\boldsymbol{\lambda}))\nonumber\\=\nabla_{\boldsymbol{\lambda}} f_t(\boldsymbol{\lambda},\widehat{\boldsymbol{\beta}}_t(\boldsymbol{\lambda})) - \nabla^2_{\boldsymbol{\lambda},\boldsymbol{\beta}} g_t(\boldsymbol{\lambda},\widehat{\boldsymbol{\beta}}_t(\boldsymbol{\lambda}))\left(\nabla^2_{\boldsymbol{\beta},\boldsymbol{\beta}}g_t(\boldsymbol{\lambda},\widehat{\boldsymbol{\beta}}_t(\boldsymbol{\lambda}))\right)^{-1}\nabla_{\boldsymbol{\beta}}f(\boldsymbol{\lambda},\widehat{\boldsymbol{\beta}}_t(\boldsymbol{\lambda})).
\end{align}\end{lemma}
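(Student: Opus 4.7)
The plan is to establish the two equalities in \eqref{eq:true_hg_val} sequentially: the first via a direct application of the multivariate chain rule, and the second by computing $\nabla\widehat{\boldsymbol{\beta}}_t(\boldsymbol{\lambda})$ through the implicit function theorem applied to the optimality condition of the inner problem. Since $F_t(\boldsymbol{\lambda})=f_t(\boldsymbol{\lambda},\widehat{\boldsymbol{\beta}}_t(\boldsymbol{\lambda}))$ is the composition of the jointly smooth map $f_t$ with $\boldsymbol{\lambda}\mapsto(\boldsymbol{\lambda},\widehat{\boldsymbol{\beta}}_t(\boldsymbol{\lambda}))$, the first equality follows immediately from the chain rule, provided we can justify the differentiability of $\widehat{\boldsymbol{\beta}}_t$.

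To obtain that differentiability and a closed form for $\nabla\widehat{\boldsymbol{\beta}}_t(\boldsymbol{\lambda})$, I would invoke Assumption \ref{assump:strong_convex_g_t}: $g_t(\boldsymbol{\lambda},\cdot)$ is $\mu_g$-strongly convex, so $\widehat{\boldsymbol{\beta}}_t(\boldsymbol{\lambda})$ is the unique global minimizer of $g_t(\boldsymbol{\lambda},\cdot)$ and satisfies the first-order optimality condition $\nabla_{\boldsymbol{\beta}} g_t(\boldsymbol{\lambda},\widehat{\boldsymbol{\beta}}_t(\boldsymbol{\lambda}))=\mathbf{0}$. Strong convexity further guarantees $\nabla^2_{\boldsymbol{\beta},\boldsymbol{\beta}} g_t(\boldsymbol{\lambda},\boldsymbol{\beta})\succeq \mu_g I$ everywhere, so the Hessian block $\nabla^2_{\boldsymbol{\beta},\boldsymbol{\beta}} g_t$ is globally invertible. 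Together with the Lipschitz continuity of $\nabla^2_{\boldsymbol{\lambda},\boldsymbol{\beta}} g_t$ and $\nabla^2_{\boldsymbol{\beta},\boldsymbol{\beta}} g_t$ granted by Assumption \ref{assump:rel_smoothness}, the implicit function theorem yields a continuously differentiable solution map $\widehat{\boldsymbol{\beta}}_t$ on $\mathcal{X}$.

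Differentiating the identity $\nabla_{\boldsymbol{\beta}} g_t(\boldsymbol{\lambda},\widehat{\boldsymbol{\beta}}_t(\boldsymbol{\lambda}))\equiv \mathbf{0}$ with respect to $\boldsymbol{\lambda}$ via the chain rule then gives
\begin{align*}
\nabla^2_{\boldsymbol{\lambda},\boldsymbol{\beta}} g_t(\boldsymbol{\lambda},\widehat{\boldsymbol{\beta}}_t(\boldsymbol{\lambda})) + \nabla\widehat{\boldsymbol{\beta}}_t(\boldsymbol{\lambda})\,\nabla^2_{\boldsymbol{\beta},\boldsymbol{\beta}} g_t(\boldsymbol{\lambda},\widehat{\boldsymbol{\beta}}_t(\boldsymbol{\lambda})) = \mathbf{0},
\end{align*}
and since the $\boldsymbol{\beta}\boldsymbol{\beta}$-Hessian is invertible, solving for $\nabla\widehat{\boldsymbol{\beta}}_t(\boldsymbol{\lambda})$ yields
\begin{align*}
\nabla\widehat{\boldsymbol{\beta}}_t(\boldsymbol{\lambda}) = -\nabla^2_{\boldsymbol{\lambda},\boldsymbol{\beta}} g_t(\boldsymbol{\lambda},\widehat{\boldsymbol{\beta}}_t(\boldsymbol{\lambda}))\bigl[\nabla^2_{\boldsymbol{\beta},\boldsymbol{\beta}} g_t(\boldsymbol{\lambda},\widehat{\boldsymbol{\beta}}_t(\boldsymbol{\lambda}))\bigr]^{-1}.
\end{align*}
Substituting this expression into the chain-rule identity from the first equality and collecting terms delivers the second equality in \eqref{eq:true_hg_val}.

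The only delicate point — and the step I would be most careful about — is ensuring the implicit function theorem gives a \emph{global} $C^1$ solution map rather than merely a collection of local ones. Uniqueness of the inner minimizer for each $\boldsymbol{\lambda}\in\mathcal{X}$ (again from strong convexity) resolves this: any local implicit branch must coincide with $\widehat{\boldsymbol{\beta}}_t(\boldsymbol{\lambda})$, so the local pieces patch together into a single continuously differentiable map on the convex set $\mathcal{X}$. With that in place, the remaining manipulation is purely algebraic matrix-valued chain rule.
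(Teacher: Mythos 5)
Your proposal is correct and follows exactly the argument the paper relies on for this cited result (Lemma 2.1 of Ghadimi--Wang): the chain rule gives the first equality, and differentiating the first-order optimality condition $\nabla_{\boldsymbol{\beta}} g_t(\boldsymbol{\lambda},\widehat{\boldsymbol{\beta}}_t(\boldsymbol{\lambda}))=\mathbf{0}$ together with the invertibility of $\nabla^2_{\boldsymbol{\beta},\boldsymbol{\beta}} g_t$ from $\mu_g$-strong convexity gives the expression for $\nabla\widehat{\boldsymbol{\beta}}_t(\boldsymbol{\lambda})$ and hence the second. Your care about patching local implicit-function branches into a global $C^1$ map via uniqueness of the inner minimizer is a sound, if routine, addition.
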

The gradient decomposition of Lemma \ref{lem:hypergrad_estimator_itd} is a common expansion in bilevel optimization that utilizes the smoothness and strong convexity requirements of Assumption \ref{assump:rel_smoothness} and \ref{assump:strong_convex_g_t}. However the computation of $\nabla F_t(\boldsymbol{\lambda})$ requires knowledge of the optimal inner level variables $\widehat{\boldsymbol{\beta}}_t(\boldsymbol{\lambda})$, which are typically inaccessible. This has motivated the construction of computationally efficient gradient estimators for $\nabla F_t(\boldsymbol{\lambda})$ that for a fixed $\boldsymbol{\lambda}\in\mathcal{X}$ and $\boldsymbol{\beta}\in\mathbb{R}^{d_2}$ can provide an approximation to $\nabla F_t(\boldsymbol{\lambda})$ without access to $\widehat{\boldsymbol{\beta}}_t(\boldsymbol{\lambda})$. One such example, is the gradient estimator proposed within the work of \cite{ghadimi2018approximation}, which provides an approximation of $\nabla F_t(\boldsymbol{\lambda})$ for a fixed $\boldsymbol{\lambda}\in\mathcal{X}$ and $\boldsymbol{\beta}\in\mathbb{R}^{d_2}$ as
    $\widetilde{\nabla} f_t(\boldsymbol{\lambda},\boldsymbol{\beta}):=\nabla_{\boldsymbol{\lambda}} f_t(\boldsymbol{\lambda},\boldsymbol{\beta}) -\nabla^2_{\boldsymbol{\lambda},\boldsymbol{\beta}} g_t(\boldsymbol{\lambda},\boldsymbol{\beta})\left(\nabla^2_{\boldsymbol{\beta},\boldsymbol{\beta}}g_t(\boldsymbol{\lambda},\boldsymbol{\beta})\right)^{-1}\nabla_{\boldsymbol{\beta}}f(\boldsymbol{\lambda},\boldsymbol{\beta})$. Another popular approach is to utilize iterative differentiation techniques for gradient estimation in offline bilevel optimization, see the work of \cite{maclaurin2015gradient} as well as  \cite{ji2021bilevel}, and hence, we examine our OBBO algorithm in this case\footnote{While we choose the hypergradient estimate of $\frac{\partial f_t(\boldsymbol{\lambda}_t,\boldsymbol{\omega}^K_t)}{\partial \boldsymbol{\lambda}} $ for  our analysis of OBBO, other hypergradient estimates can be used. }. Through the iterative gradient descent updates of   OBBO  we analyze the estimate of  $\widetilde{\nabla} f_t(\boldsymbol{\lambda}_t,\boldsymbol{\omega}^K_t):=\frac{\partial f_t(\boldsymbol{\lambda}_t,\boldsymbol{\omega}^K_t)}{\partial \boldsymbol{\lambda}}$, that has an analytical form in Lemma \ref{lem:ji_2021}. 
\begin{lemma}\label{lem:ji_2021}(Proposition 2 in \cite{ji2021bilevel}) The partial  $\frac{\partial f_t(\boldsymbol{\lambda}_t,\boldsymbol{\omega}^K_t)}{\partial \boldsymbol{\lambda}}$ takes an analytical form of $\frac{\partial f_t(\boldsymbol{\lambda}_t,\boldsymbol{\omega}^K_t)}{\partial \boldsymbol{\lambda}}=$
\begin{align}\label{eq:analytical_backprop}
\nabla_{\boldsymbol{\lambda}}f_t\left(\boldsymbol{\lambda}_t,\boldsymbol{\omega}^K_t\right)-\eta \sum_{k=0}^{K-1}\nabla^2_{\boldsymbol{\lambda},\boldsymbol{\omega}}g_t\left(\boldsymbol{\lambda}_t,\boldsymbol{\omega}^k_t\right)H_{\boldsymbol{\omega},\boldsymbol{\omega}}\nabla_{\boldsymbol{\omega}}f_t\left(\boldsymbol{\lambda}_t,\boldsymbol{\omega}^K_t\right),
\end{align}
\end{lemma}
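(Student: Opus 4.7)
The plan is to view this as the closed-form unrolling of backpropagation through $K$ inner-level gradient descent steps, which is purely an algebraic application of the chain rule. The inner iterates satisfy $\boldsymbol{\omega}_t^{k+1} = \boldsymbol{\omega}_t^k - \eta \nabla_{\boldsymbol{\omega}} g_t(\boldsymbol{\lambda}_t, \boldsymbol{\omega}_t^k)$, so the terminal iterate $\boldsymbol{\omega}_t^K$ depends on $\boldsymbol{\lambda}_t$ both explicitly (through each $\nabla_{\boldsymbol{\omega}} g_t$ evaluation) and implicitly (through the propagation of $\boldsymbol{\omega}_t^k$). The strategy is to first write the total derivative in terms of the Jacobian $J_K := \partial \boldsymbol{\omega}_t^K / \partial \boldsymbol{\lambda}$, then derive a one-step recursion for $J_k$, then unroll.

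First I would apply the chain rule to $f_t(\boldsymbol{\lambda}_t, \boldsymbol{\omega}_t^K(\boldsymbol{\lambda}_t))$ to obtain
\begin{align*}
\frac{\partial f_t(\boldsymbol{\lambda}_t, \boldsymbol{\omega}_t^K)}{\partial \boldsymbol{\lambda}} = \nabla_{\boldsymbol{\lambda}} f_t(\boldsymbol{\lambda}_t, \boldsymbol{\omega}_t^K) + J_K \nabla_{\boldsymbol{\omega}} f_t(\boldsymbol{\lambda}_t, \boldsymbol{\omega}_t^K),
\end{align*}
with $J_k \in \mathbb{R}^{d_1 \times d_2}$. It remains to compute $J_K$. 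Setting $M_k := \nabla^2_{\boldsymbol{\lambda},\boldsymbol{\omega}} g_t(\boldsymbol{\lambda}_t, \boldsymbol{\omega}_t^k)$ and $G_k := I - \eta \nabla^2_{\boldsymbol{\omega},\boldsymbol{\omega}} g_t(\boldsymbol{\lambda}_t, \boldsymbol{\omega}_t^k)$, differentiating the inner update with respect to $\boldsymbol{\lambda}_t$ yields the linear recursion $J_{k+1} = J_k G_k - \eta M_k$ with initial condition $J_0 = 0$ (since $\boldsymbol{\omega}_t^0$ is taken as a fixed warm start independent of $\boldsymbol{\lambda}_t$).

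A straightforward induction on $k$ then establishes
\begin{align*}
J_K = -\eta \sum_{k=0}^{K-1} M_k \prod_{j=k+1}^{K-1} G_j,
\end{align*}
with the convention that the empty product at $k = K-1$ equals the identity. Substituting this expression back into the chain rule identity produces precisely the formula of the lemma, provided $H_{\boldsymbol{\omega},\boldsymbol{\omega}}$ is identified as the index-dependent Hessian product $\prod_{j=k+1}^{K-1}(I - \eta \nabla^2_{\boldsymbol{\omega},\boldsymbol{\omega}} g_t(\boldsymbol{\lambda}_t, \boldsymbol{\omega}_t^j))$ appearing inside the sum. Existence and continuity of the Hessians $\nabla^2_{\boldsymbol{\lambda},\boldsymbol{\omega}} g_t$ and $\nabla^2_{\boldsymbol{\omega},\boldsymbol{\omega}} g_t$ used throughout the differentiation are guaranteed by Assumption A.

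The only real obstacle here is bookkeeping. Since the Hessians evaluated along different iterates $\boldsymbol{\omega}_t^j$ generally do not commute, the order of the factors in the telescoping product must be maintained carefully, as must the placement of the Jacobian on the correct side of $G_k$ in the recursion (a direct consequence of the chain-rule convention for row/column Jacobians). No appeal to the implicit function theorem or to strong convexity of $g_t$ is required for this identity, since the inner iterates are here treated as explicit, differentiable functions of $\boldsymbol{\lambda}_t$ rather than as the implicit minimizer $\widehat{\boldsymbol{\beta}}_t(\boldsymbol{\lambda})$.
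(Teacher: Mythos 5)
Your proposal is correct: the paper does not prove this lemma itself but imports it verbatim as Proposition 2 of Ji et al.~(2021), and your chain-rule unrolling with the Jacobian recursion $J_{k+1}=J_kG_k-\eta M_k$, $J_0=0$, is precisely the derivation behind that cited result, including the correct identification of $H_{\boldsymbol{\omega},\boldsymbol{\omega}}$ as the $k$-dependent product $\prod_{j=k+1}^{K-1}\left(I_{d_2}-\eta\nabla^2_{\boldsymbol{\omega},\boldsymbol{\omega}}g_t(\boldsymbol{\lambda}_t,\boldsymbol{\omega}^j_t)\right)$ and the observation that the warm start $\boldsymbol{\omega}^0_t=\boldsymbol{\beta}_t$ is treated as independent of $\boldsymbol{\lambda}_t$. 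No gaps.
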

where $H_{\boldsymbol{\omega},\boldsymbol{\omega}}:=\prod_{j=k+1}^{K-1}\left(I_{d_2}-\eta\nabla^2_{\boldsymbol{\omega},\boldsymbol{\omega}}g_t\left(\boldsymbol{\lambda}_t,\boldsymbol{\omega}^j_t\right)\right)$,  the $d_2$-identity matrix is denoted  $I_{d_2}$, with  $\eta>0$ and $K$  as the step size and number of iterations for the inner loop.  In practice, automatic differentiation is applied to compute second-order derivatives in \eqref{eq:analytical_backprop}, e.g.,  PyTorch of \cite{paszke2019pytorch}. 

\begin{algorithm}[]
\begin{algorithmic}
\caption{OBBO Deterministic Online Bregman Bilevel Optimizer }\label{alg:deterministic_online_bregman}
\Require $T,K$; stepsizes $\alpha,\eta>0$;   distance generating functions $\phi_t(\boldsymbol{\lambda}): \mathcal{X}   \mapsto \mathbb{R}$; window  $w\geq 1$. \vspace{0.8mm}
\vspace{0.6mm}\\
\textbf{Initialize} $\boldsymbol{\beta}_{1} \in \mathbb{R}^{d_2} $ and $\boldsymbol{\lambda}_1 \in \mathcal{X}$
\For{$t=1, \ldots, T$}
\vspace{0.6mm}
\State Retrieve information about $f_t$ and $g_t$
\vspace{0.6mm}
\State $\boldsymbol{\omega}^0_t \gets \boldsymbol{\beta}_t$ 
\vspace{0.6mm}
\For{$k=1, \ldots, K$}
\vspace{0.6mm}
\State $\boldsymbol{\omega}_t^{k} \gets \boldsymbol{\omega}_t^{k-1}-\eta\nabla_{\boldsymbol{\omega}} g_t(\boldsymbol{\lambda}_t,\boldsymbol{\omega}_t^{k-1}) $ 
\EndFor
\vspace{0.6mm}
\State  Get $\widetilde{\nabla} f_t(\boldsymbol{\lambda}_t,\boldsymbol{\omega}^K_t):=\frac{\partial f_t(\boldsymbol{\lambda}_t,\boldsymbol{\omega}^K_t)}{\partial \boldsymbol{\lambda}} $ from \eqref{eq:analytical_backprop} and store in memory
\vspace{0.6mm}
\State Compute $\widetilde{\nabla} f_{t,w} (\boldsymbol{\lambda}_t,\boldsymbol{\beta}_{t+1} )$ from \eqref{eq:time_smooth_hypergradient} for $\boldsymbol{\beta}_{t+1}=\boldsymbol{\omega}^K_t$
\vspace{0.6mm}
\State $\boldsymbol{\lambda}_{t+1} \gets \argmin_{\boldsymbol{\lambda} \in \mathcal{X}} \left\{\left\langle \widetilde{\nabla} f_{t,w} (\boldsymbol{\lambda}_t,\boldsymbol{\beta}_{t+1} ), \boldsymbol{\lambda} \right\rangle + h(\boldsymbol{\lambda}) + \frac{1}{\alpha} \mathcal{D}_{\phi_t}(\boldsymbol{\lambda}, \boldsymbol{\lambda}_t)\right\}$
\vspace{0.6mm}
\EndFor\\
\vspace{0.6mm}
\Return $\boldsymbol{\lambda}_{T+1},\boldsymbol{\beta}_{T+1}$
\end{algorithmic}
\end{algorithm}

In Theorem 2.7 of \cite{hazan2017efficient}, time smoothing is shown to be a necessary component of the gradient step for an online algorithm to achieve a sublinear rate of bilevel local regret. Following such, we introduce dynamic time-smoothing into our OBBO algorithm by first defining the  gradient estimator $\widetilde{\nabla}f_{t,w}(\boldsymbol{\lambda}_t,\boldsymbol{\beta_{t+1}})$ for all $t\in[1,T]$ and a window size $w\geq 1$ as 
\begin{align}\label{eq:time_smooth_hypergradient}
    \widetilde{\nabla}f_{t,w}(\boldsymbol{\lambda}_t,\boldsymbol{\beta_{t+1}}):=\frac{1}{w}\sum_{i=0}^{w-1}
\widetilde{\nabla}f_{t-i}(\boldsymbol{\lambda}_{t-i},\boldsymbol{\beta}_{t+1-i}) 
\end{align}
 with the convention that $f_t=0 \  \forall t\leq 0$. Similar to the window averaged decision update in SOBOW of \cite{sobow}, the gradient estimator in \eqref{eq:time_smooth_hypergradient} is a weighted average of the previous evaluations, which can be stored in memory. This offers a computational improvement relative to OAGD, as only the previous evaluations are required instead of the time-varying objective functions.

\subsection{Stochastic Algorithm (\textbf{SOBBO})}
We start the section on our stochastic online Bregman bilevel optimizer SOBBO with a discussion of a stochastic gradient estimator common in offline bilevel optimization, see  \cite{ghadimi2018approximation} and \cite{huang2022enhanced}. For a sample  upper bound of $m$ and independent   $\mathcal{E}_t=\{\epsilon_t,\zeta_t^0,\ldots,\zeta_t^{m-1}\}$, the stochastic gradient of $\widetilde{\nabla}f_t(\boldsymbol{\lambda}_t,\boldsymbol{\beta}_{t+1},\mathcal{E}_t)$ provides an estimate of $\widetilde{\nabla}f_t(\boldsymbol{\lambda}_t,\boldsymbol{\beta}_{t+1})$  and is constructed $\forall t\in[1,T]$ as
\begin{align}\label{eq:stochastic_gradient_sample}
\widetilde{\nabla}f_t(\boldsymbol{\lambda}_t,\boldsymbol{\beta}_{t+1},\mathcal{E}_t):=\nabla_{\boldsymbol{\lambda}}f_t(\boldsymbol{\lambda}_t,\boldsymbol{\beta}_{t+1},\epsilon_t)-\nabla_{\boldsymbol{\lambda},\boldsymbol{\beta}}^2g_t(\boldsymbol{\lambda}_t,\boldsymbol{\beta}_{t+1},\zeta_t^0)\nonumber\\\times\left[\frac{m}{\ell_{g,1}}\prod_{j=1}^{\widetilde{m}}\left(I_{d_2}-\frac{1}{\ell_{g,1}}\nabla_{\boldsymbol{\beta}}^2g_t(\boldsymbol{\lambda}_t,\boldsymbol{\beta}_{t+1},\zeta_t^j)\right)\right]\nabla_{\boldsymbol{\beta}}f_t(\boldsymbol{\lambda}_t,\boldsymbol{\beta}_{t+1},\epsilon_t),
\end{align}
where $\widetilde{m}\sim\mathcal{U}(0,1,\ldots,m-1)$ and $\prod_{j=1}^{m=0}(\cdot)=I_{d_2}$. As in \eqref{eq:time_smooth_hypergradient}, we introduce dynamic time-smoothing into our SOBBO algorithm with a  stochastic gradient estimator $\widetilde{\nabla}f_{t,w}(\boldsymbol{\lambda}_t,\boldsymbol{\beta}_{t+1},\mathcal{Z}_{t,w})$ defined for all $t\in[1,T]$, window size $w\geq 1$, and independent samples $\mathcal{Z}_{t,w}=\{\mathcal{E}_{t-i}\}_{i=0}^{w-1}$ as
\begin{align}\label{eq:stoch_time_smooth_hypergradient}
\widetilde{\nabla}f_{t,w}(\boldsymbol{\lambda}_t,\boldsymbol{\beta}_{t+1},\mathcal{Z}_{t,w}):=\frac{1}{w}\sum_{i=0}^{w-1}
\widetilde{\nabla}f_{t-i}(\boldsymbol{\lambda}_{t-i},\boldsymbol{\beta}_{t+1-i},\mathcal{E}_{t-i}) , \quad f_t=0\ \forall t\leq 0
\end{align}

\begin{algorithm}[]
\begin{algorithmic}
\caption{ SOBBO Stochastic Online Bregman Bilevel Optimizer }\label{alg:stochastic_online_bregman}
\Require $T,K,m,s$; $\alpha,\eta>0$;   distance generating function $\phi_t(\boldsymbol{\lambda}): \mathcal{X}   \mapsto \mathbb{R}$; window  $w\geq 1$. \vspace{0.8mm}
\vspace{0.6mm}\\
\textbf{Initialize} $\boldsymbol{\beta}_{1} \in \mathbb{R}^{d_2} $ and $\boldsymbol{\lambda}_1 \in \mathcal{X}$
\vspace{0.6mm}
\For{$t=1, \ldots, T$}
\vspace{0.6mm}
\State Retrieve information about $f_t$ and $g_t$
\vspace{0.6mm}
\State $\boldsymbol{\omega}^0_t \gets \boldsymbol{\beta}_t$ 
\vspace{0.6mm}
\For{$k=1, \ldots, K$}
\vspace{0.6mm}
\State Draw $s$ independent samples of $\zeta$, Set $\bar{\zeta}_{t,k}:=\left\{\zeta^{k-1}_{t,i}\right\}_{i=1}^s$
\vspace{0.6mm}
\State $\boldsymbol{\omega}_t^{k} \gets \boldsymbol{\omega}_t^{k-1}-\eta\nabla_{\boldsymbol{\omega}} g_t(\boldsymbol{\lambda}_t,\boldsymbol{\omega}_t^{k-1},\bar{\zeta}_{t,k}) $ 
\EndFor
\vspace{0.6mm}
\State $\boldsymbol{\beta}_{t+1}\gets\boldsymbol{\omega}_t^{K}$
\vspace{0.6mm}
\State  Draw $m$ independent samples of $\zeta$, Set $\mathcal{E}_t=\left\{\epsilon_t,\zeta_t^0,\ldots,\zeta_t^{m-1}\right\}$
\vspace{0.6mm}
\State Get $\widetilde{\nabla} f_{t}(\boldsymbol{\lambda}_t,\boldsymbol{\beta}_{t+1},\mathcal{E}_t )$ from \eqref{eq:stochastic_gradient_sample} and store in memory
\vspace{0.6mm}
\State Compute $\widetilde{\nabla} f_{t,\boldsymbol{w}} (\boldsymbol{\lambda}_t,\boldsymbol{\beta}_{t+1},\mathcal{Z}_{t,w})$ from \eqref{eq:stoch_time_smooth_hypergradient}
\vspace{0.6mm}
\State $\boldsymbol{\lambda}_{t+1} \gets \argmin_{\boldsymbol{\lambda} \in \mathcal{X}} \left\{\left\langle \widetilde{\nabla} f_{t,\boldsymbol{w}} (\boldsymbol{\lambda}_t,\boldsymbol{\beta}_{t+1},\mathcal{Z}_{t,w}), \boldsymbol{\lambda} \right\rangle + h(\boldsymbol{\lambda}) + \frac{1}{\alpha} \mathcal{D}_{\phi_t}(\boldsymbol{\lambda}, \boldsymbol{\lambda}_t)\right\}$
\vspace{0.6mm}
\EndFor\\
\vspace{0.6mm}
\Return $\boldsymbol{\lambda}_{T+1},\boldsymbol{\beta}_{T+1}$
\end{algorithmic}
\end{algorithm}

\section{Bilevel Local Regret Minimization}
\subsection{\textbf{OBBO} Regret}
Our objective is to show that the deterministic OBBO algorithm of Algorithm \ref{alg:deterministic_online_bregman} achieves a sublinear rate of bilevel local regret. We first provide a lemma that introduces a novel decomposition of the hypergradient error  incurred by OBBO. Namely the hypergradient error is the cost of using the approximate gradient $\widetilde{\nabla} f_t(\boldsymbol{\lambda}_t,\boldsymbol{\omega}^K_t)$ evaluated at $\boldsymbol{\lambda}_t\in\mathcal{X}$ and $\boldsymbol{\beta}_{t+1}=\boldsymbol{\omega}^K_t \in \mathbb{R}^{d_2}$ instead of optimal $\widehat{\boldsymbol{\beta}}_t(\boldsymbol{\lambda})$ and can be separated into four terms. Three terms of our decomposition are discounted variations, parameterized by  decay factor $\nu<1$, of the (i) cumulative time-smoothed hypergradient error; (ii) bilevel local regret; and (iii) cumulative differences of optimal inner-level variables, with the most recent terms in the aforementioned summations with the largest weights. The $\delta_t$ term is composed of an initial error characterizing the original state of the system and a smoothness term from the inner-level objective. Next, we discuss the novelty and practicality of this decomposition.
\begin{lemma}\label{lem:deterministic_hypergradient_error_paper}
    Suppose Assumptions \ref{assump:rel_smoothness}, \ref{assump:strong_convex_g_t}, \ref{assump:bounded_hyperparm}, and \ref{assump:continuity_phi_t}.   Let the inner step size of $\eta < \min{\left(\frac{1}{\ell_{g,1}},\frac{1}{\mu_g}\right)}$ and inner iteration count $K\geq 1$. Then  $\forall t\in[1,T]$  the   hypergradient error from  OBBO  satisfies
    \begin{align}
 \left\|\frac{\partial f_t(\boldsymbol{\lambda}_t,\boldsymbol{\omega}^K_t)}{\partial \boldsymbol{\lambda}}-\nabla F_t(\boldsymbol{\lambda}_t)\right\|^2 \leq \delta_t+A\sum_{j=0}^{t-2}\nu^{j} \left\|\frac{\partial f_{t-1-j,w}(\boldsymbol{\lambda}_{t-1-j},\boldsymbol{\omega}^K_{t-1-j})}{\partial \boldsymbol{\lambda}}-\nabla F_{t-1-j,w}(\boldsymbol{\lambda}_{t-1-j})\right\|^2\nonumber\\+B\sum_{j=0}^{t-2}\nu^{j} \left\|\mathcal{G}_{\mathcal{X}}\left(\boldsymbol{\lambda}_{t-1-j},\nabla F_{t-1-j,w}(\boldsymbol{\lambda}_{t-1-j}),\alpha\right)\right\|^2 +C\sum_{j=0}^{t-2}\nu^{j}\left\|\widehat{\boldsymbol{\beta}}_{t-j}(\boldsymbol{\lambda}_{t-1-j})-\widehat{\boldsymbol{\beta}}_{t-1-j}(\boldsymbol{\lambda}_{t-1-j})\right\|^2,\nonumber
    \end{align}
    where   $F_{t,w}(\boldsymbol{\lambda})\triangleq f_{t,w}\left(\boldsymbol{\lambda},\widehat{\boldsymbol{\beta}}_t(\boldsymbol{\lambda})\right)$ is the time-smoothed  objective, with decay parameter $\nu$, initial error term   $\delta_t=3L^2_3(1-\eta \mu_g)^{2K} +3L_{\boldsymbol{\beta}}\nu^{t-1}\Delta_{\boldsymbol{\beta}}$, and   constants  $A,B,C$   defined in Lemma \ref{lem:deterministic_hypergradient_error}.
\end{lemma}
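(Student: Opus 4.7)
The plan is to set up a recursion for the warm-start gap $u_t := \|\boldsymbol{\beta}_t - \widehat{\boldsymbol{\beta}}_t(\boldsymbol{\lambda}_t)\|^2$ and then unroll it; the four terms in the statement will each arise naturally from one piece of the one-step decomposition. First, using the analytical form of the iterative-differentiation estimator in Lemma \ref{lem:ji_2021} together with Assumptions \ref{assump:rel_smoothness} and \ref{assump:strong_convex_g_t} (Lipschitzness of $\nabla f_t$, the mixed Hessian and Hessian inverse, plus the geometric decay of $I_{d_2} - \eta\nabla^2_{\boldsymbol{\omega},\boldsymbol{\omega}} g_t$), I would bound
\begin{align*}
\left\|\tfrac{\partial f_t(\boldsymbol{\lambda}_t,\boldsymbol{\omega}^K_t)}{\partial \boldsymbol{\lambda}}-\nabla F_t(\boldsymbol{\lambda}_t)\right\|^2 \;\leq\; C_1\,(1-\eta\mu_g)^{2K} + C_2\,\|\boldsymbol{\omega}^K_t - \widehat{\boldsymbol{\beta}}_t(\boldsymbol{\lambda}_t)\|^2,
\end{align*}
where the first term is a truncation-of-Neumann-series error independent of the warm start. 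Since $g_t$ is $\mu_g$-strongly convex and $\eta<1/\ell_{g,1}$, the gradient-descent contraction gives $\|\boldsymbol{\omega}^K_t - \widehat{\boldsymbol{\beta}}_t(\boldsymbol{\lambda}_t)\|^2 \leq (1-\eta\mu_g)^{2K}\,u_t$, so everything reduces to controlling $u_t$.

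Next, I would set up the one-step recursion for $u_t$. Since $\boldsymbol{\beta}_t=\boldsymbol{\omega}^K_{t-1}$, the triangle inequality with three-term weights yields
\begin{align*}
u_t \;\leq\; 3\|\boldsymbol{\omega}^K_{t-1} - \widehat{\boldsymbol{\beta}}_{t-1}(\boldsymbol{\lambda}_{t-1})\|^2 + 3\|\widehat{\boldsymbol{\beta}}_{t-1}(\boldsymbol{\lambda}_{t-1}) - \widehat{\boldsymbol{\beta}}_t(\boldsymbol{\lambda}_{t-1})\|^2 + 3\|\widehat{\boldsymbol{\beta}}_t(\boldsymbol{\lambda}_{t-1}) - \widehat{\boldsymbol{\beta}}_t(\boldsymbol{\lambda}_t)\|^2.
\end{align*}
The first summand is contracted to $3(1-\eta\mu_g)^{2K}u_{t-1}$, producing the geometric factor $\nu$ in the final recursion; the second summand is precisely the inner-level drift term appearing in $\|\widehat{\boldsymbol{\beta}}_{t-j}(\boldsymbol{\lambda}_{t-1-j}) - \widehat{\boldsymbol{\beta}}_{t-1-j}(\boldsymbol{\lambda}_{t-1-j})\|^2$; and the third is bounded by $L_{\boldsymbol{\beta}}^2\|\boldsymbol{\lambda}_{t-1}-\boldsymbol{\lambda}_t\|^2$, using that $\widehat{\boldsymbol{\beta}}_t(\cdot)$ is Lipschitz with constant $L_{\boldsymbol{\beta}}=\ell_{g,1}/\mu_g$ (a standard consequence of strong convexity plus the implicit function theorem on $\nabla_{\boldsymbol{\beta}}g_t=0$).

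The key move to surface the hypergradient-error and bilevel-local-regret summands is to rewrite $\|\boldsymbol{\lambda}_{t-1}-\boldsymbol{\lambda}_t\|^2 = \alpha^2\|\mathcal{G}_{\mathcal{X}}(\boldsymbol{\lambda}_{t-1},\widetilde{\nabla} f_{t-1,w}(\boldsymbol{\lambda}_{t-1},\boldsymbol{\beta}_t),\alpha)\|^2$ via \eqref{defn:generalized_gradient} applied to the OBBO update, and then to use the non-expansiveness of $\mathcal{G}_{\mathcal{X}}$ with respect to its gradient argument under Assumption \ref{assump:continuity_phi_t} (this is where $\rho$-strong convexity of $\phi_t$ enters). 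That gives
\begin{align*}
\|\mathcal{G}_{\mathcal{X}}(\boldsymbol{\lambda}_{t-1},\widetilde{\nabla} f_{t-1,w},\alpha)\|^2 \leq 2\|\mathcal{G}_{\mathcal{X}}(\boldsymbol{\lambda}_{t-1},\nabla F_{t-1,w}(\boldsymbol{\lambda}_{t-1}),\alpha)\|^2 + \tfrac{2}{\rho^2}\|\widetilde{\nabla} f_{t-1,w} - \nabla F_{t-1,w}\|^2,
\end{align*}
which separates out exactly the two desired summands (generalized-gradient norm and time-smoothed hypergradient error). Substituting back produces a clean affine recursion $u_t \leq \nu u_{t-1} + A' e_{t-1,w} + B' R_{t-1} + C' V_{t-1}$ with $\nu=3(1-\eta\mu_g)^{2K}$.

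Finally, I would unroll this recursion backwards from $t$ down to $1$, collecting geometric weights $\nu^j$ on the $(t-1-j)$-th copies of each summand, absorbing the initial condition $u_1 \leq \Delta_{\boldsymbol{\beta}}$ into the $\nu^{t-1}\Delta_{\boldsymbol{\beta}}$ piece of $\delta_t$ and combining with the $(1-\eta\mu_g)^{2K}$ truncation error to form the full $\delta_t$; the constants $A,B,C$ are collected from $C_2$, $L_{\boldsymbol{\beta}}$, $\alpha$, $\rho$, and the factor $3$ in the triangle inequality. The main obstacle I anticipate is bookkeeping: ensuring that the coefficient on $u_{t-1}$ is genuinely less than $1$ so that $\nu<1$ (this is where $\eta<1/\mu_g$ and sufficiently large $K$ enter), and aligning the window-averaged term $e_{t-1,w}$ with the per-time error $u_{t-1}$ when $\widetilde{\nabla}f_{t-1,w}$ is itself an average over past times—this likely requires a convexity/Jensen step inside the $w$-average before the recursion becomes tight.
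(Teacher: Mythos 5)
Your overall route is the same as the paper's: bound the ITD hypergradient error by the warm-start gap $\|\boldsymbol{\beta}_t-\widehat{\boldsymbol{\beta}}_t(\boldsymbol{\lambda}_t)\|^2$ plus a $(1-\eta\mu_g)^{2K}$ truncation term (Lemma \ref{lem:itd_hypergradient_bound}), set up a one-step recursion for that gap, convert $\|\boldsymbol{\lambda}_{t-1}-\boldsymbol{\lambda}_t\|^2$ into $\alpha^2\|\mathcal{G}_{\mathcal{X}}(\cdot,\widetilde{\nabla}f_{t-1,w},\alpha)\|^2$ via the algorithm's update, split it with the $\frac{1}{\rho}$-Lipschitzness of the generalized projection (Lemma \ref{lem:gen_projection_bound_two}) into the regret and time-smoothed-error summands, and unroll. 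Your worry about aligning the window-averaged error with the recursion is unfounded: since OBBO's update uses $\widetilde{\nabla}f_{t,w}$, the generalized gradient that equals $\frac{1}{\alpha}(\boldsymbol{\lambda}_{t-1-j}-\boldsymbol{\lambda}_{t-j})$ is already the window-averaged one, so the $w$-averaged terms appear with no Jensen step needed here.

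The one genuine gap is the contraction coefficient. Your symmetric three-term triangle inequality puts a factor $3$ in front of the contracted term, giving a recursion coefficient $3(1-\eta\mu_g)^{K}$ (note also that Lemma \ref{lem:alt_grad_bound} contracts the \emph{squared} norm by $(1-\eta\mu_g)$ per step, so the exponent is $K$, not $2K$). For $K=1$ and small $\eta\mu_g$ this exceeds $1$, the geometric series diverges, and your proposed fix --- requiring $K$ large --- would weaken the lemma, which is claimed for all $K\geq 1$. The paper avoids this with an asymmetric Young's inequality: it weights the contracted term by $(1+\delta)$ with $\delta=\frac{\eta\mu_g}{2}$ and the drift terms by $(1+\frac{1}{\delta})=C_{\mu_g}$, so that $(1+\frac{\eta\mu_g}{2})(1-\eta\mu_g)^{K}<(1-\frac{\eta\mu_g}{2})(1-\eta\mu_g)^{K-1}=:\nu<1$ holds for every $K\geq 1$ under $\eta<\min(1/\ell_{g,1},1/\mu_g)$. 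The price is the large constant $C_{\mu_g}=1+\frac{2}{\eta\mu_g}$ appearing in $A$, $B$, $C$, but the recursion then contracts unconditionally. With that substitution your argument goes through as written.
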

The hypergradient error decomposition introduced with OBBO is novel, as the upper bound is expressed in terms of the previous bilevel local regret and time-smoothed hypergradient error. In contrast, the expansion in SOBOW, see Theorem 5.6 of \cite{sobow}, is limited to the cumulative difference of outer-level variables. Our decomposition provides an upper bound for the hypergradient error at time $t$ by the cumulative hypergradient error and bilevel local regret up to $t-1$, demonstrating how errors propagate over time. This result, derived from Bregman divergences, is essential for the improved sublinear rate discussed next.
\begin{theorem}\label{thrm:non_convex_regret_paper}
 Suppose Assumptions \ref{assump:rel_smoothness}, \ref{assump:strong_convex_g_t}, \ref{assump:bounded_hyperparm}, \ref{assump:bounded_F_t}, and \ref{assump:continuity_phi_t}. Let inner step size of $\eta < \min{\left(\frac{1}{\ell_{g,1}},\frac{1}{\mu_g}\right)}$, outer step size of $  \alpha\leq \min{\{\frac{3\rho}{4\ell_{F,1}},\frac{\rho\sqrt{(1-\nu)}}{\kappa_g\sqrt{108C_{\mu_g}L_{\boldsymbol{\beta}}}}\}}$, and inner iteration count $K=\frac{\log{(T)}}{\log{\left((1-\eta\mu_g)^{-1}\right)}}+1$. Then the bilevel local regret of our OBBO algorithm  satisfies 
    \begin{align}
            BLR_{w}(T) := \sum_{t=1}^T \left\|\mathcal{G}_{\mathcal{X}}(\boldsymbol{\lambda}_t,\nabla F_{t,w}(\boldsymbol{\lambda}_t),\alpha)\right\|^2\leq O\left(\frac{T}{w}+V_{1,T}+\kappa_g^2H_{2,T}\right)
\end{align}
\end{theorem}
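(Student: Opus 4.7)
The plan is to derive a Bregman-proximal descent inequality for the time-smoothed outer objective, telescope it across $t=1,\ldots,T$, and close the circular dependence between the hypergradient error and the bilevel local regret by substituting the recursion from Lemma \ref{lem:deterministic_hypergradient_error_paper}. View $\bar F_{t,w}(\boldsymbol{\lambda}) := \frac{1}{w}\sum_{i=0}^{w-1} F_{t-i}(\boldsymbol{\lambda})$ as a function of a single argument whose gradient inherits an $\ell_{F,1}$-Lipschitz constant from Assumptions \ref{assump:rel_smoothness} and \ref{assump:strong_convex_g_t} by the standard hypergradient smoothness argument. The smoothness descent on $\bar F_{t,w}$ combined with the optimality condition of the Bregman update for $\boldsymbol{\lambda}_{t+1}$, the $\rho$-strong convexity of $\phi_t$ (Assumption \ref{assump:continuity_phi_t}), and the definition of $\mathcal{G}_{\mathcal{X}}$ in \eqref{defn:generalized_gradient} yields, after inserting $\pm \nabla F_{t,w}(\boldsymbol{\lambda}_t)$ inside the linear term, a per-step inequality
$$\bar F_{t,w}(\boldsymbol{\lambda}_{t+1}) \le \bar F_{t,w}(\boldsymbol{\lambda}_t) - c_1\alpha\,\|\mathcal{G}_{\mathcal{X}}(\boldsymbol{\lambda}_t,\nabla F_{t,w}(\boldsymbol{\lambda}_t),\alpha)\|^2 + c_2\alpha\,\|\widetilde{\nabla}f_{t,w}(\boldsymbol{\lambda}_t,\boldsymbol{\beta}_{t+1}) - \nabla F_{t,w}(\boldsymbol{\lambda}_t)\|^2,$$
with constants $c_1,c_2>0$ depending on $\rho$ and $\ell_{F,1}$, valid under the first step-size restriction $\alpha \le 3\rho/(4\ell_{F,1})$.

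Summing over $t$ leaves two terms to handle: the telescoped objective and the cumulative hypergradient error. For the telescope, split $\bar F_{t,w}(\boldsymbol{\lambda}_t) - \bar F_{t,w}(\boldsymbol{\lambda}_{t+1}) = [\bar F_{t,w}(\boldsymbol{\lambda}_t) - \bar F_{t+1,w}(\boldsymbol{\lambda}_{t+1})] + \frac{1}{w}[F_{t-w+1}(\boldsymbol{\lambda}_{t+1}) - F_{t+1}(\boldsymbol{\lambda}_{t+1})]$; the first block telescopes to an $O(1)$ constant by Assumption \ref{assump:bounded_F_t}, and the second is controlled either by $2Q/w$ per term (producing the $T/w$ contribution) or by $\frac{1}{w}\sum_{k=t-w+1}^t \sup_{\boldsymbol{\lambda}}|F_{k+1}(\boldsymbol{\lambda})-F_k(\boldsymbol{\lambda})|$ (producing the $V_{1,T}$ contribution after swapping the double sum). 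The time-smoothed hypergradient error then reduces to the per-step version by Jensen's inequality, $\|\widetilde{\nabla}f_{t,w}-\nabla F_{t,w}\|^2 \le \frac{1}{w}\sum_{i=0}^{w-1}\|\widetilde{\nabla}f_{t-i}-\nabla F_{t-i}\|^2$, so that $\sum_t \|\widetilde{\nabla}f_{t,w}-\nabla F_{t,w}\|^2 \le \sum_t\|\widetilde{\nabla}f_t-\nabla F_t\|^2$.

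Plugging Lemma \ref{lem:deterministic_hypergradient_error_paper} into this last sum and interchanging summations via $\sum_{t=1}^T\sum_{j=0}^{t-2}\nu^j(\cdot)_{t-1-j} \le \frac{1}{1-\nu}\sum_{t=1}^T(\cdot)_t$ produces a self-consistent bound
$$\sum_t\|\widetilde{\nabla}f_t-\nabla F_t\|^2 \le O(1) + \frac{A}{1-\nu}\sum_t\|\widetilde{\nabla}f_t-\nabla F_t\|^2 + \frac{B}{1-\nu}\sum_t\|\mathcal{G}_{\mathcal{X}}(\boldsymbol{\lambda}_t,\nabla F_{t,w}(\boldsymbol{\lambda}_t),\alpha)\|^2 + \frac{C}{1-\nu}H_{2,T},$$
where $\sum_t\delta_t=O(1)$ because the $(1-\eta\mu_g)^{2K}$ contribution is $O(1/T)$ under the choice $K=\log(T)/\log((1-\eta\mu_g)^{-1})+1$ and $\sum_t\nu^{t-1}$ is a finite geometric series. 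Substituting this back into the telescoped descent and rearranging gives $(1-c)\sum_t\|\mathcal{G}_{\mathcal{X}}(\boldsymbol{\lambda}_t,\nabla F_{t,w}(\boldsymbol{\lambda}_t),\alpha)\|^2 \le O(T/w + V_{1,T} + \kappa_g^2 H_{2,T})$, where the constant $C$ absorbs a $\kappa_g^2$ factor through $C_{\mu_g}L_{\boldsymbol{\beta}}$. The main obstacle is precisely this rearrangement: the hypergradient error and the bilevel local regret are coupled recursively through Lemma \ref{lem:deterministic_hypergradient_error_paper}, so the proof must simultaneously (i) enforce $A/(1-\nu)<1$ to close the inner hypergradient-error system (secured by the chosen $\eta$ and $K$), and (ii) ensure $c<1$ in the outer rearrangement, which is exactly the role of the tightened condition $\alpha \le \rho\sqrt{1-\nu}/(\kappa_g\sqrt{108\,C_{\mu_g}L_{\boldsymbol{\beta}}})$ with its $\kappa_g^{-1}$ scaling.
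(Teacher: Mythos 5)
Your proposal is correct and follows essentially the same route as the paper: a Bregman-proximal descent inequality on the time-smoothed objective (via Lemmas \ref{lem:gen_projection_bound_one} and \ref{lem:gen_projection_bound_two}), a telescoping bound yielding the $2TQ/w + V_{1,T}$ contribution (the paper's Lemma \ref{lem:bounded_time_varying}), Jensen to pass from the time-smoothed to the per-step hypergradient error, and then the recursion of Lemma \ref{lem:deterministic_hypergradient_error_paper} with the geometric-series interchange and the two step-size conditions to close the self-consistent system (the paper packages this as Lemma \ref{lem:deterministic_cumulative_hypergradient_error} before substituting into the telescoped descent). The only cosmetic difference is that you fold the cumulative-error lemma inline rather than stating it separately.
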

Analogously to OBO (\cite{sobow}), we are interested in sublinear comparator sequences, e.g.,  $V_{1,T}=o(T)$ and $H_{2,T}=o(T)$. It is a weak assumption that still allows for the amount of nonstationarity to grow up to a rate of time itself. Using the above assumption with a properly selected sublinear window size $w=o(T)$ results in the sublinear rate of bilevel local regret presented in Theorem \ref{thrm:non_convex_regret_paper}. 

\subsection{\textbf{SOBBO} Regret}
To study the bilevel local regret in the stochastic framework, we present a lemma that provides a decomposition of the expected hypergradient error of our SOBBO algorithm. The decomposition  includes discounted variations of 1) expected previous bilevel local regret, 2) expected  time-smoothed hypergradient error, 3) expected cumulative differences of optimal inner level variables, and an additional term arising due to the variance $\sigma^2_{g_{\boldsymbol{\beta}}}$ of the inner level objective stochastic gradients. In the following paragraph, we highlight the novelty and practicality of our result.
\begin{lemma}\label{lem:stochastic_cumulative_hypergradient_error_paper}
 Suppose Assumptions \ref{assump:rel_smoothness}, \ref{assump:strong_convex_g_t}, \ref{assump:unbiased_finite_var}, \ref{assump:bounded_hyperparm}, and \ref{assump:continuity_phi_t}. Let the inner step size of $\eta\leq\frac{2}{\ell_{g,1}+\mu_g}$ and inner iteration count $K\geq 1$. Then  $\forall t\in[1,T]$  the expected  hypergradient error from  SOBBO satisfies 
\begin{align}
\mathbb{E}_{\bar{\zeta}_{t,K+1}}\left[\left\|\widetilde{\nabla}f_{t}(\boldsymbol{\lambda}_t,\boldsymbol{\beta}_{t+1})-\nabla F_{t}\left(\boldsymbol{\lambda}_t\right)\right\|^2\right] \leq \delta_t+A\sum_{j=0}^{t-2}\nu^{j+1}\left\|\mathcal{G}_{\mathcal{X}}(\boldsymbol{\lambda}_{t-1-j},\nabla F_{t-1-j,w}(\boldsymbol{\lambda}_{t-1-j}),\alpha)\right\|^2\nonumber\\+B\sum_{j=0}^{t-2}\nu^{j+1}\left\|\widetilde{\nabla}f_{t-1-j,w}(\boldsymbol{\lambda}_{t-1-j},\boldsymbol{\beta}_{t-j},\mathcal{Z}_{t-1-j,w})-\nabla F_{t-1-j,w}(\boldsymbol{\lambda}_{t-1-j})\right\|^2\nonumber\\+C\sum_{j=0}^{t-2}\nu^{j+1}\left[\left\|\widehat{\boldsymbol{\beta}}_{t-j}(\boldsymbol{\lambda}_{t-1-j})-\widehat{\boldsymbol{\beta}}_{t-1-j}(\boldsymbol{\lambda}_{t-1-j})\right\|^2\right]+\frac{D\sigma^2_{g_{\boldsymbol{\beta}}}}{s}.\nonumber 
\end{align}
        where $F_{t,w}(\boldsymbol{\lambda})\triangleq \mathbb{E}_{\epsilon}\left[f_{t,w}\left(\boldsymbol{\lambda},\widehat{\boldsymbol{\beta}}_t(\boldsymbol{\lambda})\right)\right]$ is the time-smoothed  objective  with  decay parameter $\nu$, initial error  $ \delta_t:=3L^2_3(1-\eta \mu_g)^{2K} +3L_{\boldsymbol{\beta}}\nu^{t-1}\Delta_{\boldsymbol{\beta}}$ and constants  $A,B,C,D$  defined in Lemma \ref{lem:stochastic_hypergradient_error}.
\end{lemma}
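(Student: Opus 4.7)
The plan is to mirror the deterministic decomposition of Lemma \ref{lem:deterministic_hypergradient_error_paper}, but with careful bookkeeping of the additional variance term that arises from the mini-batch stochastic inner loop in SOBBO. The natural starting point is to bound $\mathbb{E}\|\widetilde{\nabla}f_t(\boldsymbol{\lambda}_t,\boldsymbol{\beta}_{t+1})-\nabla F_t(\boldsymbol{\lambda}_t)\|^2$ by splitting it, via a three-way triangle inequality, into (i) the gap between $\widetilde{\nabla}f_t(\boldsymbol{\lambda}_t,\boldsymbol{\beta}_{t+1})$ and $\widetilde{\nabla}f_t(\boldsymbol{\lambda}_t,\widehat{\boldsymbol{\beta}}_t(\boldsymbol{\lambda}_t))$, which is $O(\|\boldsymbol{\beta}_{t+1}-\widehat{\boldsymbol{\beta}}_t(\boldsymbol{\lambda}_t)\|^2)$ by Assumptions \ref{assump:rel_smoothness}–\ref{assump:strong_convex_g_t}, and (ii) the gap between $\widetilde{\nabla}f_t(\boldsymbol{\lambda}_t,\widehat{\boldsymbol{\beta}}_t(\boldsymbol{\lambda}_t))$ and $\nabla F_t(\boldsymbol{\lambda}_t)$, which under Lemma \ref{lem:hypergrad_estimator_itd} reduces to the classical bilevel approximation error $O((1-\eta\mu_g)^{2K})$ contained in the $\delta_t$ term.

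Second, I would bound $\mathbb{E}\|\boldsymbol{\beta}_{t+1}-\widehat{\boldsymbol{\beta}}_t(\boldsymbol{\lambda}_t)\|^2$ using the standard strongly convex stochastic gradient descent contraction. With $\eta\leq 2/(\ell_{g,1}+\mu_g)$ and $s$-sample mini-batches of $\nabla_{\boldsymbol{\omega}} g_t(\cdot,\cdot,\bar{\zeta}_{t,k})$, a direct induction on $k=1,\dots,K$ yields
\begin{align*}
\mathbb{E}\|\boldsymbol{\omega}^K_t-\widehat{\boldsymbol{\beta}}_t(\boldsymbol{\lambda}_t)\|^2 \;\leq\; (1-\eta\mu_g)^{2K}\,\|\boldsymbol{\beta}_t-\widehat{\boldsymbol{\beta}}_t(\boldsymbol{\lambda}_t)\|^2 \;+\; \frac{D'\sigma^2_{g_{\boldsymbol{\beta}}}}{s},
\end{align*}
for some constant $D'$ depending on $\eta,\mu_g$. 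This is the source of the $D\sigma^2_{g_{\boldsymbol{\beta}}}/s$ term in the lemma. The quantity $\|\boldsymbol{\beta}_t-\widehat{\boldsymbol{\beta}}_t(\boldsymbol{\lambda}_t)\|^2$ is then split by triangle inequality into three pieces: the previous round's inner-level error $\|\boldsymbol{\beta}_t-\widehat{\boldsymbol{\beta}}_{t-1}(\boldsymbol{\lambda}_{t-1})\|^2$, the inner-level path variation $\|\widehat{\boldsymbol{\beta}}_{t-1}(\boldsymbol{\lambda}_{t-1})-\widehat{\boldsymbol{\beta}}_t(\boldsymbol{\lambda}_{t-1})\|^2$, and a Lipschitz term $L^2_{\boldsymbol{\beta}}\|\boldsymbol{\lambda}_{t-1}-\boldsymbol{\lambda}_t\|^2$ (using that $\widehat{\boldsymbol{\beta}}_t$ is Lipschitz in $\boldsymbol{\lambda}$ by Assumptions \ref{assump:rel_smoothness}–\ref{assump:strong_convex_g_t}).

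Third, I would convert the outer-iterate increment $\|\boldsymbol{\lambda}_{t-1}-\boldsymbol{\lambda}_t\|^2$ into the bilevel local regret and time-smoothed hypergradient error. By the definition of the Bregman proximal update in Algorithm \ref{alg:stochastic_online_bregman} together with \eqref{defn:generalized_gradient} and the $\rho$-strong convexity of $\phi_t$ from Assumption \ref{assump:continuity_phi_t}, one obtains $\|\boldsymbol{\lambda}_{t-1}-\boldsymbol{\lambda}_t\|^2\lesssim \alpha^2\|\mathcal{G}_{\mathcal{X}}(\boldsymbol{\lambda}_{t-1},\widetilde{\nabla}f_{t-1,w}(\cdot),\alpha)\|^2$, which after adding and subtracting $\nabla F_{t-1,w}(\boldsymbol{\lambda}_{t-1})$ and applying nonexpansiveness of $\mathcal{G}_{\mathcal{X}}$ in its second argument splits into the bilevel local regret contribution $\|\mathcal{G}_{\mathcal{X}}(\boldsymbol{\lambda}_{t-1},\nabla F_{t-1,w},\alpha)\|^2$ and the time-smoothed hypergradient error $\|\widetilde{\nabla}f_{t-1,w}-\nabla F_{t-1,w}\|^2$. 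Substituting everything into the contraction from step two gives a one-step recursion
\begin{align*}
a_t \;\leq\; \nu\, a_{t-1} \;+\; A'\,r_{t-1} \;+\; B'\,e_{t-1} \;+\; C'\,h_{t-1} \;+\; \tfrac{D\sigma^2_{g_{\boldsymbol{\beta}}}}{s} \;+\; \tilde{\delta}_t,
\end{align*}
where $a_t$ is the current hypergradient error bound, $r_{t-1},e_{t-1},h_{t-1}$ are the previous local regret, time-smoothed hypergradient error, and inner-level path variation, and $\nu:=(1-\eta\mu_g)^{2K}(1+C_{\mu_g}L_{\boldsymbol{\beta}}\alpha^2\kappa_g^2)<1$ under the hypotheses on $\alpha,\eta,K$. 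Unrolling this recursion from $t$ back to $1$ produces the geometric sum $\sum_{j=0}^{t-2}\nu^{j+1}(\cdot)$ in each of the three cumulative terms and, because $\sum_{j\geq 0}\nu^{j+1}$ is bounded, absorbs the per-step variance into the stated constant $D\sigma^2_{g_{\boldsymbol{\beta}}}/s$, while the surviving initial term yields the $3L^2_3(1-\eta\mu_g)^{2K}+3L_{\boldsymbol{\beta}}\nu^{t-1}\Delta_{\boldsymbol{\beta}}$ definition of $\delta_t$.

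The main obstacle will be step two combined with step three: ensuring that the variance $\sigma^2_{g_{\boldsymbol{\beta}}}/s$ that enters at every inner loop (across every round $1,\dots,t$) telescopes into a single $D\sigma^2_{g_{\boldsymbol{\beta}}}/s$ rather than an unbounded sum. This requires verifying that the recursion coefficient $\nu$ remains strictly less than $1$ once the Lipschitz-of-$\widehat{\boldsymbol{\beta}}_t$ amplification is folded in, which is precisely what the choice of step sizes $\eta$ and $\alpha$ and the condition-number scaling $\kappa_g$ are designed to guarantee. A secondary bookkeeping issue is that all expectations must be taken with respect to the full sigma algebra generated by $\mathcal{Z}_{t,w}$ and the inner-loop samples $\{\bar{\zeta}_{t,k}\}$; conditioning on the past and using tower property at each recursion step should keep the bias terms unbiased and the variance terms additive.
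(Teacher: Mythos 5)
Your plan follows the same route as the paper's proof: bound the hypergradient error by the inner-level tracking error via the Lipschitz property of the estimator (Lemma \ref{lem:hypergrad_bound}), contract $\mathbb{E}\left[\left\|\boldsymbol{\beta}_{t+1}-\widehat{\boldsymbol{\beta}}_t(\boldsymbol{\lambda}_t)\right\|^2\right]$ through the mini-batch SGD recursion on the strongly convex inner problem (Lemma \ref{lem:stochastic_inner_tracking_error}), convert $\left\|\boldsymbol{\lambda}_{t-1}-\boldsymbol{\lambda}_t\right\|^2=\alpha^2\left\|\mathcal{G}_{\mathcal{X}}(\cdot)\right\|^2$ and split via Lemma \ref{lem:gen_projection_bound_two}, and unroll. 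The overall structure is correct.

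One bookkeeping point needs repair. You define the contraction factor as $\nu:=(1-\eta\mu_g)^{2K}(1+C_{\mu_g}L_{\boldsymbol{\beta}}\alpha^2\kappa_g^2)$ and appeal to ``the hypotheses on $\alpha$'' to get $\nu<1$; but this lemma's hypotheses impose no constraint on $\alpha$ (only $\eta\leq 2/(\ell_{g,1}+\mu_g)$ and $K\geq 1$), and none is needed. Since the previous rounds' local regret and time-smoothed hypergradient errors appear explicitly as forcing terms on the right-hand side of the stated bound (the $A$ and $B$ sums) rather than being absorbed back into the current error, the $\alpha^2\kappa_g^2$ amplification should multiply those forcing terms, not the homogeneous term $a_{t-1}$. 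The recursion coefficient on the inner tracking error is then $\nu=(1+\delta)\left(1-\frac{2\eta\ell_{g,1}\mu_g}{\ell_{g,1}+\mu_g}\right)^{K}$, which is strictly below one purely by the Young's-inequality choice $\delta=\frac{\eta\ell_{g,1}\mu_g}{\ell_{g,1}+\mu_g}$; the constraint on $\alpha$ only enters later, when the cumulative time-smoothed error is summed and the recursion is closed in Lemma \ref{lem:stochastic_cumulative_hypergradient_error}. With that repositioning your unrolling goes through exactly as in the paper, including the $\frac{D\sigma^2_{g_{\boldsymbol{\beta}}}}{s}$ term with $D\propto C_K\kappa_g^2\eta^2\sum_{j}\nu^{j}$ bounded by the geometric series. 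A second, minor point: the estimator \eqref{eq:stochastic_gradient_sample} used by SOBBO is exact at $\boldsymbol{\beta}=\widehat{\boldsymbol{\beta}}_t(\boldsymbol{\lambda})$, so the separate $O((1-\eta\mu_g)^{2K})$ residual you import from the ITD analysis is not actually needed here; the appendix version of this bound carries only $\delta_t=\kappa_g^2\nu^{t-1}\Delta_{\boldsymbol{\beta}}$.
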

The decomposition of expected hypergradient error in Lemma \ref{lem:stochastic_cumulative_hypergradient_error_paper} for stochastic OBO problems is, to our knowledge, the first and generalizes from the deterministic setting to finite variances. This decomposition shows that the expected hypergradient error at time $t$ is upper bounded by terms from the deterministic setting and a variance term $\sigma^2_{g_{\boldsymbol{\beta}}}$, motivating the use of variance reduction techniques. The subsequent theorem uses this result to derive a sublinear rate of bilevel local regret.
\begin{theorem}\label{thrm:sobbo_regret_minimization_paper}
  Suppose Assumptions \ref{assump:rel_smoothness}, \ref{assump:strong_convex_g_t}, \ref{assump:unbiased_finite_var}, \ref{assump:bounded_hyperparm}, 
  \ref{assump:bounded_F_t}, and \ref{assump:continuity_phi_t}. Let the inner step size of $\eta\leq\frac{2}{\ell_{g,1}+\mu_g}$, inner iteration count of $K\geq 1$, outer step size of $\alpha\leq \min\{\frac{3\rho}{4\ell_{F,1}},\frac{\rho\sqrt{(1-\nu)}}{\kappa^2_g\sqrt{72C_{\mu_g}}}\} $, and batch sizes of $s=w$ and $ m=\log{(w)}/\log{\left(1-\frac{\mu_g}{\ell_{g,1}}\right)}+1$. Then the  bilevel local regret of  SOBBO satisfies
    \begin{align}\label{eq:sublinear_sobbo}
    BLR_w(T)\leq O\left(\frac{T}{w}\left(1+\kappa^2_g+\sigma^2_f+\kappa^2_g\sigma^2_{g_{\boldsymbol{\beta}}}\right)+V_{1,T}+\kappa^2_gH_{2,T}\right)
\end{align}
\end{theorem}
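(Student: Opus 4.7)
The plan is to mimic the proof structure of Theorem \ref{thrm:non_convex_regret_paper} but carry expectations throughout and account for the new variance term in Lemma \ref{lem:stochastic_cumulative_hypergradient_error_paper}. First I would establish a one-step descent inequality for the time-smoothed objective. By the $\ell_{F,1}$-smoothness of $F_{t,w}$ (inherited from Assumption \ref{assump:rel_smoothness}) combined with the Bregman proximal update of $\boldsymbol{\lambda}_{t+1}$, standard arguments (as in Bio-BreD \cite{huang2022enhanced}) yield a bound of the form
\begin{align*}
F_{t+1,w}(\boldsymbol{\lambda}_{t+1}) - F_{t,w}(\boldsymbol{\lambda}_t) \leq -c_1\alpha\,\bigl\|\mathcal{G}_{\mathcal{X}}(\boldsymbol{\lambda}_t,\nabla F_{t,w}(\boldsymbol{\lambda}_t),\alpha)\bigr\|^2 + c_2\alpha\,\bigl\|\widetilde{\nabla}f_{t,w} - \nabla F_{t,w}(\boldsymbol{\lambda}_t)\bigr\|^2 + \bigl(F_{t+1,w}(\boldsymbol{\lambda}_{t+1}) - F_{t,w}(\boldsymbol{\lambda}_{t+1})\bigr),
\end{align*}
where the step-size condition $\alpha \leq 3\rho/(4\ell_{F,1})$ gives the correct sign of the leading term. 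Summing over $t$ and taking expectation over $\mathcal{Z}_{t,w}$, the telescoping of $F_{t,w}$ is bounded by Assumption \ref{assump:bounded_F_t}, and the sum of the last parenthetical contributes $O(V_{1,T})$ by definition of first-order variation, giving
\begin{align*}
BLR_w(T) \leq O\!\left(\tfrac{Q}{\alpha} + V_{1,T}\right) + \tfrac{c_2}{c_1}\sum_{t=1}^T \mathbb{E}\bigl\|\widetilde{\nabla}f_{t,w}(\boldsymbol{\lambda}_t,\boldsymbol{\beta}_{t+1},\mathcal{Z}_{t,w}) - \nabla F_{t,w}(\boldsymbol{\lambda}_t)\bigr\|^2.
\end{align*}

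Next I would invoke Jensen's inequality on the window average to reduce the smoothed error to a sum of per-step errors, $\mathbb{E}\|\widetilde{\nabla}f_{t,w}-\nabla F_{t,w}\|^2 \leq \tfrac{1}{w}\sum_{i=0}^{w-1}\mathbb{E}\|\widetilde{\nabla}f_{t-i}-\nabla F_{t-i}\|^2$, so that $\sum_t \mathbb{E}\|\widetilde{\nabla}f_{t,w}-\nabla F_{t,w}\|^2 \leq \sum_t e_t$ where $e_t$ denotes the per-step expected hypergradient error. Applying Lemma \ref{lem:stochastic_cumulative_hypergradient_error_paper} and swapping the order of summation in the double sums of the form $\sum_t \sum_j \nu^{j+1}[\cdot]_{t-1-j}$ (each inner index contributes at most $\tfrac{\nu}{1-\nu}$ by the geometric series), I obtain
\begin{align*}
\sum_{t=1}^T e_t \leq \sum_{t=1}^T \delta_t + \tfrac{A\nu}{1-\nu}\,BLR_w(T) + \tfrac{B\nu}{1-\nu}\sum_{t=1}^T e_t + \tfrac{C\nu}{1-\nu}\,H_{2,T} + \tfrac{TD\sigma^2_{g_{\boldsymbol{\beta}}}}{s}.
\end{align*}

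The step-size condition $\alpha\leq \rho\sqrt{1-\nu}/(\kappa_g^2\sqrt{72 C_{\mu_g}})$ is precisely what is needed to make the coefficient $\tfrac{B\nu}{1-\nu}$ strictly less than one, so I can move the $\sum_t e_t$ term to the left and obtain $\sum_t e_t \leq O(\sum_t \delta_t + BLR_w(T) + H_{2,T} + T\sigma^2_{g_{\boldsymbol{\beta}}}/s)$. Substituting this back into the descent bound and again using the step-size condition to absorb the resulting $BLR_w(T)$ on the right yields $BLR_w(T) \leq O(Q/\alpha + V_{1,T} + \sum_t \delta_t + \kappa_g^2 H_{2,T} + T\kappa_g^2 \sigma^2_{g_{\boldsymbol{\beta}}}/s)$, where the $\kappa_g^2$ factors originate from the constants $A,B,C,D$ tracked through Lemma \ref{lem:stochastic_hypergradient_error}. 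The $\sigma_f^2$ contribution enters the per-step error $e_t$ via the variance of $\widetilde{\nabla}f_{t-i}(\cdot,\mathcal{E}_{t-i})$ (Assumption \ref{assump:unbiased_finite_var}); because the samples $\{\mathcal{E}_{t-i}\}_{i=0}^{w-1}$ are independent across the window, the variance of the smoothed estimator is $\sigma_f^2/w$ per step. The bias from the truncated Neumann estimator in \eqref{eq:stochastic_gradient_sample} is controlled by the choice $m=\log(w)/\log(1-\mu_g/\ell_{g,1})+1$, which forces $(1-\mu_g/\ell_{g,1})^m \leq 1/w$. Finally $\sum_t\delta_t$ decomposes into a geometric tail $3L_{\boldsymbol{\beta}}\Delta_{\boldsymbol{\beta}}/(1-\nu)$ and a term $3L_3^2 T (1-\eta\mu_g)^{2K}$ that is absorbed into $O(T/w)$ under the natural choice of $K$ consistent with the deterministic case. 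Plugging in $s=w$ produces the advertised $O(T/w \cdot (1+\kappa_g^2+\sigma_f^2+\kappa_g^2\sigma^2_{g_{\boldsymbol{\beta}}}) + V_{1,T} + \kappa_g^2 H_{2,T})$.

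The main obstacle is the self-referencing structure in Lemma \ref{lem:stochastic_cumulative_hypergradient_error_paper}: the right-hand side contains both the past bilevel local regret and the past smoothed hypergradient error, and after swapping summations these sums reappear as multiples of $BLR_w(T)$ and of $\sum_t e_t$ themselves. Getting the two-level absorption to close requires a delicate choice of the outer step size $\alpha$ so that both the coefficient $B\nu/(1-\nu)$ on $\sum_t e_t$ and the subsequent coefficient on $BLR_w(T)$ fall below one with enough slack, which is exactly the role of the $\kappa_g^2\sqrt{72 C_{\mu_g}}$ factor in the stated bound on $\alpha$. A secondary but more technical difficulty is keeping the variance bookkeeping honest: one must separately track the contributions of the inner-gradient batch of size $s$, the outer-hypergradient batch size implicit in $\mathcal{E}_t$, and the Neumann series bias in $m$, and show that the calibrated choices $s=w$ and $m\asymp \log w$ route each of these into the $O(T/w)$ term rather than into a dimension- or horizon-dependent residual.
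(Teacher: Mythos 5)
Your proposal follows essentially the same route as the paper's proof: a smoothness-based descent inequality combined with the Bregman generalized-projection lemmas, telescoping of the time-smoothed objective into a $2TQ/w + V_{1,T}$ bound, substitution of the cumulative stochastic hypergradient error with geometric-series absorption of the self-referencing regret and error terms under the stated step-size condition, and the calibrated choices $s=w$ and $m\asymp\log w$ to route the variance and Neumann-bias contributions into the $O(T/w)$ term. The only differences are organizational (the paper performs the two-level absorption inside a separate cumulative-error lemma, Lemma \ref{lem:stochastic_cumulative_hypergradient_error}, rather than in the main argument), so the proposal is correct and matches the paper's approach.
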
 As in the deterministic setting, we consider sublinear comparator sequences, e.g.,  $V_{1,T}=o(T)$ and $H_{2,T}=o(T)$. Given this assumption and properly chosen window and batch size such that  $w=o(T)$ and $s=o(T)$, \eqref{eq:sublinear_sobbo} results in a sublinear rate of bilevel local regret. Compared to the deterministic result, our derived rate in Theorem \ref{thrm:sobbo_regret_minimization_paper} has three additional terms, (i) $\kappa^2_g$ from the bias of the stochastic gradient estimator of Lemma \eqref{lem:stochastic_gradient_estimate_bias}; (ii) $\sigma^2_f$ from the finite variance (Assumption \ref{assump:unbiased_finite_var}) of the stochastic gradients of the outer level objective; and (iii) $\kappa^2_g\sigma^2_{g_{\boldsymbol{\beta}}}$ from the finite variance (Assumption \ref{assump:unbiased_finite_var}) of stochastic gradients of the inner level objective.  We  remark our deterministic rate  is a special case of  the rate of Theorem \ref{thrm:sobbo_regret_minimization_paper} for zero variance, i.e., $\sigma^2_f=0, \sigma^2_{g_{\boldsymbol{\beta}}}=0$.
\section{Experimental Results}
\begin{figure}
  \begin{minipage}[c]{.66\linewidth}
    \centering
\includegraphics[width=.5\textwidth]{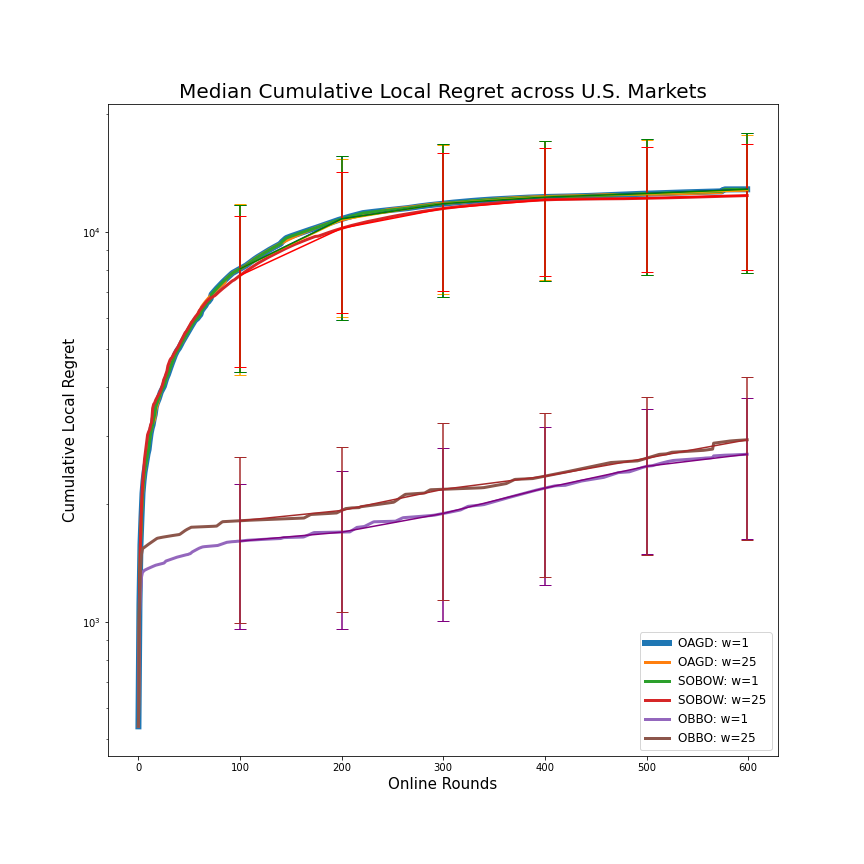}\includegraphics[width=.5\textwidth]{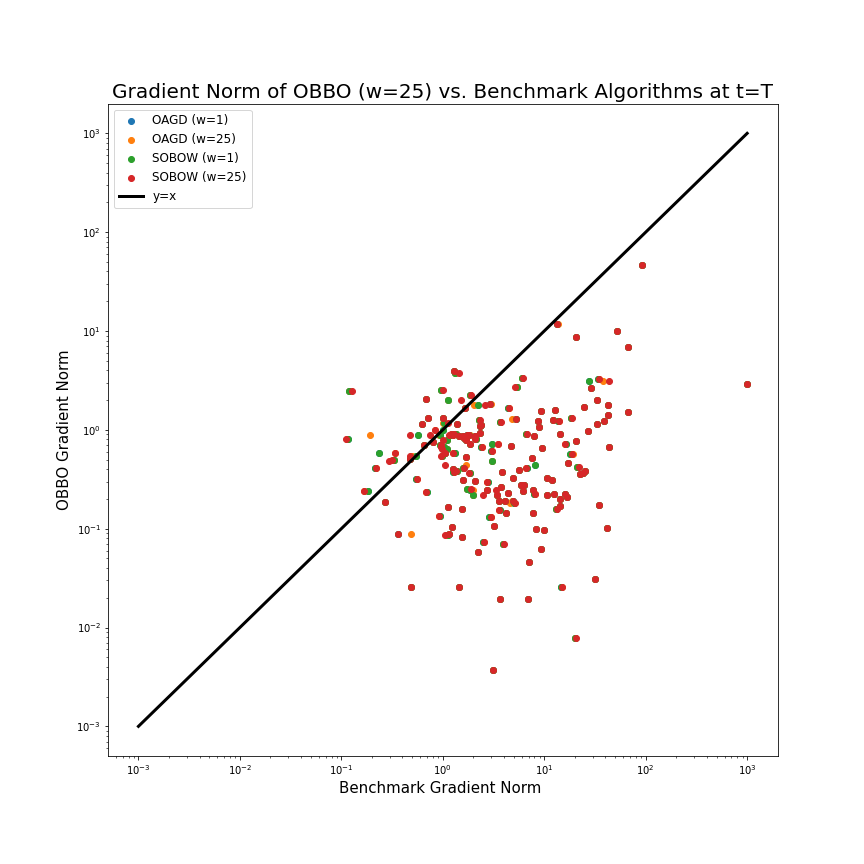}
    \captionof{figure}{\textbf{Left Panel}: Median cumulative local regret of OBBO vs. benchmark algorithms and median deviation bars  plotted every 100 rounds. \textbf{Right Panel}: Gradient norm of OBBO (w=25) vs. benchmark algorithms  at  $t=T$ with $y=x$ line plotted to visualize the  improvement OBBO offers in achieving a solution with smaller gradient norm.}\label{fig:paper_mmn_regret}
  \end{minipage}\hfill
  \begin{minipage}[c]{.3\linewidth}
    \centering
    \begin{tabular}{ p{1cm} p{1cm}p{1.5cm} }
       & Median Loss & Median Deviation  \\
      \hline
       OBBO & \textbf{0.205}   & 0.150  \\
            \hline
      OAGD & 0.265  &  0.209 \\
            \hline
     SOBOW &  0.273 & 0.215  \\
        \hline
    \end{tabular}
    \captionof{table}{Median test loss statistics for U.S. markets (w=25). }\label{tab:table_1}
        \centering
   \begin{tabular}{ p{1cm} p{1cm}p{1.5cm}}
       & Mean Loss & Standard Error  \\
      \hline
       OBBO & \textbf{0.661} &  0.055 \\
            \hline
      OAGD &  0.707  & 0.053  \\
            \hline
      SOBOW & 0.689  &  0.053 \\

        \hline
    \end{tabular}
    \captionof{table}{Mean test loss statistics for U.S. markets (w=25).  }\label{tab:table_2}
  \end{minipage}
\end{figure}

We conduct two experiments to demonstrate the superior performance and efficiency of our algorithms relative to the online bilevel benchmarks of OAGD (\cite{tarzanagh2024online}) and SOBOW (\cite{sobow}). For our algorithms, we choose the  reference function of  $\phi_t(\boldsymbol{\lambda})=\frac{1}{2}\boldsymbol{\lambda}^T\mathbf{H}_t\boldsymbol{\lambda}$ such that $\mathbf{H}_t$ is an adaptive matrix of averaged gradient squares with coefficient 0.9, commonly applied in prior works (\cite{huang2022enhanced},\cite{huang2021super}). Further details and results for both experiments are available in Appendix  \ref{sec:appendix_D}.

\textbf{Online Hyperparameter Optimization:} Hyperparameter optimization has often been formulated as a bilevel optimization as in \cite{pedregosa2016hyperparameter} and \cite{lorraine2020optimizing}. In hyperparameter optimization, the goal is to find optimal hyperparameter values on a validation dataset for optimal parameter values on a training dataset. Specifically, we consider an \emph{online} hyperparameter optimization where the underlying data distribution can vary across time. Compared to the offline case, an online framework captures a larger class of hyperparameter optimization problems (e.g., nonstationarity in optimal hyperparameters).

In online hyperparameter optimization, at each time $t$, new data samples split into a training and validation set, that is $D_t:=\{D_t^{tr},D_t^{val}\}$, arrive from a potentially new distribution. The inner objective of the online hyperparameter optimization is a regularized training loss on $D_t^{tr}$ of the form  $\sum_{\boldsymbol{x} \in D^{tr}_t}L(\boldsymbol{\beta},\boldsymbol{x})+\Omega(\boldsymbol{\lambda},\boldsymbol{\beta})$ for a loss function $L(\boldsymbol{\beta},\boldsymbol{x})$ evaluated across samples $\boldsymbol{x}\in D_t^{tr}$ for parameters $\boldsymbol{\beta}$ and the regularization function of $\Omega(\boldsymbol{\lambda},\boldsymbol{\beta})$. Given the optimal parameters $\widehat{\boldsymbol{\beta}}_t(\boldsymbol{\lambda})$ from the inner optimization, the outer objective is the validation loss on $D_t^{val}$ of the form $\sum_{\boldsymbol{x} \in D^{val}}L(\widehat{\boldsymbol{\beta}}_t(\boldsymbol{\lambda}),\boldsymbol{x})$.

We perform online regression on a Market Impact dataset comprising time series data from the components of the S\&P 500 index. Each time series corresponds to periods annotated by experts as significant market impact events. Our underlying model is a smoothing spline of linear order as in \cite{wahba1978},  where the inner level variables are  B-spline coefficients  and the outer level variable is a positive regularization hyperparameter, $\lambda \in \mathbb{R}^+$, respectively fitted on the training and validation datasets. The simplicity of such a model allows us to utilize closed-form hypergradients, obviating the need for an inner gradient descent loop. For all algorithms and window configurations, the outer learning rate is  set at $\alpha=0.001$. New data samples from the corresponding time series arrive to each model in sequential order in a batch size of 1. Further details can be found in Appendix \ref{sec:appendix_D}. 

The left panel of Figure \ref{fig:paper_mmn_regret} demonstrates significant improvements in terms of the  median cumulative local regret achieved by OBBO relative to  benchmark algorithms of OAGD and SOBOW across samples from the dataset (averaged across 3 random seeds). Further, the right panel of Figure \ref{fig:paper_mmn_regret}, shows that  OBBO achieves a smaller gradient norm at $t=T$ relative to benchmarks across individual market events. The descriptive statistics of forecasting loss aggregated across samples from the dataset are in Table \ref{tab:table_1} and \ref{tab:table_2} and highlight the improvement achieved by OBBO relative to the benchmarks on the test data in terms of median- and mean- of the mean squared error, respectively. 

\begin{figure}[]
  \begin{minipage}[c]{\linewidth}
    \centering
\includegraphics[width=.33\textwidth,,height=47mm]{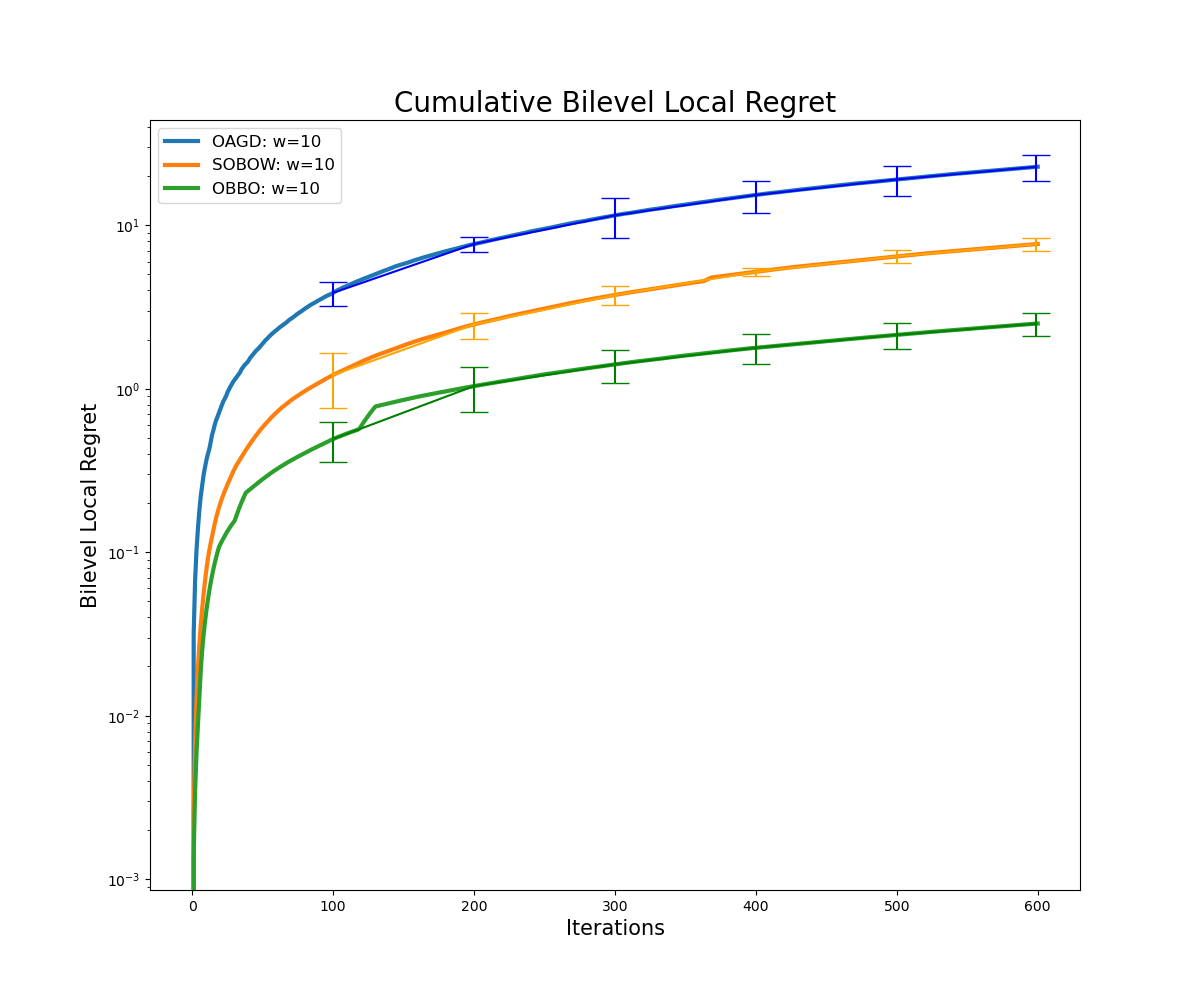}\includegraphics[width=.33\textwidth,height=45mm]{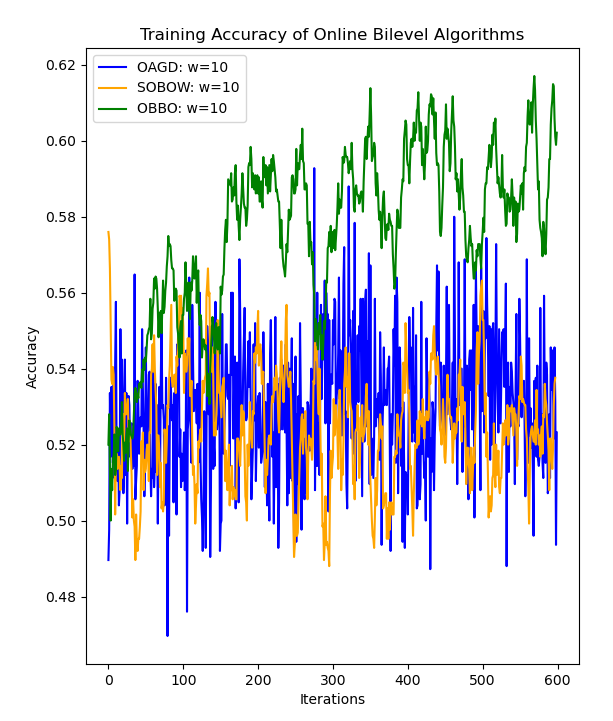}\includegraphics[width=.33\textwidth,height=45mm]{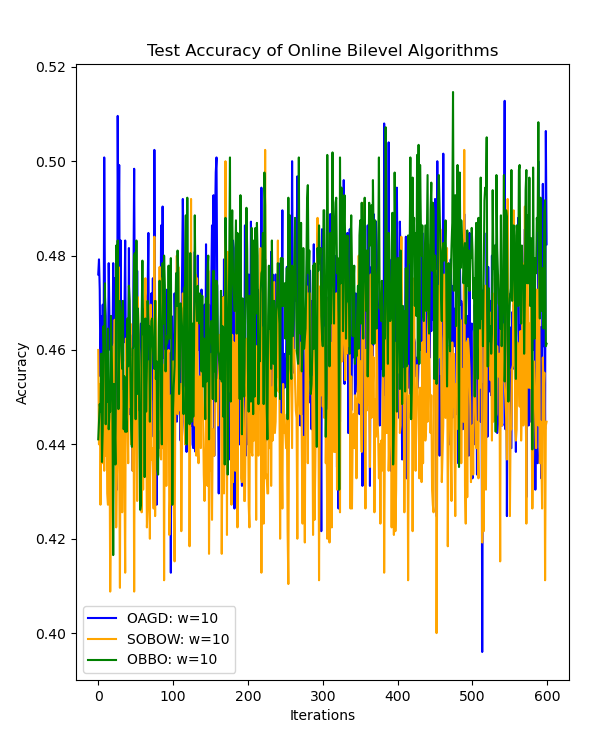} \captionof{figure}{Online meta-learning task on FC100 dataset. \textbf{Left Panel}: Improvement with OBBO on  cumulative bilevel local regret.  \textbf{Middle  Panel}: Higher training accuracy  with OBBO. \textbf{Right Panel}: Test accuracy: OBBO outperforms SOBOW while achieving OAGD performance with 10x ($w=10$) computationally cheaper update.}\label{fig:paper_meta_learning}
\end{minipage}
  \end{figure}

\textbf{Online Meta-Learning:} Meta-learning is frequently formulated as a bilevel optimization problem, see, e.g., \cite{finn2017model} and \cite{rajeswaran2019meta}. The objective is to learn optimal meta-parameters in the outer optimization, which, after adaptation—typically via a gradient descent step—yield optimal parameters for the inner optimization. We consider the online meta-learning framework proposed in \cite{finn2019online}, which extends the traditional offline formulation to accommodate non-stationary task distributions in the online setting.

The online meta-learning problem is setup where at each time $t$, a new task $D_t$ composed of training and validation samples, that is $D_t:\{D_t^{tr},D_t^{val}\}$, arrives from a potentially nonstationary distribution. The goal of the learner in the inner objective is to learn task specific parameters $\widehat{\boldsymbol{\beta}}_t(\boldsymbol{\lambda}) $ for task $D_t$ through the minimization of  $\sum_{\boldsymbol{x} \in D_t^{tr}} L(\boldsymbol{\beta},\boldsymbol{x}) +\frac{\gamma}{2} \left\|\boldsymbol{\lambda}-\boldsymbol{\beta}\right\|^2$ for $\boldsymbol{\beta}$. Note the learner requires a loss function $L(\boldsymbol{\beta},\boldsymbol{x})$, fixed meta-learned parameters $\boldsymbol{\lambda}$, and regularization constant $\gamma$ that can also be learned. The meta-learner aims to learn optimal meta-parameters in the outer objective, that is $\sum_{\boldsymbol{x} \in D_t^{val}} L(\widehat{\boldsymbol{\beta}}_t(\boldsymbol{\lambda}),\boldsymbol{x})$, such that after task adaptation the task-specific parameters will be optimal.  

To test the performance and efficiency of OBBO in online meta-learning, we consider a 5-way 5-shot classification task on the FC100 dataset from \cite{oreshkin2018tadam}. Following the online bilevel optimization work of \cite{tarzanagh2024online}, we consider a 4-layer convolutional neural network as the underlying model with all other experimental details and hyperparameters configured as in \cite{tarzanagh2024online}. In particular, at each time $t$, a new task composed of 25 training and 25 validation samples arrives for which the model respectively undergoes task-specific updates and evaluation on. Inner and outer learning rates are respectively $\eta=0.1$ and $\alpha=1e-4$ for all benchmark algorithms with further remarks in Appendix \ref{sec:appendix_D}. 

In the left panel of Figure \ref{fig:paper_meta_learning}, note the improvement in cumulative bilevel local regret with OBBO relative to benchmark algorithms of OAGD and SOBOW across samples from the FC100 dataset (averaged across 5 random seeds). In the middle  panel of Figure \ref{fig:paper_meta_learning}, we report higher training accuracy achieved with OBBO. In the right panel of Figure \ref{fig:paper_meta_learning}, OBBO outperforms  test accuracy relative to SOBOW while achieving OAGD performance with a 10x ($w=10$) computationally cheaper update.

\section{Conclusions}
 In this work, we develop a novel  family of algorithms parameterized by Bregman divergences. We prove that  in addition to achieving  sublinear bilevel local regret,  our algorithms  offer improvements in convergence rates by adapting to the underlying geometry of the problem. Furthermore, we  introduce the first stochastic online bilevel optimizer and show that by utilizing a weighted average of recent stochastic approximated hypergradients, our algorithm achieves a  sublinear rate of bilevel local regret in a sample efficient manner. Empirical results on machine learning tasks such as online hyperparameter optimization and online meta-learning validate our theoretical contributions and the practicality of our proposed methods. We remark that research in online bilevel optimization is  underdeveloped relative to its offline counterpart. Many open problems remain such as the existence of lower bounds for bilevel local regret in the deterministic and stochastic setting. As a comparison, we note a few areas of interest in offline bilevel optimization that have not been explored in the online setting; that is the design of adaptive-momentum based bilevel algorithms in \cite{huang2021biadam} as well as the construction of Hessian free bilevel algorithms in \cite{sow2022convergence}. 

\bibliographystyle{abbrv}
\bibliography{ref}

\newpage
\appendix

\section{Notation and Preliminaries}\label{sec:appendix_A}
\begin{table}[h]
    \centering
    \begin{tabular}{||l|l||}
    \hline
           \textbf{Notation} & \textbf{Description}\\
         \hline
         $t$ & Time index \\
         $w$ & Window size \\
         $g_t$ & Inner level objective at $t$\\
         $f_t$ & Outer level objective at $t$ \\
         $F_t$ & Reparameterized  outer level objective  at $t$\\
         $h$ & Convex and potentially nonsmooth regularization term \\
         $\boldsymbol{\beta}_t$ & Inner level variable at time $t$  \\
         $\boldsymbol{\lambda}_t$ & Outer level variable at time $t$\\
         $\epsilon$ & Error random variable for $f_t$\\
         $\zeta$ & Error random variable for $g_t$\\
         $\left\|\cdot\right\|$ & The $\ell_2$ norm for vectors (spectral norm for matrices) \\
         $\mathcal{X}$ & Decision set for outer level variable  \\
        $S$ & Diameter of $\mathcal{X}$: $S=\max_{\boldsymbol{\lambda},\boldsymbol{\lambda}'\in \mathcal{X}} \left\|\boldsymbol{\lambda}-\boldsymbol{\lambda}'\right\|$ \\
        $Q$ & Upper bound on $F_t$: $\sup_{\boldsymbol{\lambda}\in \mathcal{X}}\left|F_t(\boldsymbol{\lambda})\right|\leq Q$\\ 
        $\nabla F_t(\boldsymbol{\lambda})$ & Gradient of $F_t$ w.r.t. $\boldsymbol{\lambda}$, i.e., the  hypergradient \\
        $ \widetilde{\nabla} f_t(\boldsymbol{\lambda},\boldsymbol{\beta})$ & Gradient estimate for $\nabla F_t(\boldsymbol{\lambda})$ given any $\boldsymbol{\lambda} \in \mathcal{X}$ and $\boldsymbol{\beta} \in \mathbb{R}^{d_2}$  \\
        $ \widetilde{\nabla}f_{t,w}(\boldsymbol{\lambda}_t,\boldsymbol{\beta_{t+1}})$ & Time-smoothed $ \widetilde{\nabla} f_t(\boldsymbol{\lambda}_t,\boldsymbol{\beta}_{t+1})$ across a window of size $w$  \\
        $s$ &  Batch size for $\nabla_{\boldsymbol{\beta}} g_t(\cdot,\cdot,\bar{\zeta})$, that is $\left|\bar{\zeta}\right|=s$  \\
        $m$  & Batch size for $\widetilde{\nabla} f_{t}(\cdot,\cdot,\mathcal{E} ) $, that is $\left|\mathcal{E}\right|=m$  \\
       $\alpha$   & Outer step size\\
        $\eta$   & Inner step size\\
        $K$ & Inner iteration count\\
        $ \ell_{f,0}$ & Lipschitz constant for $f_t$ \\
        $\ell_{f,1}$ & Lipschitz constant for $\nabla f_t$ \\
        $\ell_{F,1}$ & Lipschitz constant for $\nabla F_t$ \\
        $\ell_{g,1}$ &  Lipschitz constant for $\nabla_{\boldsymbol{\beta}}g_t$\\
        $\ell_{g,2}$ & Lipschitz constant for $\nabla^2_{\boldsymbol{\beta},\boldsymbol{\beta}}g_t$ \\
        $\mu_g$& Strong convexity parameter of $g_t$ \\
        $\kappa_g$   & Condition number of $g_t$: $\kappa_g=\ell_{g,1}/\mu_g$ \\
        $\phi(\boldsymbol{\lambda})$  & Continuously differentiable and strongly-convex distance generating function\\
        $D_{\phi}(\cdot,\cdot)$ & Bregman Divergence defined by  $\phi_t(\boldsymbol{\lambda})$\\
        $\rho$& Strong convexity parameter of $\phi_t(\boldsymbol{\lambda})$ \\
        $L_{1}$ & First hypergradient  error constant: $L_1:=\kappa_g(\ell_{g,1}+\mu_g)$ \\
        $L_{2}$ & Second hypergradient  error constant: $L_2:=\frac{2\ell_{f,0}\ell_{g,2}}{\mu_g}(1+\kappa_g)$ \\
        $L_{3}$ & Third hypergradient  error constant: $L_3:=\ell_{f,0}\kappa_g$ \\
        $L_{\boldsymbol{\beta}}$ & Total hypergradient error constant w.r.t. $\boldsymbol{\beta}_t$: $ L_{\boldsymbol{\beta}}:=L^2_1(1-\eta \mu_g)^{K}+L^2_2(1-\eta \mu_g)^{K-1}$ \\
        $ C_{\mu_g}$ & Inner level variable error constant: $ C_{\mu_g}>1$  \\
        $ C_{K}$ & Inner level variable error variance constant: $ C_{K}>0$\\
        $\nu$ & Decay parameter: $0<\nu<1$\\ $\Delta_{\boldsymbol{\beta}}$& Initial error term of inner level variables \\
        $\sigma^2_{g_{\boldsymbol{\beta}}}$ & Finite variance of $\nabla_{\boldsymbol{\beta}}g_t(\boldsymbol{\lambda},\boldsymbol{\beta},\zeta)$\\
        $\sigma^2_{f}$ & Finite variance of $\widetilde{\nabla}f_t(\boldsymbol{\lambda},\boldsymbol{\beta},\mathcal{E})$\\
        $\mathcal{G}_{\mathcal{X}}(\boldsymbol{u},\boldsymbol{q},\alpha)$ & Generalized projection $\forall \boldsymbol{u} \in \mathcal{X}, \boldsymbol{q}\in \mathbb{R}^{d_1}$, and $\alpha>0$\\
        $V_{p,t}$ & Outer level  evaluation variation:$\sum_{t=1}^{T} \sup_{\boldsymbol{\lambda} \in \mathcal{X}}\left|F_{t+1}\left(\boldsymbol{\lambda}\right)-F_{t}\left(\boldsymbol{\lambda}\right)\right|^p$ 
        \\
        $H_{p,t}$ & Inner level path variation: $\sum_{t=2}^T\sup_{\boldsymbol{\lambda}\in \mathcal{X}}\|\widehat{\boldsymbol{\beta}}_{t-1}(\boldsymbol{\lambda})-\widehat{\boldsymbol{\beta}}_{t}(\boldsymbol{\lambda})\|^p$ \\
        $BLR_{w}(T)$ & Bilevel local regret for window size $w\geq 1$ \\
         \hline
    \end{tabular}
    \caption{Summary of Notation}
    \label{tab:my_label}
\end{table}
\newpage
The next two lemmas introduce known results for smooth and strongly convex functions.
\begin{lemma}\label{lem:smoothness} 
Suppose a function $g(\boldsymbol{\beta}):\mathbb{R}^{d_2}\mapsto\mathbb{R}$ is $\ell$-Lipschitz continuous with respect to $\left\|\cdot\right\|$. Then the following inequality holds $\forall \boldsymbol{\beta}_1,\boldsymbol{\beta}_2\in\mathbb{R}^{d_2}$
\begin{align}
    \left|g(\boldsymbol{\beta}_2)-g(\boldsymbol{\beta}_1)\right|\leq \ell\left\|\boldsymbol{\beta}_2-\boldsymbol{\beta}_1\right\|
\end{align}
\end{lemma}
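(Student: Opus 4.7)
The statement is essentially a restatement of the definition of $\ell$-Lipschitz continuity for a scalar-valued function. Following the convention established in Assumption~\ref{assump:rel_smoothness}~(A1) of this paper—where ``$f_t$ is $\ell_{f,0}$-Lipschitz continuous'' means $|f_t(\boldsymbol{\lambda},\boldsymbol{\beta}_2) - f_t(\boldsymbol{\lambda},\boldsymbol{\beta}_1)| \leq \ell_{f,0}\|\boldsymbol{\beta}_2 - \boldsymbol{\beta}_1\|$—the hypothesis that $g:\mathbb{R}^{d_2}\to\mathbb{R}$ is $\ell$-Lipschitz continuous with respect to $\|\cdot\|$ is exactly the inequality
\[
|g(\boldsymbol{\beta}_2) - g(\boldsymbol{\beta}_1)| \leq \ell\,\|\boldsymbol{\beta}_2 - \boldsymbol{\beta}_1\|, \qquad \forall\,\boldsymbol{\beta}_1,\boldsymbol{\beta}_2 \in \mathbb{R}^{d_2}.
\]
My plan, therefore, is simply to unfold the definition of $\ell$-Lipschitz continuity as used throughout the paper and observe that the desired bound is literally its statement; no further manipulation is required.

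There is no substantive obstacle here; the lemma appears to be recorded in the appendix purely for notational convenience, so that later arguments in the convergence analysis (which repeatedly pass between values of $f_t$ or $g_t$ and norms $\|\boldsymbol{\beta}_2 - \boldsymbol{\beta}_1\|$) can cite a single labelled fact rather than re-deriving the bound each time. If, alternatively, the intended hypothesis were that $g$ is continuously differentiable with a uniformly bounded gradient, $\|\nabla g(\boldsymbol{\beta})\|\leq \ell$, one could recover the same conclusion by the fundamental theorem of calculus applied along the segment $\boldsymbol{\beta}_1 + t(\boldsymbol{\beta}_2 - \boldsymbol{\beta}_1)$, $t\in[0,1]$, followed by Cauchy--Schwarz to pull $\ell$ out of the integrand; but for the version stated in the lemma, such an argument is unnecessary. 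The result is standard and requires no work beyond invoking the definition.
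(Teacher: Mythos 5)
Your proposal is correct and matches the paper's treatment: the paper states this lemma without proof as a "known result," since the claimed inequality is precisely the definition of $\ell$-Lipschitz continuity as used in Assumption~\ref{assump:rel_smoothness}. Your aside about the bounded-gradient variant is fine but, as you note, not needed here.
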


\begin{lemma}\label{lem:strong_convexity}
A function $g(\boldsymbol{\beta})$ that is $\mu_g$ strongly convex with respect to $\left\|\cdot\right\|$  satisfies the  inequality $\forall \boldsymbol{\beta}_1,\boldsymbol{\beta}_2 \in \mathbb{R}^{d_2}$ of
\begin{align}
    g(\boldsymbol{\beta}_2)-g(\boldsymbol{\beta}_1) \geq \left\langle \boldsymbol{\beta}_2-\boldsymbol{\beta}_1,\nabla g(\boldsymbol{\beta}_1)\right\rangle +\frac{\mu_g}{2}\left\|\boldsymbol{\beta}_2-\boldsymbol{\beta}_1\right\|^2
\end{align}
\end{lemma}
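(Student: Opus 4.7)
The plan is to derive the first-order quadratic lower bound as a direct consequence of the definition of $\mu_g$-strong convexity, using the standard ``shift trick'' that reduces strong convexity of $g$ to ordinary convexity of an auxiliary function. This is the textbook route; it needs only the first-order characterization of convex functions plus a polarization identity, so the argument is short and explicit.

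First I would introduce the shifted function $\psi(\boldsymbol{\beta}) := g(\boldsymbol{\beta}) - \tfrac{\mu_g}{2}\|\boldsymbol{\beta}\|^2$. By the definition of $\mu_g$-strong convexity of $g$ with respect to the Euclidean norm used throughout the paper, $\psi$ is convex on $\mathbb{R}^{d_2}$. Because the statement explicitly features $\nabla g(\boldsymbol{\beta}_1)$, we may take $g$ (and hence $\psi$) to be differentiable, with $\nabla \psi(\boldsymbol{\beta}) = \nabla g(\boldsymbol{\beta}) - \mu_g\,\boldsymbol{\beta}$.

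Next I would invoke the standard first-order condition for the differentiable convex function $\psi$ evaluated at the pair $(\boldsymbol{\beta}_1,\boldsymbol{\beta}_2)$, namely $\psi(\boldsymbol{\beta}_2) \geq \psi(\boldsymbol{\beta}_1) + \langle \nabla \psi(\boldsymbol{\beta}_1), \boldsymbol{\beta}_2 - \boldsymbol{\beta}_1\rangle$. Substituting the definitions of $\psi$ and $\nabla\psi$ and then applying the polarization identity $\|\boldsymbol{\beta}_2\|^2 - \|\boldsymbol{\beta}_1\|^2 - 2\langle \boldsymbol{\beta}_1, \boldsymbol{\beta}_2 - \boldsymbol{\beta}_1\rangle = \|\boldsymbol{\beta}_2 - \boldsymbol{\beta}_1\|^2$ collects the remaining quadratic terms into $\tfrac{\mu_g}{2}\|\boldsymbol{\beta}_2 - \boldsymbol{\beta}_1\|^2$ and, after a one-line rearrangement, yields exactly the claimed inequality.

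Since the proof is a textbook application of the first-order condition plus a polarization identity, there is no genuine obstacle here: the lemma is a standard consequence of the definition of strong convexity and appears in the appendix only for completeness. If one preferred not to invoke convexity of $\psi$ as a black box, the inequality can equivalently be obtained by integrating $\langle \nabla g(\boldsymbol{\beta}_1 + t(\boldsymbol{\beta}_2 - \boldsymbol{\beta}_1)), \boldsymbol{\beta}_2 - \boldsymbol{\beta}_1\rangle$ over $t \in [0,1]$ and using the monotonicity of $\nabla g - \mu_g\,\mathrm{Id}$ implied by strong convexity; either route produces the quadratic term with the correct constant $\tfrac{\mu_g}{2}$.
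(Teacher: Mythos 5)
Your proof is correct. Note that the paper itself offers no proof of this lemma: it is stated in Appendix A under the heading of ``known results for smooth and strongly convex functions'' and is cited as a standard fact, so there is nothing to compare against. Your shift-trick argument via $\psi(\boldsymbol{\beta}) = g(\boldsymbol{\beta}) - \tfrac{\mu_g}{2}\|\boldsymbol{\beta}\|^2$ and the polarization identity is the standard textbook derivation and the algebra checks out; the only caveat worth flagging is that it presupposes strong convexity is \emph{defined} as convexity of $\psi$ — under the equally common convention that the displayed first-order inequality is itself the definition, the lemma is vacuous, and the paper never says which convention it adopts.
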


\begin{lemma}\label{lem:vector_ub} (Lemma 12 in \cite{tarzanagh2024online}) For any set of vectors $\left\{\boldsymbol{\beta}_i\right\}_{i=1}^m$, it holds that
\begin{align}
    \left\|\sum_{i=1}^m\boldsymbol{\beta}_i\right\|^2\leq m\sum_{i=1}^m\left\|\boldsymbol{\beta}_i\right\|^2
\end{align}

\end{lemma}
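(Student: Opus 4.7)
The plan is to use convexity of the squared norm, which is the most direct route. Write $\sum_{i=1}^m \boldsymbol{\beta}_i = m \cdot \frac{1}{m}\sum_{i=1}^m \boldsymbol{\beta}_i$ so that $\left\|\sum_{i=1}^m\boldsymbol{\beta}_i\right\|^2 = m^2\left\|\frac{1}{m}\sum_{i=1}^m\boldsymbol{\beta}_i\right\|^2$. Since the function $\boldsymbol{x}\mapsto\|\boldsymbol{x}\|^2$ is convex (as the composition of the convex norm with the convex monotone map $r\mapsto r^2$ on $[0,\infty)$), Jensen's inequality applied to the uniform average over $i=1,\ldots,m$ yields $\left\|\frac{1}{m}\sum_{i=1}^m\boldsymbol{\beta}_i\right\|^2 \leq \frac{1}{m}\sum_{i=1}^m\left\|\boldsymbol{\beta}_i\right\|^2$. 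Multiplying through by $m^2$ produces the claimed bound $\left\|\sum_{i=1}^m\boldsymbol{\beta}_i\right\|^2 \leq m\sum_{i=1}^m\left\|\boldsymbol{\beta}_i\right\|^2$.

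As an alternative route, one can expand the square directly and apply the elementary inequality $2\langle \boldsymbol{\beta}_i,\boldsymbol{\beta}_j\rangle \leq \|\boldsymbol{\beta}_i\|^2+\|\boldsymbol{\beta}_j\|^2$ (which follows from $0 \leq \|\boldsymbol{\beta}_i-\boldsymbol{\beta}_j\|^2$). Specifically, $\left\|\sum_{i=1}^m\boldsymbol{\beta}_i\right\|^2 = \sum_{i=1}^m\|\boldsymbol{\beta}_i\|^2 + \sum_{i\neq j}\langle\boldsymbol{\beta}_i,\boldsymbol{\beta}_j\rangle$, and bounding each cross term by the average of the two squared norms gives the same conclusion after a counting argument on how many times each $\|\boldsymbol{\beta}_i\|^2$ appears. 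A third option is Cauchy--Schwarz against the all-ones vector in $\mathbb{R}^m$: $\left\|\sum_{i=1}^m\boldsymbol{\beta}_i\right\| \leq \sum_{i=1}^m\|\boldsymbol{\beta}_i\| = \langle \mathbf{1},(\|\boldsymbol{\beta}_i\|)_i\rangle \leq \sqrt{m}\sqrt{\sum_{i=1}^m\|\boldsymbol{\beta}_i\|^2}$, and squaring gives the statement.

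There is no real obstacle here, since the inequality is a standard consequence of convexity (or Cauchy--Schwarz); the only nuance is simply to recognize that the bound is tight precisely when all $\boldsymbol{\beta}_i$ are equal, which is consistent with equality in Jensen's inequality for the strictly convex function $\|\cdot\|^2$. I would therefore present the Jensen-based derivation, as it is the cleanest and uses only the definition of a norm and the convexity of $r\mapsto r^2$.
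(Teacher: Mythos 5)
Your proof is correct. The paper does not actually prove this lemma — it imports it verbatim as Lemma 12 of \cite{tarzanagh2024online} and states it without argument in Appendix \ref{sec:appendix_A} — so there is no in-paper proof to compare against; your Jensen/Cauchy--Schwarz derivation is the standard one and any of the three routes you sketch would be acceptable.
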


The following lemma provides progress bounds for gradient descent applied to a $\mu_g$-strongly convex and twice differentiable function $g(\boldsymbol{\beta})$.

\begin{lemma}\label{lem:alt_grad_bound}
  Let $g(\boldsymbol{\omega})$ be a  twice differentiable and $\mu_g$-strongly convex function with  $\nabla g(\boldsymbol{\omega})$ satisfying $\ell_{g,1}$-Lipschitz continuity. Further assume $g(\boldsymbol{\omega})$ has a global minimizer $\widehat{\boldsymbol{\omega}}$ over the domain $\mathbb{R}^{d_2}$. Then under the gradient descent method of   
   \begin{align}
\boldsymbol{\omega}^{k}=\boldsymbol{\omega}^{k-1}-\eta\nabla g(\boldsymbol{\omega}^{k-1}),\nonumber
    \end{align}
    the following satisfies for $\eta\leq \frac{1}{\ell_{g,1}}$
    \begin{align}
        \left\|\boldsymbol{\omega}^{k}-\widehat{\boldsymbol{\omega}}\right\|^2\leq \left(1-\eta\mu_g\right)\left\|\boldsymbol{\omega}^{k-1}-\widehat{\boldsymbol{\omega}}\right\|^2,\nonumber
    \end{align}
\end{lemma}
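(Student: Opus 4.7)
The plan is to carry out the classical one-step contraction argument for gradient descent on a smooth strongly convex function, keeping the bookkeeping tight enough that the step size restriction $\eta \leq 1/\ell_{g,1}$ is exactly what is needed to absorb an unwanted function-value term.

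First, I would expand the squared distance to the optimum using the update rule:
\begin{align*}
\left\|\boldsymbol{\omega}^{k}-\widehat{\boldsymbol{\omega}}\right\|^2
= \left\|\boldsymbol{\omega}^{k-1}-\widehat{\boldsymbol{\omega}}\right\|^2
- 2\eta\left\langle \nabla g(\boldsymbol{\omega}^{k-1}),\, \boldsymbol{\omega}^{k-1}-\widehat{\boldsymbol{\omega}}\right\rangle
+ \eta^2 \left\|\nabla g(\boldsymbol{\omega}^{k-1})\right\|^2.
\end{align*}
This isolates two quantities to bound: the inner product and the gradient norm.

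Next, I would apply Lemma \ref{lem:strong_convexity} with $\boldsymbol{\beta}_1=\boldsymbol{\omega}^{k-1}$ and $\boldsymbol{\beta}_2=\widehat{\boldsymbol{\omega}}$ to obtain
$\langle \nabla g(\boldsymbol{\omega}^{k-1}),\, \boldsymbol{\omega}^{k-1}-\widehat{\boldsymbol{\omega}}\rangle \geq g(\boldsymbol{\omega}^{k-1}) - g(\widehat{\boldsymbol{\omega}}) + \tfrac{\mu_g}{2}\|\boldsymbol{\omega}^{k-1}-\widehat{\boldsymbol{\omega}}\|^2,$
which converts the inner product into a strong-convexity contraction plus a function gap. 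For the gradient-norm term I would invoke the standard consequence of $\ell_{g,1}$-smoothness (a descent-lemma style bound), namely $\|\nabla g(\boldsymbol{\omega}^{k-1})\|^2 \leq 2\ell_{g,1}\bigl(g(\boldsymbol{\omega}^{k-1}) - g(\widehat{\boldsymbol{\omega}})\bigr)$, which follows from integrating the smoothness inequality and using that $\widehat{\boldsymbol{\omega}}$ is a global minimizer with $\nabla g(\widehat{\boldsymbol{\omega}})=0$.

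Plugging these two bounds back in collects the terms as
\begin{align*}
\left\|\boldsymbol{\omega}^{k}-\widehat{\boldsymbol{\omega}}\right\|^2
\leq (1-\eta\mu_g)\left\|\boldsymbol{\omega}^{k-1}-\widehat{\boldsymbol{\omega}}\right\|^2
+ 2\eta(\eta\ell_{g,1}-1)\bigl(g(\boldsymbol{\omega}^{k-1}) - g(\widehat{\boldsymbol{\omega}})\bigr).
\end{align*}
The final step is the only subtle one: under $\eta \leq 1/\ell_{g,1}$ the coefficient $2\eta(\eta\ell_{g,1}-1)$ is nonpositive, while $g(\boldsymbol{\omega}^{k-1}) - g(\widehat{\boldsymbol{\omega}}) \geq 0$ since $\widehat{\boldsymbol{\omega}}$ is the global minimizer, so the trailing term can be dropped to yield the claimed inequality. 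I expect no real obstacle here; the only thing to be careful about is pairing the correct consequence of smoothness (gradient-squared bounded by function gap) with strong convexity, rather than, for instance, using co-coercivity which would give the different, sharper rate $1 - \tfrac{2\eta \mu_g \ell_{g,1}}{\mu_g+\ell_{g,1}}$ that the lemma does not claim.
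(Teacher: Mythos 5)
Your proof is correct and complete. The paper states Lemma \ref{lem:alt_grad_bound} without proof, treating it as a standard preliminary fact, so there is no in-paper argument to compare against; your derivation is the classical one-step contraction argument (expand the square, lower-bound the inner product via Lemma \ref{lem:strong_convexity}, upper-bound the gradient norm by $2\ell_{g,1}$ times the function gap, and use $\eta \leq 1/\ell_{g,1}$ to drop the nonpositive residual term), and every step checks out, including the observation that co-coercivity would yield a different rate than the one claimed.
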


The following two lemmas characterize useful properties of  the generalized projection $\mathcal{G}_{\mathcal{X}}(\boldsymbol{u},\boldsymbol{q},\alpha) $.

\begin{lemma}[Lemma 1 in \cite{ghadimi2016mini}]\label{lem:gen_projection_bound_one}  Let $\boldsymbol{\lambda}^+$ be from \eqref{eq:outer_step_minimization}. Then $\forall \boldsymbol{u}\in \mathcal{X}$, $\boldsymbol{q}\in\mathbb{R}^{d_1}$, and $\alpha>0$ we have
    \begin{align}
        \left\langle \boldsymbol{q},\mathcal{G}_{\mathcal{X}}(\boldsymbol{u},\boldsymbol{q},\alpha)\right\rangle \geq \rho\left\|\mathcal{G}_{\mathcal{X}}(\boldsymbol{u},\boldsymbol{q},\alpha)\right\|^2 +\frac{1}{\alpha}\left(h(\boldsymbol{\lambda}^+)-h(\boldsymbol{u})\right)
    \end{align}
    such that $\rho>0$ is the strong convexity parameter of the distance generating function $\phi(\boldsymbol{\lambda})$. 
    \end{lemma}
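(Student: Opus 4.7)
The plan is to derive the inequality from the first-order optimality condition for the Bregman proximal subproblem defining $\boldsymbol{\lambda}^+$ in \eqref{eq:outer_step_minimization}, then invoke convexity of $h$ and $\rho$-strong convexity of $\phi$ to turn the resulting inner products into the two target terms. I will use the identity $\nabla_{\boldsymbol{\lambda}}\mathcal{D}_{\phi}(\boldsymbol{\lambda},\boldsymbol{u}) = \nabla\phi(\boldsymbol{\lambda}) - \nabla\phi(\boldsymbol{u})$, which follows immediately from the definition in \eqref{eq:bregman_divergence}.

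First, I would invoke the optimality condition for $\boldsymbol{\lambda}^+$. Since the objective in \eqref{eq:outer_step_minimization} is convex in $\boldsymbol{\lambda}$ (sum of linear, convex nonsmooth, and $\rho$-strongly convex Bregman terms), there exists a subgradient $\boldsymbol{\xi} \in \partial h(\boldsymbol{\lambda}^+)$ such that for every $\boldsymbol{\lambda} \in \mathcal{X}$,
\begin{equation*}
\left\langle \boldsymbol{q} + \boldsymbol{\xi} + \tfrac{1}{\alpha}\bigl(\nabla\phi(\boldsymbol{\lambda}^+) - \nabla\phi(\boldsymbol{u})\bigr),\, \boldsymbol{\lambda} - \boldsymbol{\lambda}^+ \right\rangle \geq 0.
\end{equation*}
Specializing the test vector to $\boldsymbol{\lambda} = \boldsymbol{u} \in \mathcal{X}$ and rearranging gives
\begin{equation*}
\left\langle \boldsymbol{q},\, \boldsymbol{u} - \boldsymbol{\lambda}^+ \right\rangle \;\geq\; \left\langle \boldsymbol{\xi},\, \boldsymbol{\lambda}^+ - \boldsymbol{u} \right\rangle \;+\; \tfrac{1}{\alpha}\left\langle \nabla\phi(\boldsymbol{u}) - \nabla\phi(\boldsymbol{\lambda}^+),\, \boldsymbol{u} - \boldsymbol{\lambda}^+ \right\rangle.
\end{equation*}

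Next, I would bound the two inner products on the right. Convexity of $h$ together with $\boldsymbol{\xi} \in \partial h(\boldsymbol{\lambda}^+)$ yields $\langle \boldsymbol{\xi}, \boldsymbol{\lambda}^+ - \boldsymbol{u}\rangle \geq h(\boldsymbol{\lambda}^+) - h(\boldsymbol{u})$. For the Bregman term, the $\rho$-strong convexity of $\phi$ from Assumption \ref{assump:continuity_phi_t} implies the standard gradient-monotonicity bound $\langle \nabla\phi(\boldsymbol{u}) - \nabla\phi(\boldsymbol{\lambda}^+),\, \boldsymbol{u} - \boldsymbol{\lambda}^+ \rangle \geq \rho \|\boldsymbol{u} - \boldsymbol{\lambda}^+\|^2$. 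Combining these two estimates gives
\begin{equation*}
\left\langle \boldsymbol{q},\, \boldsymbol{u} - \boldsymbol{\lambda}^+ \right\rangle \;\geq\; h(\boldsymbol{\lambda}^+) - h(\boldsymbol{u}) \;+\; \tfrac{\rho}{\alpha}\,\|\boldsymbol{u} - \boldsymbol{\lambda}^+\|^2.
\end{equation*}

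Finally, dividing through by $\alpha$ and inserting the definition $\mathcal{G}_{\mathcal{X}}(\boldsymbol{u},\boldsymbol{q},\alpha) = \tfrac{1}{\alpha}(\boldsymbol{u} - \boldsymbol{\lambda}^+)$ from \eqref{defn:generalized_gradient} converts the left side into $\langle \boldsymbol{q}, \mathcal{G}_{\mathcal{X}}(\boldsymbol{u},\boldsymbol{q},\alpha)\rangle$ and the last term into $\rho\|\mathcal{G}_{\mathcal{X}}(\boldsymbol{u},\boldsymbol{q},\alpha)\|^2$, completing the proof. There is no real obstacle here: the argument is the standard three-point analysis of a Bregman proximal step, and the only care needed is (i) writing the optimality condition as a variational inequality over $\mathcal{X}$ (not as equality, because of the constraint and the nonsmoothness of $h$) and (ii) correctly using the chain $\rho$-strong convexity of $\phi$ to extract the quadratic-in-$\mathcal{G}_{\mathcal{X}}$ term rather than a Bregman quantity.
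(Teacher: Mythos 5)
Your proof is correct and is exactly the standard argument for this result: the paper itself gives no proof but imports it verbatim as Lemma 1 of \cite{ghadimi2016mini}, whose proof proceeds by the same variational-inequality optimality condition at $\boldsymbol{\lambda}^+$, tested at $\boldsymbol{\lambda}=\boldsymbol{u}$, followed by convexity of $h$ and strong monotonicity of $\nabla\phi$. The algebra in your final step (dividing by $\alpha$ so that $\tfrac{\rho}{\alpha^2}\|\boldsymbol{u}-\boldsymbol{\lambda}^+\|^2$ becomes $\rho\|\mathcal{G}_{\mathcal{X}}(\boldsymbol{u},\boldsymbol{q},\alpha)\|^2$) checks out, so nothing further is needed.
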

    \begin{lemma}[Proposition 1 in \cite{ghadimi2016mini}]\label{lem:gen_projection_bound_two} 
    Let $\mathcal{G}_{\mathcal{X}}(\boldsymbol{u},\boldsymbol{q},\alpha)$ be defined in \eqref{defn:generalized_gradient}. Then $\forall \boldsymbol{q}_1,\boldsymbol{q}_2\in \mathbb{R}^{d_1}$, $\forall \boldsymbol{u}\in\mathcal{X}$, $\forall \alpha>0$, we have
    \begin{align}
        \left\| \mathcal{G}_{\mathcal{X}}(\boldsymbol{u},\boldsymbol{q}_1,\alpha)-\mathcal{G}_{\mathcal{X}}(\boldsymbol{u},\boldsymbol{q}_2,\alpha)\right\| \leq \frac{1}{\rho}\left\|\boldsymbol{q}_1-\boldsymbol{q}_2\right\|. 
    \end{align}
\end{lemma}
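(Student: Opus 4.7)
The plan is to reduce the bound on $\|\mathcal{G}_{\mathcal{X}}(\boldsymbol{u},\boldsymbol{q}_1,\alpha)-\mathcal{G}_{\mathcal{X}}(\boldsymbol{u},\boldsymbol{q}_2,\alpha)\|$ to a statement about the underlying proximal minimizers. Let $\boldsymbol{\lambda}_i^+$ for $i=1,2$ denote the output of \eqref{eq:outer_step_minimization} when the gradient estimate is $\boldsymbol{q}_i$ and the reference point is $\boldsymbol{u}$. By \eqref{defn:generalized_gradient},
$$\|\mathcal{G}_{\mathcal{X}}(\boldsymbol{u},\boldsymbol{q}_1,\alpha)-\mathcal{G}_{\mathcal{X}}(\boldsymbol{u},\boldsymbol{q}_2,\alpha)\| = \tfrac{1}{\alpha}\|\boldsymbol{\lambda}_1^+-\boldsymbol{\lambda}_2^+\|,$$
so the task reduces to proving $\|\boldsymbol{\lambda}_1^+-\boldsymbol{\lambda}_2^+\| \leq \tfrac{\alpha}{\rho}\|\boldsymbol{q}_1-\boldsymbol{q}_2\|$.

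First I would record each first-order optimality condition in variational-inequality form, which is valid even if $\boldsymbol{\lambda}_i^+$ lies on the boundary of $\mathcal{X}$ or if $h$ is nonsmooth. Explicitly, there exist subgradients $\boldsymbol{g}_i\in\partial h(\boldsymbol{\lambda}_i^+)$ such that for every $\boldsymbol{\lambda}\in\mathcal{X}$,
$$\left\langle \boldsymbol{q}_i + \boldsymbol{g}_i + \tfrac{1}{\alpha}\bigl(\nabla\phi(\boldsymbol{\lambda}_i^+) - \nabla\phi(\boldsymbol{u})\bigr),\ \boldsymbol{\lambda}-\boldsymbol{\lambda}_i^+\right\rangle \geq 0.$$
Testing the $i=1$ inequality at $\boldsymbol{\lambda}=\boldsymbol{\lambda}_2^+$ and the $i=2$ inequality at $\boldsymbol{\lambda}=\boldsymbol{\lambda}_1^+$ and summing eliminates the common $\nabla\phi(\boldsymbol{u})$ terms, yielding the three-point inequality
$$\tfrac{1}{\alpha}\left\langle \nabla\phi(\boldsymbol{\lambda}_1^+)-\nabla\phi(\boldsymbol{\lambda}_2^+),\ \boldsymbol{\lambda}_1^+-\boldsymbol{\lambda}_2^+\right\rangle + \left\langle \boldsymbol{g}_1-\boldsymbol{g}_2,\ \boldsymbol{\lambda}_1^+-\boldsymbol{\lambda}_2^+\right\rangle \leq \left\langle \boldsymbol{q}_2-\boldsymbol{q}_1,\ \boldsymbol{\lambda}_1^+-\boldsymbol{\lambda}_2^+\right\rangle.$$

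Next I would combine two monotonicity facts. Convexity of $h$ makes the subgradient inner product nonnegative, so the $\boldsymbol{g}_1-\boldsymbol{g}_2$ term can be dropped. The $\rho$-strong convexity of $\phi$ in Assumption \ref{assump:continuity_phi_t} is equivalent to strong monotonicity of $\nabla\phi$, i.e., $\left\langle\nabla\phi(\boldsymbol{\lambda}_1^+)-\nabla\phi(\boldsymbol{\lambda}_2^+),\boldsymbol{\lambda}_1^+-\boldsymbol{\lambda}_2^+\right\rangle \geq \rho\|\boldsymbol{\lambda}_1^+-\boldsymbol{\lambda}_2^+\|^2$. Bounding the right-hand side by Cauchy-Schwarz gives $\tfrac{\rho}{\alpha}\|\boldsymbol{\lambda}_1^+-\boldsymbol{\lambda}_2^+\|^2 \leq \|\boldsymbol{q}_1-\boldsymbol{q}_2\|\|\boldsymbol{\lambda}_1^+-\boldsymbol{\lambda}_2^+\|$, after which dividing by $\|\boldsymbol{\lambda}_1^+-\boldsymbol{\lambda}_2^+\|$ (the degenerate case is trivial) and substituting into the first display finishes the proof.

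The main obstacle, though minor, is being careful about the optimality conditions in the presence of the constraint set $\mathcal{X}$ and the nonsmooth term $h$. One cannot simply write $0 = \boldsymbol{q}_i + \boldsymbol{g}_i + \tfrac{1}{\alpha}(\nabla\phi(\boldsymbol{\lambda}_i^+)-\nabla\phi(\boldsymbol{u}))$ because that identity fails on the boundary of $\mathcal{X}$; instead the optimality must be encoded as the variational inequality above so that the argument depends only on monotonicity of $\partial(h+\iota_{\mathcal{X}})$ and strong monotonicity of $\nabla\phi$. Once the setup is correct, the $1/\rho$ Lipschitz constant emerges directly from the strong convexity parameter of $\phi$ with no further estimates.
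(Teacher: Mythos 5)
Your proof is correct. The paper does not prove this lemma itself—it imports it verbatim as Proposition~1 of \cite{ghadimi2016mini}—and your argument (first-order optimality in variational-inequality form at each of $\boldsymbol{\lambda}_1^+,\boldsymbol{\lambda}_2^+$, summing to cancel $\nabla\phi(\boldsymbol{u})$, then dropping the monotone $\partial h$ term, invoking $\rho$-strong monotonicity of $\nabla\phi$, and finishing with Cauchy--Schwarz) is precisely the standard derivation behind that cited result, so nothing further is needed.
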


The next Lemma provides useful bounds on the hypergradient $\nabla F_t(\boldsymbol{\lambda})$, gradient estimate $\nabla f_t(\boldsymbol{\lambda},\boldsymbol{\beta})$, and optimal inner level variables $\widehat{\boldsymbol{\beta}}_t(\boldsymbol{\lambda})$ in the deterministic online bilevel optimization problem.
\begin{lemma}\label{lem:hypergrad_bound} (Lemma 3 in \cite{tarzanagh2024online})
Under assumptions \ref{assump:rel_smoothness} and \ref{assump:strong_convex_g_t}, it holds for all $t\in[1,T]$,  $\boldsymbol{\lambda}_1,\boldsymbol{\lambda}_2\in \mathcal{X} $, and $ \boldsymbol{\beta} \in \mathbb{R}^{d_2}$ that 
   \begin{align}
\left\|\widehat{\boldsymbol{\beta}}_t(\boldsymbol{\lambda}_1)-\widehat{\boldsymbol{\beta}}_t(\boldsymbol{\lambda}_2)\right\|\leq \kappa_g\left\|\boldsymbol{\lambda}_1-\boldsymbol{\lambda}_2\right\|,
    \end{align}
     where $\kappa_g:=\frac{\ell_{g,1}}{\mu_g}=O(\kappa_g)$,   the gradient estimator $\widetilde{\nabla}f_t(\boldsymbol{\lambda},\boldsymbol{\beta})$   satisfies
    \begin{align} 
\|\widetilde{\nabla}f_t(\boldsymbol{\lambda},\boldsymbol{\beta})-\nabla F_t(\boldsymbol{\lambda})\|\leq M_f\left\|\boldsymbol{\beta}-\widehat{\boldsymbol{\beta}}_t(\boldsymbol{\lambda})\right\|,
    \end{align}
    where $M_f:=\ell_{f,1}+\ell_{f,1}\kappa_g+\frac{\ell_{f,0}\ell_{g,2}}{\mu_g}\left(1+\kappa_g\right)=O(\kappa_g^2)$, and 
    \begin{align}
        \left\|\nabla F_t(\boldsymbol{\lambda}_1)-\nabla F_t(\boldsymbol{\lambda}_2)\right\|\leq \ell_{F,1}\left\|\boldsymbol{\lambda}_1-\boldsymbol{\lambda}_2\right\|,
    \end{align}
     where $\ell_{F,1}:= \ell_{f,1}(1+\kappa_g)+\frac{\ell_{f,0}\ell_{g,2}}{\mu_g}(1+\kappa_g)+M_f\kappa_g=O(\kappa_g^3)$.
     \end{lemma}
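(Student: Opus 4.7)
The plan is to establish the three bounds sequentially, with each bound relying on the previous through a combination of the implicit function theorem, the explicit hypergradient formula of Lemma \ref{lem:hypergrad_estimator_itd}, and careful bookkeeping using Assumptions \ref{assump:rel_smoothness} and \ref{assump:strong_convex_g_t}. The main obstacle is not any single deep idea but the careful accounting of constants across several triangle-inequality expansions, making sure the factors of $\kappa_g$ accumulate in exactly the right powers to recover $M_f = O(\kappa_g^2)$ and $\ell_{F,1} = O(\kappa_g^3)$.

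For the first bound, I would differentiate the first-order optimality condition $\nabla_{\boldsymbol{\beta}} g_t(\boldsymbol{\lambda},\widehat{\boldsymbol{\beta}}_t(\boldsymbol{\lambda})) = 0$ with respect to $\boldsymbol{\lambda}$ via the implicit function theorem. This yields $\nabla \widehat{\boldsymbol{\beta}}_t(\boldsymbol{\lambda}) = -\bigl(\nabla^2_{\boldsymbol{\beta},\boldsymbol{\beta}} g_t\bigr)^{-1}\nabla^2_{\boldsymbol{\beta},\boldsymbol{\lambda}} g_t$. Assumption \ref{assump:strong_convex_g_t} gives $\|(\nabla^2_{\boldsymbol{\beta},\boldsymbol{\beta}} g_t)^{-1}\| \le 1/\mu_g$, while Assumption \ref{assump:rel_smoothness} (A2) gives $\|\nabla^2_{\boldsymbol{\beta},\boldsymbol{\lambda}} g_t\| \le \ell_{g,1}$. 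A mean-value argument along the segment from $\boldsymbol{\lambda}_1$ to $\boldsymbol{\lambda}_2$ then yields the Lipschitz bound with constant $\kappa_g = \ell_{g,1}/\mu_g$.

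For the second bound, I would write the difference $\widetilde{\nabla}f_t(\boldsymbol{\lambda},\boldsymbol{\beta}) - \nabla F_t(\boldsymbol{\lambda})$ using the explicit hypergradient formula of Lemma \ref{lem:hypergrad_estimator_itd}, evaluated at $\boldsymbol{\beta}$ versus $\widehat{\boldsymbol{\beta}}_t(\boldsymbol{\lambda})$. Setting $A = \nabla^2_{\boldsymbol{\lambda},\boldsymbol{\beta}} g_t$, $B = \nabla^2_{\boldsymbol{\beta},\boldsymbol{\beta}} g_t$, $u = \nabla_{\boldsymbol{\beta}} f_t$, I split the second-order piece into three telescoping differences: $(A|_{\boldsymbol{\beta}} - A|_{\widehat{\boldsymbol{\beta}}})B|_{\boldsymbol{\beta}}^{-1}u|_{\boldsymbol{\beta}}$, $A|_{\widehat{\boldsymbol{\beta}}}(B|_{\boldsymbol{\beta}}^{-1} - B|_{\widehat{\boldsymbol{\beta}}}^{-1})u|_{\boldsymbol{\beta}}$, and $A|_{\widehat{\boldsymbol{\beta}}} B|_{\widehat{\boldsymbol{\beta}}}^{-1}(u|_{\boldsymbol{\beta}} - u|_{\widehat{\boldsymbol{\beta}}})$. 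Bounding each factor using A1 ($\|u\|\le \ell_{f,0}$, $\|u|_{\boldsymbol{\beta}}-u|_{\widehat{\boldsymbol{\beta}}}\|\le \ell_{f,1}\|\boldsymbol{\beta}-\widehat{\boldsymbol{\beta}}\|$), A2 ($\|A\|\le \ell_{g,1}$, $\|B^{-1}\|\le 1/\mu_g$), A3 (Lipschitzness of $A$ and $B$ in $\boldsymbol{\beta}$), together with the identity $B_1^{-1}-B_2^{-1} = B_1^{-1}(B_2-B_1)B_2^{-1}$, yields the three contributions $\frac{\ell_{g,2}\ell_{f,0}}{\mu_g}$, $\frac{\kappa_g\ell_{g,2}\ell_{f,0}}{\mu_g}$, and $\kappa_g\ell_{f,1}$, which together with the $\ell_{f,1}$ term from $\nabla_{\boldsymbol{\lambda}} f_t$ assemble exactly to $M_f$.

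For the third bound, I would again invoke Lemma \ref{lem:hypergrad_estimator_itd} but now compare $\nabla F_t(\boldsymbol{\lambda}_1)$ with $\nabla F_t(\boldsymbol{\lambda}_2)$ at points of the form $(\boldsymbol{\lambda}_i, \widehat{\boldsymbol{\beta}}_t(\boldsymbol{\lambda}_i))$. The strategy is to expand the difference by inserting the intermediate point $(\boldsymbol{\lambda}_2, \widehat{\boldsymbol{\beta}}_t(\boldsymbol{\lambda}_1))$: one piece reduces to the bound of the second part evaluated at $\boldsymbol{\beta} = \widehat{\boldsymbol{\beta}}_t(\boldsymbol{\lambda}_1)$ for the function $\widetilde{\nabla}f_t(\boldsymbol{\lambda}_2,\cdot)$, while the other pieces measure how the partial-gradient and second-derivative-composition terms change in $\boldsymbol{\lambda}$ at fixed $\boldsymbol{\beta}$. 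Applying A1--A3, the first bound $\|\widehat{\boldsymbol{\beta}}_t(\boldsymbol{\lambda}_1)-\widehat{\boldsymbol{\beta}}_t(\boldsymbol{\lambda}_2)\| \le \kappa_g\|\boldsymbol{\lambda}_1-\boldsymbol{\lambda}_2\|$, and the Lipschitz estimate in $\boldsymbol{\beta}$ derived from the second part (which already carries $M_f = O(\kappa_g^2)$), a final triangle inequality collects the contributions $\ell_{f,1}(1+\kappa_g)$, $\frac{\ell_{f,0}\ell_{g,2}}{\mu_g}(1+\kappa_g)$, and $M_f\kappa_g$, producing $\ell_{F,1}$ as claimed. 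The only bookkeeping care needed is to ensure the intermediate-point insertion is done so that each resulting term is of the form already handled either by A1--A3 directly or by the first two bounds.
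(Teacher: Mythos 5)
Your argument is correct and is essentially the canonical proof of this result; note that the paper itself does not prove this lemma but imports it verbatim as Lemma~3 of \cite{tarzanagh2024online}, so there is no in-paper proof to diverge from. Your three steps (implicit differentiation of the inner optimality condition for the $\kappa_g$-Lipschitz bound on $\widehat{\boldsymbol{\beta}}_t$; the three-way telescoping of $A B^{-1} u$ with the resolvent identity $B_1^{-1}-B_2^{-1}=B_1^{-1}(B_2-B_1)B_2^{-1}$ to assemble $M_f=\ell_{f,1}(1+\kappa_g)+\tfrac{\ell_{f,0}\ell_{g,2}}{\mu_g}(1+\kappa_g)$; and the intermediate-point insertion giving $\ell_{F,1}=M_f(1+\kappa_g)$) reproduce the stated constants exactly, so the proposal stands as written.
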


Lemma \ref{lem:itd_hypergradient_bound} provides an upper bound on the hypergradient error when utilizing an iterative differentiation approach for estimation.

\begin{lemma}\label{lem:itd_hypergradient_bound} (Lemma 6 in \cite{ji2021bilevel})
    Suppose Assumptions \ref{assump:rel_smoothness} and \ref{assump:strong_convex_g_t} are satisfied with $\eta< \frac{1}{\ell_{g,1}}$ and $K\geq1$. Then we have  $\forall t \in [1,T]$
    \begin{align}
        \left\|\frac{\partial f_t(\boldsymbol{\lambda},\boldsymbol{\omega}^K_t)}{\partial \boldsymbol{\lambda}}-\nabla F_t(\boldsymbol{\lambda})\right\| \nonumber\\\leq \left(L_1(1-\eta \mu_g)^{\frac{K}{2}}+L_2(1-\eta \mu_g)^{\frac{K-1}{2}}\right)\left\|\boldsymbol{\beta}_t-\widehat{\boldsymbol{\beta}}_t(\boldsymbol{\lambda})\right\|+L_3(1-\eta \mu_g)^K ,
    \end{align}
    where $L_1=\kappa_g(\ell_{g,1}+\mu_g)$, $L_2=\frac{2\ell_{f,0}\ell_{g,2}}{\mu_g} (1+\kappa_g)$, and $L_3=\ell_{f,0}\kappa_g$.
    \end{lemma}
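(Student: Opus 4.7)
The plan is to prove the bound by expressing both the true hypergradient $\nabla F_t(\boldsymbol{\lambda})$ (via Lemma \ref{lem:hypergrad_estimator_itd}) and the ITD hypergradient estimate $\partial f_t(\boldsymbol{\lambda}_t,\boldsymbol{\omega}^K_t)/\partial \boldsymbol{\lambda}$ (via Lemma \ref{lem:ji_2021}) in a common form, then bounding their difference term-by-term. First I would apply the triangle inequality to split the error into two contributions: (i) the direct partial $\nabla_{\boldsymbol{\lambda}} f_t(\boldsymbol{\lambda},\boldsymbol{\omega}^K_t) - \nabla_{\boldsymbol{\lambda}} f_t(\boldsymbol{\lambda},\widehat{\boldsymbol{\beta}}_t(\boldsymbol{\lambda}))$ piece, which is immediately controlled by $\ell_{f,1}$-Lipschitz continuity (Assumption A1) times $\|\boldsymbol{\omega}^K_t-\widehat{\boldsymbol{\beta}}_t(\boldsymbol{\lambda})\|$, and (ii) the harder piece involving the difference between the truncated Neumann-like product $\eta \sum_{k=0}^{K-1} \nabla^2_{\boldsymbol{\lambda},\boldsymbol{\omega}} g_t(\boldsymbol{\lambda},\boldsymbol{\omega}^k_t) H_{\boldsymbol{\omega},\boldsymbol{\omega}} \nabla_{\boldsymbol{\omega}} f_t(\boldsymbol{\lambda},\boldsymbol{\omega}^K_t)$ and the exact inverse-Hessian form $\nabla^2_{\boldsymbol{\lambda},\boldsymbol{\beta}} g_t(\boldsymbol{\lambda},\widehat{\boldsymbol{\beta}}_t(\boldsymbol{\lambda}))\bigl[\nabla^2_{\boldsymbol{\beta},\boldsymbol{\beta}} g_t(\boldsymbol{\lambda},\widehat{\boldsymbol{\beta}}_t(\boldsymbol{\lambda}))\bigr]^{-1} \nabla_{\boldsymbol{\beta}} f_t(\boldsymbol{\lambda},\widehat{\boldsymbol{\beta}}_t(\boldsymbol{\lambda}))$.

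For piece (ii), the key identity I would exploit is the Neumann expansion: under $\eta < 1/\ell_{g,1}$ and $\mu_g$-strong convexity (Assumptions A2 and \ref{assump:strong_convex_g_t}), the eigenvalues of $I-\eta\nabla^2_{\boldsymbol{\beta},\boldsymbol{\beta}}g_t$ lie in $[0, 1-\eta\mu_g]$, so $(\nabla^2_{\boldsymbol{\beta},\boldsymbol{\beta}} g_t(\boldsymbol{\lambda},\widehat{\boldsymbol{\beta}}_t(\boldsymbol{\lambda})))^{-1} = \eta \sum_{k=0}^{\infty} (I - \eta \nabla^2_{\boldsymbol{\beta},\boldsymbol{\beta}} g_t(\boldsymbol{\lambda},\widehat{\boldsymbol{\beta}}_t(\boldsymbol{\lambda})))^{k}$. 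Splitting this series at index $K-1$, the tail $\eta\sum_{k=K}^{\infty}(\cdot)^k$ has spectral norm at most $\eta(1-\eta\mu_g)^K / (\eta\mu_g) = \kappa_g(1-\eta\mu_g)^K/\ell_{g,1}$, and combined with the boundedness of $\nabla_{\boldsymbol{\beta}} f_t$ and $\nabla^2_{\boldsymbol{\lambda},\boldsymbol{\beta}} g_t$ from Assumption A, this yields the $L_3(1-\eta\mu_g)^K$ standalone term with $L_3=\ell_{f,0}\kappa_g$. The residual first $K$ terms are compared by telescoping: each factor $I-\eta\nabla^2_{\boldsymbol{\omega},\boldsymbol{\omega}}g_t(\boldsymbol{\lambda},\boldsymbol{\omega}^j_t)$ is close to $I-\eta\nabla^2_{\boldsymbol{\beta},\boldsymbol{\beta}}g_t(\boldsymbol{\lambda},\widehat{\boldsymbol{\beta}}_t(\boldsymbol{\lambda}))$ up to $\eta\ell_{g,2}\|\boldsymbol{\omega}^j_t-\widehat{\boldsymbol{\beta}}_t(\boldsymbol{\lambda})\|$ by Lipschitzness of the Hessian (Assumption A3), and similarly for the $\nabla^2_{\boldsymbol{\lambda},\boldsymbol{\omega}}$ factor.

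The third main ingredient is the inner-loop contraction from Lemma \ref{lem:alt_grad_bound}: under $\eta\le 1/\ell_{g,1}$, gradient descent on the $\mu_g$-strongly convex $g_t(\boldsymbol{\lambda},\cdot)$ gives $\|\boldsymbol{\omega}^k_t - \widehat{\boldsymbol{\beta}}_t(\boldsymbol{\lambda})\|^2 \le (1-\eta\mu_g)^k \|\boldsymbol{\beta}_t - \widehat{\boldsymbol{\beta}}_t(\boldsymbol{\lambda})\|^2$. Plugging these decaying bounds into the telescoping expansion and summing the geometric series $\sum_{k=0}^{K-1}(1-\eta\mu_g)^{k/2}\cdot(1-\eta\mu_g)^{K-1-k}$ (which is dominated by the $(1-\eta\mu_g)^{(K-1)/2}$ rate), and then combining with the Lipschitz piece (i) above, produces the two multiplicative constants $L_1=\kappa_g(\ell_{g,1}+\mu_g)$ (from the $\nabla^2_{\boldsymbol{\lambda},\boldsymbol{\omega}} g_t$ / linear terms) and $L_2=\tfrac{2\ell_{f,0}\ell_{g,2}}{\mu_g}(1+\kappa_g)$ (from the Hessian-Lipschitz cross-terms).

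The main obstacle will be piece (ii): keeping a clean accounting of the telescoping expansion of a product of $K$ operators where each factor drifts in two different ways (deviation of $\boldsymbol{\omega}^j_t$ from $\widehat{\boldsymbol{\beta}}_t(\boldsymbol{\lambda})$, plus truncation of the Neumann tail) without incurring $K$-dependent losses. The contraction factor $(1-\eta\mu_g)^{k/2}$ from Lemma \ref{lem:alt_grad_bound} is what tames the geometric blow-up of the operator-norm products; matching it carefully with the $\eta\ell_{g,2}$ perturbation on each factor is the delicate bookkeeping step. Once that is handled and the final geometric series is summed, the constants $L_1,L_2,L_3$ fall out precisely as stated.
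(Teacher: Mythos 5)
The paper does not prove this lemma at all: it is imported verbatim, with citation, as Lemma~6 of \cite{ji2021bilevel}, so there is no in-paper argument to compare against. Your outline reconstructs the standard proof from that source — split off the $\nabla_{\boldsymbol{\lambda}}f_t$ and $\nabla_{\boldsymbol{\omega}}f_t$ Lipschitz terms, write $\bigl(\nabla^2_{\boldsymbol{\beta},\boldsymbol{\beta}}g_t\bigr)^{-1}$ as a Neumann series truncated at $K$ (the tail giving the $L_3(1-\eta\mu_g)^K$ term), telescope the product of drifting factors against the exact ones using the $\ell_{g,2}$-Lipschitz Hessians, and control each $\left\|\boldsymbol{\omega}^j_t-\widehat{\boldsymbol{\beta}}_t(\boldsymbol{\lambda})\right\|$ by the contraction of Lemma~\ref{lem:alt_grad_bound} — and this is sound. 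The only caveat is on the constants: $L_1=\kappa_g(\ell_{g,1}+\mu_g)$ contains no $\ell_{f,1}$ because \cite{ji2021bilevel} identifies the smoothness constants of $f$ and $g$ under a single symbol, so your attribution of $L_1$ purely to the $\nabla^2_{\boldsymbol{\lambda},\boldsymbol{\omega}}g_t$ factors is not quite how the bookkeeping closes, but this does not affect the validity of the argument.
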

  
The next  Lemmas characterize the bias  of the stochastic hypergradient estimate $\widetilde{\nabla}f_t(\boldsymbol{\lambda}_t,\boldsymbol{\beta}_{t+1},\mathcal{E}_t)$.
\begin{lemma}[Lemma 2.1 in \cite{khanduri2021near}]\label{lem:stochastic_gradient_estimate_bias} Suppose Assumptions \ref{assump:rel_smoothness},\ref{assump:strong_convex_g_t}, and \ref{assump:bounded_F_t}. For any $m\geq 1$  the gradient estimator of \eqref{eq:stochastic_gradient_sample} satisfies
    \begin{align}
\left\|\widetilde{\nabla}f_t(\boldsymbol{\lambda}_t,\boldsymbol{\beta}_{t+1})-\mathbb{E}_{\mathcal{E}_t}\left[\widetilde{\nabla}f_t(\boldsymbol{\lambda}_t,\boldsymbol{\beta}_{t+1},\mathcal{E}_t)\right]\right\| \leq\ell_{f,1}\kappa_g\left(1-\frac{\mu_g}{\ell_{g,1}}\right)^m
    \end{align}
\end{lemma}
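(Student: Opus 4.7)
The plan is to exploit the Neumann series structure of the inverse Hessian that implicitly underlies the estimator \eqref{eq:stochastic_gradient_sample}, and to express the bias purely as a tail-of-series truncation error. Specifically, I would first take the expectation over $\mathcal{E}_t=\{\epsilon_t,\zeta_t^0,\ldots,\zeta_t^{m-1}\}$ inside the estimator. By Assumption \ref{assump:unbiased_finite_var} (extended to Hessians), each sampled $\nabla^2_{\boldsymbol{\beta}}g_t(\boldsymbol{\lambda}_t,\boldsymbol{\beta}_{t+1},\zeta_t^j)$ is unbiased for $\nabla^2_{\boldsymbol{\beta}}g_t(\boldsymbol{\lambda}_t,\boldsymbol{\beta}_{t+1})$, and the samples are independent of $\widetilde{m}\sim \mathcal{U}(0,\ldots,m-1)$. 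Independence lets expectation pass through the product, and the factor $m$ combined with $\Pr(\widetilde{m}=k)=1/m$ yields
\begin{align*}
\mathbb{E}_{\mathcal{E}_t}\!\left[\frac{m}{\ell_{g,1}}\prod_{j=1}^{\widetilde{m}}\!\left(I_{d_2}-\tfrac{1}{\ell_{g,1}}\nabla^2_{\boldsymbol{\beta}}g_t(\boldsymbol{\lambda}_t,\boldsymbol{\beta}_{t+1},\zeta_t^j)\right)\right]
= \frac{1}{\ell_{g,1}}\sum_{k=0}^{m-1}\!\left(I_{d_2}-\tfrac{1}{\ell_{g,1}}\nabla^2_{\boldsymbol{\beta}}g_t(\boldsymbol{\lambda}_t,\boldsymbol{\beta}_{t+1})\right)^{k}.
\end{align*}
Likewise $\mathbb{E}[\nabla_{\boldsymbol{\lambda},\boldsymbol{\beta}}^2 g_t(\cdot,\zeta_t^0)]$ and $\mathbb{E}[\nabla_{\boldsymbol{\beta}}f_t(\cdot,\epsilon_t)]$ collapse to their deterministic counterparts.

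Second, I would invoke the Neumann series identity, valid because Assumptions \ref{assump:rel_smoothness} and \ref{assump:strong_convex_g_t} pin the spectrum of $\nabla_{\boldsymbol{\beta}}^2 g_t(\boldsymbol{\lambda}_t,\boldsymbol{\beta}_{t+1})$ to $[\mu_g,\ell_{g,1}]$, so $\|I_{d_2}-\nabla_{\boldsymbol{\beta}}^2 g_t/\ell_{g,1}\|\leq 1-\mu_g/\ell_{g,1}<1$, to write
\begin{align*}
\left(\nabla_{\boldsymbol{\beta}}^2 g_t\right)^{-1}=\frac{1}{\ell_{g,1}}\sum_{k=0}^{\infty}\!\left(I_{d_2}-\tfrac{1}{\ell_{g,1}}\nabla_{\boldsymbol{\beta}}^2 g_t\right)^{k}.
\end{align*}
Subtracting the partial sum of the preceding display from this series leaves precisely the tail $\frac{1}{\ell_{g,1}}\sum_{k=m}^{\infty}(I_{d_2}-\nabla_{\boldsymbol{\beta}}^2 g_t/\ell_{g,1})^{k}$, whose spectral norm is bounded by the geometric series $\frac{1}{\ell_{g,1}}\cdot\frac{(1-\mu_g/\ell_{g,1})^m}{\mu_g/\ell_{g,1}}=\frac{1}{\mu_g}(1-\mu_g/\ell_{g,1})^m$.

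Finally, I would assemble the pieces. Writing the bias as
\begin{align*}
\widetilde{\nabla}f_t(\boldsymbol{\lambda}_t,\boldsymbol{\beta}_{t+1})-\mathbb{E}_{\mathcal{E}_t}\!\left[\widetilde{\nabla}f_t(\boldsymbol{\lambda}_t,\boldsymbol{\beta}_{t+1},\mathcal{E}_t)\right] = \nabla_{\boldsymbol{\lambda},\boldsymbol{\beta}}^2 g_t\!\left[(\nabla_{\boldsymbol{\beta}}^2 g_t)^{-1}-\tfrac{1}{\ell_{g,1}}\!\sum_{k=0}^{m-1}\!(I_{d_2}-\tfrac{\nabla_{\boldsymbol{\beta}}^2 g_t}{\ell_{g,1}})^{k}\right]\!\nabla_{\boldsymbol{\beta}}f_t,
\end{align*}
submultiplicativity of $\|\cdot\|$ combined with $\|\nabla_{\boldsymbol{\lambda},\boldsymbol{\beta}}^2 g_t\|\leq \ell_{g,1}$ (Assumption A2) and $\|\nabla_{\boldsymbol{\beta}}f_t\|\leq \ell_{f,1}$ (Assumption A1) yields the factor $\ell_{g,1}\cdot\tfrac{1}{\mu_g}\cdot\ell_{f,1}=\ell_{f,1}\kappa_g$, producing the claimed bound $\ell_{f,1}\kappa_g(1-\mu_g/\ell_{g,1})^m$.

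The main technical obstacle is the first step: verifying that the particular telescoping/product structure of the estimator, coupled with the uniform sampling of $\widetilde{m}$, produces exactly the truncated Neumann partial sum in expectation. This requires carefully conditioning on $\widetilde{m}=k$, invoking i.i.d.\ independence of the $\zeta_t^j$ from $\widetilde{m}$ to commute $\mathbb{E}$ through the product of matrices, and verifying that the $m$-prefactor cancels the $1/m$ from the uniform law. Once this identity is established, the bound follows from the elementary geometric tail estimate.
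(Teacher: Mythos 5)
Your argument is essentially correct, and it is the standard derivation of this bias bound. Note, however, that the paper does not prove this lemma at all: it is imported verbatim as Lemma~2.1 of \cite{khanduri2021near} and used as a black box, so there is no in-paper proof to compare against --- what you have written is a reconstruction of the cited result's proof. The three steps (conditioning on $\widetilde{m}=k$ and using independence of the $\zeta_t^j$ to turn the expected product into the truncated Neumann partial sum, with the $m$ prefactor cancelling the $1/m$ from the uniform law; the Neumann identity for $(\nabla^2_{\boldsymbol{\beta},\boldsymbol{\beta}}g_t)^{-1}$ valid since the spectrum lies in $[\mu_g,\ell_{g,1}]$; the geometric tail bound $\tfrac{1}{\mu_g}(1-\mu_g/\ell_{g,1})^m$) are all sound. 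Two small points deserve flagging. First, you correctly observe that the argument needs unbiasedness of the sampled Hessians $\nabla^2_{\boldsymbol{\beta}}g_t(\cdot,\zeta_t^j)$, which Assumption~\ref{assump:unbiased_finite_var} as written only grants for the gradient $\nabla_{\boldsymbol{\beta}}g_t(\cdot,\zeta)$; your parenthetical ``extended to Hessians'' is an honest acknowledgment of an assumption the paper (and the cited source) implicitly uses. Second, your final assembly cites Assumption~A1 to claim $\|\nabla_{\boldsymbol{\beta}}f_t\|\le \ell_{f,1}$, but under the paper's conventions A1 gives $\|\nabla f_t\|\le \ell_{f,0}$ (the $\ell_{f,0}$-Lipschitzness of $f_t$), while $\ell_{f,1}$ bounds the second derivatives; the natural constant from your argument is therefore $\ell_{f,0}\kappa_g$, and the $\ell_{f,1}$ appearing in the lemma statement is most plausibly a notational slip inherited from translating the source's constants. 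This does not affect the structure or the exponential rate $(1-\mu_g/\ell_{g,1})^m$, which is the content actually used downstream.
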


\section{Proof of Main Results}\label{sec:appendix_B}
\subsection{Deterministic Setting}
First, we introduce some required lemmas. The following Lemma provides an upper bound on the cumulative difference between the time-smoothed outer level objective $F_{t,w}(\boldsymbol{\lambda})$ evaluated at  $\boldsymbol{\lambda}_t$ and $\boldsymbol{\lambda}_{t+1}$ in terms of the outer level objective upper bound $Q$,  window size $w$, and the comparator sequence on subsequent function evaluations $V_{1,T}$.
\begin{lemma}\label{lem:bounded_time_varying} Suppose Assumption \ref{assump:bounded_F_t}. If our OBBO algorithm in Algorithm \ref{alg:deterministic_online_bregman} is applied with window size $w\geq 1$ to generate the sequence $\{\boldsymbol{\lambda}_t\}_{t=1}^T$, then we have 
\begin{align}
\sum_{t=1}^T\left(F_{t,w}(\boldsymbol{\lambda}_t)-F_{t,w}(\boldsymbol{\lambda}_{t+1})\right) \leq \frac{2TQ}{w} +V_{1,T}.\nonumber
\end{align}
where $V_{1,T}:=\sum_{t=1}^{T} \sup_{\boldsymbol{\lambda} \in \mathcal{X}}\left[F_{t+1}\left(\boldsymbol{\lambda}\right)-F_{t}\left(\boldsymbol{\lambda}\right)\right] $
\end{lemma}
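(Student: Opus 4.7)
The plan is to rewrite the target sum through a one-step index shift and then exploit the telescoping structure of the window-averaged $F_{t,w}$. Concretely, since $\sum_{t=1}^T F_{t,w}(\boldsymbol{\lambda}_{t+1}) = \sum_{t=2}^{T+1} F_{t-1,w}(\boldsymbol{\lambda}_t)$, isolating the matched pairs yields
\begin{align}
\sum_{t=1}^T \bigl[F_{t,w}(\boldsymbol{\lambda}_t) - F_{t,w}(\boldsymbol{\lambda}_{t+1})\bigr] = F_{1,w}(\boldsymbol{\lambda}_1) - F_{T,w}(\boldsymbol{\lambda}_{T+1}) + \sum_{t=2}^T \bigl[F_{t,w}(\boldsymbol{\lambda}_t) - F_{t-1,w}(\boldsymbol{\lambda}_t)\bigr].\nonumber
\end{align}
The residual summand simplifies to $F_{t,w}(\boldsymbol{\lambda}_t) - F_{t-1,w}(\boldsymbol{\lambda}_t) = \tfrac{1}{w}\bigl[F_t(\boldsymbol{\lambda}_t) - F_{t-w}(\boldsymbol{\lambda}_t)\bigr]$, because the two length-$w$ running averages overlap on all but the newest and oldest summands (under the stated convention $F_s \equiv 0$ for $s \leq 0$).

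For the boundary pair, Assumption \ref{assump:bounded_F_t} gives $|F_{1,w}(\boldsymbol{\lambda}_1)| \leq Q/w$ (only $F_1$ contributes in the initial window) and $|F_{T,w}(\boldsymbol{\lambda}_{T+1})| \leq Q$, so this pair fits inside the target $\frac{2TQ}{w}$ budget. For the residual sum my plan is to split at $t=w$: on the initial block $t \leq w$ the term $F_{t-w}(\boldsymbol{\lambda}_t)$ vanishes by convention, so each $\tfrac{1}{w}|F_t(\boldsymbol{\lambda}_t) - F_{t-w}(\boldsymbol{\lambda}_t)| \leq Q/w$ and the contribution again absorbs into $\frac{2TQ}{w}$. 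For $t > w$ I would use the finer telescoping
\begin{align}
F_t(\boldsymbol{\lambda}_t) - F_{t-w}(\boldsymbol{\lambda}_t) = \sum_{s=t-w}^{t-1}\bigl[F_{s+1}(\boldsymbol{\lambda}_t) - F_s(\boldsymbol{\lambda}_t)\bigr],\nonumber
\end{align}
and bound each summand uniformly by $\sup_{\boldsymbol{\lambda}}|F_{s+1}(\boldsymbol{\lambda}) - F_s(\boldsymbol{\lambda})|$, which is precisely the increment accumulated by $V_{1,T}$.

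The final step is to swap the order of summation between $t$ and $s$: for each fixed $s \in [1,T-1]$ there are at most $w$ indices $t$ with $s \in [t-w, t-1]$, so the $\tfrac{1}{w}$ prefactor cancels exactly and the double sum collapses to $\sum_{s=1}^{T-1}\sup_{\boldsymbol{\lambda}}|F_{s+1}(\boldsymbol{\lambda}) - F_s(\boldsymbol{\lambda})| \leq V_{1,T}$. Combining the boundary contribution, the crude estimate on the first $w$ shift terms, and the $V_{1,T}$ bound on the remaining shift terms delivers the target $\frac{2TQ}{w} + V_{1,T}$.

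The main obstacle I anticipate is the bookkeeping around the $F_s \equiv 0$ convention when $t-w \leq 0$ or when the telescoping index $s$ drops below $1$; without the clean split at $t=w$, the initial-window boundary effects would spuriously inflate the $V_{1,T}$ term. A further subtlety is that either of the two bounds on the residual sum (the crude $2Q/w$ estimate or the $V_{1,T}$-telescoping) alone would suffice in isolation for different regimes of $(T,w,V_{1,T})$; arranging them so that the constants absorb cleanly into the single bound $\frac{2TQ}{w} + V_{1,T}$ is where the careful index management pays off.
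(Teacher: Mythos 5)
There is a genuine gap in your key simplification step. You treat $F_{t,w}$ as an ordinary running average evaluated at a single point, so that $F_{t,w}(\boldsymbol{\lambda}_t)-F_{t-1,w}(\boldsymbol{\lambda}_t)=\frac{1}{w}\left[F_t(\boldsymbol{\lambda}_t)-F_{t-w}(\boldsymbol{\lambda}_t)\right]$ by cancellation of the overlapping summands. But the paper's definition in \eqref{eq:det_sobow_local_regret} couples the function index to the iterate: $F_{t,w}(\boldsymbol{\lambda}_t)=\frac{1}{w}\sum_{i=0}^{w-1}F_{t-i}(\boldsymbol{\lambda}_{t-i})$, and correspondingly $F_{t-1,w}(\boldsymbol{\lambda}_t)=\frac{1}{w}\sum_{i=0}^{w-1}F_{t-1-i}(\boldsymbol{\lambda}_{t-i})$ and $F_{t,w}(\boldsymbol{\lambda}_{t+1})=\frac{1}{w}\sum_{i=0}^{w-1}F_{t-i}(\boldsymbol{\lambda}_{t+1-i})$. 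The ``overlapping'' terms are then $F_{t-1-i}(\boldsymbol{\lambda}_{t-1-i})$ in the first average versus $F_{t-1-i}(\boldsymbol{\lambda}_{t-i})$ in the second: same function index, different evaluation point, so they do not cancel, and your closed form $\frac{1}{w}\left[F_t(\boldsymbol{\lambda}_t)-F_{t-w}(\boldsymbol{\lambda}_t)\right]$ for the residual is false under the definition the lemma actually uses.

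The damage is contained, however, because under the correct definition the residual is already
\begin{align}
F_{t,w}(\boldsymbol{\lambda}_t)-F_{t-1,w}(\boldsymbol{\lambda}_t)=\frac{1}{w}\sum_{i=0}^{w-1}\left[F_{t-i}(\boldsymbol{\lambda}_{t-i})-F_{t-1-i}(\boldsymbol{\lambda}_{t-i})\right],\nonumber
\end{align}
an average of single-step function variations at common evaluation points, so you can drop your inner telescoping entirely: bound each summand by $\sup_{\boldsymbol{\lambda}}\left[F_{t-i}(\boldsymbol{\lambda})-F_{t-1-i}(\boldsymbol{\lambda})\right]$ and apply exactly your swap-of-summation argument (each consecutive pair of indices appears at most $w$ times, cancelling the $\frac{1}{w}$) to land on $V_{1,T}$, modulo the same $F_s\equiv 0$ boundary bookkeeping you already flag. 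The repaired argument is then essentially the paper's: the paper adds and subtracts $f_{t+1-i}(\boldsymbol{\lambda}_{t+1-i},\widehat{\boldsymbol{\beta}}_{t+1-i}(\boldsymbol{\lambda}_{t+1-i}))$ inside the double sum, its second block \eqref{eq:second_term_rhs_function_eval} being precisely this function-variation sum, while its first block \eqref{eq:first_term_rhs_function_eval} is telescoped over the window index $i$ for each fixed $t$ and bounded termwise by $\frac{2Q}{w}$. Your global telescope over $t$ replaces that first block by the two boundary terms $F_{1,w}(\boldsymbol{\lambda}_1)-F_{T,w}(\boldsymbol{\lambda}_{T+1})$, which is marginally sharper (it gives $Q+\frac{Q}{w}$ rather than $\frac{2TQ}{w}$, assuming $w\leq T$), but the final bound is the same.
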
\begin{proof}[Proof of Lemma \ref{lem:bounded_time_varying}]
By definition, in the deterministic setting, we have $F_t(\boldsymbol{\lambda})\triangleq f_t\left(\boldsymbol{\lambda},\widehat{\boldsymbol{\beta}}_t(\boldsymbol{\lambda})\right)$. Then it holds
\begin{align}
\sum_{t=1}^T\left(F_{t,w}(\boldsymbol{\lambda}_t)-F_{t,w}(\boldsymbol{\lambda}_{t+1})\right)=\sum_{t=1}^T\frac{1}{w}\sum_{i=0}^{w-1}\left(f_{t-i}\left(\boldsymbol{\lambda}_{t-i},\widehat{\boldsymbol{\beta}}_{t-i}(\boldsymbol{\lambda}_{t-i})\right)-f_{t-i}\left(\boldsymbol{\lambda}_{t+1-i},\widehat{\boldsymbol{\beta}}_{t-i}(\boldsymbol{\lambda}_{t+1-i})\right)\right)\nonumber
\end{align}
Which is equivalent to
\begin{align}
    \sum_{t=1}^T\frac{1}{w}\sum_{i=0}^{w-1}\left(f_{t-i}\left(\boldsymbol{\lambda}_{t-i},\widehat{\boldsymbol{\beta}}_{t-i}(\boldsymbol{\lambda}_{t-i})\right)-f_{t-i}\left(\boldsymbol{\lambda}_{t+1-i},\widehat{\boldsymbol{\beta}}_{t-i}(\boldsymbol{\lambda}_{t+1-i})\right)\right) \nonumber\\ \label{eq:first_term_rhs_function_eval}=  \sum_{t=1}^T\frac{1}{w}\sum_{i=0}^{w-1}\left(f_{t-i}\left(\boldsymbol{\lambda}_{t-i},\widehat{\boldsymbol{\beta}}_{t-i}(\boldsymbol{\lambda}_{t-i})\right)-f_{t+1-i}\left(\boldsymbol{\lambda}_{t+1-i},\widehat{\boldsymbol{\beta}}_{t+1-i}(\boldsymbol{\lambda}_{t+1-i})\right)\right) \\ \label{eq:second_term_rhs_function_eval}+  \sum_{t=1}^T\frac{1}{w}\sum_{i=0}^{w-1}\left(f_{t+1-i}\left(\boldsymbol{\lambda}_{t+1-i},\widehat{\boldsymbol{\beta}}_{t+1-i}(\boldsymbol{\lambda}_{t+1-i})\right)-f_{t-i}\left(\boldsymbol{\lambda}_{t+1-i},\widehat{\boldsymbol{\beta}}_{t-i}(\boldsymbol{\lambda}_{t+1-i})\right)\right) 
\end{align}
For \eqref{eq:first_term_rhs_function_eval}, we can write
\begin{align}\label{eq:ub_first_term_rhs_function_eval}
   \frac{1}{w}\sum_{i=0}^{w-1}\left(f_{t-i}\left(\boldsymbol{\lambda}_{t-i},\widehat{\boldsymbol{\beta}}_{t-i}(\boldsymbol{\lambda}_{t-i})\right)-f_{t+1-i}\left(\boldsymbol{\lambda}_{t+1-i},\widehat{\boldsymbol{\beta}}_{t+1-i}(\boldsymbol{\lambda}_{t+1-i})\right)\right)\nonumber\\=\frac{1}{w}\left[f_{t}\left(\boldsymbol{\lambda}_{t},\widehat{\boldsymbol{\beta}}_{t}(\boldsymbol{\lambda}_{t})\right)+\ldots+f_{t+1-w}\left(\boldsymbol{\lambda}_{t+1-w},\widehat{\boldsymbol{\beta}}_{t+1-w}(\boldsymbol{\lambda}_{t+1-w})\right)\right]\nonumber\\-\frac{1}{w}\left[f_{t+1}\left(\boldsymbol{\lambda}_{t+1},\widehat{\boldsymbol{\beta}}_{t+1}(\boldsymbol{\lambda}_{t+1})\right)+\ldots+f_{t+2-w}\left(\boldsymbol{\lambda}_{t+2-w},\widehat{\boldsymbol{\beta}}_{t+2-w}(\boldsymbol{\lambda}_{t+2-w})\right)\right]\nonumber\\=\frac{1}{w}\left[f_{t+1-w}\left(\boldsymbol{\lambda}_{t+1-w},\widehat{\boldsymbol{\beta}}_{t+1-w}(\boldsymbol{\lambda}_{t+1-w})\right)-f_{t+1}\left(\boldsymbol{\lambda}_{t+1},\widehat{\boldsymbol{\beta}}_{t+1}(\boldsymbol{\lambda}_{t+1})\right)\right]\nonumber\\=\frac{1}{w}\left(F_{t+1-w}(\boldsymbol{\lambda}_{t+1-w})-F_{t+1}(\boldsymbol{\lambda}_{t+1})\right)\leq \frac{2Q}{w},
\end{align}
where the last inequality comes from Assumption \ref{assump:bounded_F_t}. Note \eqref{eq:second_term_rhs_function_eval} can be bounded through 
\begin{align}\label{ub_second_term_rhs_function_eval}
    \sum_{t=1}^T\frac{1}{w}\sum_{i=0}^{w-1}\left(f_{t+1-i}\left(\boldsymbol{\lambda}_{t+1-i},\widehat{\boldsymbol{\beta}}_{t+1-i}(\boldsymbol{\lambda}_{t+1-i})\right)-f_{t-i}\left(\boldsymbol{\lambda}_{t+1-i},\widehat{\boldsymbol{\beta}}_{t-i}(\boldsymbol{\lambda}_{t+1-i})\right)\right) \nonumber\\ \leq \sum_{t=1}^T\frac{1}{w}\sum_{i=0}^{w-1}\sup_{\boldsymbol{\lambda} \in \mathcal{X}}\left[f_{t+1-i}\left(\boldsymbol{\lambda},\widehat{\boldsymbol{\beta}}_{t+1-i}(\boldsymbol{\lambda})\right)-f_{t-i}\left(\boldsymbol{\lambda},\widehat{\boldsymbol{\beta}}_{t-i}(\boldsymbol{\lambda})\right)\right] \leq V_{1,T}
\end{align}
Combining \eqref{eq:ub_first_term_rhs_function_eval} and \eqref{ub_second_term_rhs_function_eval} results in the upper bound of \begin{align}
\sum_{t=1}^T\left(F_{t,w}(\boldsymbol{\lambda}_t)-F_{t,w}(\boldsymbol{\lambda}_{t+1})\right) \leq \frac{2TQ}{w} +V_{1,T}.\nonumber
\end{align}
\end{proof} 
The next Lemma provides an upper bound on the error of $\left\|\boldsymbol{\beta}_{t}-\widehat{\boldsymbol{\beta}}_t(\boldsymbol{\lambda}_t)\right\|^2$ for all $t\in[1, T]$ in terms of an initial error, the cumulative differences of the outer level variable, and the cumulative differences of the optimal inner level variables.
\begin{lemma}\label{lem:deterministic_inner_tracking_error} 
Suppose Assumptions \ref{assump:rel_smoothness} and \ref{assump:strong_convex_g_t}. Choose the inner step size of $\eta$ and inner iteration count of $K$ to satisfy
\begin{align}
    \eta < \min{\left(\frac{1}{\ell_{g,1}},\frac{1}{\mu_g}\right)},\ \text{and} \quad K\geq1, \nonumber
\end{align}
and define the decay parameter $\nu$, inner level variable error constant $C_{\mu_g}$, and initial error $\Delta_{\boldsymbol{\beta}}$ respectively as 
\begin{align}\label{eq:misc_defns}
    \nu:=\left(1-\frac{\eta\mu_g}{2}\right)(1-\eta\mu_g)^{K-1},\ \text{and} \quad C_{\mu_g}:=\left(1+\frac{2}{\eta\mu_g}\right),\nonumber\\\ \text{and}\quad \Delta_{\boldsymbol{\beta}}:=\left\|\boldsymbol{\beta}_{1}-\widehat{\boldsymbol{\beta}}_1(\boldsymbol{\lambda}_1)\right\|^2. \nonumber
\end{align}
Then
our OBBO algorithm in Algorithm \ref{alg:deterministic_online_bregman} guarantees $\forall t\in [1,T]$
\begin{align}
\left\|\boldsymbol{\beta}_{t}-\widehat{\boldsymbol{\beta}}_t(\boldsymbol{\lambda}_t)\right\|^2 \leq \nu^{t-1}\Delta_{\boldsymbol{\beta}}  \nonumber\\+2C_{\mu_g}\kappa_g^2\sum_{j=0}^{t-2}\nu^{j} \left\|\boldsymbol{\lambda}_{t-1-j}-\boldsymbol{\lambda}_{t-j}\right\|^2 +2C_{\mu_g}\sum_{j=0}^{t-2}\nu^{j}\left\|\widehat{\boldsymbol{\beta}}_{t-j}(\boldsymbol{\lambda}_{t-1-j})-\widehat{\boldsymbol{\beta}}_{t-1-j}(\boldsymbol{\lambda}_{t-1-j})\right\|^2.
\end{align}
\end{lemma}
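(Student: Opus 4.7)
The plan is to derive a one-step recursion for $e_t := \|\boldsymbol{\beta}_t - \widehat{\boldsymbol{\beta}}_t(\boldsymbol{\lambda}_t)\|^2$ and then unroll it. The key idea is to split the error from iteration $t$ to iteration $t+1$ into two components: (i) the progress the $K$ inner gradient-descent steps make toward the current fixed-$\boldsymbol{\lambda}_t$ minimizer $\widehat{\boldsymbol{\beta}}_t(\boldsymbol{\lambda}_t)$, and (ii) the ``drift'' of the optimal inner variable when going from $(\boldsymbol{\lambda}_t, g_t)$ to $(\boldsymbol{\lambda}_{t+1}, g_{t+1})$. Throughout, Lemma \ref{lem:alt_grad_bound} controls component (i) and Lemma \ref{lem:hypergrad_bound} controls the $\boldsymbol{\lambda}$-dependence in component (ii).

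First I would apply Lemma \ref{lem:alt_grad_bound} to the inner-loop iterates at round $t$: since $\boldsymbol{\omega}_t^0 = \boldsymbol{\beta}_t$ and the update is $K$ steps of gradient descent on $g_t(\boldsymbol{\lambda}_t,\cdot)$ with step size $\eta \leq 1/\ell_{g,1}$, we get $\|\boldsymbol{\beta}_{t+1} - \widehat{\boldsymbol{\beta}}_t(\boldsymbol{\lambda}_t)\|^2 = \|\boldsymbol{\omega}_t^K - \widehat{\boldsymbol{\beta}}_t(\boldsymbol{\lambda}_t)\|^2 \leq (1-\eta\mu_g)^K \|\boldsymbol{\beta}_t - \widehat{\boldsymbol{\beta}}_t(\boldsymbol{\lambda}_t)\|^2$. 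Next I would use Young's inequality $\|a+b\|^2 \leq (1+c)\|a\|^2 + (1+1/c)\|b\|^2$ with the specific choice $c = \eta\mu_g/2$ on the decomposition $\boldsymbol{\beta}_{t+1} - \widehat{\boldsymbol{\beta}}_{t+1}(\boldsymbol{\lambda}_{t+1}) = [\boldsymbol{\beta}_{t+1} - \widehat{\boldsymbol{\beta}}_t(\boldsymbol{\lambda}_t)] + [\widehat{\boldsymbol{\beta}}_t(\boldsymbol{\lambda}_t) - \widehat{\boldsymbol{\beta}}_{t+1}(\boldsymbol{\lambda}_{t+1})]$. The crucial arithmetic check is that $(1+\eta\mu_g/2)(1-\eta\mu_g) = 1 - \eta\mu_g/2 - \eta^2\mu_g^2/2 \leq 1 - \eta\mu_g/2$, so $(1+\eta\mu_g/2)(1-\eta\mu_g)^K \leq (1-\eta\mu_g/2)(1-\eta\mu_g)^{K-1} = \nu$, while $1+1/c = 1 + 2/(\eta\mu_g) = C_{\mu_g}$. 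This yields the clean contraction $e_{t+1} \leq \nu\,e_t + C_{\mu_g}\|\widehat{\boldsymbol{\beta}}_t(\boldsymbol{\lambda}_t) - \widehat{\boldsymbol{\beta}}_{t+1}(\boldsymbol{\lambda}_{t+1})\|^2$.

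To expose the two separate perturbations that appear in the target bound, I would further split the drift term via the triangle-then-squared-average inequality (Lemma \ref{lem:vector_ub} with $m=2$): $\|\widehat{\boldsymbol{\beta}}_t(\boldsymbol{\lambda}_t) - \widehat{\boldsymbol{\beta}}_{t+1}(\boldsymbol{\lambda}_{t+1})\|^2 \leq 2\|\widehat{\boldsymbol{\beta}}_t(\boldsymbol{\lambda}_t) - \widehat{\boldsymbol{\beta}}_{t+1}(\boldsymbol{\lambda}_t)\|^2 + 2\|\widehat{\boldsymbol{\beta}}_{t+1}(\boldsymbol{\lambda}_t) - \widehat{\boldsymbol{\beta}}_{t+1}(\boldsymbol{\lambda}_{t+1})\|^2$. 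The first summand is the ``function drift'' $\|\widehat{\boldsymbol{\beta}}_{t}(\boldsymbol{\lambda}_t) - \widehat{\boldsymbol{\beta}}_{t+1}(\boldsymbol{\lambda}_{t})\|^2$ appearing in the final bound, while the second summand is bounded by $\kappa_g^2\|\boldsymbol{\lambda}_t - \boldsymbol{\lambda}_{t+1}\|^2$ using the Lipschitz property of $\widehat{\boldsymbol{\beta}}_{t+1}$ in $\boldsymbol{\lambda}$ from Lemma \ref{lem:hypergrad_bound}. Combining gives $e_{t+1} \leq \nu\, e_t + 2C_{\mu_g}\kappa_g^2\|\boldsymbol{\lambda}_t - \boldsymbol{\lambda}_{t+1}\|^2 + 2C_{\mu_g}\|\widehat{\boldsymbol{\beta}}_{t+1}(\boldsymbol{\lambda}_t) - \widehat{\boldsymbol{\beta}}_t(\boldsymbol{\lambda}_t)\|^2$.

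Finally, I would unroll the one-step recursion from $t$ back to $1$ as $e_t \leq \nu^{t-1} e_1 + \sum_{k=1}^{t-1}\nu^{k-1} A_{t-k}$, with $A_\tau$ denoting the two perturbation terms at time $\tau$, and then re-index by $j = k-1$ to obtain the summations in the statement with exponents $\nu^j$ for $j = 0,\dots, t-2$. The initialization $e_1 = \Delta_{\boldsymbol{\beta}}$ supplies the leading term $\nu^{t-1}\Delta_{\boldsymbol{\beta}}$. The main obstacle I anticipate is selecting the Young's-inequality parameter $c$ so that the two constants produced, $(1+c)(1-\eta\mu_g)^K$ and $1+1/c$, match exactly the declared $\nu$ and $C_{\mu_g}$ respectively; the specific choice $c=\eta\mu_g/2$ is what makes this algebra close, and this is a non-obvious tuning that must be verified carefully before unrolling.
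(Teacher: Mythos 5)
Your proposal is correct and follows essentially the same route as the paper's proof: the same Young's inequality decomposition with $\delta=\eta\mu_g/2$, the same splitting of the optimal-inner-variable drift through the intermediate point $\widehat{\boldsymbol{\beta}}_{t+1}(\boldsymbol{\lambda}_t)$, the same use of the gradient-descent contraction and the $\kappa_g$-Lipschitz bound, and the same unrolling. The only difference is a one-step index shift (you recurse $t\to t+1$ where the paper recurses $t-1\to t$), which is immaterial.
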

\begin{proof}[Proof of Lemma \ref{lem:deterministic_inner_tracking_error}]
By definition for $t=1$, we have $\left\|\boldsymbol{\beta}_{1}-\widehat{\boldsymbol{\beta}}_1(\boldsymbol{\lambda}_1)\right\|^2 =\Delta_{\boldsymbol{\beta}}$. Then  $\forall t\in[2,T]$
    \begin{align}\label{eq:deterministic_inner_tracking_error_expansion}
       \left\|\boldsymbol{\beta}_{t}-\widehat{\boldsymbol{\beta}}_t(\boldsymbol{\lambda}_t)\right\|^2=\left\|\boldsymbol{\beta}_t-\widehat{\boldsymbol{\beta}}_{t-1}(\boldsymbol{\lambda}_{t-1})+\widehat{\boldsymbol{\beta}}_{t-1}(\boldsymbol{\lambda}_{t-1})-\widehat{\boldsymbol{\beta}}_t(\boldsymbol{\lambda}_t)\right\|^2,
    \end{align}
    which can be expanded  based on the Young's Inequality for any $\delta>0$ as 
    \begin{align}
      \left\|\boldsymbol{\beta}_t-\widehat{\boldsymbol{\beta}}_{t-1}(\boldsymbol{\lambda}_{t-1})+\widehat{\boldsymbol{\beta}}_{t-1}(\boldsymbol{\lambda}_{t-1})-\widehat{\boldsymbol{\beta}}_t(\boldsymbol{\lambda}_t)\right\|^2\nonumber\\ \leq (1+\delta)      \left\|\boldsymbol{\beta}_{t}-\widehat{\boldsymbol{\beta}}_{t-1}(\boldsymbol{\lambda}_{t-1})\right\|^2\nonumber\\+\left(1+\frac{1}{\delta}\right)\left\|\widehat{\boldsymbol{\beta}}_{t-1}(\boldsymbol{\lambda}_{t-1})-\widehat{\boldsymbol{\beta}}_t(\boldsymbol{\lambda}_t)\right\|^2.\nonumber
    \end{align}
  Now  it holds that 
  \begin{align}
      \left\|\widehat{\boldsymbol{\beta}}_{t-1}(\boldsymbol{\lambda}_{t-1})-\widehat{\boldsymbol{\beta}}_t(\boldsymbol{\lambda}_t)\right\|^2\leq 2 \left\|\widehat{\boldsymbol{\beta}}_t(\boldsymbol{\lambda}_{t-1})-\widehat{\boldsymbol{\beta}}_{t}(\boldsymbol{\lambda}_{t})\right\|^2+2\left\|\widehat{\boldsymbol{\beta}}_{t}(\boldsymbol{\lambda}_{t-1})-\widehat{\boldsymbol{\beta}}_{t-1}(\boldsymbol{\lambda}_{t-1})\right\|^2 \nonumber
  \end{align}
which through  Lemma \ref{lem:hypergrad_bound} can be further upper bounded with the Lipschitz constant of $\kappa_g$ as 
\begin{align}
      \left\|\widehat{\boldsymbol{\beta}}_{t-1}(\boldsymbol{\lambda}_{t-1})-\widehat{\boldsymbol{\beta}}_t(\boldsymbol{\lambda}_t)\right\|^2\leq 2\kappa_g^2 \left\|\boldsymbol{\lambda}_{t-1}-\boldsymbol{\lambda}_{t}\right\|^2+2\left\|\widehat{\boldsymbol{\beta}}_{t}(\boldsymbol{\lambda}_{t-1})-\widehat{\boldsymbol{\beta}}_{t-1}(\boldsymbol{\lambda}_{t-1})\right\|^2 \nonumber
\end{align}
Combining above, we see that $\forall \delta>0$, \eqref{eq:deterministic_inner_tracking_error_expansion} is upper bounded as
\begin{align}\label{eq:deterministic_inner _path_error_ub1}
    \left\|\boldsymbol{\beta}_{t}-\widehat{\boldsymbol{\beta}}_t(\boldsymbol{\lambda}_t)\right\|^2\leq (1+\delta)\left\|\boldsymbol{\beta}_{t}-\widehat{\boldsymbol{\beta}}_{t-1}(\boldsymbol{\lambda}_{t-1})\right\|^2 \nonumber\\+2\left(1+\frac{1}{\delta}\right)\kappa_g^2 \left\|\boldsymbol{\lambda}_{t-1}-\boldsymbol{\lambda}_{t}\right\|^2 +2\left(1+\frac{1}{\delta}\right)\left\|\widehat{\boldsymbol{\beta}}_{t}(\boldsymbol{\lambda}_{t-1})-\widehat{\boldsymbol{\beta}}_{t-1}(\boldsymbol{\lambda}_{t-1})\right\|^2.
\end{align}
As $\eta <\frac{1}{\ell_{g,1}}$, we apply Lemma \ref{lem:alt_grad_bound} to see
\begin{align}
    (1+\delta)\left\|\boldsymbol{\beta}_{t}-\widehat{\boldsymbol{\beta}}_{t-1}(\boldsymbol{\lambda}_{t-1})\right\|^2 \leq (1+\delta)(1-\eta\mu_g)^K\left\|\boldsymbol{\beta}_{t-1}-\widehat{\boldsymbol{\beta}}_{t-1}(\boldsymbol{\lambda}_{t-1})\right\|^2\nonumber
\end{align}
Now setting $\delta=\frac{\eta\mu_g}{2}>0$ implies that 
\begin{align}
    (1+\delta)(1-\eta\mu_g)^K=(1+\frac{\eta\mu_g}{2})(1-\eta\mu_g)^K <\left(1-\frac{\eta\mu_g}{2}\right)(1-\eta\mu_g)^{K-1}<1 \nonumber
\end{align}
Using $\nu:=\left(1-\frac{\eta\mu_g}{2}\right)(1-\eta\mu_g)^{K-1}$ in \eqref{eq:deterministic_inner _path_error_ub1}, we get
\begin{align}
\nu\left\|\boldsymbol{\beta}_{t}-\widehat{\boldsymbol{\beta}}_{t}(\boldsymbol{\lambda}_{t})\right\|^2 \leq \nu^2\left\|\boldsymbol{\beta}_{t-1}-\widehat{\boldsymbol{\beta}}_{t-1}(\boldsymbol{\lambda}_{t-1})\right\|^2 \nonumber\\+2C_{\mu_g}\nu\kappa_g^2\left\|\boldsymbol{\lambda}_{t-1}-\boldsymbol{\lambda}_{t}\right\|^2 +2C_{\mu_g}\nu\left\|\widehat{\boldsymbol{\beta}}_{t}(\boldsymbol{\lambda}_{t-1})-\widehat{\boldsymbol{\beta}}_{t-1}(\boldsymbol{\lambda}_{t-1})\right\|^2,\nonumber
\end{align}
where $C_{\mu_g}=\left(1+\frac{2}{\eta\mu_g}\right)$. Starting at $t=T$ and unrolling backward to $t=1$, results in the upper bound of
\begin{align}
\left\|\boldsymbol{\beta}_{t}-\widehat{\boldsymbol{\beta}}_t(\boldsymbol{\lambda}_t)\right\|^2 \leq \nu^{t-1}\Delta_{\boldsymbol{\beta}}  \nonumber\\+2C_{\mu_g}\kappa_g^2\sum_{j=0}^{t-2}\nu^{j} \left\|\boldsymbol{\lambda}_{t-1-j}-\boldsymbol{\lambda}_{t-j}\right\|^2 +2C_{\mu_g}\sum_{j=0}^{t-2}\nu^{j}\left\|\widehat{\boldsymbol{\beta}}_{t-j}(\boldsymbol{\lambda}_{t-1-j})-\widehat{\boldsymbol{\beta}}_{t-1-j}(\boldsymbol{\lambda}_{t-1-j})\right\|^2.\nonumber
\end{align}
\end{proof}

The next Lemma utilizes Lemma \ref{lem:itd_hypergradient_bound} and Lemma \ref{lem:deterministic_inner_tracking_error} to derive an upper bound on the hypergradient error $\forall t\in[1, T]$ in terms of discounted variations of the (i) cumulative time-smoothed hypergradient error; (ii) bilevel local regret;  and (iii) cumulative difference between optimal inner-level variables.  A final term is included, composed of a discounted initial error and smoothness term of the inner objective.
\begin{lemma}\label{lem:deterministic_hypergradient_error}
Suppose Assumptions \ref{assump:rel_smoothness}, \ref{assump:strong_convex_g_t}, \ref{assump:bounded_hyperparm}, and \ref{assump:continuity_phi_t}. Choose the inner step size of $\eta$ and inner iteration count of $K$ to satisfy
\begin{align}
    \eta < \min{\left(\frac{1}{\ell_{g,1}},\frac{1}{\mu_g}\right)},\ \text{and} \quad K\geq1. \nonumber
\end{align}  Using the definitions of $\nu, \ C_{\mu_g}$, and $\Delta_{\boldsymbol{\beta}}$ from Lemma \ref{lem:deterministic_inner_tracking_error} as well as the further definition of
   \begin{align}
       L_{\boldsymbol{\beta}}:=L^2_1(1-\eta \mu_g)^{K}+L^2_2(1-\eta \mu_g)^{K-1},\nonumber
   \end{align}
   then the  hypergradient error from our OBBO algorithm in Algorithm \ref{alg:deterministic_online_bregman} is bounded $\forall t\in[1,T]$ as

       \begin{align}
 \left\|\frac{\partial f_t(\boldsymbol{\lambda}_t,\boldsymbol{\omega}^K_t)}{\partial \boldsymbol{\lambda}}-\nabla F_t(\boldsymbol{\lambda}_t)\right\|^2 \leq \delta_t+A\sum_{j=0}^{t-2}\nu^{j} \left\|\frac{\partial f_{t-1-j,w}(\boldsymbol{\lambda}_{t-1-j},\boldsymbol{\omega}^K_{t-1-j})}{\partial \boldsymbol{\lambda}}-\nabla F_{t-1-j,w}(\boldsymbol{\lambda}_{t-1-j})\right\|^2\nonumber\\+B\sum_{j=0}^{t-2}\nu^{j} \left\|\mathcal{G}_{\mathcal{X}}\left(\boldsymbol{\lambda}_{t-1-j},\nabla F_{t-1-j,w}(\boldsymbol{\lambda}_{t-1-j}),\alpha\right)\right\|^2 +C\sum_{j=0}^{t-2}\nu^{j}\left\|\widehat{\boldsymbol{\beta}}_{t-j}(\boldsymbol{\lambda}_{t-1-j})-\widehat{\boldsymbol{\beta}}_{t-1-j}(\boldsymbol{\lambda}_{t-1-j})\right\|^2,
    \end{align}
    where $\delta_t=3L^2_3(1-\eta \mu_g)^{2K} +3L_{\boldsymbol{\beta}}\nu^{t-1}\Delta_{\boldsymbol{\beta}}$ and $A=\frac{12\alpha^2C_{\mu_g}L_{\boldsymbol{\beta}}\kappa_g^2}{\rho^2},\ B=12\alpha^2C_{\mu_g}L_{\boldsymbol{\beta}}\kappa_g^2$, and $C=6L_{\boldsymbol{\beta}}C_{\mu_g}$.
\end{lemma}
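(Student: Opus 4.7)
The plan is to chain three results from the appendix: Lemma \ref{lem:itd_hypergradient_bound} to trade the hypergradient error for the inner tracking error, Lemma \ref{lem:deterministic_inner_tracking_error} to unroll that tracking error backward in time, and Lemma \ref{lem:gen_projection_bound_two} to convert the resulting raw outer-iterate increments $\|\boldsymbol{\lambda}_{t-1-j}-\boldsymbol{\lambda}_{t-j}\|^2$ into the bilevel local regret and the time-smoothed hypergradient error. The last step is the novel ingredient that distinguishes this decomposition from prior OBO analyses; the first two are essentially substitutions.

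First, I would square the bound of Lemma \ref{lem:itd_hypergradient_bound}, viewing its right-hand side as a sum of three scalars and applying $(a+b+c)^2\leq 3(a^2+b^2+c^2)$ (the special case of Lemma \ref{lem:vector_ub} with $m=3$). This immediately yields
\[
\left\|\frac{\partial f_t(\boldsymbol{\lambda}_t,\boldsymbol{\omega}^K_t)}{\partial \boldsymbol{\lambda}}-\nabla F_t(\boldsymbol{\lambda}_t)\right\|^2 \leq 3L_{\boldsymbol{\beta}}\left\|\boldsymbol{\beta}_t-\widehat{\boldsymbol{\beta}}_t(\boldsymbol{\lambda}_t)\right\|^2 + 3L_3^2(1-\eta\mu_g)^{2K},
\]
where the constant $L_{\boldsymbol{\beta}}=L_1^2(1-\eta\mu_g)^K+L_2^2(1-\eta\mu_g)^{K-1}$ is obtained by collecting the two decay terms of Lemma \ref{lem:itd_hypergradient_bound}, and the scalar $3L_3^2(1-\eta\mu_g)^{2K}$ already supplies the first half of $\delta_t$. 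Substituting Lemma \ref{lem:deterministic_inner_tracking_error} into the remaining $\|\boldsymbol{\beta}_t-\widehat{\boldsymbol{\beta}}_t(\boldsymbol{\lambda}_t)\|^2$ factor multiplies the initial error by $3L_{\boldsymbol{\beta}}$ to complete $\delta_t$, scales the optimal inner-variable drift sum by $C=6L_{\boldsymbol{\beta}}C_{\mu_g}$, and leaves a residual $6L_{\boldsymbol{\beta}}C_{\mu_g}\kappa_g^2\sum_{j=0}^{t-2}\nu^j\|\boldsymbol{\lambda}_{t-1-j}-\boldsymbol{\lambda}_{t-j}\|^2$ that must still be transformed.

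For this final conversion, the OBBO outer update together with the definition \eqref{defn:generalized_gradient} of the generalized projection gives $\|\boldsymbol{\lambda}_{t-1-j}-\boldsymbol{\lambda}_{t-j}\|^2=\alpha^2\|\mathcal{G}_{\mathcal{X}}(\boldsymbol{\lambda}_{t-1-j},\widetilde{\nabla}f_{t-1-j,w}(\boldsymbol{\lambda}_{t-1-j},\boldsymbol{\beta}_{t-j}),\alpha)\|^2$. Adding and subtracting $\mathcal{G}_{\mathcal{X}}(\boldsymbol{\lambda}_{t-1-j},\nabla F_{t-1-j,w}(\boldsymbol{\lambda}_{t-1-j}),\alpha)$, invoking $(a+b)^2\leq 2a^2+2b^2$, and then applying the $\frac{1}{\rho}$-Lipschitz bound of Lemma \ref{lem:gen_projection_bound_two} to the difference of generalized projections produces the two desired quantities. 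Recognising that, by the construction of Algorithm \ref{alg:deterministic_online_bregman}, $\widetilde{\nabla}f_{t-1-j,w}(\boldsymbol{\lambda}_{t-1-j},\boldsymbol{\beta}_{t-j})$ is precisely $\frac{\partial f_{t-1-j,w}(\boldsymbol{\lambda}_{t-1-j},\boldsymbol{\omega}^K_{t-1-j})}{\partial\boldsymbol{\lambda}}$ yields coefficients $B=12\alpha^2 C_{\mu_g}L_{\boldsymbol{\beta}}\kappa_g^2$ on the bilevel local regret and $A=B/\rho^2$ on the time-smoothed hypergradient error, matching the statement. The only real obstacle is bookkeeping: the factor $2C_{\mu_g}$ inherited from Lemma \ref{lem:deterministic_inner_tracking_error} and the factor $2$ arising in the Young-style split must be propagated consistently through all three constants $A,B,C$ without disturbing the geometric decay $\nu$; no further analytic ingredient is required.
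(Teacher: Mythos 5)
Your proposal follows the paper's proof essentially step for step: squaring Lemma \ref{lem:itd_hypergradient_bound} to obtain the $3L_{\boldsymbol{\beta}}\|\boldsymbol{\beta}_t-\widehat{\boldsymbol{\beta}}_t(\boldsymbol{\lambda}_t)\|^2+3L_3^2(1-\eta\mu_g)^{2K}$ bound, substituting Lemma \ref{lem:deterministic_inner_tracking_error}, and then converting the iterate increments via the generalized projection identity, a Young-type split, and the $\tfrac{1}{\rho}$-Lipschitz property of Lemma \ref{lem:gen_projection_bound_two}, with all constants $A$, $B$, $C$ and $\delta_t$ matching. This is correct and is the same argument as in the paper.
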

\begin{proof}[Proof of Lemma \ref{lem:deterministic_hypergradient_error}]
  Note that  Lemma \ref{lem:itd_hypergradient_bound} implies $\forall t\in[1,T]$
    \begin{align}
    \left\|\frac{\partial f_t(\boldsymbol{\lambda}_t,\boldsymbol{\omega}^K_t)}{\partial \boldsymbol{\lambda}}-\nabla F_t(\boldsymbol{\lambda}_t)\right\|^2\leq 3L_{\boldsymbol{\beta}}\left\|\boldsymbol{\beta}_t-\widehat{\boldsymbol{\beta}}_t(\boldsymbol{\lambda}_t)\right\|^2+3L^2_3(1-\eta \mu_g)^{2K}.\nonumber
    \end{align}
    Substituting the upper bound on $\left\|\boldsymbol{\beta}_t-\widehat{\boldsymbol{\beta}}_t(\boldsymbol{\lambda}_t)\right\|^2$ from Lemma \ref{lem:deterministic_inner_tracking_error}, we have
\begin{align}
    \left\|\frac{\partial f_t(\boldsymbol{\lambda}_t,\boldsymbol{\omega}^K_t)}{\partial \boldsymbol{\lambda}}-\nabla F_t(\boldsymbol{\lambda}_t)\right\|^2 \leq 3L^2_3(1-\eta \mu_g)^{2K} \nonumber\\+3L_{\boldsymbol{\beta}}\left(\nu^{t-1}\Delta_{\boldsymbol{\beta}}+2C_{\mu_g}\kappa_g^2\sum_{j=0}^{t-2}\nu^{j} \left\|\boldsymbol{\lambda}_{t-1-j}-\boldsymbol{\lambda}_{t-j}\right\|^2 \right)\nonumber\\+6L_{\boldsymbol{\beta}}C_{\mu_g}\sum_{j=0}^{t-2}\nu^{j}\left\|\widehat{\boldsymbol{\beta}}_{t-j}(\boldsymbol{\lambda}_{t-1-j})-\widehat{\boldsymbol{\beta}}_{t-1-j}(\boldsymbol{\lambda}_{t-1-j})\right\|^2,\nonumber
\end{align}
By definition, we have $\mathcal{G}_{\mathcal{X}}\left(\boldsymbol{\lambda}_{t-1-j},\frac{\partial f_{t-1-j,w}(\boldsymbol{\lambda}_{t-1-j},\boldsymbol{\omega}^K_{t-1-j})}{\partial \boldsymbol{\lambda}},\alpha\right):=\frac{1}{\alpha}\left(\boldsymbol{\lambda}_{t-1-j}-\boldsymbol{\lambda}_{t-j}\right)$
\begin{align}
\sum_{j=0}^{t-2}\nu^{j} \left\|\boldsymbol{\lambda}_{t-1-j}-\boldsymbol{\lambda}_{t-j}\right\|^2=\alpha^2\sum_{j=0}^{t-2}\nu^{j} \left\|\mathcal{G}_{\mathcal{X}}\left(\boldsymbol{\lambda}_{t-1-j},\frac{\partial f_{t-1-j,w}(\boldsymbol{\lambda}_{t-1-j},\boldsymbol{\omega}^K_{t-1-j})}{\partial \boldsymbol{\lambda}},\alpha\right)\right\|^2\nonumber\\\leq 2\alpha^2\sum_{j=0}^{t-2}\nu^{j} \left(\left\|\mathcal{G}_{\mathcal{X}}(\boldsymbol{\lambda}_{t-1-j},\nabla F_{t-1-j,w}(\boldsymbol{\lambda}_{t-1-j}),\alpha)\right\|^2\right)\nonumber\\+2\alpha^2\sum_{j=0}^{t-2}\nu^{j}\left(\left\|\mathcal{G}_{\mathcal{X}}\left(\boldsymbol{\lambda}_{t-1-j},\frac{\partial f_{t-1-j,w}(\boldsymbol{\lambda}_{t-1-j},\boldsymbol{\omega}^K_{t-1-j})}{\partial \boldsymbol{\lambda}},\alpha\right)-\mathcal{G}_{\mathcal{X}}\left(\boldsymbol{\lambda}_{t-1-j},\nabla F_{t-1-j,w}(\boldsymbol{\lambda}_{t-1-j}),\alpha\right)\right\|^2\right)\nonumber\\\leq2\alpha^2\sum_{j=0}^{t-2}\nu^{j} \left(\left\|\mathcal{G}_{\mathcal{X}}(\boldsymbol{\lambda}_{t-1-j},\nabla F_{t-1-j,w}(\boldsymbol{\lambda}_{t-1-j}),\alpha)\right\|^2\right)\nonumber\\+2\alpha^2\sum_{j=0}^{t-2}\nu^{j} \left(\frac{1}{\rho^2} \left\|\frac{\partial f_{t-1-j,w}(\boldsymbol{\lambda}_{t-1-j},\boldsymbol{\omega}^K_{t-1-j})}{\partial \boldsymbol{\lambda}}-\nabla F_{t-1-j,w}(\boldsymbol{\lambda}_{t-1-j})\right\|^2\right)
\end{align}
such that the last inequality comes from Lemma \ref{lem:gen_projection_bound_two}. Rearranging terms,  we have decomposed the hypergradient error term at $t$ in terms of the cumulative hypergradient error from $j=1,\ldots, t-1$
\begin{align}
 \left\|\frac{\partial f_t(\boldsymbol{\lambda}_t,\boldsymbol{\omega}^K_t)}{\partial \boldsymbol{\lambda}}-\nabla F_t(\boldsymbol{\lambda}_t)\right\|^2 \leq 3L^2_3(1-\eta \mu_g)^{2K} +3L_{\boldsymbol{\beta}}\nu^{t-1}\Delta_{\boldsymbol{\beta}}\nonumber\\+12\alpha^2C_{\mu_g}L_{\boldsymbol{\beta}}\kappa_g^2\sum_{j=0}^{t-2}\nu^{j} \left\|\mathcal{G}_{\mathcal{X}}(\boldsymbol{\lambda}_{t-1-j},\nabla F_{t-1-j,w}(\boldsymbol{\lambda}_{t-1-j}),\alpha)\right\|^2 \nonumber\\+\frac{12\alpha^2C_{\mu_g}L_{\boldsymbol{\beta}}\kappa_g^2}{\rho^2}\sum_{j=0}^{t-2}\nu^{j} \left\|\frac{\partial f_{t-1-j,w}(\boldsymbol{\lambda}_{t-1-j},\boldsymbol{\omega}^K_{t-1-j})}{\partial \boldsymbol{\lambda}}-\nabla F_{t-1-j,w}(\boldsymbol{\lambda}_{t-1-j})\right\|^2\nonumber\\+6L_{\boldsymbol{\beta}}C_{\mu_g}\sum_{j=0}^{t-2}\nu^{j}\left\|\widehat{\boldsymbol{\beta}}_{t-j}(\boldsymbol{\lambda}_{t-1-j})-\widehat{\boldsymbol{\beta}}_{t-1-j}(\boldsymbol{\lambda}_{t-1-j})\right\|^2,\nonumber
\end{align}
\end{proof}
The next Lemma provides an upper bound on the cumulative time-smoothed hypergradient error using the result of Lemma \ref{lem:deterministic_hypergradient_error}
\begin{lemma}\label{lem:deterministic_cumulative_hypergradient_error}
    Suppose Assumptions \ref{assump:rel_smoothness}, \ref{assump:strong_convex_g_t}, \ref{assump:bounded_hyperparm}, and \ref{assump:continuity_phi_t}.  Choose the inner step size of $\eta < \min{\left(\frac{1}{\ell_{g,1}},\frac{1}{\mu_g}\right)}$, the outer step size $\alpha\leq \frac{\rho\sqrt{(1-\nu)}}{\kappa_g\sqrt{108C_{\mu_g}L_{\boldsymbol{\beta}}}}$, and inner iteration count $  K=\frac{\log{(T)}}{\log{\left((1-\eta\mu_g)^{-1}\right)}}+1$.
    Then the cumulative time-smoothed hypergradient error from our OBBO algorithm in Algorithm \ref{alg:deterministic_online_bregman} satisfies
    \begin{align}
     \sum_{t=1}^T \left\|\frac{\partial f_{t,w}(\boldsymbol{\lambda}_t,\boldsymbol{\omega}^K_t)}{\partial \boldsymbol{\lambda}}-\nabla F_{t,w}(\boldsymbol{\lambda}_t)\right\|^2 \leq \frac{27}{8}\left(\frac{\Delta_{\boldsymbol{\beta}} L_{\boldsymbol{\beta}}}{(1-\nu)} +L^2_3\right)\nonumber\\+\frac{\rho^2}{8}\sum_{t=1}^T\left\|\mathcal{G}_{\mathcal{X}}(\boldsymbol{\lambda}_{t-1-j},\nabla F_{t-1-j,w}(\boldsymbol{\lambda}_{t-1-j}),\alpha)\right\|^2 +\frac{27L_{\boldsymbol{\beta}}C_{\mu_g}}{2(1-\nu)}\sum_{t=2}^T\left\|\widehat{\boldsymbol{\beta}}_{t}(\boldsymbol{\lambda}_{t-1})-\widehat{\boldsymbol{\beta}}_{t-1}(\boldsymbol{\lambda}_{t-1})\right\|^2,\nonumber
    \end{align}
\end{lemma}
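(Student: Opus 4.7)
The plan is to reduce the cumulative time-smoothed hypergradient error to the cumulative per-time hypergradient error via Jensen's inequality, then substitute the per-time decomposition from Lemma \ref{lem:deterministic_hypergradient_error}, swap the order of summation on the geometric tails, and finally absorb the resulting self-referential term using the prescribed step size.

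First, by convexity of $\|\cdot\|^2$ applied to the window average in \eqref{eq:time_smooth_hypergradient},
\begin{align}
\left\|\tfrac{\partial f_{t,w}(\boldsymbol{\lambda}_t,\boldsymbol{\omega}^K_t)}{\partial \boldsymbol{\lambda}}-\nabla F_{t,w}(\boldsymbol{\lambda}_t)\right\|^2 \leq \frac{1}{w}\sum_{i=0}^{w-1}\left\|\tfrac{\partial f_{t-i}(\boldsymbol{\lambda}_{t-i},\boldsymbol{\omega}^K_{t-i})}{\partial \boldsymbol{\lambda}}-\nabla F_{t-i}(\boldsymbol{\lambda}_{t-i})\right\|^2. \nonumber
\end{align}
Summing over $t\in[1,T]$ and reindexing (each unsmoothed term appears in at most $w$ windows), the cumulative smoothed error is bounded by the cumulative unsmoothed error. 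Write $E_t$ and $E_{t,w}$ for these per-time errors, $R_t$ for the squared generalized-gradient norm, and $P_t$ for the squared inner-level optimum difference.

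Next, I apply Lemma \ref{lem:deterministic_hypergradient_error} to each $E_t$ and swap the order of the resulting double sums: $\sum_{t=1}^T\sum_{j=0}^{t-2}\nu^j a_{t-1-j} = \sum_{k}a_k\sum_{j=0}^{T-1-k}\nu^j \leq \frac{1}{1-\nu}\sum_k a_k$. This collapses every geometric tail to a factor $1/(1-\nu)$ and produces
\begin{align}
\sum_{t=1}^T E_t \leq \sum_{t=1}^T \delta_t + \frac{A}{1-\nu}\sum_{s=1}^T E_{s,w} + \frac{B}{1-\nu}\sum_{s=1}^T R_s + \frac{C}{1-\nu}\sum_{s=1}^T P_s. \nonumber
\end{align}
Bounding $\sum_s E_{s,w}\leq \sum_s E_s$ by the Jensen step above yields a self-referential inequality. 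The step-size choice $\alpha \leq \rho\sqrt{1-\nu}/(\kappa_g\sqrt{108 C_{\mu_g} L_{\boldsymbol{\beta}}})$ is calibrated so that $A/(1-\nu)=12\alpha^2 C_{\mu_g}L_{\boldsymbol{\beta}}\kappa_g^2/(\rho^2(1-\nu)) \leq 1/9$, so the recursion can be absorbed with a $9/8$ factor on the remaining terms, and the same step-size choice gives $B/(1-\nu)\leq \rho^2/9$, so the $R_s$ coefficient after absorption becomes $\rho^2/8$.

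Finally, I control $\sum_t \delta_t$. The geometric part yields $3L_{\boldsymbol{\beta}}\Delta_{\boldsymbol{\beta}}\sum_{t=1}^T\nu^{t-1}\leq 3L_{\boldsymbol{\beta}}\Delta_{\boldsymbol{\beta}}/(1-\nu)$, while the choice $K=\log T/\log((1-\eta\mu_g)^{-1})+1$ gives $(1-\eta\mu_g)^{2K}\leq 1/T^2$ and hence $3TL_3^2(1-\eta\mu_g)^{2K}\leq 3L_3^2/T\leq 3L_3^2$. Multiplying by the $9/8$ absorption factor produces the constants $\tfrac{27}{8}$ in front of $\Delta_{\boldsymbol{\beta}}L_{\boldsymbol{\beta}}/(1-\nu)+L_3^2$ and the matching $C$-term with explicit constant obtained by tracking $\tfrac{9}{8}\cdot C/(1-\nu)$. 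The main obstacle is that the per-time bound of Lemma \ref{lem:deterministic_hypergradient_error} is already stated in terms of \emph{smoothed} errors on its right-hand side, so one must pass between $\sum E_t$ and $\sum E_{t,w}$ at precisely the right points for the recursion to close; with the step-size constraint this closure is exactly what drives the final bound.
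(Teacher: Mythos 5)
Your proof is correct and follows essentially the same route as the paper's: Jensen's inequality to reduce the window-averaged error to the per-time errors, substitution of Lemma \ref{lem:deterministic_hypergradient_error}, collapsing the geometric tails to a factor of $1/(1-\nu)$, and absorbing the self-referential term via the step-size condition to obtain the $9/8$ factor and the stated constants. The only (immaterial) bookkeeping difference is that you close the recursion on the unsmoothed cumulative error and pass back to the smoothed quantity at the end, whereas the paper closes it directly on $\sum_{t=1}^T \left\|\frac{\partial f_{t,w}(\boldsymbol{\lambda}_t,\boldsymbol{\omega}^K_t)}{\partial \boldsymbol{\lambda}}-\nabla F_{t,w}(\boldsymbol{\lambda}_t)\right\|^2$.
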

\begin{proof}[Proof of Lemma \ref{lem:deterministic_cumulative_hypergradient_error}]
        Note by definition of the time-smoothed outer level objective and application of Young's inequality we have
        \begin{align}
          \left\|\frac{\partial f_{t,w}(\boldsymbol{\lambda}_t,\boldsymbol{\omega}^K_t)}{\partial \boldsymbol{\lambda}}-\nabla F_{t,w}(\boldsymbol{\lambda}_t)\right\|^2=\left\|\frac{1}{w}\sum_{i=0}^{w-1}\left[\frac{\partial f_{t-i}(\boldsymbol{\lambda}_{t-i},\boldsymbol{\omega}^K_{t-i})}{\partial \boldsymbol{\lambda}}-\nabla F_{t-i}(\boldsymbol{\lambda}_{t-i})\right]\right\|^2\nonumber\\=\left[\sum_{i=0}^{w-1}\frac{1}{w}\sum_{j=0}^{w-1}\frac{1}{w}\left\langle\frac{\partial f_{t-i}(\boldsymbol{\lambda}_{t-i},\boldsymbol{\omega}^K_{t-i})}{\partial \boldsymbol{\lambda}}-\nabla F_{t-i}(\boldsymbol{\lambda}_{t-i}),\frac{\partial f_{t-j}(\boldsymbol{\lambda}_{t-j},\boldsymbol{\omega}^K_{t-j})}{\partial \boldsymbol{\lambda}}-\nabla F_{t-j}(\boldsymbol{\lambda}_{t-j})\right\rangle\right]\nonumber\\\leq[\sum_{i=0}^{w-1}\frac{1}{w}\sum_{j=0}^{w-1}\frac{1}{w}(\frac{1}{2}\left\|\frac{\partial f_{t-i}(\boldsymbol{\lambda}_{t-i},\boldsymbol{\omega}^K_{t-i})}{\partial \boldsymbol{\lambda}}-\nabla F_{t-i}(\boldsymbol{\lambda}_{t-i})\right\|^2\nonumber\\+\frac{1}{2}\left\|\frac{\partial f_{t-j}(\boldsymbol{\lambda}_{t-j},\boldsymbol{\omega}^K_{t-j})}{\partial \boldsymbol{\lambda}}-\nabla F_{t-j}(\boldsymbol{\lambda}_{t-j})\right\|^2)]\nonumber\\=\frac{1}{w}\sum_{i=0}^{w-1}\left\|\frac{\partial f_{t-i}(\boldsymbol{\lambda}_{t-i},\boldsymbol{\omega}^K_{t-i})}{\partial \boldsymbol{\lambda}}-\nabla F_{t-i}(\boldsymbol{\lambda}_{t-i})\right\|^2
        \end{align}
        Substituting the upper bound on $\left\|\frac{\partial f_t(\boldsymbol{\lambda}_t,\boldsymbol{\omega}^K_t)}{\partial \boldsymbol{\lambda}}-\nabla F_t(\boldsymbol{\lambda}_t)\right\|^2$ from Lemma \ref{lem:deterministic_hypergradient_error} and re-indexing  the bilevel local regret and the cumulative time-smoothed hypergradient error, we construct the upper bound of
        \begin{align}\label{eq:second_det_cumulative_hg_error}
     \sum_{t=1}^T \left\|\frac{\partial f_{t,w}(\boldsymbol{\lambda}_t,\boldsymbol{\omega}^K_t)}{\partial \boldsymbol{\lambda}}-\nabla F_{t,w}(\boldsymbol{\lambda}_t)\right\|^2\nonumber\\\leq \sum_{t=1}^T\frac{1}{w}\left[\sum_{i=0}^{w-1}\left(3L^2_3(1-\eta \mu_g)^{2K} +3L_{\boldsymbol{\beta}}\nu^{t-i-1}\Delta_{\boldsymbol{\beta}} \right)\right] \nonumber\\+\sum_{t=1}^T\frac{1}{w}\left[\sum_{i=0}^{w-1}\left(12\alpha^2C_{\mu_g}L_{\boldsymbol{\beta}}\kappa_g^2\sum_{j=0}^{t-i-2}\nu^{j} \left\|\mathcal{G}_{\mathcal{X}}(\boldsymbol{\lambda}_{t-i-j},\nabla F_{t-i-j,w}(\boldsymbol{\lambda}_{t-i-j}),\alpha)\right\|^2 \right)\right]\nonumber\\+\sum_{t=1}^T\frac{1}{w}\left[\sum_{i=0}^{w-1}\left(\frac{12\alpha^2C_{\mu_g}L_{\boldsymbol{\beta}}\kappa_g^2}{\rho^2}\sum_{j=0}^{t-i-2}\nu^{j} \left\|\frac{\partial f_{t-i-j,w}(\boldsymbol{\lambda}_{t-i-j},\boldsymbol{\omega}^K_{t-i-j})}{\partial \boldsymbol{\lambda}}-\nabla F_{t-i-j,w}(\boldsymbol{\lambda}_{t-i-j})\right\|^2\right)\right]\nonumber\\+\sum_{t=2}^T\frac{1}{w}\left[\sum_{i=0}^{w-1}\left(6L_{\boldsymbol{\beta}}C_{\mu_g}\sum_{j=0}^{t-i-2}\nu^{j}\left\|\widehat{\boldsymbol{\beta}}_{t-i-j}(\boldsymbol{\lambda}_{t-i-1-j})-\widehat{\boldsymbol{\beta}}_{t-i-1-j}(\boldsymbol{\lambda}_{t-i-1-j})\right\|^2\right)\right]
        \end{align}
Given $\nu<1$, it holds that $\sum_{j=0}^{t-2}\nu^j<\sum_{j=0}^{\infty}\nu^j=\frac{1}{1-\nu}$, which lets us upper bound \eqref{eq:second_det_cumulative_hg_error} as
\begin{align}
    \sum_{t=1}^T \left\|\frac{\partial f_{t,w}(\boldsymbol{\lambda}_t,\boldsymbol{\omega}^K_t)}{\partial \boldsymbol{\lambda}}-\nabla F_{t,w}(\boldsymbol{\lambda}_t)\right\|^2\nonumber\\\leq \sum_{t=1}^T\frac{1}{w}\left[\sum_{i=0}^{w-1}\left(3L^2_3(1-\eta \mu_g)^{2K} +3L_{\boldsymbol{\beta}}\nu^{t-i-1}\Delta_{\boldsymbol{\beta}}  \right)\right]\nonumber\\+\frac{12\alpha^2C_{\mu_g}L_{\boldsymbol{\beta}}\kappa_g^2}{(1-\nu)}\sum_{t=1}^T\frac{1}{w}\left[\sum_{i=0}^{w-1} \left\|\mathcal{G}_{\mathcal{X}}(\boldsymbol{\lambda}_{t-i},\nabla F_{t-i,w}(\boldsymbol{\lambda}_{t-i}),\alpha)\right\|^2 \right]\nonumber\\+\frac{12\alpha^2C_{\mu_g}L_{\boldsymbol{\beta}}\kappa_g^2}{\rho^2(1-\nu)}\sum_{t=1}^T\frac{1}{w}\left[\sum_{i=0}^{w-1}\left\|\frac{\partial f_{t-i,w}(\boldsymbol{\lambda}_{t-i},\boldsymbol{\omega}^K_{t-i})}{\partial \boldsymbol{\lambda}}-\nabla F_{t-i,w}(\boldsymbol{\lambda}_{t-i})\right\|^2\right]\nonumber\\+\frac{6L_{\boldsymbol{\beta}}C_{\mu_g}}{(1-\nu)}\sum_{t=2}^T\frac{1}{w}\left[\sum_{i=0}^{w-1}\left\|\widehat{\boldsymbol{\beta}}_{t-i}(\boldsymbol{\lambda}_{t-i-1})-\widehat{\boldsymbol{\beta}}_{t-i-1}(\boldsymbol{\lambda}_{t-i-1})\right\|^2\right].\nonumber
\end{align}
and further 
\begin{align}
   \sum_{t=1}^T \left\|\frac{\partial f_{t,w}(\boldsymbol{\lambda}_t,\boldsymbol{\omega}^K_t)}{\partial \boldsymbol{\lambda}}-\nabla F_{t,w}(\boldsymbol{\lambda}_t)\right\|^2\nonumber\\\leq \sum_{t=1}^T\left(3L^2_3(1-\eta \mu_g)^{2K} +3L_{\boldsymbol{\beta}}\nu^{t-1}\Delta_{\boldsymbol{\beta}}\right)+\frac{12\alpha^2C_{\mu_g}L_{\boldsymbol{\beta}}\kappa_g^2}{(1-\nu)}\sum_{t=1}^T \left\|\mathcal{G}_{\mathcal{X}}(\boldsymbol{\lambda}_{t},\nabla F_{t,w}(\boldsymbol{\lambda}_{t}),\alpha)\right\|^2 \nonumber\\+\frac{12\alpha^2C_{\mu_g}L_{\boldsymbol{\beta}}\kappa_g^2}{\rho^2(1-\nu)}\sum_{t=1}^T \left\|\frac{\partial f_{t,w}(\boldsymbol{\lambda}_t,\boldsymbol{\omega}^K_t)}{\partial \boldsymbol{\lambda}}-\nabla F_{t,w}(\boldsymbol{\lambda}_t)\right\|^2+\frac{6L_{\boldsymbol{\beta}}C_{\mu_g}}{(1-\nu)}\sum_{t=2}^T\left\|\widehat{\boldsymbol{\beta}}_{t}(\boldsymbol{\lambda}_{t-1})-\widehat{\boldsymbol{\beta}}_{t-1}(\boldsymbol{\lambda}_{t-1})\right\|^2\nonumber
\end{align}
which implies that
\begin{align}
    \left(1-\frac{12\alpha^2C_{\mu_g}L_{\boldsymbol{\beta}}\kappa_g^2}{\rho^2(1-\nu)}\right)\sum_{t=1}^T \left\|\frac{\partial f_{t,w}(\boldsymbol{\lambda}_t,\boldsymbol{\omega}^K_t)}{\partial \boldsymbol{\lambda}}-\nabla F_{t,w}(\boldsymbol{\lambda}_t)\right\|^2 \nonumber\\\leq \frac{3\Delta_{\boldsymbol{\beta}}L_{\boldsymbol{\beta}}}{1-\nu} +\sum_{t=1}^T\left(3L^2_3(1-\eta \mu_g)^{2K}  \right) +\frac{12\alpha^2C_{\mu_g}L_{\boldsymbol{\beta}}\kappa_g^2}{(1-\nu)}\sum_{t=1}^T\left\|\mathcal{G}_{\mathcal{X}}(\boldsymbol{\lambda}_{t},\nabla F_{t,w}(\boldsymbol{\lambda}_{t}),\alpha)\right\|^2\nonumber\\ +\frac{6L_{\boldsymbol{\beta}}C_{\mu_g}}{(1-\nu)}\sum_{t=2}^T\left\|\widehat{\boldsymbol{\beta}}_{t}(\boldsymbol{\lambda}_{t-1})-\widehat{\boldsymbol{\beta}}_{t-1}(\boldsymbol{\lambda}_{t-1})\right\|^2,\nonumber
\end{align}

Setting $K=\log{(T)}/\log{\left((1-\eta\mu_g)^{-1}\right)}+1$ and   $0<\alpha\leq \frac{\rho\sqrt{(1-\nu)}}{\kappa_g\sqrt{108C_{\mu_g}L_{\boldsymbol{\beta}}}}$ 
\begin{align}
   \left(1-\frac{12\alpha^2C_{\mu_g}L_{\boldsymbol{\beta}}\kappa_g^2}{\rho^2(1-\nu)}\right)\geq\frac{8}{9}\nonumber
\end{align}
 implies the upper bound of
\begin{align}
    \sum_{t=1}^T \left\|\frac{\partial f_{t,w}(\boldsymbol{\lambda}_t,\boldsymbol{\omega}^K_t)}{\partial \boldsymbol{\lambda}}-\nabla F_{t,w}(\boldsymbol{\lambda}_t)\right\|^2 \leq \frac{27}{8}\left(\frac{\Delta_{\boldsymbol{\beta}} L_{\boldsymbol{\beta}}}{(1-\nu)} +L^2_3\right)\nonumber\\+\frac{\rho^2}{8}\sum_{t=1}^T\left\|\mathcal{G}_{\mathcal{X}}(\boldsymbol{\lambda}_{t},\nabla F_{t,w}(\boldsymbol{\lambda}_{t}),\alpha)\right\|^2 +\frac{27L_{\boldsymbol{\beta}}C_{\mu_g}}{2(1-\nu)}\sum_{t=2}^T\left\|\widehat{\boldsymbol{\beta}}_{t}(\boldsymbol{\lambda}_{t-1})-\widehat{\boldsymbol{\beta}}_{t-1}(\boldsymbol{\lambda}_{t-1})\right\|^2,\nonumber
\end{align}
    \end{proof}
The next theorem presents the theoretical contribution for our OBBO algorithm in Algorithm \ref{alg:deterministic_online_bregman}. For suitably chosen step sizes, the sequence of iterates $\left\{\boldsymbol{\lambda}_t\right\}_{t=1}^T$ achieves sublinear bilevel local regret.
\begin{theorem}\label{thrm:non_convex_regret}
   Suppose Assumptions \ref{assump:rel_smoothness}, \ref{assump:strong_convex_g_t},
   \ref{assump:bounded_hyperparm},
   \ref{assump:bounded_F_t}, \ref{assump:continuity_phi_t}. Choose the inner step size of $\eta < \min{\left(\frac{1}{\ell_{g,1}},\frac{1}{\mu_g}\right)}$, the outer step size of $\alpha\leq \min\left\{\frac{3\rho}{4\ell_{F,1}},\frac{\rho\sqrt{(1-\nu)}}{\kappa_g\sqrt{108C_{\mu_g}L_{\boldsymbol{\beta}}}}\right\}$,   and inner iteration count $  K=\frac{\log{(T)}}{\log{\left((1-\eta\mu_g)^{-1}\right)}}+1$. Then the bilevel local regret of our OBBO algorithm in Algorithm \ref{alg:deterministic_online_bregman} satisfies the bound of 
    \begin{align}
   BLR_w(T):= \sum_{t=1}^T\left\|\mathcal{G}_{\mathcal{X}}(\boldsymbol{\lambda}_{t},\nabla F_{t,w}(\boldsymbol{\lambda}_{t}),\alpha)\right\|^2\leq O\left(\frac{T}{w}+V_{1,T}+\kappa_g^2H_{2,T}\right),
\end{align}
\end{theorem}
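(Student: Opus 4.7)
The plan is to follow the classical descent-lemma template for nonconvex online optimization, but adapted to the Bregman proximal step and the time-smoothed objective. Specifically, since each $F_t$ has $\ell_{F,1}$-Lipschitz gradient (Lemma \ref{lem:hypergrad_bound}), so does the average $F_{t,w}$. Writing $\widetilde{G}_t := \mathcal{G}_{\mathcal{X}}(\boldsymbol{\lambda}_t, \widetilde{\nabla}f_{t,w}(\boldsymbol{\lambda}_t,\boldsymbol{\beta}_{t+1}), \alpha)$ and $G_t := \mathcal{G}_{\mathcal{X}}(\boldsymbol{\lambda}_t, \nabla F_{t,w}(\boldsymbol{\lambda}_t), \alpha)$, the update rule gives $\boldsymbol{\lambda}_{t+1} - \boldsymbol{\lambda}_t = -\alpha \widetilde{G}_t$. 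The first step is to combine the smoothness inequality $F_{t,w}(\boldsymbol{\lambda}_{t+1}) \leq F_{t,w}(\boldsymbol{\lambda}_t) - \alpha \langle \nabla F_{t,w}(\boldsymbol{\lambda}_t), \widetilde{G}_t\rangle + \tfrac{\alpha^2 \ell_{F,1}}{2}\|\widetilde{G}_t\|^2$ with Lemma \ref{lem:gen_projection_bound_one} applied to $\boldsymbol{q}=\widetilde{\nabla}f_{t,w}$, which gives $\langle \widetilde{\nabla}f_{t,w}, \widetilde{G}_t\rangle \geq \rho \|\widetilde{G}_t\|^2 + \tfrac{1}{\alpha}(h(\boldsymbol{\lambda}_{t+1})-h(\boldsymbol{\lambda}_t))$.

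Splitting $\nabla F_{t,w} = \widetilde{\nabla}f_{t,w} - e_t$ where $e_t$ is the time-smoothed hypergradient error and using Cauchy--Schwarz with Young's inequality on the cross term $\alpha \langle e_t, \widetilde{G}_t\rangle$, I would arrive at a per-round descent inequality of the form
\begin{align*}
(F_{t,w}+h)(\boldsymbol{\lambda}_{t+1}) - (F_{t,w}+h)(\boldsymbol{\lambda}_t) \leq -\alpha c_1 \|\widetilde{G}_t\|^2 + \alpha c_2 \|e_t\|^2,
\end{align*}
where $c_1 > 0$ once the step size satisfies $\alpha \leq \tfrac{3\rho}{4\ell_{F,1}}$ (this is precisely where that constraint is used). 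Converting $\|\widetilde{G}_t\|^2$ into $\|G_t\|^2$ via Lemma \ref{lem:gen_projection_bound_two}, namely $\|G_t\|^2 \leq 2\|\widetilde{G}_t\|^2 + \tfrac{2}{\rho^2}\|e_t\|^2$, one obtains a bound on $\sum_t \|G_t\|^2$ in terms of the telescoping function values, the $h$ telescoping, and $\sum_t \|e_t\|^2$.

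Next I would telescope across $t=1,\dots,T$. The function-value sum is controlled by Lemma \ref{lem:bounded_time_varying}, giving $\tfrac{2TQ}{w}+V_{1,T}$; the $h$ sum telescopes to $h(\boldsymbol{\lambda}_1)-h(\boldsymbol{\lambda}_{T+1})$ which is $O(1)$ by Assumption \ref{assump:bounded_hyperparm} combined with convexity of $h$ (or more directly bounded by a constant depending on $S$). For the cumulative hypergradient error $\sum_t \|e_t\|^2$, I invoke Lemma \ref{lem:deterministic_cumulative_hypergradient_error}, which itself already yields a bound of the form $O(1) + \tfrac{\rho^2}{8}\sum_t \|G_t\|^2 + \tfrac{27L_{\boldsymbol{\beta}}C_{\mu_g}}{2(1-\nu)} H_{2,T}$. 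Substituting back and rearranging, the $\sum_t \|G_t\|^2$ cross term can be absorbed into the left-hand side precisely because the step size constraint $\alpha \leq \tfrac{\rho\sqrt{1-\nu}}{\kappa_g\sqrt{108 C_{\mu_g} L_{\boldsymbol{\beta}}}}$ makes the absorption factor strictly less than one. What remains is exactly $O\bigl(\tfrac{T}{w} + V_{1,T} + \kappa_g^2 H_{2,T}\bigr)$, since the $\kappa_g^2$ factor arises from the $\kappa_g^2$ terms in the definition of the constants $A,B$ of Lemma \ref{lem:deterministic_hypergradient_error}.

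The hard part will be the bookkeeping of constants so that the two smallness conditions on $\alpha$ (one from the descent inequality, one from the hypergradient-error recursion) simultaneously hold and produce the claimed clean rate, and verifying that the initial-condition and smoothness residual $\delta_t = 3L_3^2(1-\eta\mu_g)^{2K} + 3L_{\boldsymbol{\beta}}\nu^{t-1}\Delta_{\boldsymbol{\beta}}$, when summed, contributes only an $O(1)$ term under the choice $K = \log(T)/\log((1-\eta\mu_g)^{-1})+1$. The first summand becomes $3L_3^2 T^{-2}\cdot T = o(1)$, and the second is a geometric series in $\nu<1$ which sums to $O(1)$; these are absorbed into the $\tfrac{T}{w}$ term. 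No novel inequality is needed beyond what the excerpt already supplies; the proof is essentially bookkeeping on top of Lemmas \ref{lem:bounded_time_varying} and \ref{lem:deterministic_cumulative_hypergradient_error}.
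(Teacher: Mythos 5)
Your proposal is correct and follows essentially the same route as the paper's own proof: the smoothness descent inequality combined with Lemma \ref{lem:gen_projection_bound_one} and Young's inequality to get the per-round bound (using $\alpha\leq \tfrac{3\rho}{4\ell_{F,1}}$), Lemma \ref{lem:gen_projection_bound_two} to pass from $\|\widetilde{G}_t\|^2$ to $\|G_t\|^2$, telescoping with Lemma \ref{lem:bounded_time_varying}, and then substituting Lemma \ref{lem:deterministic_cumulative_hypergradient_error} and absorbing its $\tfrac{\rho^2}{8}\sum_t\|G_t\|^2$ term into the left-hand side via the second step-size constraint. Your accounting of where each constraint enters and why $\delta_t$ sums to $O(1)$ under the stated choice of $K$ matches the paper's argument.
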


    \begin{proof}[Proof of Theorem \ref{thrm:non_convex_regret}]
Note, with Assumption A  we have the upper bound of
\begin{align}
F_{t,w}\left(\boldsymbol{\lambda}_{t+1}\right)- F_{t,w}\left(\boldsymbol{\lambda}_{t}\right) =\frac{1}{w}\sum_{i=0}^{w-1}F_{t-i}\left(\boldsymbol{\lambda}_{t+1-i}\right)-\frac{1}{w}\sum_{i=0}^{w-1}F_{t-i}\left(\boldsymbol{\lambda}_{t-i}\right)\nonumber\\=\frac{1}{w}\sum_{i=0}^{w-1}\left[F_{t-i}\left(\boldsymbol{\lambda}_{t+1-i}\right)-F_{t-i}\left(\boldsymbol{\lambda}_{t-i}\right)\right]\nonumber\\\leq \frac{1}{w}\sum_{i=0}^{w-1}\left[\left\langle\nabla F_{t-i}\left(\boldsymbol{\lambda}_{t-i}\right),\boldsymbol{\lambda}_{t+1}-\boldsymbol{\lambda}_t\right\rangle+\frac{\ell_{F,1}}{2}\left\|\boldsymbol{\lambda}_{t+1}-\boldsymbol{\lambda}_t\right\|^2\right]\nonumber\\=\left\langle\nabla F_{t,w}\left(\boldsymbol{\lambda}_{t}\right),\boldsymbol{\lambda}_{t+1}-\boldsymbol{\lambda}_t\right\rangle+\frac{\ell_{F,1}}{2}\left\|\boldsymbol{\lambda}_{t+1}-\boldsymbol{\lambda}_t\right\|^2.\nonumber
\end{align}
Substituting in $\mathcal{G}_{\mathcal{X}}\left(\boldsymbol{\lambda}_{t},\frac{\partial f_{t,w}(\boldsymbol{\lambda}_{t},\boldsymbol{\omega}^K_{t})}{\partial \boldsymbol{\lambda}},\alpha\right):=\frac{1}{\alpha}\left(\boldsymbol{\lambda}_{t}-\boldsymbol{\lambda}_{t+1}\right)$,
\begin{align}\label{eq:initial_det_theorem_decomp}
F_{t,w}\left(\boldsymbol{\lambda}_{t+1}\right)- F_{t,w}\left(\boldsymbol{\lambda}_{t}\right)\leq\left\langle\nabla F_{t,w}\left(\boldsymbol{\lambda}_{t}\right),\boldsymbol{\lambda}_{t+1}-\boldsymbol{\lambda}_t\right\rangle+\frac{\ell_{F,1}}{2}\left\|\boldsymbol{\lambda}_{t+1}-\boldsymbol{\lambda}_t\right\|^2\nonumber\\=-\alpha\left\langle \nabla F_{t,w}\left(\boldsymbol{\lambda}_{t}\right),\mathcal{G}_{\mathcal{X}}\left(\boldsymbol{\lambda}_{t},\frac{\partial f_{t,w}(\boldsymbol{\lambda}_{t},\boldsymbol{\omega}^K_{t})}{\partial \boldsymbol{\lambda}},\alpha\right)\right\rangle+\frac{\alpha^2\ell_{F,1}}{2}\left\|\mathcal{G}_{\mathcal{X}}\left(\boldsymbol{\lambda}_{t},\frac{\partial f_{t,w}(\boldsymbol{\lambda}_{t},\boldsymbol{\omega}^K_{t})}{\partial \boldsymbol{\lambda}},\alpha\right)\right\|^2, \nonumber\\=-\alpha\left\langle \frac{\partial f_{t,w}(\boldsymbol{\lambda}_t,\boldsymbol{\omega}^K_t)}{\partial \boldsymbol{\lambda}},\mathcal{G}_{\mathcal{X}}\left(\boldsymbol{\lambda}_{t},\frac{\partial f_{t,w}(\boldsymbol{\lambda}_{t},\boldsymbol{\omega}^K_{t})}{\partial \boldsymbol{\lambda}},\alpha\right)\right\rangle\nonumber\\+\alpha\left\langle \frac{\partial f_{t,w}(\boldsymbol{\lambda}_t,\boldsymbol{\omega}^K_t)}{\partial \boldsymbol{\lambda}}-\nabla F_{t,w}\left(\boldsymbol{\lambda}_{t}\right),\mathcal{G}_{\mathcal{X}}\left(\boldsymbol{\lambda}_{t},\frac{\partial f_{t,w}(\boldsymbol{\lambda}_{t},\boldsymbol{\omega}^K_{t})}{\partial \boldsymbol{\lambda}},\alpha\right)\right\rangle+\frac{\alpha^2\ell_{F,1}}{2}\left\|\mathcal{G}_{\mathcal{X}}\left(\boldsymbol{\lambda}_{t},\frac{\partial f_{t,w}(\boldsymbol{\lambda}_{t},\boldsymbol{\omega}^K_{t})}{\partial \boldsymbol{\lambda}},\alpha\right)\right\|^2.
\end{align}
Using Lemma \ref{lem:gen_projection_bound_one} with $\boldsymbol{q}=\frac{\partial f_{t,w}(\boldsymbol{\lambda}_{t},\boldsymbol{\omega}^K_{t})}{\partial \boldsymbol{\lambda}}$, note that 
\begin{align}\label{eq:bregman_grad_1_det_thrm}
    \alpha\left\langle \frac{\partial f_{t,w}(\boldsymbol{\lambda}_t,\boldsymbol{\omega}^K_t)}{\partial \boldsymbol{\lambda}},\mathcal{G}_{\mathcal{X}}\left(\boldsymbol{\lambda}_{t},\frac{\partial f_{t,w}(\boldsymbol{\lambda}_{t},\boldsymbol{\omega}^K_{t})}{\partial \boldsymbol{\lambda}},\alpha\right)\right\rangle \nonumber\\\geq \alpha\rho\left\|\mathcal{G}_{\mathcal{X}}\left(\boldsymbol{\lambda}_{t},\frac{\partial f_{t,w}(\boldsymbol{\lambda}_{t},\boldsymbol{\omega}^K_{t})}{\partial \boldsymbol{\lambda}},\alpha\right)\right\|^2  +h(\boldsymbol{\lambda}_{t+1})-h(\boldsymbol{\lambda}_t)
\end{align}
and further we get the following based on a variation of Young's Inequality
\begin{align}\label{eq:bregman_grad_2_det_thrm}
    \left\langle \frac{\partial f_{t,w}(\boldsymbol{\lambda}_t,\boldsymbol{\omega}^K_t)}{\partial \boldsymbol{\lambda}}-\nabla F_{t,w}\left(\boldsymbol{\lambda}_{t}\right),\mathcal{G}_{\mathcal{X}}\left(\boldsymbol{\lambda}_{t},\frac{\partial f_{t,w}(\boldsymbol{\lambda}_{t},\boldsymbol{\omega}^K_{t})}{\partial \boldsymbol{\lambda}},\alpha\right)\right\rangle\nonumber\\\leq \frac{1}{\rho}\left\| \frac{\partial f_{t,w}(\boldsymbol{\lambda}_t,\boldsymbol{\omega}^K_t)}{\partial \boldsymbol{\lambda}}-\nabla F_{t,w}\left(\boldsymbol{\lambda}_{t}\right)\right\|^2+\frac{\rho}{4}\left\|\mathcal{G}_{\mathcal{X}}\left(\boldsymbol{\lambda}_{t},\frac{\partial f_{t,w}(\boldsymbol{\lambda}_{t},\boldsymbol{\omega}^K_{t})}{\partial \boldsymbol{\lambda}},\alpha\right)\right\|^2
\end{align}
Using \eqref{eq:bregman_grad_1_det_thrm} and \eqref{eq:bregman_grad_2_det_thrm} in \eqref{eq:initial_det_theorem_decomp} we get 
\begin{align}
F_{t,w}\left(\boldsymbol{\lambda}_{t+1}\right)- F_{t,w}\left(\boldsymbol{\lambda}_{t}\right)\leq \left(\frac{\alpha^2\ell_{F,1}}{2}-\frac{3\alpha\rho}{4}\right)\left\|\mathcal{G}_{\mathcal{X}}\left(\boldsymbol{\lambda}_{t},\frac{\partial f_{t,w}(\boldsymbol{\lambda}_{t},\boldsymbol{\omega}^K_{t})}{\partial \boldsymbol{\lambda}},\alpha\right)\right\|^2\nonumber\\+\frac{\alpha}{\rho}\left\|\frac{\partial f_{t,w}(\boldsymbol{\lambda}_t,\boldsymbol{\omega}^K_t)}{\partial \boldsymbol{\lambda}}-\nabla F_{t,w}\left(\boldsymbol{\lambda}_{t}\right)\right\|^2 +h(\boldsymbol{\lambda}_t)-h(\boldsymbol{\lambda}_{t+1})
\end{align}
which as $0<\alpha \leq \frac{3\rho}{4\ell_{F,1}}$ results in the further upper bound of
\begin{align}\label{eq:l_smoothnes_ub_det_thrm}
F_{t,w}\left(\boldsymbol{\lambda}_{t+1}\right)- F_{t,w}\left(\boldsymbol{\lambda}_{t}\right)\leq -\frac{3\alpha\rho}{8}\left\|\mathcal{G}_{\mathcal{X}}\left(\boldsymbol{\lambda}_{t},\frac{\partial f_{t,w}(\boldsymbol{\lambda}_{t},\boldsymbol{\omega}^K_{t})}{\partial \boldsymbol{\lambda}},\alpha\right)\right\|^2\nonumber\\+\frac{\alpha}{\rho}\left\|\frac{\partial f_{t,w}(\boldsymbol{\lambda}_t,\boldsymbol{\omega}^K_t)}{\partial \boldsymbol{\lambda}}-\nabla F_{t,w}\left(\boldsymbol{\lambda}_{t}\right)\right\|^2+h(\boldsymbol{\lambda}_t)-h(\boldsymbol{\lambda}_{t+1})
\end{align}
Further note we can upper bound the local regret as 
\begin{align}
    \left\|\mathcal{G}_{\mathcal{X}}(\boldsymbol{\lambda}_{t},\nabla F_{t,w}(\boldsymbol{\lambda}_{t}),\alpha)\right\|^2\leq 2\left\|\mathcal{G}_{\mathcal{X}}\left(\boldsymbol{\lambda}_{t},\frac{\partial f_{t,w}(\boldsymbol{\lambda}_{t},\boldsymbol{\omega}^K_{t})}{\partial \boldsymbol{\lambda}},\alpha\right)\right\|^2\nonumber\\+2\left\|\mathcal{G}_{\mathcal{X}}\left(\boldsymbol{\lambda}_{t},\frac{\partial f_{t,w}(\boldsymbol{\lambda}_{t},\boldsymbol{\omega}^K_{t})}{\partial \boldsymbol{\lambda}},\alpha\right)-\mathcal{G}_{\mathcal{X}}(\boldsymbol{\lambda}_{t},\nabla F_{t,w}(\boldsymbol{\lambda}_{t}),\alpha)\right\|^2\nonumber\\ \leq 2\left\|\mathcal{G}_{\mathcal{X}}\left(\boldsymbol{\lambda}_{t},\frac{\partial f_{t,w}(\boldsymbol{\lambda}_{t},\boldsymbol{\omega}^K_{t})}{\partial \boldsymbol{\lambda}},\alpha\right)\right\|^2+\frac{2}{\rho^2}\left\|\frac{\partial f_{t,w}(\boldsymbol{\lambda}_t,\boldsymbol{\omega}^K_t)}{\partial \boldsymbol{\lambda}}-\nabla F_{t,w}\left(\boldsymbol{\lambda}_{t}\right)\right\|^2,\nonumber
\end{align}
where the last inequality comes from Lemma \ref{lem:gen_projection_bound_two}. This then implies that
\begin{align}\label{eq:gradient_step_ub_deterministic_thrm}
-\left\|\mathcal{G}_{\mathcal{X}}\left(\boldsymbol{\lambda}_{t},\frac{\partial f_{t,w}(\boldsymbol{\lambda}_{t},\boldsymbol{\omega}^K_{t})}{\partial \boldsymbol{\lambda}},\alpha\right)\right\|^2\leq-\frac{1}{2}\left\|\mathcal{G}_{\mathcal{X}}(\boldsymbol{\lambda}_{t},\nabla F_{t,w}(\boldsymbol{\lambda}_{t}),\alpha)\right\|^2\nonumber\\+\frac{1}{\rho^2}\left\|\frac{\partial f_{t,w}(\boldsymbol{\lambda}_t,\boldsymbol{\omega}^K_t)}{\partial \boldsymbol{\lambda}}-\nabla F_{t,w}\left(\boldsymbol{\lambda}_{t}\right)\right\|^2
\end{align}
Substituting \eqref{eq:gradient_step_ub_deterministic_thrm} into \eqref{eq:l_smoothnes_ub_det_thrm} gives us
\begin{align}
F_{t,w}\left(\boldsymbol{\lambda}_{t+1}\right)- F_{t,w}\left(\boldsymbol{\lambda}_{t}\right)\leq -\frac{3\alpha\rho}{16}\left\|\mathcal{G}_{\mathcal{X}}(\boldsymbol{\lambda}_{t},\nabla F_{t,w}(\boldsymbol{\lambda}_{t}),\alpha)\right\|^2\nonumber\\+\left(\frac{\alpha}{\rho}+\frac{3\alpha}{8\rho}\right)\left\|\frac{\partial f_{t,w}(\boldsymbol{\lambda}_t,\boldsymbol{\omega}^K_t)}{\partial \boldsymbol{\lambda}}-\nabla F_{t,w}\left(\boldsymbol{\lambda}_{t}\right)\right\|^2 +h(\boldsymbol{\lambda}_t)-h(\boldsymbol{\lambda}_{t+1}).
\end{align}
Rearranging we see 
\begin{align}
    \frac{3\alpha\rho}{16}\left\|\mathcal{G}_{\mathcal{X}}(\boldsymbol{\lambda}_{t},\nabla F_{t,w}(\boldsymbol{\lambda}_{t}),\alpha)\right\|^2\leq   F_{t,w}\left(\boldsymbol{\lambda}_{t}\right)-F_{t,w}\left(\boldsymbol{\lambda}_{t+1}\right)\nonumber\\+\frac{11\alpha}{8\rho}\left\|\frac{\partial f_{t,w}(\boldsymbol{\lambda}_t,\boldsymbol{\omega}^K_t)}{\partial \boldsymbol{\lambda}}-\nabla F_{t,w}\left(\boldsymbol{\lambda}_{t}\right)\right\|^2 +h(\boldsymbol{\lambda}_t)-h(\boldsymbol{\lambda}_{t+1}).
\end{align}
Summing from $1,\ldots,T$ and telescoping $h(\boldsymbol{\lambda}_t)$ 
\begin{align}
    \frac{3\alpha\rho}{16}\sum_{t=1}^T\left\|\mathcal{G}_{\mathcal{X}}(\boldsymbol{\lambda}_{t},\nabla F_{t,w}(\boldsymbol{\lambda}_{t}),\alpha)\right\|^2\leq  \sum_{t=1}^T\left(F_{t,w}\left(\boldsymbol{\lambda}_{t}\right)-F_{t,w}\left(\boldsymbol{\lambda}_{t+1}\right)\right)\nonumber\\+\frac{11\alpha}{8\rho}\sum_{t=1}^T\left(\left\|\frac{\partial f_{t,w}(\boldsymbol{\lambda}_t,\boldsymbol{\omega}^K_t)}{\partial \boldsymbol{\lambda}}-\nabla F_{t,w}\left(\boldsymbol{\lambda}_{t}\right)\right\|^2\right)+\Delta_h, \nonumber
\end{align}
where $\Delta_h:=h(\boldsymbol{\lambda}_1)-h(\boldsymbol{\lambda}_{T+1})$ Then  we can substitute Lemma \ref{lem:deterministic_cumulative_hypergradient_error} to get
\begin{align}
    \frac{3\alpha\rho}{16}\sum_{t=1}^T\left\|\mathcal{G}_{\mathcal{X}}(\boldsymbol{\lambda}_{t},\nabla F_{t,w}(\boldsymbol{\lambda}_{t}),\alpha)\right\|^2\leq  \sum_{t=1}^T\left(F_{t,w}\left(\boldsymbol{\lambda}_{t}\right)-F_{t,w}\left(\boldsymbol{\lambda}_{t+1}\right)\right)\nonumber\\+\frac{11\alpha}{8\rho}\left( \frac{27}{8}\left(\frac{\Delta_{\boldsymbol{\beta}}L_{\boldsymbol{\beta}}}{(1-\nu)} +L^2_3\right)+\frac{\rho^2}{8}\sum_{t=1}^T\left\|\mathcal{G}_{\mathcal{X}}(\boldsymbol{\lambda}_{t},\nabla F_{t,w}(\boldsymbol{\lambda}_{t}),\alpha)\right\|^2 \right)\nonumber\\+\frac{11\alpha}{8\rho}\left(\frac{27L_{\boldsymbol{\beta}}C_{\mu_g}}{2(1-\nu)}\sum_{t=2}^T\left\|\widehat{\boldsymbol{\beta}}_{t}(\boldsymbol{\lambda}_{t-1})-\widehat{\boldsymbol{\beta}}_{t-1}(\boldsymbol{\lambda}_{t-1})\right\|^2\right)+\Delta_h  .\nonumber
\end{align}
Rearranging we have
\begin{align}
    \frac{12\alpha\rho}{64}\sum_{t=1}^T\left\|\mathcal{G}_{\mathcal{X}}(\boldsymbol{\lambda}_{t},\nabla F_{t,w}(\boldsymbol{\lambda}_{t}),\alpha)\right\|^2\leq  \sum_{t=1}^T\left(F_{t,w}\left(\boldsymbol{\lambda}_{t}\right)-F_{t,w}\left(\boldsymbol{\lambda}_{t+1}\right)\right)\nonumber\\+\frac{11\alpha\rho}{64}\sum_{t=1}^T\left\|\mathcal{G}_{\mathcal{X}}(\boldsymbol{\lambda}_{t},\nabla F_{t,w}(\boldsymbol{\lambda}_{t}),\alpha)\right\|^2+\frac{11\alpha}{8\rho}\left(\frac{27}{8}\left(\frac{\Delta_{\boldsymbol{\beta}} L_{\boldsymbol{\beta}}}{(1-\nu)} +L^2_3\right)\right)\nonumber\\+\frac{11\alpha}{8\rho}\left(\frac{27L_{\boldsymbol{\beta}}C_{\mu_g}}{2(1-\nu)}\sum_{t=2}^T\left\|\widehat{\boldsymbol{\beta}}_{t}(\boldsymbol{\lambda}_{t-1})-\widehat{\boldsymbol{\beta}}_{t-1}(\boldsymbol{\lambda}_{t-1})\right\|^2\right)+\Delta_h, \nonumber
\end{align}
or more succinctly 
\begin{align}\label{eq:succinct_det_thrm_ub}
    \sum_{t=1}^T\left\|\mathcal{G}_{\mathcal{X}}(\boldsymbol{\lambda}_{t},\nabla F_{t,w}(\boldsymbol{\lambda}_{t}),\alpha)\right\|^2\leq  \frac{64}{\alpha\rho}\sum_{t=1}^T\left(F_{t,w}\left(\boldsymbol{\lambda}_{t}\right)-F_{t,w}\left(\boldsymbol{\lambda}_{t+1}\right)\right)\nonumber\\+\frac{88}{\rho^2}\left(\frac{27}{8}\left(\frac{\Delta_{\boldsymbol{\beta}}L_{\boldsymbol{\beta}}}{(1-\nu)} +L^2_3\right) \right)+\frac{88}{\rho^2}\frac{27L_{\boldsymbol{\beta}}C_{\mu_g}}{2(1-\nu)}\sum_{t=2}^T\left\|\widehat{\boldsymbol{\beta}}_{t}(\boldsymbol{\lambda}_{t-1})-\widehat{\boldsymbol{\beta}}_{t-1}(\boldsymbol{\lambda}_{t-1})\right\|^2+\frac{64\Delta_h}{\alpha\rho}.
\end{align}
Applying Lemma \ref{lem:bounded_time_varying} we see 
\begin{align}\label{eq:subsequent_function_ub}
 \sum_{t=1}^T\left(F_{t,w}\left(\boldsymbol{\lambda}_{t}\right)-F_{t,w}\left(\boldsymbol{\lambda}_{t+1}\right)\right)\leq  \frac{2TQ}{w} +V_{1,T},
\end{align}
which by using \eqref{eq:subsequent_function_ub} in \eqref{eq:succinct_det_thrm_ub} we get for $L_{\boldsymbol{\beta}}=O(\kappa^2_g)$
\begin{align}
        \sum_{t=1}^T\left\|\mathcal{G}_{\mathcal{X}}(\boldsymbol{\lambda}_{t},\nabla F_{t,w}(\boldsymbol{\lambda}_{t}),\alpha)\right\|^2\leq  \frac{64}{\alpha\rho}\left(\frac{2TQ}{w} +V_{1,T}\right)+\frac{297}{\rho^2}\left(\frac{\Delta_{\boldsymbol{\beta}} L_{\boldsymbol{\beta}}}{(1-\nu)} +L^2_3\right)\nonumber\\+\frac{64\Delta_h}{\alpha\rho} +\frac{1188L_{\boldsymbol{\beta}}C_{\mu_g}}{\rho^2(1-\nu)}H_{2,T},
\end{align}

which  dividing by $T$ and recalling we imposed regularity constraints of $H_{2,T}=o(T)$, as well as $V_{1,T}=o(T)$,  implies the bilevel local regret of our OBBO algorithm is  sublinear on the order of
\begin{align}
   BLR_w(T):= \sum_{t=1}^T\left\|\mathcal{G}_{\mathcal{X}}(\boldsymbol{\lambda}_{t},\nabla F_{t,w}(\boldsymbol{\lambda}_{t}),\alpha)\right\|^2\leq O\left(\frac{T}{w}+V_{1,T}+\kappa_g^2H_{2,T}\right).
\end{align}
\end{proof}
\subsection{Stochastic Setting}
The next Lemma upper bounds the expected cumulative difference between the time-smoothed outer level objective $F_{t,w}(\boldsymbol{\lambda})$ evaluated at  $\boldsymbol{\lambda}_t$ and $\boldsymbol{\lambda}_{t+1}$ in terms of the outer level objective upper bound $m$,  window size $w$, and a comparator sequence on subsequent function evaluations $V_{1, T}$.
\begin{lemma}\label{lem:stochastic_bounded_time_varying} Suppose Assumption \ref{assump:bounded_F_t}. If our SOBBO algorithm in Algorithm \ref{alg:stochastic_online_bregman} is applied with window size $w\geq 1$ to generate the sequence $\{\boldsymbol{\lambda}_t\}_{t=1}^T$, then we have the upper bound in expectation of
\begin{align}
\sum_{t=1}^T\left(F_{t,w}(\boldsymbol{\lambda}_t)-F_{t,w}(\boldsymbol{\lambda}_{t+1})\right) \leq \frac{2TQ}{w} +V_{1,T}.\nonumber
\end{align}
where $V_{1,T}:=\sum_{t=1}^{T} \sup_{\boldsymbol{\lambda} \in \mathcal{X}}\left[F_{t+1}\left(\boldsymbol{\lambda}\right)-F_{t}\left(\boldsymbol{\lambda}\right)\right] $.
\end{lemma}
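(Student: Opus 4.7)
The claimed bound mirrors the deterministic Lemma \ref{lem:bounded_time_varying}, and I would argue that the proof carries over essentially unchanged once one observes that in the stochastic formulation the reparameterized outer objective
\[
F_t(\boldsymbol{\lambda}) \;\triangleq\; \mathbb{E}_{\epsilon}\bigl[f_t(\boldsymbol{\lambda},\widehat{\boldsymbol{\beta}}_t(\boldsymbol{\lambda}),\epsilon)\bigr]
\]
is itself a deterministic function of $\boldsymbol{\lambda}$, so the iterate-independent bound $\sup_{\boldsymbol{\lambda}\in\mathcal{X}}|F_t(\boldsymbol{\lambda})|\leq Q$ from Assumption \ref{assump:bounded_F_t} and the variation quantity $V_{1,T}$ are well defined exactly as in the deterministic case. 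In particular, because every inequality we will apply is a pointwise supremum over $\mathcal{X}$, the resulting estimate holds pathwise for the (random) sequence $\{\boldsymbol{\lambda}_t\}_{t=1}^T$ and hence also in expectation; no separate handling of the randomness in $\boldsymbol{\lambda}_t$ is required.

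First, I would unfold the definition of $F_{t,w}$ to write
\[
\sum_{t=1}^{T}\bigl(F_{t,w}(\boldsymbol{\lambda}_t)-F_{t,w}(\boldsymbol{\lambda}_{t+1})\bigr) \;=\; \sum_{t=1}^{T}\frac{1}{w}\sum_{i=0}^{w-1}\bigl(F_{t-i}(\boldsymbol{\lambda}_{t-i})-F_{t-i}(\boldsymbol{\lambda}_{t+1-i})\bigr),
\]
and then insert $\pm F_{t+1-i}(\boldsymbol{\lambda}_{t+1-i})$ inside the inner summand, splitting each summand into a ``telescoping'' piece $F_{t-i}(\boldsymbol{\lambda}_{t-i})-F_{t+1-i}(\boldsymbol{\lambda}_{t+1-i})$ and a ``variation'' piece $F_{t+1-i}(\boldsymbol{\lambda}_{t+1-i})-F_{t-i}(\boldsymbol{\lambda}_{t+1-i})$. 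This is exactly the decomposition \eqref{eq:first_term_rhs_function_eval}--\eqref{eq:second_term_rhs_function_eval} from the proof of Lemma \ref{lem:bounded_time_varying}, just applied to the stochastic $F_t$.

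For the telescoping piece, summation over $i=0,\dots,w-1$ collapses to $\tfrac{1}{w}\bigl(F_{t+1-w}(\boldsymbol{\lambda}_{t+1-w})-F_{t+1}(\boldsymbol{\lambda}_{t+1})\bigr)$, which by Assumption \ref{assump:bounded_F_t} is bounded in absolute value by $2Q/w$; summing over $t=1,\dots,T$ gives the $2TQ/w$ contribution. For the variation piece, I upper bound $F_{t+1-i}(\boldsymbol{\lambda}_{t+1-i})-F_{t-i}(\boldsymbol{\lambda}_{t+1-i}) \leq \sup_{\boldsymbol{\lambda}\in\mathcal{X}}\bigl[F_{t+1-i}(\boldsymbol{\lambda})-F_{t-i}(\boldsymbol{\lambda})\bigr]$, and after swapping the order of summation the window average over $i$ telescopes into the single-index sum defining $V_{1,T}$, contributing exactly $V_{1,T}$. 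Adding the two pieces yields the claimed bound.

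I do not anticipate any real obstacle beyond bookkeeping: the only place where the stochastic formulation could, in principle, intrude is in controlling randomness of $\boldsymbol{\lambda}_t$, but since the relevant bounds on $F_t$ and $F_{t+1}-F_t$ are uniform in $\boldsymbol{\lambda}\in\mathcal{X}$, this is handled automatically. Consequently the proof is a verbatim translation of Lemma \ref{lem:bounded_time_varying} with the understanding that $F_t$ now denotes the $\epsilon$-expectation, and the inequality holds deterministically (and therefore in expectation).
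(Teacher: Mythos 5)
Your proof is correct and follows essentially the same route as the paper's: the same insertion of $\pm F_{t+1-i}(\boldsymbol{\lambda}_{t+1-i})$, the same telescoping of the window average to $\tfrac{1}{w}\bigl(F_{t+1-w}(\boldsymbol{\lambda}_{t+1-w})-F_{t+1}(\boldsymbol{\lambda}_{t+1})\bigr)\leq 2Q/w$, and the same supremum bound yielding $V_{1,T}$. The only cosmetic difference is that the paper unpacks $F_t$ as $\mathbb{E}_{\epsilon}[f_t(\cdot,\cdot,\epsilon)]$ and invokes linearity of expectation line by line, whereas you shortcut this by noting $F_t$ is already a deterministic function of $\boldsymbol{\lambda}$ — which is a valid and arguably cleaner way to reach the identical argument.
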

\begin{proof}[Proof of Lemma \ref{lem:stochastic_bounded_time_varying}]
By definition in the stochastic setting, we have $F_t(\boldsymbol{\lambda})\triangleq\mathbb{E}_{\epsilon}\left[f_t(\boldsymbol{\lambda},\widehat{\boldsymbol{\beta}}_t(\boldsymbol{\lambda}),\epsilon)\right]$. Then it holds, with the linearity of expectation that
\begin{align}
\sum_{t=1}^T\left(F_{t,w}(\boldsymbol{\lambda}_t)-F_{t,w}(\boldsymbol{\lambda}_{t+1})\right)=\sum_{t=1}^T\frac{1}{w}\sum_{i=0}^{w-1}\left(F_{t-i}(\boldsymbol{\lambda}_{t-i})-F_{t-i}(\boldsymbol{\lambda}_{t+1-i})\right)\nonumber\\=\sum_{t=1}^T\frac{1}{w}\sum_{i=0}^{w-1}\left(\mathbb{E}_{\epsilon}\left[f_{t-i}\left(\boldsymbol{\lambda}_{t-i},\widehat{\boldsymbol{\beta}}_{t-i}(\boldsymbol{\lambda}_{t-i}),\epsilon\right)\right]-\mathbb{E}_{\epsilon}\left[f_{t-i}\left(\boldsymbol{\lambda}_{t+1-i},\widehat{\boldsymbol{\beta}}_{t-i}(\boldsymbol{\lambda}_{t+1-i}),\epsilon\right)\right]\right)\nonumber\\=\sum_{t=1}^T\frac{1}{w}\sum_{i=0}^{w-1}\mathbb{E}_{\epsilon}\left[f_{t-i}\left(\boldsymbol{\lambda}_{t-i},\widehat{\boldsymbol{\beta}}_{t-i}(\boldsymbol{\lambda}_{t-i}),\epsilon\right)-f_{t-i}\left(\boldsymbol{\lambda}_{t+1-i},\widehat{\boldsymbol{\beta}}_{t-i}(\boldsymbol{\lambda}_{t+1-i}),\epsilon\right)\right]\nonumber
\end{align}
Which with the linearity of expectation is equivalent to
\begin{align}
    \sum_{t=1}^T\frac{1}{w}\sum_{i=0}^{w-1}\mathbb{E}_{\epsilon}\left[f_{t-i}\left(\boldsymbol{\lambda}_{t-i},\widehat{\boldsymbol{\beta}}_{t-i}(\boldsymbol{\lambda}_{t-i}),\epsilon\right)-f_{t-i}\left(\boldsymbol{\lambda}_{t+1-i},\widehat{\boldsymbol{\beta}}_{t-i}(\boldsymbol{\lambda}_{t+1-i}),\epsilon\right) \right]\nonumber\\ \label{eq:stochastic_first_term_rhs_function_eval}=  \sum_{t=1}^T\frac{1}{w}\sum_{i=0}^{w-1}\mathbb{E}_{\epsilon}\left[f_{t-i}\left(\boldsymbol{\lambda}_{t-i},\widehat{\boldsymbol{\beta}}_{t-i}(\boldsymbol{\lambda}_{t-i}),\epsilon\right)-f_{t+1-i}\left(\boldsymbol{\lambda}_{t+1-i},\widehat{\boldsymbol{\beta}}_{t+1-i}(\boldsymbol{\lambda}_{t+1-i}),\epsilon\right)\right] \\ \label{eq:stochastic_second_term_rhs_function_eval}+  \sum_{t=1}^T\frac{1}{w}\sum_{i=0}^{w-1}\mathbb{E}_{\epsilon}\left[f_{t+1-i}\left(\boldsymbol{\lambda}_{t+1-i},\widehat{\boldsymbol{\beta}}_{t+1-i}(\boldsymbol{\lambda}_{t+1-i}),\epsilon\right)-f_{t-i}\left(\boldsymbol{\lambda}_{t+1-i},\widehat{\boldsymbol{\beta}}_{t-i}(\boldsymbol{\lambda}_{t+1-i}),\epsilon\right)\right]
\end{align}
For \eqref{eq:stochastic_first_term_rhs_function_eval}, with linearity of expectation, we have
\begin{align}\label{eq:stochastic_ub_first_term_rhs_function_eval}
   \frac{1}{w}\sum_{i=0}^{w-1}\mathbb{E}_{\epsilon}\left[f_{t-i}\left(\boldsymbol{\lambda}_{t-i},\widehat{\boldsymbol{\beta}}_{t-i}(\boldsymbol{\lambda}_{t-i}),\epsilon\right)-f_{t+1-i}\left(\boldsymbol{\lambda}_{t+1-i},\widehat{\boldsymbol{\beta}}_{t+1-i}(\boldsymbol{\lambda}_{t+1-i}),\epsilon\right)\right]\nonumber\\=\frac{1}{w}\mathbb{E}_{\epsilon}\left[f_{t}\left(\boldsymbol{\lambda}_{t},\widehat{\boldsymbol{\beta}}_{t}(\boldsymbol{\lambda}_{t}),\epsilon\right)+\ldots+f_{t+1-w}\left(\boldsymbol{\lambda}_{t+1-w},\widehat{\boldsymbol{\beta}}_{t+1-w}(\boldsymbol{\lambda}_{t+1-w}),\epsilon\right)\right]\nonumber\\-\frac{1}{w}\mathbb{E}_{\epsilon}\left[f_{t+1}\left(\boldsymbol{\lambda}_{t+1},\widehat{\boldsymbol{\beta}}_{t+1}(\boldsymbol{\lambda}_{t+1})\right)+\ldots+f_{t+2-w}\left(\boldsymbol{\lambda}_{t+2-w},\widehat{\boldsymbol{\beta}}_{t+2-w}(\boldsymbol{\lambda}_{t+2-w}),\epsilon\right)\right]\nonumber\\=\frac{1}{w}\mathbb{E}_{\epsilon}\left[f_{t+1-w}\left(\boldsymbol{\lambda}_{t+1-w},\widehat{\boldsymbol{\beta}}_{t+1-w}(\boldsymbol{\lambda}_{t+1-w}),\epsilon\right)-f_{t+1}\left(\boldsymbol{\lambda}_{t+1},\widehat{\boldsymbol{\beta}}_{t+1}(\boldsymbol{\lambda}_{t+1}),\epsilon\right)\right]\nonumber\\=\frac{1}{w}\left(F_{t+1-w}(\boldsymbol{\lambda}_{t+1-w})-F_{t+1}(\boldsymbol{\lambda}_{t+1})\right)\leq \frac{2Q}{w},
\end{align}
where the last inequality comes from Assumption \ref{assump:bounded_F_t}. Note \eqref{eq:stochastic_second_term_rhs_function_eval} can be bounded through 
\begin{align}\label{eq:stochastic_ub_second_term_rhs_function_eval}
    \sum_{t=1}^T\frac{1}{w}\sum_{i=0}^{w-1}\mathbb{E}_{\epsilon}\left[f_{t+1-i}\left(\boldsymbol{\lambda}_{t+1-i},\widehat{\boldsymbol{\beta}}_{t+1-i}(\boldsymbol{\lambda}_{t+1-i}),\epsilon\right)-f_{t-i}\left(\boldsymbol{\lambda}_{t+1-i},\widehat{\boldsymbol{\beta}}_{t-i}(\boldsymbol{\lambda}_{t+1-i}),\epsilon\right)\right] \nonumber\\ \leq \sum_{t=1}^T\frac{1}{w}\sum_{i=0}^{w-1}\sup_{\boldsymbol{\lambda}}\mathbb{E}_{\epsilon}\left[f_{t+1-i}\left(\boldsymbol{\lambda},\widehat{\boldsymbol{\beta}}_{t+1-i}(\boldsymbol{\lambda}),\epsilon\right)-f_{t-i}\left(\boldsymbol{\lambda},\widehat{\boldsymbol{\beta}}_{t-i}(\boldsymbol{\lambda}),\epsilon\right)\right] \nonumber\\=\sum_{t=1}^{T} \sup_{\boldsymbol{\lambda} \in \mathcal{X}}\left[F_{t+1}\left(\boldsymbol{\lambda}\right)-F_{t}\left(\boldsymbol{\lambda}\right)\right] :=V_{1,T}
\end{align}
Combining \eqref{eq:stochastic_second_term_rhs_function_eval} and \eqref{eq:stochastic_ub_second_term_rhs_function_eval} results in the upper bound of \begin{align}
\sum_{t=1}^T\left(F_{t,w}(\boldsymbol{\lambda}_t)-F_{t,w}(\boldsymbol{\lambda}_{t+1})\right) \leq \frac{2TQ}{w} +V_{1,T}.\nonumber
\end{align}
\end{proof} 
The next Lemma provides an upper bound on the expected error of $\mathbb{E}\left[\left\|\boldsymbol{\beta}_{t}-\widehat{\boldsymbol{\beta}}_t(\boldsymbol{\lambda}_t)\right\|^2\right]$ for all $t\in[1, T]$ in terms of an expected initial error, the expected cumulative differences of the outer level variable, the expected cumulative differences of the optimal inner level variables, and a variance term arising from the stochasticity of $g_t(\boldsymbol{\lambda},\boldsymbol{\beta},\zeta)$.
\begin{lemma}\label{lem:stochastic_inner_tracking_error}
Suppose Assumptions \ref{assump:rel_smoothness}, \ref{assump:strong_convex_g_t}, and  \ref{assump:unbiased_finite_var}. Choose the inner step size of $\eta$  and the inner iteration count $K$ as
\begin{align}
    0<\eta\leq\frac{2}{\ell_{g,1}+\mu_g}, \ \text{and} \quad K\geq 1\nonumber,
\end{align}
and define the decay parameter $\nu$, the inner level variable error constant $C_{\mu_g}$, the initial error $\Delta_{\boldsymbol{\beta}}$, and the inner level variable error variance $C_K$ respectively as
\begin{align}
    \nu:=\left(1-\frac{\eta\ell_{g,1}\mu_g}{\ell_{g,1}+\mu_g}\right)\left(1-\frac{2\eta\ell_{g,1}\mu_g}{\ell_{g,1}+\mu_g}\right)^{K-1}, \quad C_{\mu_g}:=\left(1+\frac{\ell_{g,1}+\mu_g}{\eta\ell_{g,1}\mu_g}\right),\nonumber\\ \quad \Delta_{\boldsymbol{\beta}}:=\left\|\boldsymbol{\beta}_{2}-\widehat{\boldsymbol{\beta}}_1(\boldsymbol{\lambda}_1)\right\|^2, \quad  \text{and} \quad C_K:=\sum_{k=1}^{K}\left(1-\frac{2\eta\ell_{g,1}\mu_g}{\ell_{g,1}+\mu_g}\right)^k.
\end{align}
Then we have $\forall t \in[1,T]$,
\begin{align}
\mathbb{E}_{\bar{\zeta}_{t,K+1}}\left[\left\|\boldsymbol{\beta}_{t+1}-\widehat{\boldsymbol{\beta}}_t(\boldsymbol{\lambda}_t)\right\|^2\right] \leq \nu^{t-1}\Delta_{\boldsymbol{\beta}}+2C_{\mu_g}\kappa_g^2\sum_{j=0}^{t-2}\nu^{j+1} \left[\left\|\boldsymbol{\lambda}_{t-1-j}-\boldsymbol{\lambda}_{t-j}\right\|^2\right]\nonumber\\ +2C_{\mu_g}\sum_{j=0}^{t-2}\nu^{j+1}\left[\left\|\widehat{\boldsymbol{\beta}}_{t-j}(\boldsymbol{\lambda}_{t-1-j})-\widehat{\boldsymbol{\beta}}_{t-1-j}(\boldsymbol{\lambda}_{t-1-j})\right\|^2\right]+\frac{C_K\eta^2\sigma^2_{g_{\boldsymbol{\beta}}}}{s} \sum_{j=0}^{t-2}\nu^{j}.\nonumber
\end{align}
\end{lemma}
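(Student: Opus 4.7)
The plan is to adapt the deterministic Lemma \ref{lem:deterministic_inner_tracking_error} to the stochastic mini-batch setting. Two new considerations appear: (i) each inner iteration introduces variance $\sigma^2_{g_{\boldsymbol{\beta}}}/s$ from the $s$-sample mini-batch gradient, and (ii) the quantity bounded is $E_t := \mathbb{E}\|\boldsymbol{\beta}_{t+1} - \widehat{\boldsymbol{\beta}}_t(\boldsymbol{\lambda}_t)\|^2$, where $\boldsymbol{\beta}_{t+1} = \boldsymbol{\omega}_t^K$ is the state \emph{after} the inner loop at time $t$, so that $E_1 = \Delta_{\boldsymbol{\beta}}$ by the stated definition.

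First, I would establish a one-step mini-batch SGD contraction. Under Assumptions \ref{assump:rel_smoothness}, \ref{assump:strong_convex_g_t}, and \ref{assump:unbiased_finite_var}, with step size $\eta \leq 2/(\ell_{g,1}+\mu_g)$, the standard co-coercivity argument for the $\ell_{g,1}$-smooth, $\mu_g$-strongly convex function $g_t(\boldsymbol{\lambda}_t, \cdot)$, combined with unbiasedness and bounded variance of the $s$-sample mini-batch gradient, gives
\begin{align}
\mathbb{E}\|\boldsymbol{\omega}_t^k - \widehat{\boldsymbol{\beta}}_t(\boldsymbol{\lambda}_t)\|^2 \leq (1-c)\,\mathbb{E}\|\boldsymbol{\omega}_t^{k-1} - \widehat{\boldsymbol{\beta}}_t(\boldsymbol{\lambda}_t)\|^2 + \frac{\eta^2 \sigma^2_{g_{\boldsymbol{\beta}}}}{s}, \nonumber
\end{align}
where $c := 2\eta\ell_{g,1}\mu_g/(\ell_{g,1}+\mu_g)$. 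Unrolling over the $K$ inner iterations at time $t$ (initialized at $\boldsymbol{\omega}_t^0 = \boldsymbol{\beta}_t$ and ending at $\boldsymbol{\omega}_t^K = \boldsymbol{\beta}_{t+1}$) and collecting the geometric series of variance contributions into $C_K\eta^2\sigma^2_{g_{\boldsymbol{\beta}}}/s$ yields $E_t \leq (1-c)^K\, \mathbb{E}\|\boldsymbol{\beta}_t - \widehat{\boldsymbol{\beta}}_t(\boldsymbol{\lambda}_t)\|^2 + C_K\eta^2\sigma^2_{g_{\boldsymbol{\beta}}}/s$.

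Next I would mimic the triangle split from the deterministic proof. Decompose $\boldsymbol{\beta}_t - \widehat{\boldsymbol{\beta}}_t(\boldsymbol{\lambda}_t) = [\boldsymbol{\beta}_t - \widehat{\boldsymbol{\beta}}_{t-1}(\boldsymbol{\lambda}_{t-1})] + [\widehat{\boldsymbol{\beta}}_{t-1}(\boldsymbol{\lambda}_{t-1}) - \widehat{\boldsymbol{\beta}}_t(\boldsymbol{\lambda}_t)]$ and apply Young's inequality with $\delta = c/2$, so that $(1+\delta)(1-c)^K \leq (1-c/2)(1-c)^{K-1} = \nu$ and $1+1/\delta = 1 + 2/c = C_{\mu_g}$. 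The crucial observation is that $\boldsymbol{\beta}_t = \boldsymbol{\omega}_{t-1}^K$, hence $\mathbb{E}\|\boldsymbol{\beta}_t - \widehat{\boldsymbol{\beta}}_{t-1}(\boldsymbol{\lambda}_{t-1})\|^2 = E_{t-1}$ exactly. Bounding the second Young's term by $\|\widehat{\boldsymbol{\beta}}_{t-1}(\boldsymbol{\lambda}_{t-1}) - \widehat{\boldsymbol{\beta}}_t(\boldsymbol{\lambda}_t)\|^2 \leq 2\kappa_g^2 \|\boldsymbol{\lambda}_{t-1}-\boldsymbol{\lambda}_t\|^2 + 2\|\widehat{\boldsymbol{\beta}}_t(\boldsymbol{\lambda}_{t-1}) - \widehat{\boldsymbol{\beta}}_{t-1}(\boldsymbol{\lambda}_{t-1})\|^2$ via the triangle inequality and the $\kappa_g$-Lipschitz property of $\widehat{\boldsymbol{\beta}}_t(\cdot)$ from Lemma \ref{lem:hypergrad_bound} produces the one-step recursion
\begin{align}
E_t \leq \nu E_{t-1} + 2\nu C_{\mu_g}\bigl[\kappa_g^2 \|\boldsymbol{\lambda}_{t-1}-\boldsymbol{\lambda}_t\|^2 + \|\widehat{\boldsymbol{\beta}}_t(\boldsymbol{\lambda}_{t-1}) - \widehat{\boldsymbol{\beta}}_{t-1}(\boldsymbol{\lambda}_{t-1})\|^2\bigr] + \frac{C_K\eta^2\sigma^2_{g_{\boldsymbol{\beta}}}}{s}. \nonumber
\end{align}

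The main subtlety is the bookkeeping around the Young's factor: one must verify $(1-c)^K C_{\mu_g} \leq \nu C_{\mu_g}$ so the path and optimal-inner-difference terms pick up a $\nu C_{\mu_g}$ coefficient, while the variance term sits outside the Young's split and therefore carries only $C_K\eta^2\sigma^2_{g_{\boldsymbol{\beta}}}/s$, rather than being multiplied by $C_{\mu_g}$. Finally, unrolling the recursion from $t$ down to $1$ with base case $E_1 = \Delta_{\boldsymbol{\beta}}$ produces the geometric prefactors $\nu^{t-1}$ on the initial error, $\sum_{j=0}^{t-2}\nu^{j+1}$ on each of the two difference terms, and $\sum_{j=0}^{t-2}\nu^{j}$ on the variance term, recovering the claimed bound.
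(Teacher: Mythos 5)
Your proposal is correct and follows essentially the same route as the paper's proof: a one-step mini-batch contraction via the co-coercivity bound for the $\ell_{g,1}$-smooth, $\mu_g$-strongly convex inner objective with the $\sigma^2_{g_{\boldsymbol{\beta}}}/s$ variance term, unrolled over the $K$ inner iterations, followed by the Young's split with $\delta = \eta\ell_{g,1}\mu_g/(\ell_{g,1}+\mu_g)$ and the $\kappa_g$-Lipschitz bound from Lemma \ref{lem:hypergrad_bound}, then unrolled over $t$. You also correctly identify the one genuinely delicate point, namely that the variance contribution enters the recursion outside the Young's factor and therefore accumulates with weight $\sum_{j}\nu^{j}$ rather than $C_{\mu_g}\sum_j\nu^{j+1}$, exactly as in the paper.
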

\begin{proof}[Proof of Lemma \ref{lem:stochastic_inner_tracking_error}]
        Note $\forall k\in[1,K]$ the following expansion holds
        \begin{align}
           \left\|\boldsymbol{\omega}^{k}_{t}-\widehat{\boldsymbol{\beta}}_t(\boldsymbol{\lambda}_t)\right\|^2\nonumber\\=\left\|\boldsymbol{\omega}^k_{t}-\boldsymbol{\omega}^{k-1}_{t}\right\|^2+ 2\left\langle\boldsymbol{\omega}^k_{t}-\boldsymbol{\omega}^{k-1}_{t}, \boldsymbol{\omega}^{k-1}_{t}-\widehat{\boldsymbol{\beta}}_t(\boldsymbol{\lambda}_t)\right\rangle +\left\|\boldsymbol{\omega}^{k-1}_{t}-\widehat{\boldsymbol{\beta}}_t(\boldsymbol{\lambda}_t)\right\|^2\nonumber\\=\eta^2\left\|\nabla_{\boldsymbol{\omega}} g_t(\boldsymbol{\lambda}_t,\boldsymbol{\omega}_t^{k-1},\bar{\zeta}_{t,k})\right\|^2-2\eta\left\langle\nabla_{\boldsymbol{\omega}} g_t(\boldsymbol{\lambda}_t,\boldsymbol{\omega}_t^{k-1},\bar{\zeta}_{t,k}),\boldsymbol{\omega}^{k-1}_{t}-\widehat{\boldsymbol{\beta}}_t(\boldsymbol{\lambda}_t)\right\rangle\nonumber\\+\left\|\boldsymbol{\omega}^{k-1}_{t}-\widehat{\boldsymbol{\beta}}_t(\boldsymbol{\lambda}_t)\right\|^2.\nonumber
        \end{align}
        Using the definition of variance of
        \begin{align}
            VAR_{\bar{\zeta}_{t,k}}\left[\left\|\nabla_{\boldsymbol{\omega}} g_t(\boldsymbol{\lambda}_t,\boldsymbol{\omega}_t^{k-1},\bar{\zeta}_{t,k})\right\|\right]\nonumber\\=\mathbb{E}_{\bar{\zeta}_{t,k}}\left[\left\|\nabla_{\boldsymbol{\omega}} g_t(\boldsymbol{\lambda}_t,\boldsymbol{\omega}_t^{k-1},\bar{\zeta}_{t,k})\right\|^2\right]-\mathbb{E}_{\bar{\zeta}_{t,k}}\left[\left\|\nabla_{\boldsymbol{\omega}} g_t(\boldsymbol{\lambda}_t,\boldsymbol{\omega}_t^{k-1},\bar{\zeta}_{t,k})\right\|\right]^2\nonumber,
        \end{align}
and conditioning on $\boldsymbol{\omega}^{k-1}_t$, we take expectation to provide the upper bound of
        \begin{align}\label{eq:initial_stochastic_inner_ub}
    \mathbb{E}_{\bar{\zeta}_{t,k}}\left[\left\|\boldsymbol{\omega}^{k}_{t}-\widehat{\boldsymbol{\beta}}_t(\boldsymbol{\lambda}_t)\right\|^2\right]\leq\eta^2\left(\frac{\sigma^2_{g_{\boldsymbol{\beta}}}}{s}+\left\|\nabla_{\boldsymbol{\omega}} g_t(\boldsymbol{\lambda}_t,\boldsymbol{\omega}_t^{k-1})\right\|^2\right)\nonumber\\-2\eta\left\langle\nabla_{\boldsymbol{\omega}} g_t(\boldsymbol{\lambda}_t,\boldsymbol{\omega}_t^{k-1}),\boldsymbol{\omega}^{k-1}_{t}-\widehat{\boldsymbol{\beta}}_t(\boldsymbol{\lambda}_t)\right\rangle+\left\|\boldsymbol{\omega}^{k-1}_{t}-\widehat{\boldsymbol{\beta}}_t(\boldsymbol{\lambda}_t)\right\|^2.
    \end{align}
    The above upper bound is deterministic, and as such we can utilize the $\mu_g$-strong convexity of $g_t$ to bound
    \begin{align}
    -2\eta\left\langle\nabla_{\boldsymbol{\omega}} g_t(\boldsymbol{\lambda}_t,\boldsymbol{\omega}_t^{k-1}),\boldsymbol{\omega}^{k-1}_{t}-\widehat{\boldsymbol{\beta}}_t(\boldsymbol{\lambda}_t)\right\rangle \nonumber\\\leq -2\eta\left(\frac{\ell_{g,1}\mu_g}{\ell_{g,1}+\mu_g}\left\|\boldsymbol{\omega}^{k-1}_{t}-\widehat{\boldsymbol{\beta}}_t(\boldsymbol{\lambda}_t)\right\|^2+\frac{1}{\ell_{g,1}+\mu_g}\left\|\nabla_{\boldsymbol{\omega}} g_t(\boldsymbol{\lambda}_t,\boldsymbol{\omega}_t^{k-1})\right\|^2\right),\nonumber
    \end{align}
    which we can substitute in \eqref{eq:initial_stochastic_inner_ub} to get
    \begin{align}
    \mathbb{E}_{\bar{\zeta}_{t,k}}\left[\left\|\boldsymbol{\omega}^{k}_{t}-\widehat{\boldsymbol{\beta}}_t(\boldsymbol{\lambda}_t)\right\|^2\right]\leq    
    \frac{\eta^2\sigma^2_{g_{\boldsymbol{\beta}}}}{s}-\eta\left(\frac{2}{\ell_{g,1}+\mu_g}-\eta\right)\left\|\nabla_{\boldsymbol{\omega}} g_t(\boldsymbol{\lambda}_t,\boldsymbol{\omega}_t^{k-1})\right\|^2\nonumber\\+\left(1-\frac{2\eta\ell_{g,1}\mu_g}{\ell_{g,1}+\mu_g}\right)\left\|\boldsymbol{\omega}^{k-1}_{t}-\widehat{\boldsymbol{\beta}}_t(\boldsymbol{\lambda}_t)\right\|^2\nonumber.
        \end{align} 
        As $\eta\leq\frac{2}{\ell_{g,1}+\mu_g}$ this provides the upper bound to \eqref{eq:initial_stochastic_inner_ub} of 
        \begin{align}
        \mathbb{E}_{\bar{\zeta}_{t,k}}\left[\left\|\boldsymbol{\omega}^{k}_{t}-\widehat{\boldsymbol{\beta}}_t(\boldsymbol{\lambda}_t)\right\|^2\right]\leq \left(1-\frac{2\eta\ell_{g,1}\mu_g}{\ell_{g,1}+\mu_g}\right)\left\|\boldsymbol{\omega}^{k-1}_{t}-\widehat{\boldsymbol{\beta}}_t(\boldsymbol{\lambda}_t)\right\|^2+\frac{\eta^2\sigma^2_{g_{\boldsymbol{\beta}}}}{s}\nonumber.
        \end{align}
        This can be unrolled, through iterative conditioning, from $k=K,\ldots,1$
        \begin{align}
        \mathbb{E}_{\bar{\zeta}_{t,K+1}}\left[\left\|\boldsymbol{\omega}^{K}_{t}-\widehat{\boldsymbol{\beta}}_t(\boldsymbol{\lambda}_t)\right\|^2\right]\leq \left(1-\frac{2\eta\ell_{g,1}\mu_g}{\ell_{g,1}+\mu_g}\right)^K\mathbb{E}_{\bar{\zeta}_{t,1}}\left\|\boldsymbol{\omega}^{0}_{t}-\widehat{\boldsymbol{\beta}}_t(\boldsymbol{\lambda}_t)\right\|^2+\frac{C_{K}\eta^2\sigma^2_{g_{\boldsymbol{\beta}}}}{s}\nonumber,
        \end{align}
        for $C_K:=\sum_{k=1}^{K}\left(1-\frac{2\eta\ell_{g,1}\mu_g}{\ell_{g,1}+\mu_g}\right)^k$.  By definition of $\boldsymbol{\beta}_{t+1}=\boldsymbol{\omega}^K_{t}$ and $\boldsymbol{\omega}^0_t=\boldsymbol{\beta}_t$ gives us
        \begin{align}
        \mathbb{E}_{\bar{\zeta}_{t,K+1}}\left[\left\|\boldsymbol{\beta}_{t+1}-\widehat{\boldsymbol{\beta}}_t(\boldsymbol{\lambda}_t)\right\|^2\right]\leq \left(1-\frac{2\eta\ell_{g,1}\mu_g}{\ell_{g,1}+\mu_g}\right)^K\mathbb{E}_{\bar{\zeta}_{t-1,K+1}}\left\|\boldsymbol{\beta}_{t}-\widehat{\boldsymbol{\beta}}_t(\boldsymbol{\lambda}_t)\right\|^2+\frac{C_K\eta^2\sigma^2_{g_{\boldsymbol{\beta}}}}{s}\nonumber.
        \end{align}
    Note we can decompose
    \begin{align}
       \mathbb{E}_{\bar{\zeta}_{t-1,K+1}}\left\|\boldsymbol{\beta}_{t}-\widehat{\boldsymbol{\beta}}_t(\boldsymbol{\lambda}_t)\right\|^2=\mathbb{E}_{\bar{\zeta}_{t-1,K+1}}\left\|\boldsymbol{\beta}_t-\widehat{\boldsymbol{\beta}}_{t-1}(\boldsymbol{\lambda}_{t-1})+\widehat{\boldsymbol{\beta}}_{t-1}(\boldsymbol{\lambda}_{t-1})-\widehat{\boldsymbol{\beta}}_t(\boldsymbol{\lambda}_t)\right\|^2,\nonumber
    \end{align}
    which can be expanded  based on Young's Inequality and the linearity of expectation for any $\delta>0$ as 
    \begin{align}\label{eq:youngs_decomp_beta_stochastic}
      \mathbb{E}_{\bar{\zeta}_{t-1,K+1}}\left\|\boldsymbol{\beta}_t-\widehat{\boldsymbol{\beta}}_{t-1}(\boldsymbol{\lambda}_{t-1})+\widehat{\boldsymbol{\beta}}_{t-1}(\boldsymbol{\lambda}_{t-1})-\widehat{\boldsymbol{\beta}}_t(\boldsymbol{\lambda}_t)\right\|^2\nonumber\\ \leq (1+\delta)      \mathbb{E}_{\bar{\zeta}_{t-1,K+1}}\left\|\boldsymbol{\beta}_{t}-\widehat{\boldsymbol{\beta}}_{t-1}(\boldsymbol{\lambda}_{t-1})\right\|^2\nonumber\\+\left(1+\frac{1}{\delta}\right)\mathbb{E}_{\bar{\zeta}_{t-1,K+1}}\left\|\widehat{\boldsymbol{\beta}}_{t-1}(\boldsymbol{\lambda}_{t-1})-\widehat{\boldsymbol{\beta}}_t(\boldsymbol{\lambda}_t)\right\|^2.
    \end{align}
  Now  it holds through linearity of expectation that 
\begin{align}\label{eq:stochastic_inner_beta_decomp}
      \mathbb{E}_{\bar{\zeta}_{t-1,K+1}}\left\|\widehat{\boldsymbol{\beta}}_{t-1}(\boldsymbol{\lambda}_{t-1})-\widehat{\boldsymbol{\beta}}_t(\boldsymbol{\lambda}_t)\right\|^2\leq 2 \mathbb{E}_{\bar{\zeta}_{t-1,K+1}}\left\|\widehat{\boldsymbol{\beta}}_t(\boldsymbol{\lambda}_{t-1})-\widehat{\boldsymbol{\beta}}_{t}(\boldsymbol{\lambda}_{t})\right\|^2\nonumber\\+2\mathbb{E}_{\bar{\zeta}_{t-1,K+1}}\left\|\widehat{\boldsymbol{\beta}}_{t}(\boldsymbol{\lambda}_{t-1})-\widehat{\boldsymbol{\beta}}_{t-1}(\boldsymbol{\lambda}_{t-1})\right\|^2 
  \end{align}
which through Lemma \ref{lem:hypergrad_bound} can be further upper bounded with the Lipschitz constant of $\kappa_g$ as 
\begin{align}
      \mathbb{E}_{\bar{\zeta}_{t-1,K+1}}\left\|\widehat{\boldsymbol{\beta}}_{t-1}(\boldsymbol{\lambda}_{t-1})-\widehat{\boldsymbol{\beta}}_t(\boldsymbol{\lambda}_t)\right\|^2\nonumber\\\leq 2\kappa_g^2 \mathbb{E}_{\bar{\zeta}_{t-1,K+1}}\left\|\boldsymbol{\lambda}_{t-1}-\boldsymbol{\lambda}_{t}\right\|^2+2\mathbb{E}_{\bar{\zeta}_{t-1,K+1}}\left\|\widehat{\boldsymbol{\beta}}_{t}(\boldsymbol{\lambda}_{t-1})-\widehat{\boldsymbol{\beta}}_{t-1}(\boldsymbol{\lambda}_{t-1})\right\|^2 \nonumber\\=2\kappa_g^2 \left\|\boldsymbol{\lambda}_{t-1}-\boldsymbol{\lambda}_{t}\right\|^2+2\left\|\widehat{\boldsymbol{\beta}}_{t}(\boldsymbol{\lambda}_{t-1})-\widehat{\boldsymbol{\beta}}_{t-1}(\boldsymbol{\lambda}_{t-1})\right\|^2 \nonumber\\
\end{align}
where the last line comes from the non-randomness of $\left\|\boldsymbol{\lambda}_{t-1}-\boldsymbol{\lambda}_{t}\right\|^2$ and $\left\|\widehat{\boldsymbol{\beta}}_{t}(\boldsymbol{\lambda}_{t-1})-\widehat{\boldsymbol{\beta}}_{t-1}(\boldsymbol{\lambda}_{t-1})\right\|^2$ with respect to $\bar{\zeta}_{t,k}$. Combining \eqref{eq:stochastic_inner_beta_decomp} and \eqref{eq:youngs_decomp_beta_stochastic}, we have $\forall \delta>0$
\begin{align}
    \mathbb{E}_{\bar{\zeta}_{t,K+1}}\left[\left\|\boldsymbol{\beta}_{t+1}-\widehat{\boldsymbol{\beta}}_t(\boldsymbol{\lambda}_t)\right\|^2\right]\leq \left(1-\frac{2\eta\ell_{g,1}\mu_g}{\ell_{g,1}+\mu_g}\right)^K(1+\delta)\mathbb{E}_{\bar{\zeta}_{t-1,K+1}}\left[\left\|\boldsymbol{\beta}_{t}-\widehat{\boldsymbol{\beta}}_{t-1}(\boldsymbol{\lambda}_{t-1})\right\|^2\right] \nonumber\\+2\left(1-\frac{2\eta\ell_{g,1}\mu_g}{\ell_{g,1}+\mu_g}\right)^K\left(1+\frac{1}{\delta}\right)\kappa_g^2 \left\|\boldsymbol{\lambda}_{t-1}-\boldsymbol{\lambda}_{t}\right\|^2\nonumber\\ +2\left(1-\frac{2\eta\ell_{g,1}\mu_g}{\ell_{g,1}+\mu_g}\right)^K\left(1+\frac{1}{\delta}\right)\left\|\widehat{\boldsymbol{\beta}}_{t}(\boldsymbol{\lambda}_{t-1})-\widehat{\boldsymbol{\beta}}_{t-1}(\boldsymbol{\lambda}_{t-1})\right\|^2+\frac{C_K\eta^2\sigma^2_{g_{\boldsymbol{\beta}}}}{s}.\nonumber
\end{align}
Now setting $\delta=\frac{\eta\ell_{g,1}\mu_g}{\ell_{g,1}+\mu_g}>0$ implies the upper bound of 
\begin{align}
    (1+\delta)\left(1-\frac{2\eta\ell_{g,1}\mu_g}{\ell_{g,1}+\mu_g}\right)^K<\left(1-\frac{\eta\ell_{g,1}\mu_g}{\ell_{g,1}+\mu_g}\right)\left(1-\frac{2\eta\ell_{g,1}\mu_g}{\ell_{g,1}+\mu_g}\right)^{K-1}<1\nonumber,
\end{align}
which defining $\nu:=\left(1-\frac{\eta\mu_g\ell_{g,1}}{\ell_{g,1}+\mu_g}\right)\left(1-\frac{2\eta\ell_{g,1}\mu_g}{\ell_{g,1}+\mu_g}\right)^{K-1}$ and $\delta>0$ implies
\begin{align}
    \left(1-\frac{2\eta\ell_{g,1}\mu_g}{\ell_{g,1}+\mu_g}\right)^K<\nu\nonumber,
\end{align}
Using the definition of $\nu$, we get
\begin{align}
\nu\mathbb{E}_{\bar{\zeta}_{t,K+1}}\left[\left\|\boldsymbol{\beta}_{t+1}-\widehat{\boldsymbol{\beta}}_{t}(\boldsymbol{\lambda}_{t})\right\|^2\right] \leq \nu^2\mathbb{E}_{\bar{\zeta}_{t-1,K+1}}\left[\left\|\boldsymbol{\beta}_{t}-\widehat{\boldsymbol{\beta}}_{t-1}(\boldsymbol{\lambda}_{t-1})\right\|^2\right]\nonumber\\+2C_{\mu_g}\nu^2\kappa_g^2\left\|\boldsymbol{\lambda}_{t-1}-\boldsymbol{\lambda}_{t}\right\|^2+2C_{\mu_g}\nu^2\left\|\widehat{\boldsymbol{\beta}}_{t}(\boldsymbol{\lambda}_{t-1})-\widehat{\boldsymbol{\beta}}_{t-1}(\boldsymbol{\lambda}_{t-1})\right\|^2+\frac{\nu C_K\eta^2\sigma^2_{g_{\boldsymbol{\beta}}}}{s},\nonumber
\end{align}
where $C_{\mu_g}=\left(1+\frac{\ell_{g,1}+\mu_g}{\eta\ell_{g,1}\mu_g}\right)$. 
Starting at $t=T$, and unrolling to $t=1$, we can write
\begin{align}
 \mathbb{E}_{\bar{\zeta}_{t,K+1}}\left[\left\|\boldsymbol{\beta}_{t+1}-\widehat{\boldsymbol{\beta}}_t(\boldsymbol{\lambda}_t)\right\|^2\right] \leq \nu^{t-1}\Delta_{\boldsymbol{\beta}}+2C_{\mu_g}\kappa_g^2\sum_{j=0}^{t-2}\nu^{j+1} \left[\left\|\boldsymbol{\lambda}_{t-1-j}-\boldsymbol{\lambda}_{t-j}\right\|^2\right]\nonumber\\ +2C_{\mu_g}\sum_{j=0}^{t-2}\nu^{j+1}\left[\left\|\widehat{\boldsymbol{\beta}}_{t-j}(\boldsymbol{\lambda}_{t-1-j})-\widehat{\boldsymbol{\beta}}_{t-1-j}(\boldsymbol{\lambda}_{t-1-j})\right\|^2\right]+\frac{C_K\eta^2\sigma^2_{g_{\boldsymbol{\beta}}}}{s} \sum_{j=0}^{t-2}\nu^{j}.\nonumber
\end{align}
\end{proof}

The next Lemma utilizes Lemma \ref{lem:hypergrad_bound} and Lemma \ref{lem:stochastic_inner_tracking_error} to derive an upper bound on the expected hypergradient error $\forall t\in[1, T]$ with respect to $\bar{\zeta}_{t,k}$ in terms of   discounted variations of the (i) cumulative time-smoothed hypergradient error; (ii) bilevel local regret;  and (iii) cumulative difference between optimal inner-level variables.  There is a term composed of a discounted initial error and smoothness term of the inner objective, as well as an additional term arising from the variance of the stochastic gradients of $g_t(\boldsymbol{\lambda},\boldsymbol{\beta},\zeta)$. 
\begin{lemma}\label{lem:stochastic_hypergradient_error}
   Suppose Assumptions \ref{assump:rel_smoothness}, \ref{assump:strong_convex_g_t}, \ref{assump:unbiased_finite_var}, \ref{assump:bounded_hyperparm}, and \ref{assump:continuity_phi_t}. Choose the inner step size of $\eta$  and inner iteration count $K$ as
\begin{align}
    0<\eta\leq\frac{2}{\ell_{g,1}+\mu_g}, \ \text{and} \quad K\geq 1.\nonumber
\end{align} With the definitions of $\nu, \ C_{\mu_g},\ \Delta_{\boldsymbol{\beta}},$ and $C_K$ from Lemma \ref{lem:stochastic_inner_tracking_error},  the expected hypergradient error can be bounded as
      \begin{align}
          \mathbb{E}_{\bar{\zeta}_{t,K+1}}\left[\left\|\widetilde{\nabla}f_{t}(\boldsymbol{\lambda}_t,\boldsymbol{\beta}_{t+1})-\nabla F_{t}\left(\boldsymbol{\lambda}_t\right)\right\|^2\right] \leq \delta_t+A\sum_{j=0}^{t-2}\nu^{j+1}\left\|\mathcal{G}_{\mathcal{X}}(\boldsymbol{\lambda}_{t-1-j},\nabla F_{t-1-j,w}(\boldsymbol{\lambda}_{t-1-j}),\alpha)\right\|^2\nonumber\\+B\sum_{j=0}^{t-2}\nu^{j+1}\left\|\widetilde{\nabla}f_{t-1-j,w}(\boldsymbol{\lambda}_{t-1-j},\boldsymbol{\beta}_{t-j},\mathcal{Z}_{t-1-j,w})-\nabla F_{t-1-j,w}(\boldsymbol{\lambda}_{t-1-j})\right\|^2\nonumber\\+C\sum_{j=0}^{t-2}\nu^{j+1}\left[\left\|\widehat{\boldsymbol{\beta}}_{t-j}(\boldsymbol{\lambda}_{t-1-j})-\widehat{\boldsymbol{\beta}}_{t-1-j}(\boldsymbol{\lambda}_{t-1-j})\right\|^2\right]+\frac{D\sigma^2_{g_{\boldsymbol{\beta}}}}{s}.\nonumber 
        \end{align}
        where $\delta_t=\kappa^2_g\nu^{t-1}\Delta_{\boldsymbol{\beta}}$ and $A=4C_{\mu_g}\kappa_g^4\alpha^2,B=\frac{4C_{\mu_g}\kappa_g^4\alpha^2}{\rho^2}$, $C=2C_{\mu_g}\kappa^2_g$, and $D=C_K\kappa^2_g\eta^2 \sum_{j=0}^{t-2}\nu^{j}$.
  
\end{lemma}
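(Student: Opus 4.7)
\noindent\textbf{Proof plan for Lemma \ref{lem:stochastic_hypergradient_error}.}
The plan is to mirror the architecture of the proof of Lemma \ref{lem:deterministic_hypergradient_error} with two structural substitutions: the ITD-specific bound (Lemma \ref{lem:itd_hypergradient_bound}) is replaced by Lemma \ref{lem:hypergrad_bound} applied to the Neumann-type estimator $\widetilde{\nabla}f_t(\boldsymbol{\lambda}_t,\boldsymbol{\beta}_{t+1})$, and the deterministic inner-tracking lemma is replaced by its stochastic counterpart (Lemma \ref{lem:stochastic_inner_tracking_error}), which introduces the new variance residual proportional to $\sigma_{g_{\boldsymbol{\beta}}}^2/s$. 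Up to bookkeeping of constants, the decomposition then assembles in the same pattern as in the deterministic case, with the new variance feeding directly into the constant $D$.

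First I would apply Lemma \ref{lem:hypergrad_bound} pointwise to obtain
\[
\left\|\widetilde{\nabla}f_t(\boldsymbol{\lambda}_t,\boldsymbol{\beta}_{t+1}) - \nabla F_t(\boldsymbol{\lambda}_t)\right\|^2 \leq M_f^2\left\|\boldsymbol{\beta}_{t+1} - \widehat{\boldsymbol{\beta}}_t(\boldsymbol{\lambda}_t)\right\|^2,
\]
with $M_f^2 = O(\kappa_g^2)$. Taking $\mathbb{E}_{\bar{\zeta}_{t,K+1}}[\cdot]$ and substituting the upper bound on $\mathbb{E}_{\bar{\zeta}_{t,K+1}}[\|\boldsymbol{\beta}_{t+1}-\widehat{\boldsymbol{\beta}}_t(\boldsymbol{\lambda}_t)\|^2]$ furnished by Lemma \ref{lem:stochastic_inner_tracking_error} immediately yields four groups of terms: (i) an initial contribution $M_f^2\nu^{t-1}\Delta_{\boldsymbol{\beta}}$ matching $\delta_t$; (ii) a discounted sum of squared outer-variable displacements $\|\boldsymbol{\lambda}_{t-1-j}-\boldsymbol{\lambda}_{t-j}\|^2$; (iii) a discounted sum of optimal-inner drifts yielding the constant $C$; and (iv) a variance residual $M_f^2\cdot \tfrac{C_K\eta^2\sigma_{g_{\boldsymbol{\beta}}}^2}{s}\sum_{j=0}^{t-2}\nu^j$ that collapses to $D\sigma_{g_{\boldsymbol{\beta}}}^2/s$.

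Next I would convert the outer-variable displacements into generalized projections. Invoking the SOBBO update gives $\boldsymbol{\lambda}_{t-1-j}-\boldsymbol{\lambda}_{t-j} = \alpha\,\mathcal{G}_{\mathcal{X}}(\boldsymbol{\lambda}_{t-1-j},\widetilde{\nabla}f_{t-1-j,w}(\boldsymbol{\lambda}_{t-1-j},\boldsymbol{\beta}_{t-j},\mathcal{Z}_{t-1-j,w}),\alpha)$, and a single application of Young's inequality splits the squared projection into twice the true-gradient generalized projection (producing the $A$ coefficient) and twice the deviation between stochastic and true generalized projections, which Lemma \ref{lem:gen_projection_bound_two} controls by $\rho^{-2}\|\widetilde{\nabla}f_{t-1-j,w}-\nabla F_{t-1-j,w}\|^2$ (producing $B$ with its additional $\rho^{-2}$ factor).

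The main obstacle I anticipate is the careful handling of expectations and filtrations: the iterates $\boldsymbol{\lambda}_{t-1-j}$ and $\boldsymbol{\beta}_{t-j}$ for $j\geq 0$ are measurable with respect to samples drawn strictly before round $t$, so only $\bar{\zeta}_{t,K+1}$ acts stochastically on the left-hand side. Because every inequality invoked above (Young's, Lemma \ref{lem:hypergrad_bound}, Lemma \ref{lem:gen_projection_bound_two}) is pointwise in the randomness, I would first derive the almost-sure bound and only then apply a single expectation $\mathbb{E}_{\bar{\zeta}_{t,K+1}}[\cdot]$, which distributes linearly over the cumulative sums. Collecting the constants ($M_f^2\cdot 2C_{\mu_g}\kappa_g^2\cdot 2\alpha^2 \mapsto A$, division by $\rho^2 \mapsto B$, $M_f^2\cdot 2C_{\mu_g}\mapsto C$, and $M_f^2\cdot C_K\eta^2\sum_{j}\nu^j \mapsto D$) then reproduces the stated decomposition exactly, with the deterministic Lemma \ref{lem:deterministic_hypergradient_error} recovered in the zero-variance limit $\sigma_{g_{\boldsymbol{\beta}}}^2 \to 0$.
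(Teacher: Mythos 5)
Your proposal follows essentially the same route as the paper's proof: bound the hypergradient bias pointwise via Lemma \ref{lem:hypergrad_bound}, take $\mathbb{E}_{\bar{\zeta}_{t,K+1}}$ and substitute Lemma \ref{lem:stochastic_inner_tracking_error}, then rewrite the outer-variable displacements through the SOBBO update and split them with Lemma \ref{lem:gen_projection_bound_two} into the $A$ and $B$ terms; the constants assemble exactly as you describe. The only cosmetic slip is writing $M_f^2=O(\kappa_g^2)$ (it is $O(\kappa_g^4)$ under the paper's definition of $M_f$), but the paper itself uses the constant $\kappa_g^2$ in this step, so your final constants agree with the stated lemma.
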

\begin{proof}[Proof of Lemma \ref{lem:stochastic_hypergradient_error}]
        First,  from Lemma \ref{lem:hypergrad_bound} we have that $\forall \boldsymbol{\lambda} \in \mathcal{X}$ and $\boldsymbol{\beta}\in\mathbb{R}^{d_2}$
        \begin{align}\label{eq:hypergradient_bias_equation}
               \left\|\widetilde{\nabla}f_{t}(\boldsymbol{\lambda}_t,\boldsymbol{\beta}_{t+1})-\nabla F_{t}\left(\boldsymbol{\lambda}_t\right)\right\|^2 \leq \kappa^2_g\left\|\boldsymbol{\beta}_{t+1}-\widehat{\boldsymbol{\beta}}_t(\boldsymbol{\lambda}_t)\right\|^2.
        \end{align}
        Taking expectation of \eqref{eq:hypergradient_bias_equation} with respect to $\bar{\zeta}_{t,K+1}$ and substituting the upper bound of Lemma \ref{lem:stochastic_inner_tracking_error}, note
        \begin{align}
               \mathbb{E}_{\bar{\zeta}_{t,K+1}}\left[\left\|\widetilde{\nabla}f_{t}(\boldsymbol{\lambda}_t,\boldsymbol{\beta}_{t+1})-\nabla F_{t}\left(\boldsymbol{\lambda}_t\right)\right\|^2\right]\nonumber\\\label{eq:term_to_focus_stochastic_hg}\leq \kappa^2_g\left(\nu^{t-1}\Delta_{\boldsymbol{\beta}}+2C_{\mu_g}\kappa_g^2\sum_{j=0}^{t-2}\nu^{j+1} \left\|\boldsymbol{\lambda}_{t-1-j}-\boldsymbol{\lambda}_{t-j}\right\|^2\right)\\ +\kappa^2_g\left(2C_{\mu_g}\sum_{j=0}^{t-2}\nu^{j+1}\left\|\widehat{\boldsymbol{\beta}}_{t-j}(\boldsymbol{\lambda}_{t-1-j})-\widehat{\boldsymbol{\beta}}_{t-1-j}(\boldsymbol{\lambda}_{t-1-j})\right\|^2+\frac{C_K\eta^2\sigma^2_{g_{\boldsymbol{\beta}}}}{s} \sum_{j=0}^{t-2}\nu^{j}\right).
        \end{align}
        Focusing on the second term of \eqref{eq:term_to_focus_stochastic_hg} we see by definition 
        \begin{align}\label{eq:gradient_step_equality}
            \sum_{j=0}^{t-2}\nu^{j+1} \left\|\boldsymbol{\lambda}_{t-1-j}-\boldsymbol{\lambda}_{t-j}\right\|^2\nonumber\\=\sum_{j=0}^{t-2}\nu^{j+1} \alpha^2\left\|\mathcal{G}_{\mathcal{X}}(\boldsymbol{\lambda}_{t-1-j},\widetilde{\nabla} f_{t-1-j,\boldsymbol{w}} (\boldsymbol{\lambda}_{t-1-j},\boldsymbol{\beta}_{t-j},\mathcal{Z}_{t-1-j,w}),\alpha)\right\|^2.
        \end{align}
        Using Lemma \ref{lem:gen_projection_bound_two} we have  $\forall j\in [0,t-2]$
        \begin{align}
            \left\|\mathcal{G}_{\mathcal{X}}(\boldsymbol{\lambda}_{t-1-j},\widetilde{\nabla} f_{t-1-j,\boldsymbol{w}} (\boldsymbol{\lambda}_{t-1-j},\boldsymbol{\beta}_{t-j},\mathcal{Z}_{t-1-j,w}),\alpha)\right\|^2\leq 2\left\|\mathcal{G}_{\mathcal{X}}(\boldsymbol{\lambda}_{t-1-j},\nabla F_{t-1-j,w}(\boldsymbol{\lambda}_{t-1-j}),\alpha)\right\|^2\nonumber\\+2\left\|\mathcal{G}_{\mathcal{X}}(\boldsymbol{\lambda}_{t-1-j},\nabla F_{t-1-j,w}(\boldsymbol{\lambda}_{t-1-j}),\alpha)-\mathcal{G}_{\mathcal{X}}(\boldsymbol{\lambda}_{t-1-j},\widetilde{\nabla} f_{t-1-j,\boldsymbol{w}} (\boldsymbol{\lambda}_{t-1-j},\boldsymbol{\beta}_{t-j},\mathcal{Z}_{t-1-j,w}),\alpha)\right\|^2\nonumber\\\leq2\left\|\mathcal{G}_{\mathcal{X}}(\boldsymbol{\lambda}_{t-1-j},\nabla F_{t-1-j,w}(\boldsymbol{\lambda}_{t-1-j}),\alpha)\right\|^2\nonumber\\+\frac{2}{\rho^2}\left\|\widetilde{\nabla}f_{t-1-j,w}(\boldsymbol{\lambda}_{t-1-j},\boldsymbol{\beta}_{t-j},\mathcal{Z}_{t-1-j,w})-\nabla F_{t-1-j,w}(\boldsymbol{\lambda}_{t-1-j})\right\|^2\nonumber.
        \end{align}
        We can write an upper bound to \eqref{eq:gradient_step_equality} as
        \begin{align}\label{eq:bregman_stochastic_hg_decomposition}
        \sum_{j=0}^{t-2}\nu^{j+1} \left\|\boldsymbol{\lambda}_{t-1-j}-\boldsymbol{\lambda}_{t-j}\right\|^2\leq 2\alpha^2\sum_{j=0}^{t-2}\nu^{j+1}\left\|\mathcal{G}_{\mathcal{X}}(\boldsymbol{\lambda}_{t-1-j},\nabla F_{t-1-j,w}(\boldsymbol{\lambda}_{t-1-j}),\alpha)\right\|^2\nonumber\\+\frac{2\alpha^2}{\rho^2}\sum_{j=0}^{t-2}\nu^{j+1}\left(\left\|\widetilde{\nabla}f_{t-1-j,w}(\boldsymbol{\lambda}_{t-1-j},\boldsymbol{\beta}_{t-j},\mathcal{Z}_{t-1-j,w})-\nabla F_{t-1-j,w}(\boldsymbol{\lambda}_{t-1-j})\right\|^2\right).
        \end{align}
        Using \eqref{eq:bregman_stochastic_hg_decomposition}, we get
      \begin{align}
          \mathbb{E}_{\bar{\zeta}_{t,K+1}}\left[\left\|\widetilde{\nabla}f_{t}(\boldsymbol{\lambda}_t,\boldsymbol{\beta}_{t+1})-\nabla F_{t}\left(\boldsymbol{\lambda}_t\right)\right\|^2\right] \leq \delta_t+A\sum_{j=0}^{t-2}\nu^{j+1}\left\|\mathcal{G}_{\mathcal{X}}(\boldsymbol{\lambda}_{t-1-j},\nabla F_{t-1-j,w}(\boldsymbol{\lambda}_{t-1-j}),\alpha)\right\|^2\nonumber\\+B\sum_{j=0}^{t-2}\nu^{j+1}\left\|\widetilde{\nabla}f_{t-1-j,w}(\boldsymbol{\lambda}_{t-1-j},\boldsymbol{\beta}_{t-j},\mathcal{Z}_{t-1-j,w})-\nabla F_{t-1-j,w}(\boldsymbol{\lambda}_{t-1-j})\right\|^2\nonumber\\+C\sum_{j=0}^{t-2}\nu^{j+1}\left[\left\|\widehat{\boldsymbol{\beta}}_{t-j}(\boldsymbol{\lambda}_{t-1-j})-\widehat{\boldsymbol{\beta}}_{t-1-j}(\boldsymbol{\lambda}_{t-1-j})\right\|^2\right]+\frac{D\sigma^2_{g_{\boldsymbol{\beta}}}}{s}.\nonumber 
        \end{align}
        where $\delta_t=\kappa^2_g\nu^{t-1}\Delta_{\boldsymbol{\beta}}$ and $A=4C_{\mu_g}\kappa_g^4\alpha^2,B=\frac{4C_{\mu_g}\kappa_g^4\alpha^2}{\rho^2}$, $C=2C_{\mu_g}\kappa^2_g$, and $D=C_K\kappa^2_g\eta^2 \sum_{j=0}^{t-2}\nu^{j}$.
  
    \end{proof}
 Lemma \ref{lem:stochastic_cumulative_hypergradient_error} provides an upper bound on the expected cumulative time-smoothed hypergradient error in terms of an initial error, expected bilevel local regret, expected cumulative differences of optimal inner level variables, as well as variance terms from the stochastic approximated gradients.
\begin{lemma}\label{lem:stochastic_cumulative_hypergradient_error}
Suppose Assumptions \ref{assump:rel_smoothness}, \ref{assump:strong_convex_g_t}, \ref{assump:unbiased_finite_var}, \ref{assump:bounded_hyperparm}, and \ref{assump:continuity_phi_t}. Choose the inner step size of $\eta$, the inner iteration count $K$, and the outer step size $\alpha$ respectively as
\begin{align}
    0<\eta\leq\frac{2}{\ell_{g,1}+\mu_g}, \ \quad K\geq 1, \  \text{and} \quad \alpha<\frac{\rho\sqrt{(1-\nu)}}{\kappa^2_g\sqrt{72C_{\mu_g}}}.\nonumber
\end{align} Then  $\forall t\in[1,T]$  the expected cumulative time-smoothed hypergradient error with respect to independent samples $Z_{t,w}$ from  SOBBO satisfies
    \begin{align}
\mathbb{E}_{Z_{t,w}}\left[\sum_{t=1}^T\left\|\widetilde{\nabla}f_{t,w}(\boldsymbol{\lambda}_t,\boldsymbol{\beta}_{t+1},\mathcal{Z}_{t,w})-\nabla F_{t,w}\left(\boldsymbol{\lambda}_t\right)\right\|^2\right]  \leq\frac{9T\sigma^2_f}{2w}+\frac{9T\ell^2_{f,1}\kappa^2_g}{2}\left(1-\frac{\mu_g}{\ell_{g,1}}\right)^{2m} \nonumber\\+\frac{9\kappa^2_g\Delta_{\boldsymbol{\beta}}}{4(1-\nu)}+\frac{\rho^2}{8}\sum_{t=1}^T\mathbb{E}\left\|\mathcal{G}_{\mathcal{X}}(\boldsymbol{\lambda}_{t},\nabla F_{t,w}(\boldsymbol{\lambda}_{t}),\alpha)\right\|^2\nonumber\\+\frac{9C_{\mu_g}\kappa^2_g}{2(1-\nu)}\sum_{t=2}^T\mathbb{E}\left[\left\|\widehat{\boldsymbol{\beta}}_{t}(\boldsymbol{\lambda}_{t-1})-\widehat{\boldsymbol{\beta}}_{t-1}(\boldsymbol{\lambda}_{t-1})\right\|^2\right]+\frac{9TC_K\kappa^2_g\eta^2\sigma^2_{g_{\boldsymbol{\beta}}}}{4s(1-\nu)}.\nonumber
\end{align}
\end{lemma}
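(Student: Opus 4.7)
The plan is to extend the strategy of Lemma \ref{lem:deterministic_cumulative_hypergradient_error} to the stochastic setting by introducing an additional decomposition that isolates both the variance of the window-averaged stochastic hypergradient estimator and its finite-$m$ Neumann-series bias. First, I decompose the error $\widetilde{\nabla}f_{t,w}(\boldsymbol{\lambda}_t,\boldsymbol{\beta}_{t+1},\mathcal{Z}_{t,w}) - \nabla F_{t,w}(\boldsymbol{\lambda}_t)$ into three pieces: the centered fluctuation $P_1 := \widetilde{\nabla}f_{t,w}(\cdot,\cdot,\mathcal{Z}_{t,w}) - \mathbb{E}_{\mathcal{Z}_{t,w}}[\widetilde{\nabla}f_{t,w}(\cdot,\cdot,\mathcal{Z}_{t,w})]$, the (conditionally non-random) bias $P_2 := \mathbb{E}_{\mathcal{Z}_{t,w}}[\widetilde{\nabla}f_{t,w}(\cdot,\cdot,\mathcal{Z}_{t,w})] - \widetilde{\nabla}f_{t,w}(\cdot,\cdot)$, and the deterministic hypergradient approximation error $P_3 := \widetilde{\nabla}f_{t,w}(\cdot,\cdot) - \nabla F_{t,w}(\cdot)$. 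Applying Lemma \ref{lem:vector_ub} with three summands yields $\|P_1+P_2+P_3\|^2 \leq 3(\|P_1\|^2+\|P_2\|^2+\|P_3\|^2)$, and I bound each expectation termwise.

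For $\mathbb{E}\|P_1\|^2$, independence of the samples $\{\mathcal{E}_{t-i}\}_i$ across the window (Assumption \ref{assump:unbiased_finite_var}) causes the cross terms to vanish, leaving the diagonal variance at most $\sigma^2_f/w$—this is precisely where window averaging acts as variance reduction. For $\|P_2\|^2$, Jensen over the window average combined with Lemma \ref{lem:stochastic_gradient_estimate_bias} yields the deterministic, $w$-independent bound $\ell^2_{f,1}\kappa^2_g(1-\mu_g/\ell_{g,1})^{2m}$. For $\|P_3\|^2$, Jensen gives $\|P_3\|^2 \leq \frac{1}{w}\sum_{i=0}^{w-1}\|\widetilde{\nabla}f_{t-i}(\boldsymbol{\lambda}_{t-i},\boldsymbol{\beta}_{t+1-i}) - \nabla F_{t-i}(\boldsymbol{\lambda}_{t-i})\|^2$. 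Summing over $t \in [1,T]$, each past index $k$ appears in the window average with total weight at most one, so $\sum_{t=1}^T\mathbb{E}\|P_3\|^2 \leq \sum_{t=1}^T\mathbb{E}\|\widetilde{\nabla}f_t(\boldsymbol{\lambda}_t,\boldsymbol{\beta}_{t+1}) - \nabla F_t(\boldsymbol{\lambda}_t)\|^2$, which is the individual-hypergradient error already analyzed.

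I then apply Lemma \ref{lem:stochastic_hypergradient_error} termwise to this cumulative individual error. Swapping the outer sum over $t$ with the inner decay sum over $j$ in the $\nu^{j+1}$ factors and invoking $\sum_{j=0}^{\infty}\nu^j = 1/(1-\nu)$ produces a self-referential inequality in which $\sum_t\mathbb{E}\|\widetilde{\nabla}f_{t,w}-\nabla F_{t,w}\|^2$ appears on both sides, together with the cumulative bilevel local regret $\sum_t\mathbb{E}\|\mathcal{G}_{\mathcal{X}}(\boldsymbol{\lambda}_t,\nabla F_{t,w}(\boldsymbol{\lambda}_t),\alpha)\|^2$ (with coefficient proportional to $A/(1-\nu)$), the cumulative optimal-inner-variable differences (coefficient $C/(1-\nu)$), the geometric initial-error tail $\kappa^2_g\Delta_{\boldsymbol{\beta}}/(1-\nu)$, and the stochastic inner-gradient variance $TC_K\kappa^2_g\eta^2\sigma^2_{g_{\boldsymbol{\beta}}}/(s(1-\nu))$.

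Finally, I isolate $\sum_t\mathbb{E}\|\widetilde{\nabla}f_{t,w}-\nabla F_{t,w}\|^2$ on the left by ensuring the self-coefficient $3B/(1-\nu)$ is bounded strictly below $1$. Since $B = 4C_{\mu_g}\kappa_g^4\alpha^2/\rho^2$, the step-size condition $\alpha \leq \rho\sqrt{1-\nu}/(\kappa_g^2\sqrt{72C_{\mu_g}})$ makes $3B/(1-\nu) \leq 1/6$, so dividing by $1 - 3B/(1-\nu)$ only inflates the remaining constants by a factor of order one; collecting all coefficients then produces the stated prefactors $9/2$, $9/4$, and $\rho^2/8$. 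I expect the main obstacle to be the combinatorial bookkeeping of the nested sums over window index $i$, recursion index $j$, and outer time $t$—in particular, being careful about edge effects near $t=1$ where $\sum_{j=0}^{t-2}$ is empty, and about cleanly decoupling the window-averaged $1/w$ scaling of the $P_1$ variance from the $m$-dependent but $w$-independent bias contribution of $P_2$ before the final isolation step.
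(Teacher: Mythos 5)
Your proposal is correct and follows essentially the same route as the paper's proof: the paper also splits the window-averaged error into the centered fluctuation (bounded by $\sigma_f^2/w$ via independence), the estimator bias (via Lemma \ref{lem:stochastic_gradient_estimate_bias} and Lemma \ref{lem:vector_ub}), and the deterministic approximation error, then feeds the last piece through Lemma \ref{lem:stochastic_hypergradient_error}, the geometric-series bound $\sum_j \nu^j < 1/(1-\nu)$, and the self-referential isolation enabled by the step-size condition on $\alpha$. The only cosmetic difference is that you use a single three-way Young split where the paper uses two nested two-way splits, which changes the intermediate constants slightly but still yields the stated bound.
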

\begin{proof}[Proof of Lemma \ref{lem:stochastic_cumulative_hypergradient_error}]
    With the linearity of expectation and by definition of \eqref{eq:stoch_time_smooth_hypergradient} and \eqref{eq:det_sobow_local_regret} we have
\begin{align}\label{eq:lin_expectation_cumulative_hg_error}
\mathbb{E}_{Z_{t,w}}\left[\sum_{t=1}^T\left\|\widetilde{\nabla}f_{t,w}(\boldsymbol{\lambda}_t,\boldsymbol{\beta}_{t+1},\mathcal{Z}_{t,w})-\nabla F_{t,w}\left(\boldsymbol{\lambda}_t\right)\right\|^2\right] \nonumber\\=\sum_{t=1}^T\mathbb{E}_{Z_{t,w}}\left[\left\|\widetilde{\nabla}f_{t,w}(\boldsymbol{\lambda}_t,\boldsymbol{\beta}_{t+1},\mathcal{Z}_{t,w})-\nabla F_{t,w}\left(\boldsymbol{\lambda}_t\right)\right\|^2\right]  \nonumber\\=\frac{1}{w^2}\sum_{t=1}^T\mathbb{E}_{\mathcal{Z}_{t,w}}\left[\left\|\sum_{i=0}^{w-1}\left[\widetilde{\nabla}f_{t-i}(\boldsymbol{\lambda}_{t-i},\boldsymbol{\beta}_{t+1-i},\mathcal{E}_{t-i})-\nabla F_{t-i}\left(\boldsymbol{\lambda}_{t-i}\right)\right]\right\|^2\right].
    \end{align}
Note that we can upper bound \eqref{eq:lin_expectation_cumulative_hg_error} as
\begin{align}
    \frac{1}{w^2}\sum_{t=1}^T\mathbb{E}_{\mathcal{Z}_{t,w}}\left[\left\|\sum_{i=0}^{w-1}\left[\widetilde{\nabla}f_{t-i}(\boldsymbol{\lambda}_{t-i},\boldsymbol{\beta}_{t+1-i},\mathcal{E}_{t-i})-\nabla F_{t-i}\left(\boldsymbol{\lambda}_{t-i}\right)\right]\right\|^2\right] \nonumber\\ \label{eq:variance_sgd_hg_estimate} \leq    \frac{2}{w^2}\sum_{t=1}^T\mathbb{E}_{\mathcal{Z}_{t,w}}\left[\left\|\sum_{i=0}^{w-1}\left[\widetilde{\nabla}f_{t-i}(\boldsymbol{\lambda}_{t-i},\boldsymbol{\beta}_{t+1-i},\mathcal{E}_{t-i})-\mathbb{E}_{\mathcal{E}_{t-i}}\left[\widetilde{\nabla}f_{t-i}(\boldsymbol{\lambda}_{t-i},\boldsymbol{\beta}_{t+1-i},\mathcal{E}_{t-i})\right]\right]\right\|^2\right]\\\label{eq:hypergradient_bias_cumulative_hg_error}+    \frac{2}{w^2}\sum_{t=1}^T\mathbb{E}_{\mathcal{Z}_{t,w}}\left[\left\|\sum_{i=0}^{w-1}\left[\mathbb{E}_{\mathcal{E}_{t-i}}\left[\widetilde{\nabla}f_{t-i}(\boldsymbol{\lambda}_{t-i},\boldsymbol{\beta}_{t+1-i},\mathcal{E}_{t-i})\right]-\nabla F_{t-i}\left(\boldsymbol{\lambda}_{t-i}\right)\right]\right\|^2\right].
\end{align}
The linearity of expectation,  definition of variance, and  independence of $Z_{t,w}:=\left\{\mathcal{E}_{t-i} \right\}_{i=0}^{w-1} \ \forall t\in[1,T]$ implies for $y_i=\widetilde{\nabla}f_{t-i}(\boldsymbol{\lambda}_{t-i},\boldsymbol{\beta}_{t+1-i},\mathcal{E}_{t-i})$ with finite variance $\sigma^2_f$, we have
\begin{align}\label{eq:variance_defn_stochastic_hg_error}
\mathbb{E}_{\mathcal{Z}_{t,w}}\left[\left\|\sum_{i=0}^{w-1}\widetilde{\nabla}f_{t-i}(\boldsymbol{\lambda}_{t-i},\boldsymbol{\beta}_{t+1-i},\mathcal{E}_{t-i})-\mathbb{E}_{\mathcal{E}_{t-i}}\left[\sum_{i=0}^{w-1}\widetilde{\nabla}f_{t-i}(\boldsymbol{\lambda}_{t-i},\boldsymbol{\beta}_{t+1-i},\mathcal{E}_{t-i})\right]\right\|^2\right]\nonumber\\\leq \sum_{i=0}^{w-1}\sigma^2_f=w\sigma^2_f.
\end{align}
Expanding \eqref{eq:hypergradient_bias_cumulative_hg_error} we have
\begin{align}
\frac{2}{w^2}\sum_{t=1}^T\mathbb{E}_{\mathcal{Z}_{t,w}}\left[\left\|\sum_{i=0}^{w-1}\left[\mathbb{E}_{\mathcal{E}_{t-i}}\left[\widetilde{\nabla}f_{t-i}(\boldsymbol{\lambda}_{t-i},\boldsymbol{\beta}_{t+1-i},\mathcal{E}_{t-i})\right]-\nabla F_{t-i}\left(\boldsymbol{\lambda}_{t-i}\right)\right]\right\|^2\right]\nonumber\\\leq \frac{4}{w^2}\sum_{t=1}^T\mathbb{E}_{\mathcal{Z}_{t,w}}\left[\left\|\sum_{i=0}^{w-1}\left[\mathbb{E}_{\mathcal{E}_{t-i}}\left[\widetilde{\nabla}f_{t-i}(\boldsymbol{\lambda}_{t-i},\boldsymbol{\beta}_{t+1-i},\mathcal{E}_{t-i})\right]-\widetilde{\nabla}f_{t-i}(\boldsymbol{\lambda}_{t-i},\boldsymbol{\beta}_{t+1-i})\right]\right\|^2\right] \label{eq:bias_expansion_appendix}\\+\frac{4}{w^2}\sum_{t=1}^T\left[\left\|\sum_{i=0}^{w-1}\left[\widetilde{\nabla}f_{t-i}(\boldsymbol{\lambda}_{t-i},\boldsymbol{\beta}_{t+1-i})-\nabla F_{t-i}\left(\boldsymbol{\lambda}_{t-i}\right)\right]\right\|^2\right]\label{eq:expected_surrogat_grad_error}
\end{align}

Utilizing Lemmas \ref{lem:vector_ub} and \ref{lem:stochastic_gradient_estimate_bias}for \eqref{eq:bias_expansion_appendix} gives us the expected stochastic gradient bias 
\begin{align}
   \frac{4}{w^2}\sum_{t=1}^T\mathbb{E}_{\mathcal{Z}_{t,w}}\left[\left\|\sum_{i=0}^{w-1}\left[\mathbb{E}_{\mathcal{E}_{t-i}}\left[\widetilde{\nabla}f_{t-i}(\boldsymbol{\lambda}_{t-i},\boldsymbol{\beta}_{t+1-i},\mathcal{E}_{t-i})\right]-\widetilde{\nabla}f_{t-i}(\boldsymbol{\lambda}_{t-i},\boldsymbol{\beta}_{t+1-i})\right]\right\|^2\right] \nonumber\\\leq\frac{4}{w^2}\sum_{t=1}^T\mathbb{E}_{\mathcal{Z}_{t,w}}\left[w\sum_{i=0}^{w-1}\left\|\mathbb{E}_{\mathcal{E}_{t-i}}\left[\widetilde{\nabla}f_{t-i}(\boldsymbol{\lambda}_{t-i},\boldsymbol{\beta}_{t+1-i},\mathcal{E}_{t-i})\right]-\widetilde{\nabla}f_{t-i}(\boldsymbol{\lambda}_{t-i},\boldsymbol{\beta}_{t+1-i})\right\|^2\right] \nonumber\\\leq \frac{4}{w^2}\sum_{t=1}^T\left(w^2\ell^2_{f,1}\kappa^2_g\left(1-\frac{\mu_g}{\ell_{g,1}}\right)^{2m}\right)=4T\ell^2_{f,1}\kappa^2_g\left(1-\frac{\mu_g}{\ell_{g,1}}\right)^{2m}
\end{align}
Applying Lemma \ref{lem:vector_ub} with linearity of expectation to \eqref{eq:expected_surrogat_grad_error} results in
\begin{align}\label{eq:inner_tracking_decomp}
    \frac{4}{w^2}\sum_{t=1}^T\left[\left\|\sum_{i=0}^{w-1}\left[\widetilde{\nabla}f_{t-i}(\boldsymbol{\lambda}_{t-i},\boldsymbol{\beta}_{t+1-i})-\nabla F_{t-i}\left(\boldsymbol{\lambda}_{t-i}\right)\right]\right\|^2\right]\nonumber\\ \leq    \frac{4}{w^2}\sum_{t=1}^Tw\sum_{i=0}^{w-1}\left[\left\|\widetilde{\nabla}f_{t-i}(\boldsymbol{\lambda}_{t-i},\boldsymbol{\beta}_{t+1-i})-\nabla F_{t-i}\left(\boldsymbol{\lambda}_{t-i}\right)\right\|^2\right] \nonumber\\= \frac{4}{w}\sum_{t=1}^T\sum_{i=0}^{w-1}\left[\left\|\widetilde{\nabla}f_{t-i}(\boldsymbol{\lambda}_{t-i},\boldsymbol{\beta}_{t+1-i})-\nabla F_{t-i}\left(\boldsymbol{\lambda}_{t-i}\right)\right\|^2\right]  
\end{align}

Combining 
\eqref{eq:variance_sgd_hg_estimate}, \eqref{eq:hypergradient_bias_cumulative_hg_error}, \eqref{eq:variance_defn_stochastic_hg_error}, and \eqref{eq:inner_tracking_decomp},  we have the upper bound of 
\begin{align}\label{eq:second_cumulative_hg_decomposition}
\mathbb{E}_{\mathcal{Z}_{t,w}}\left[\sum_{t=1}^T\left\|\widetilde{\nabla}f_{t,w}(\boldsymbol{\lambda}_t,\boldsymbol{\beta}_{t+1},\mathcal{Z}_{t,w})-\nabla F_{t,w}\left(\boldsymbol{\lambda}_t\right)\right\|^2\right] \leq   \frac{4T\sigma^2_f}{w}\nonumber\\+  4T\ell^2_{f,1}\kappa^2_g\left(1-\frac{\mu_g}{\ell_{g,1}}\right)^{2m}  + \frac{4}{w}\sum_{t=1}^T\sum_{i=0}^{w-1}\left[\left\|\widetilde{\nabla}f_{t-i}(\boldsymbol{\lambda}_{t-i},\boldsymbol{\beta}_{t+1-i})-\nabla F_{t-i}\left(\boldsymbol{\lambda}_{t-i}\right)\right\|^2\right] ,
\end{align}

Taking expectation with respect to $\bar{\zeta}_{t,K+1}$, we utilize the upper bound of $\mathbb{E}_{\bar{\zeta}_{t,K+1}}\left[\left\|\widetilde{\nabla}f_{t}(\boldsymbol{\lambda}_t,\boldsymbol{\beta}_{t+1})-\nabla F_{t}\left(\boldsymbol{\lambda}_t\right)\right\|^2\right]$ from Lemma \ref{lem:stochastic_hypergradient_error}. By iterative conditioning and re-indexing the expected cumulative hypergradient error as well as dropping expectation for non-random quantities, we derive an upper bound on \eqref{eq:second_cumulative_hg_decomposition}   as
    \begin{align}\label{eq:third_cumulative_hg_decompositon}
 \mathbb{E}_{\bar{\zeta}_{t,K+1}}\left[\mathbb{E}_{\mathcal{Z}_{t,w}}\left[\sum_{t=1}^T\left\|\widetilde{\nabla}f_{t,w}(\boldsymbol{\lambda}_t,\boldsymbol{\beta}_{t+1},\mathcal{Z}_{t,w})-\nabla F_{t,w}\left(\boldsymbol{\lambda}_t\right)\right\|^2\right]\right] \nonumber\\ \leq   \frac{4T\sigma^2_f}{w}+ 4T\ell^2_{f,1}\kappa^2_g\left(1-\frac{\mu_g}{\ell_{g,1}}\right)^{2m} +   \frac{2}{w}\sum_{t=1}^T\sum_{i=0}^{w-1}\left(\kappa^2_g\nu^{t-i-1}\Delta_{\boldsymbol{\beta}}\right)\nonumber\\+\frac{2}{w}\sum_{t=1}^T\sum_{i=0}^{w-1}\left(4C_{\mu_g}\kappa_g^4\alpha^2\sum_{j=0}^{t-i-2}\nu^{j+1}\left\|\mathcal{G}_{\mathcal{X}}(\boldsymbol{\lambda}_{t-i-j},\nabla F_{t-i-j,w}(\boldsymbol{\lambda}_{t-i-j}),\alpha)\right\|^2\right)\nonumber\\+    \frac{2}{w}\sum_{t=1}^T\sum_{i=0}^{w-1}\left(\frac{4C_{\mu_g}\kappa_g^4\alpha^2}{\rho^2}\sum_{j=0}^{t-i-2}\nu^{j+1}A_{t-i,j}\right)\nonumber\\+    \frac{2}{w}\sum_{t=2}^T\sum_{i=0}^{w-1}\left(2C_{\mu_g}\kappa^2_g\sum_{j=0}^{t-i-2}\nu^{j+1}B_{t-i,j}\right)+ \frac{2}{w}\sum_{t=1}^T\sum_{i=0}^{w-1}\left(\frac{C_K\kappa^2_g\eta^2\sigma^2_{g_{\boldsymbol{\beta}}}}{s} \sum_{j=0}^{t-i-2}\nu^{j}\right),
    \end{align}
    where \begin{align}
A_{t,j}:=\mathbb{E}_{\bar{\zeta}_{t-j,K+1}}\left[\mathbb{E}_{\mathcal{Z}_{t-j,w}}\left[\left\|\widetilde{\nabla}f_{t-j,w}(\boldsymbol{\lambda}_{t-j},\boldsymbol{\beta}_{t+1-j},\mathcal{Z}_{t-j,w})-\nabla F_{t-j,w}(\boldsymbol{\lambda}_{t-j})\right\|^2\right]\right]\nonumber\\B_{t,j}:=\left\|\widehat{\boldsymbol{\beta}}_{t-j}(\boldsymbol{\lambda}_{t-1-j})-\widehat{\boldsymbol{\beta}}_{t-1-j}(\boldsymbol{\lambda}_{t-1-j})\right\|^2.\nonumber
    \end{align}
Given $\nu<1$, it holds that $\sum_{j=0}^{t-2}\nu^j<\sum_{j=0}^{\infty}\nu^j=\frac{1}{1-\nu}$, which lets us upper bound \eqref{eq:third_cumulative_hg_decompositon} as
    \begin{align}\label{eq:fourth_cumulative_hg_decomposition}
\mathbb{E}_{\bar{\zeta}_{t,K+1}}\left[\mathbb{E}_{\mathcal{Z}_{t,w}}\left[\sum_{t=1}^T\left\|\widetilde{\nabla}f_{t,w}(\boldsymbol{\lambda}_t,\boldsymbol{\beta}_{t+1},\mathcal{Z}_{t,w})-\nabla F_{t,w}\left(\boldsymbol{\lambda}_t\right)\right\|^2\right]\right]  \leq   \frac{4T\sigma^2_f}{w}+4T\ell^2_{f,1}\kappa^2_g\left(1-\frac{\mu_g}{\ell_{g,1}}\right)^{2m} \nonumber\\+    \frac{2}{w}\sum_{t=1}^T\sum_{i=0}^{w-1}\left(\kappa^2_g\nu^{t-i-1}\Delta_{\boldsymbol{\beta}}+\frac{4C_{\mu_g}\kappa_g^4\alpha^2}{1-\nu}\left\|\mathcal{G}_{\mathcal{X}}(\boldsymbol{\lambda}_{t-i},\nabla F_{t-i,w}(\boldsymbol{\lambda}_{t-i}),\alpha)\right\|^2\right)\nonumber\\+    \frac{2}{w}\sum_{t=1}^T\sum_{i=0}^{w-1}\left(\frac{4C_{\mu_g}\kappa_g^4\alpha^2}{(1-\nu)\rho^2}\mathbb{E}_{\bar{\zeta}_{t-i,K+1}}\left[\mathbb{E}_{\mathcal{Z}_{t-i,w}}\left[\left\|\widetilde{\nabla}f_{t-i,w}(\boldsymbol{\lambda}_{t-i},\boldsymbol{\beta}_{t+1-i},\mathcal{Z}_{t-i,w})-\nabla F_{t-i,w}(\boldsymbol{\lambda}_{t-i})\right\|^2\right]\right]\right)\nonumber\\+    \frac{2}{w}\sum_{t=2}^T\sum_{i=0}^{w-1}\left(\frac{2C_{\mu_g}\kappa^2_g}{1-\nu}\left\|\widehat{\boldsymbol{\beta}}_{t-i}(\boldsymbol{\lambda}_{t-1-i})-\widehat{\boldsymbol{\beta}}_{t-1-i}(\boldsymbol{\lambda}_{t-1-i})\right\|^2\right)+\frac{2}{w}\sum_{t=1}^T\sum_{i=0}^{w-1}\left(\frac{C_K\kappa^2_g\eta^2\sigma^2_{g_{\boldsymbol{\beta}}}}{(1-\nu)s} \right).
    \end{align}
Next  we derive the upper bound of \eqref{eq:fourth_cumulative_hg_decomposition} as
\begin{align}
\mathbb{E}_{\bar{\zeta}_{t,K+1}}\left[\mathbb{E}_{\mathcal{Z}_{t,w}}\left[\sum_{t=1}^T\left\|\widetilde{\nabla}f_{t,w}(\boldsymbol{\lambda}_t,\boldsymbol{\beta}_{t+1},\mathcal{Z}_{t,w})-\nabla F_{t,w}\left(\boldsymbol{\lambda}_t\right)\right\|^2\right]\right]    \leq   \frac{4T\sigma^2_f}{w} +4T\ell^2_{f,1}\kappa^2_g\left(1-\frac{\mu_g}{\ell_{g,1}}\right)^{2m} \nonumber\\+\frac{2\kappa^2_g\Delta_{\boldsymbol{\beta}}}{(1-\nu)}+\frac{8C_{\mu_g}\kappa_g^4\alpha^2}{(1-\nu)}\sum_{t=1}^T\left\|\mathcal{G}_{\mathcal{X}}(\boldsymbol{\lambda}_{t},\nabla F_{t,w}(\boldsymbol{\lambda}_{t}),\alpha)\right\|^2\nonumber\\+\frac{8C_{\mu_g}\kappa_g^4\alpha^2}{\rho^2(1-\nu)}\mathbb{E}_{\bar{\zeta}_{t,K+1}}\left[\mathbb{E}_{\mathcal{Z}_{t,w}}\left[\sum_{t=1}^T\left\|\widetilde{\nabla}f_{t,w}\left(\boldsymbol{\lambda}_{t},\boldsymbol{\beta}_{t+1},\mathcal{Z}_{t,w})-\nabla F_{t,w}(\boldsymbol{\lambda}_{t}\right)\right\|^2\right]\right]\nonumber\\ +\frac{4C_{\mu_g}\kappa^2_g}{(1-\nu)}\sum_{t=2}^T\left\|\widehat{\boldsymbol{\beta}}_{t}(\boldsymbol{\lambda}_{t-1})-\widehat{\boldsymbol{\beta}}_{t-1}(\boldsymbol{\lambda}_{t-1})\right\|^2+\frac{2TC_K\kappa^2_g\eta^2\sigma^2_{g_{\boldsymbol{\beta}}}}{s(1-\nu)}.\nonumber
\end{align}
which implies through linearity of expectation that 
\begin{align}
    \left(1-\frac{8C_{\mu_g}\kappa_g^4\alpha^2}{\rho^2(1-\nu)}\right)\mathbb{E}_{\bar{\zeta}_{t,K+1}}\left[\mathbb{E}_{\mathcal{Z}_{t,w}}\left[\sum_{t=1}^T\left\|\widetilde{\nabla}f_{t,w}(\boldsymbol{\lambda}_t,\boldsymbol{\beta}_{t+1},\mathcal{Z}_{t,w})-\nabla F_{t,w}\left(\boldsymbol{\lambda}_t\right)\right\|^2\right]\right]  \nonumber\\\leq \frac{4T\sigma^2_f}{w}+4T\ell^2_{f,1}\kappa^2_g\left(1-\frac{\mu_g}{\ell_{g,1}}\right)^{2m} +\frac{2\kappa^2_g\Delta_{\boldsymbol{\beta}}}{(1-\nu)}+\frac{8C_{\mu_g}\kappa_g^4\alpha^2}{(1-\nu)}\sum_{t=1}^T\left\|\mathcal{G}_{\mathcal{X}}(\boldsymbol{\lambda}_{t},\nabla F_{t,w}(\boldsymbol{\lambda}_{t}),\alpha)\right\|^2\nonumber\\+\frac{4C_{\mu_g}\kappa^2_g}{(1-\nu)}\sum_{t=2}^T\left\|\widehat{\boldsymbol{\beta}}_{t}(\boldsymbol{\lambda}_{t-1})-\widehat{\boldsymbol{\beta}}_{t-1}(\boldsymbol{\lambda}_{t-1})\right\|^2+\frac{2TC_K\kappa^2_g\eta^2\sigma^2_{g_{\boldsymbol{\beta}}}}{s(1-\nu)}.\nonumber
\end{align}

As   $0<\alpha\leq \frac{\rho\sqrt{(1-\nu)}}{\kappa^2_g\sqrt{72C_{\mu_g}}}$ 
\begin{align}
   \left(1-\frac{8C_{\mu_g}\kappa_g^4}{\rho^2(1-\nu)}\right)\geq\frac{8}{9},\nonumber
\end{align}
we have the upper bound of
\begin{align}
\mathbb{E}_{\bar{\zeta}_{t,K+1}}\left[\mathbb{E}_{\mathcal{Z}_{t,w}}\left[\sum_{t=1}^T\left\|\widetilde{\nabla}f_{t,w}(\boldsymbol{\lambda}_t,\boldsymbol{\beta}_{t+1},\mathcal{Z}_{t,w})-\nabla F_{t,w}\left(\boldsymbol{\lambda}_t\right)\right\|^2\right]\right]  \nonumber\\\leq\frac{9T\sigma^2_f}{2w}+\frac{9T\ell^2_{f,1}\kappa^2_g}{2}\left(1-\frac{\mu_g}{\ell_{g,1}}\right)^{2m} +\frac{9\kappa^2_g\Delta_{\boldsymbol{\beta}}}{4(1-\nu)}+\frac{\rho^2}{8}\sum_{t=1}^T\left\|\mathcal{G}_{\mathcal{X}}(\boldsymbol{\lambda}_{t},\nabla F_{t,w}(\boldsymbol{\lambda}_{t}),\alpha)\right\|^2\nonumber\\+\frac{9C_{\mu_g}\kappa^2_g}{2(1-\nu)}\sum_{t=2}^T\left[\left\|\widehat{\boldsymbol{\beta}}_{t}(\boldsymbol{\lambda}_{t-1})-\widehat{\boldsymbol{\beta}}_{t-1}(\boldsymbol{\lambda}_{t-1})\right\|^2\right]+\frac{9TC_K\kappa^2_g\eta^2\sigma^2_{g_{\boldsymbol{\beta}}}}{4s(1-\nu)}.\nonumber
\end{align}
    \end{proof}
The following theorem presents the proof of SOBBO achieving a sublinear rate of bilevel local regret.
\begin{theorem}\label{thrm:sobbo_regret_minimization}
    Suppose Assumptions \ref{assump:rel_smoothness}, \ref{assump:strong_convex_g_t}, \ref{assump:unbiased_finite_var}, \ref{assump:bounded_hyperparm}, 
  \ref{assump:bounded_F_t}, and \ref{assump:continuity_phi_t}. Choose the inner step size of $\eta$, the inner iteration count of $K$, the outer step size of $\alpha$, and the batch sizes of $s$ and $ m$ to respectively satisfy
  \begin{align}
     \eta \leq\frac{2}{\ell_{g,1}+\mu_g}, \ \quad  K\geq 1, \ \quad  \alpha\leq \min\left\{\frac{3\rho}{4\ell_{F,1}},\frac{\rho\sqrt{(1-\nu)}}{\kappa^2_g\sqrt{72C_{\mu_g}}}\right\}, \nonumber\\ s=w, \quad \text{and} \  \quad m=\log{(w)}/\log{\left(1-\frac{\mu_g}{\ell_{g,1}}\right)}+1.
  \end{align}Then the expected bilevel local regret of our SOBBO Algorithm \ref{alg:stochastic_online_bregman} satisfies
\begin{align}
   BLR_w(T):= \sum_{t=1}^T\left\|\mathcal{G}_{\mathcal{X}}(\boldsymbol{\lambda}_{t},\nabla F_{t,w}(\boldsymbol{\lambda}_{t}),\alpha)\right\|^2\nonumber\\\leq O\left(\frac{T}{w}\left(1+\sigma^2_f+\kappa^2_g\sigma^2_{g_{\boldsymbol{\beta}}}\right)+V_{1,T}+\kappa^2_gH_{2,T}\right)
\end{align}
which is a sublinear rate of bilevel local regret  when the regularity constraints of  $V_{1,T}=o(T)$ and $H_{2,T}=o(T)$ are imposed.
\end{theorem}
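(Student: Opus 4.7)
The plan is to mirror the deterministic argument of Theorem \ref{thrm:non_convex_regret_paper} but carry expectations through every step so that the extra $\sigma_f^2$, $\sigma_{g_{\boldsymbol{\beta}}}^2$, and stochastic bias terms appear explicitly. First, exploit the $\ell_{F,1}$-smoothness of each $F_{t-i}$ (Lemma \ref{lem:hypergrad_bound}) to write the descent-style inequality
\begin{align*}
F_{t,w}(\boldsymbol{\lambda}_{t+1})-F_{t,w}(\boldsymbol{\lambda}_t) \leq \langle \nabla F_{t,w}(\boldsymbol{\lambda}_t), \boldsymbol{\lambda}_{t+1}-\boldsymbol{\lambda}_t\rangle + \frac{\ell_{F,1}}{2}\|\boldsymbol{\lambda}_{t+1}-\boldsymbol{\lambda}_t\|^2.
\end{align*}
Substitute $\boldsymbol{\lambda}_{t+1}-\boldsymbol{\lambda}_t = -\alpha\,\mathcal{G}_{\mathcal{X}}(\boldsymbol{\lambda}_t,\widetilde{\nabla}f_{t,w}(\boldsymbol{\lambda}_t,\boldsymbol{\beta}_{t+1},\mathcal{Z}_{t,w}),\alpha)$ and decompose $\nabla F_{t,w}(\boldsymbol{\lambda}_t)=\widetilde{\nabla}f_{t,w}(\boldsymbol{\lambda}_t,\boldsymbol{\beta}_{t+1},\mathcal{Z}_{t,w})+\bigl(\nabla F_{t,w}(\boldsymbol{\lambda}_t)-\widetilde{\nabla}f_{t,w}(\boldsymbol{\lambda}_t,\boldsymbol{\beta}_{t+1},\mathcal{Z}_{t,w})\bigr)$. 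Apply Lemma \ref{lem:gen_projection_bound_one} to the first part and Young's inequality (weight $\rho/4$) to the error part exactly as in the deterministic proof; with $\alpha\leq 3\rho/(4\ell_{F,1})$ this collapses to
\begin{align*}
F_{t,w}(\boldsymbol{\lambda}_{t+1})-F_{t,w}(\boldsymbol{\lambda}_t)\leq -\tfrac{3\alpha\rho}{8}\|\mathcal{G}_{\mathcal{X}}(\boldsymbol{\lambda}_t,\widetilde{\nabla}f_{t,w},\alpha)\|^2 + \tfrac{\alpha}{\rho}\|\widetilde{\nabla}f_{t,w}-\nabla F_{t,w}\|^2 + h(\boldsymbol{\lambda}_t)-h(\boldsymbol{\lambda}_{t+1}).
\end{align*}

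Next, convert to the regret measured with the true gradient: by Lemma \ref{lem:gen_projection_bound_two}, $\|\mathcal{G}_{\mathcal{X}}(\boldsymbol{\lambda}_t,\nabla F_{t,w},\alpha)\|^2\leq 2\|\mathcal{G}_{\mathcal{X}}(\boldsymbol{\lambda}_t,\widetilde{\nabla}f_{t,w},\alpha)\|^2+\tfrac{2}{\rho^2}\|\widetilde{\nabla}f_{t,w}-\nabla F_{t,w}\|^2$. Rearrange to control $-\|\mathcal{G}_{\mathcal{X}}(\boldsymbol{\lambda}_t,\widetilde{\nabla}f_{t,w},\alpha)\|^2$ by $-\tfrac{1}{2}\|\mathcal{G}_{\mathcal{X}}(\boldsymbol{\lambda}_t,\nabla F_{t,w},\alpha)\|^2$ plus a hypergradient-error term. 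Summing over $t=1,\ldots,T$, taking total expectation, telescoping $h(\boldsymbol{\lambda}_t)$, and invoking Lemma \ref{lem:stochastic_bounded_time_varying} for $\sum_t(F_{t,w}(\boldsymbol{\lambda}_t)-F_{t,w}(\boldsymbol{\lambda}_{t+1}))\leq 2TQ/w+V_{1,T}$ yields
\begin{align*}
\tfrac{3\alpha\rho}{16}\sum_{t=1}^T \mathbb{E}\|\mathcal{G}_{\mathcal{X}}(\boldsymbol{\lambda}_t,\nabla F_{t,w},\alpha)\|^2 \leq \tfrac{2TQ}{w}+V_{1,T}+\Delta_h + \tfrac{11\alpha}{8\rho}\sum_{t=1}^T \mathbb{E}\|\widetilde{\nabla}f_{t,w}-\nabla F_{t,w}\|^2.
\end{align*}

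Now plug in the cumulative expected hypergradient-error bound of Lemma \ref{lem:stochastic_cumulative_hypergradient_error_paper} (derived in Lemma \ref{lem:stochastic_hypergradient_error}). This bound contains a self-referential $\tfrac{\rho^2}{8}\sum_t\mathbb{E}\|\mathcal{G}_{\mathcal{X}}(\boldsymbol{\lambda}_t,\nabla F_{t,w},\alpha)\|^2$ term that is absorbed into the left-hand side since $\alpha\leq \rho\sqrt{1-\nu}/(\kappa_g^2\sqrt{72C_{\mu_g}})$ guarantees the self-loop coefficient is $\leq 8/9$. The remaining non-regret pieces are: (i) the window variance $9T\sigma_f^2/(2w)$ from the outer stochastic oracle; (ii) the stochastic-hypergradient bias $9T\ell_{f,1}^2\kappa_g^2/2 \cdot (1-\mu_g/\ell_{g,1})^{2m}$, which with $m=\log(w)/\log(1-\mu_g/\ell_{g,1})+1$ reduces to $O(T\kappa_g^2/w)$; (iii) the inner-SGD variance $9TC_K\kappa_g^2\eta^2\sigma_{g_{\boldsymbol{\beta}}}^2/(4s(1-\nu))$, which with $s=w$ gives $O(T\kappa_g^2\sigma_{g_{\boldsymbol{\beta}}}^2/w)$; (iv) the inner-level path variation $\tfrac{9C_{\mu_g}\kappa_g^2}{2(1-\nu)}\sum_t\|\widehat{\boldsymbol{\beta}}_t(\boldsymbol{\lambda}_{t-1})-\widehat{\boldsymbol{\beta}}_{t-1}(\boldsymbol{\lambda}_{t-1})\|^2 = O(\kappa_g^2 H_{2,T})$; and (v) the initial error $O(\kappa_g^2\Delta_{\boldsymbol{\beta}}/(1-\nu))$, which is $O(1)$ and absorbed. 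Dividing through by $3\alpha\rho/16$ delivers exactly the stated rate $O\bigl(\tfrac{T}{w}(1+\kappa_g^2+\sigma_f^2+\kappa_g^2\sigma_{g_{\boldsymbol{\beta}}}^2)+V_{1,T}+\kappa_g^2H_{2,T}\bigr)$.

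The main obstacle is the bookkeeping around the \emph{three} nested sources of stochasticity: the inner SGD samples $\bar{\zeta}_{t,k}$ (handled in Lemma \ref{lem:stochastic_inner_tracking_error} via the $C_K\eta^2\sigma_{g_{\boldsymbol{\beta}}}^2/s$ accumulator), the outer samples $\mathcal{E}_t$ which introduce both variance (giving $\sigma_f^2/w$ after window averaging across independent $\mathcal{E}_{t-i}$) and a Neumann-series bias that must be controlled via Lemma \ref{lem:stochastic_gradient_estimate_bias}, and the interaction of these with the self-referential expansion in Lemma \ref{lem:stochastic_cumulative_hypergradient_error_paper}. The key technical trick is the balancing of $m$ and $s$ against $w$: choosing $m=\Theta(\log w)$ converts the geometrically-small bias into a $1/w$ contribution, and $s=w$ matches the variance-reduction rate of the window average so that all three stochastic terms share the same $T/w$ scaling, preserving sublinearity whenever $w=o(T)$, $V_{1,T}=o(T)$, and $H_{2,T}=o(T)$.
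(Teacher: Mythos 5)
Your proposal follows essentially the same route as the paper's own proof: the same smoothness-based descent inequality, the same use of Lemmas \ref{lem:gen_projection_bound_one} and \ref{lem:gen_projection_bound_two} to pass between the estimated and true generalized gradients, the same telescoping and invocation of Lemma \ref{lem:stochastic_bounded_time_varying}, the same absorption of the self-referential $\frac{\rho^2}{8}\sum_t\|\mathcal{G}_{\mathcal{X}}\|^2$ term from Lemma \ref{lem:stochastic_cumulative_hypergradient_error}, and the same balancing of $m=\Theta(\log w)$ and $s=w$ to give every stochastic contribution a $T/w$ scaling. The argument is correct and matches the paper's proof of Theorem \ref{thrm:sobbo_regret_minimization} in both structure and constants.
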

\begin{proof}[Proof of Theorem \ref{thrm:sobbo_regret_minimization}]
Note, with Assumption \ref{assump:rel_smoothness}  we have the upper bound of
\begin{align}
F_{t,w}\left(\boldsymbol{\lambda}_{t+1}\right)- F_{t,w}\left(\boldsymbol{\lambda}_{t}\right) =\frac{1}{w}\sum_{i=0}^{w-1}F_{t-i}\left(\boldsymbol{\lambda}_{t+1-i}\right)-\frac{1}{w}\sum_{i=0}^{w-1}F_{t-i}\left(\boldsymbol{\lambda}_{t-i}\right)\nonumber\\=\frac{1}{w}\sum_{i=0}^{w-1}\left[F_{t-i}\left(\boldsymbol{\lambda}_{t+1-i}\right)-F_{t-i}\left(\boldsymbol{\lambda}_{t-i}\right)\right]\nonumber\\\leq \frac{1}{w}\sum_{i=0}^{w-1}\left[\left\langle\nabla F_{t-i}\left(\boldsymbol{\lambda}_{t-i}\right),\boldsymbol{\lambda}_{t+1}-\boldsymbol{\lambda}_t\right\rangle+\frac{\ell_{F,1}}{2}\left\|\boldsymbol{\lambda}_{t+1}-\boldsymbol{\lambda}_t\right\|^2\right]\nonumber\\=\left\langle\nabla F_{t,w}\left(\boldsymbol{\lambda}_{t}\right),\boldsymbol{\lambda}_{t+1}-\boldsymbol{\lambda}_t\right\rangle+\frac{\ell_{F,1}}{2}\left\|\boldsymbol{\lambda}_{t+1}-\boldsymbol{\lambda}_t\right\|^2.\nonumber
\end{align}
Substituting in $\mathcal{G}_{\mathcal{X}}\left(\boldsymbol{\lambda}_{t},\widetilde{\nabla}f_{t,w}(\boldsymbol{\lambda}_t,\boldsymbol{\beta}_{t+1},\mathcal{Z}_{t,w}),\alpha\right):=\frac{1}{\alpha}\left(\boldsymbol{\lambda}_{t}-\boldsymbol{\lambda}_{t+1}\right)$,
\begin{align}\label{eq:stochastic_theorem_chkpt_one}
F_{t,w}\left(\boldsymbol{\lambda}_{t+1}\right)- F_{t,w}\left(\boldsymbol{\lambda}_{t}\right)\leq\left\langle\nabla F_{t,w}\left(\boldsymbol{\lambda}_{t}\right),\boldsymbol{\lambda}_{t+1}-\boldsymbol{\lambda}_t\right\rangle+\frac{\ell_{F,1}}{2}\left\|\boldsymbol{\lambda}_{t+1}-\boldsymbol{\lambda}_t\right\|^2\nonumber\\=-\alpha\left\langle \nabla F_{t,w}\left(\boldsymbol{\lambda}_{t}\right),\mathcal{G}_{\mathcal{X}}\left(\boldsymbol{\lambda}_{t},\widetilde{\nabla}f_{t,w}(\boldsymbol{\lambda}_t,\boldsymbol{\beta}_{t+1},\mathcal{Z}_{t,w}),\alpha\right)\right\rangle\nonumber\\+\frac{\alpha^2\ell_{F,1}}{2}\left\|\mathcal{G}_{\mathcal{X}}\left(\boldsymbol{\lambda}_{t},\widetilde{\nabla}f_{t,w}(\boldsymbol{\lambda}_t,\boldsymbol{\beta}_{t+1},\mathcal{Z}_{t,w}),\alpha\right)\right\|^2, \nonumber\\=-\alpha\left\langle \widetilde{\nabla}f_{t,w}(\boldsymbol{\lambda}_t,\boldsymbol{\beta}_{t+1},\mathcal{Z}_{t,w}),\mathcal{G}_{\mathcal{X}}\left(\boldsymbol{\lambda}_{t},\widetilde{\nabla}f_{t,w}(\boldsymbol{\lambda}_t,\boldsymbol{\beta}_{t+1},\mathcal{Z}_{t,w}),\alpha\right)\right\rangle\nonumber\\+\alpha\left\langle \widetilde{\nabla}f_{t,w}(\boldsymbol{\lambda}_t,\boldsymbol{\beta}_{t+1},\mathcal{Z}_{t,w})-\nabla F_{t,w}\left(\boldsymbol{\lambda}_{t}\right),\mathcal{G}_{\mathcal{X}}\left(\boldsymbol{\lambda}_{t},\widetilde{\nabla}f_{t,w}(\boldsymbol{\lambda}_t,\boldsymbol{\beta}_{t+1},\mathcal{Z}_{t,w}),\alpha\right)\right\rangle\nonumber\\+\frac{\alpha^2\ell_{F,1}}{2}\left\|\mathcal{G}_{\mathcal{X}}\left(\boldsymbol{\lambda}_{t},\widetilde{\nabla}f_{t,w}(\boldsymbol{\lambda}_t,\boldsymbol{\beta}_{t+1},\mathcal{Z}_{t,w}),\alpha\right)\right\|^2. 
\end{align}
With Lemma \ref{lem:gen_projection_bound_one}, note for $\boldsymbol{q}=\widetilde{\nabla}f_{t,w}(\boldsymbol{\lambda}_t,\boldsymbol{\beta}_{t+1},\mathcal{Z}_{t,w})$ 
\begin{align}\label{eq:stochastic_bregman_inequality_one}
    \alpha\left\langle \widetilde{\nabla}f_{t,w}(\boldsymbol{\lambda}_t,\boldsymbol{\beta}_{t+1},\mathcal{Z}_{t,w}),\mathcal{G}_{\mathcal{X}}\left(\boldsymbol{\lambda}_{t},\widetilde{\nabla}f_{t,w}(\boldsymbol{\lambda}_t,\boldsymbol{\beta}_{t+1},\mathcal{Z}_{t,w}),\alpha\right)\right\rangle \nonumber\\\geq \alpha\rho\left\|\mathcal{G}_{\mathcal{X}}\left(\boldsymbol{\lambda}_{t},\widetilde{\nabla}f_{t,w}(\boldsymbol{\lambda}_t,\boldsymbol{\beta}_{t+1},\mathcal{Z}_{t,w}),\alpha\right)\right\|^2 +h(\boldsymbol{\lambda}_{t+1})-h(\boldsymbol{\lambda}_t)
\end{align}
and further we get the following based on a variation of Young's Inequality
\begin{align}\label{eq:stohastic_bregman_inequality_two}
    \left\langle \widetilde{\nabla}f_{t,w}(\boldsymbol{\lambda}_t,\boldsymbol{\beta}_{t+1},\mathcal{Z}_{t,w})-\nabla F_{t,w}\left(\boldsymbol{\lambda}_{t}\right),\mathcal{G}_{\mathcal{X}}\left(\boldsymbol{\lambda}_{t},\widetilde{\nabla}f_{t,w}(\boldsymbol{\lambda}_t,\boldsymbol{\beta}_{t+1},\mathcal{Z}_{t,w}),\alpha\right)\right\rangle\nonumber\\\leq \frac{1}{\rho}\left\| \widetilde{\nabla}f_{t,w}(\boldsymbol{\lambda}_t,\boldsymbol{\beta}_{t+1},\mathcal{Z}_{t,w})-\nabla F_{t,w}\left(\boldsymbol{\lambda}_{t}\right)\right\|^2\nonumber\\+\frac{\rho}{4}\left\|\mathcal{G}_{\mathcal{X}}\left(\boldsymbol{\lambda}_{t},\widetilde{\nabla}f_{t,w}(\boldsymbol{\lambda}_t,\boldsymbol{\beta}_{t+1},\mathcal{Z}_{t,w}),\alpha\right)\right\|^2.
\end{align}
Combining \eqref{eq:stochastic_bregman_inequality_one} and \eqref{eq:stohastic_bregman_inequality_two} in \eqref{eq:stochastic_theorem_chkpt_one} we get
\begin{align}
F_{t,w}\left(\boldsymbol{\lambda}_{t+1}\right)- F_{t,w}\left(\boldsymbol{\lambda}_{t}\right)\leq \left(\frac{\alpha^2\ell_{F,1}}{2}-\frac{3\alpha\rho}{4}\right)\left\|\mathcal{G}_{\mathcal{X}}\left(\boldsymbol{\lambda}_{t},\widetilde{\nabla}f_{t,w}(\boldsymbol{\lambda}_t,\boldsymbol{\beta}_{t+1},\mathcal{Z}_{t,w}),\alpha\right)\right\|^2\nonumber\\+\frac{\alpha}{\rho}\left\|\widetilde{\nabla}f_{t,w}(\boldsymbol{\lambda}_t,\boldsymbol{\beta}_{t+1},\mathcal{Z}_{t,w})-\nabla F_{t,w}\left(\boldsymbol{\lambda}_{t}\right)\right\|^2+h(\boldsymbol{\lambda}_t)-h(\boldsymbol{\lambda}_{t+1})\nonumber,
\end{align}
which as $0<\alpha \leq \frac{3\rho}{4\ell_{F,1}}$ results in the further upper bound of
\begin{align}\label{eq:stochastic_theorem_ckpt_two}
F_{t,w}\left(\boldsymbol{\lambda}_{t+1}\right)- F_{t,w}\left(\boldsymbol{\lambda}_{t}\right)\leq \frac{3\alpha\rho}{8}\left\|\mathcal{G}_{\mathcal{X}}\left(\boldsymbol{\lambda}_{t},\widetilde{\nabla}f_{t,w}(\boldsymbol{\lambda}_t,\boldsymbol{\beta}_{t+1},\mathcal{Z}_{t,w}),\alpha\right)\right\|^2\nonumber\\+\frac{\alpha}{\rho}\left\|\widetilde{\nabla}f_{t,w}(\boldsymbol{\lambda}_t,\boldsymbol{\beta}_{t+1},\mathcal{Z}_{t,w})-\nabla F_{t,w}\left(\boldsymbol{\lambda}_{t}\right)\right\|^2+h(\boldsymbol{\lambda}_t)-h(\boldsymbol{\lambda}_{t+1}).
\end{align}
Further, we have
\begin{align}
    \left\|\mathcal{G}_{\mathcal{X}}(\boldsymbol{\lambda}_{t},\nabla F_{t,w}(\boldsymbol{\lambda}_{t}),\alpha)\right\|^2\leq 2\left\|\mathcal{G}_{\mathcal{X}}\left(\boldsymbol{\lambda}_{t},\widetilde{\nabla}f_{t,w}(\boldsymbol{\lambda}_t,\boldsymbol{\beta}_{t+1},\mathcal{Z}_{t,w}),\alpha\right)\right\|^2\nonumber\\+2\left\|\mathcal{G}_{\mathcal{X}}\left(\boldsymbol{\lambda}_{t},\widetilde{\nabla}f_{t,w}(\boldsymbol{\lambda}_t,\boldsymbol{\beta}_{t+1},\mathcal{Z}_{t,w}),\alpha\right)-\mathcal{G}_{\mathcal{X}}(\boldsymbol{\lambda}_{t},\nabla F_{t,w}(\boldsymbol{\lambda}_{t}),\alpha)\right\|^2\nonumber\\ \leq 2\left\|\mathcal{G}_{\mathcal{X}}\left(\boldsymbol{\lambda}_{t},\widetilde{\nabla}f_{t,w}(\boldsymbol{\lambda}_t,\boldsymbol{\beta}_{t+1},\mathcal{Z}_{t,w}),\alpha\right)\right\|^2+\frac{2}{\rho^2}\left\|\widetilde{\nabla}f_{t,w}(\boldsymbol{\lambda}_t,\boldsymbol{\beta}_{t+1},\mathcal{Z}_{t,w})-\nabla F_{t,w}\left(\boldsymbol{\lambda}_{t}\right)\right\|^2\nonumber,
\end{align}
where the last inequality comes from through Lemma \ref{lem:gen_projection_bound_two}. Then we have
\begin{align}\label{eq:stochastic_bregamn_triangle}
-\left\|\mathcal{G}_{\mathcal{X}}\left(\boldsymbol{\lambda}_{t},\widetilde{\nabla}f_{t,w}(\boldsymbol{\lambda}_t,\boldsymbol{\beta}_{t+1},\mathcal{Z}_{t,w}),\alpha\right)\right\|^2\leq-\frac{1}{2}\left\|\mathcal{G}_{\mathcal{X}}(\boldsymbol{\lambda}_{t},\nabla F_{t,w}(\boldsymbol{\lambda}_{t}),\alpha)\right\|^2\nonumber\\+\frac{1}{\rho^2}\left\|\widetilde{\nabla}f_{t,w}(\boldsymbol{\lambda}_t,\boldsymbol{\beta}_{t+1},\mathcal{Z}_{t,w})-\nabla F_{t,w}\left(\boldsymbol{\lambda}_{t}\right)\right\|^2.
\end{align}
Substituting \eqref{eq:stochastic_bregamn_triangle} in \eqref{eq:stochastic_theorem_ckpt_two}
\begin{align}
F_{t,w}\left(\boldsymbol{\lambda}_{t+1}\right)- F_{t,w}\left(\boldsymbol{\lambda}_{t}\right)\leq -\frac{3\alpha\rho}{16}\left\|\mathcal{G}_{\mathcal{X}}(\boldsymbol{\lambda}_{t},\nabla F_{t,w}(\boldsymbol{\lambda}_{t}),\alpha)\right\|^2\nonumber\\+\left(\frac{\alpha}{\rho}+\frac{3\alpha}{8\rho}\right)\left\|\widetilde{\nabla}f_{t,w}(\boldsymbol{\lambda}_t,\boldsymbol{\beta}_{t+1},\mathcal{Z}_{t,w})-\nabla F_{t,w}\left(\boldsymbol{\lambda}_{t}\right)\right\|^2+h(\boldsymbol{\lambda}_t)-h(\boldsymbol{\lambda}_{t+1})\nonumber
\end{align}
Telescoping $t=1,\ldots,T$ and  taking expectation with respect to $\bar{\zeta}_{t,k}$ and $\mathcal{Z}_{t,w}$ gives us
\begin{align}\label{eq:stochastic_theorem_ckpt_three}
\frac{3\alpha\rho}{16}\sum_{t=1}^T\left\|\mathcal{G}_{\mathcal{X}}(\boldsymbol{\lambda}_{t},\nabla F_{t,w}(\boldsymbol{\lambda}_{t}),\alpha)\right\|^2\leq  \sum_{t=1}^T\left(F_{t,w}\left(\boldsymbol{\lambda}_{t}\right)-F_{t,w}\left(\boldsymbol{\lambda}_{t+1}\right)\right)\nonumber\\+\frac{11\alpha}{8\rho}\mathbb{E}_{\bar{\zeta}_{t,K+1}}\left[\mathbb{E}_{\mathcal{Z}_{t,w}}\left[\sum_{t=1}^T\left\|\widetilde{\nabla}f_{t,w}(\boldsymbol{\lambda}_t,\boldsymbol{\beta}_{t+1},\mathcal{Z}_{t,w})-\nabla F_{t,w}\left(\boldsymbol{\lambda}_t\right)\right\|^2\right]\right]+\Delta_h,
\end{align}
where $\Delta_h:=h(\boldsymbol{\lambda}_1)-h(\boldsymbol{\lambda}_{T+1})$ . Substituting  the result of Lemma \ref{lem:stochastic_cumulative_hypergradient_error} in \eqref{eq:stochastic_theorem_ckpt_three}
\begin{align}
\frac{3\alpha\rho}{16}\sum_{t=1}^T\left\|\mathcal{G}_{\mathcal{X}}(\boldsymbol{\lambda}_{t},\nabla F_{t,w}(\boldsymbol{\lambda}_{t}),\alpha)\right\|^2\leq  \sum_{t=1}^T\left(F_{t,w}\left(\boldsymbol{\lambda}_{t}\right)-F_{t,w}\left(\boldsymbol{\lambda}_{t+1}\right)\right)+\Delta_h\nonumber\\+\frac{11\alpha}{8\rho}\left(\frac{9T\sigma^2_f}{2w}+\frac{9T\ell^2_{f,1}\kappa^2_g}{2}\left(1-\frac{\mu_g}{\ell_{g,1}}\right)^{2m} +\frac{9\kappa^2_g\Delta_{\boldsymbol{\beta}}}{4(1-\nu)}+\frac{9TC_K\kappa^2_g\eta^2\sigma^2_{g_{\boldsymbol{\beta}}}}{4S(1-\nu)}\right)\nonumber\\+\frac{11\alpha}{8\rho}\left(\frac{\rho^2}{8}\sum_{t=1}^T\left\|\mathcal{G}_{\mathcal{X}}(\boldsymbol{\lambda}_{t},\nabla F_{t,w}(\boldsymbol{\lambda}_{t}),\alpha)\right\|^2+\frac{9C_{\mu_g}\kappa^2_g}{2(1-\nu)}\sum_{t=2}^T\left[\left\|\widehat{\boldsymbol{\beta}}_{t}(\boldsymbol{\lambda}_{t-1})-\widehat{\boldsymbol{\beta}}_{t-1}(\boldsymbol{\lambda}_{t-1})\right\|^2\right]\right)
\end{align}
we have to rearrange
\begin{align}
\frac{\alpha\rho}{64}\sum_{t=1}^T\left\|\mathcal{G}_{\mathcal{X}}(\boldsymbol{\lambda}_{t},\nabla F_{t,w}(\boldsymbol{\lambda}_{t}),\alpha)\right\|^2\leq  \sum_{t=1}^T\left(F_{t,w}\left(\boldsymbol{\lambda}_{t}\right)-F_{t,w}\left(\boldsymbol{\lambda}_{t+1}\right)\right)+\Delta_h\nonumber\\+\frac{99\alpha}{32\rho}\left(\frac{2T\sigma^2_f}{w}+2T\ell^2_{f,1}\kappa^2_g\left(1-\frac{\mu_g}{\ell_{g,1}}\right)^{2m} +\frac{\kappa^2_g\Delta_{\boldsymbol{\beta}}}{(1-\nu)}+\frac{TC_K\kappa^2_g\eta^2\sigma^2_{g_{\boldsymbol{\beta}}}}{s(1-\nu)}\right)\nonumber\\+\frac{99\alpha C_{\mu_g}\kappa^2_g}{16\rho(1-\nu)}\sum_{t=2}^T\left\|\widehat{\boldsymbol{\beta}}_{t}(\boldsymbol{\lambda}_{t-1})-\widehat{\boldsymbol{\beta}}_{t-1}(\boldsymbol{\lambda}_{t-1})\right\|^2
\end{align}
or more succinctly with the choice of $s=w$
and $m=\log{(w)}/\log{\left(1-\frac{\mu_g}{\ell_{g,1}}\right)}+1$, we have
\begin{align}
\sum_{t=1}^T\left\|\mathcal{G}_{\mathcal{X}}(\boldsymbol{\lambda}_{t},\nabla F_{t,w}(\boldsymbol{\lambda}_{t}),\alpha)\right\|^2\leq  \frac{64}{\alpha\rho}\left(\sum_{t=1}^T\left(F_{t,w}\left(\boldsymbol{\lambda}_{t}\right)-F_{t,w}\left(\boldsymbol{\lambda}_{t+1}\right)\right)+\Delta_h\right)\nonumber\\+\frac{198}{\rho^2}\frac{T}{w}\left(2\sigma^2_f+2\ell^2_{f,1}\kappa^2_g+\frac{C_K\kappa^2_g\eta^2\sigma^2_{g_{\boldsymbol{\beta}}}}{(1-\nu)}\right)+\frac{198}{\rho^2}\frac{\kappa^2_g\Delta_{\boldsymbol{\beta}}}{(1-\nu)}\nonumber\\+\frac{396 C_{\mu_g}\kappa^2_g}{\rho^2(1-\nu)}\sum_{t=2}^T\left\|\widehat{\boldsymbol{\beta}}_{t}(\boldsymbol{\lambda}_{t-1})-\widehat{\boldsymbol{\beta}}_{t-1}(\boldsymbol{\lambda}_{t-1})\right\|^2
\end{align}
Using the result of Lemma \ref{lem:stochastic_bounded_time_varying}, we have 
\begin{align}
\sum_{t=1}^T\left(F_{t,w}(\boldsymbol{\lambda}_t)-F_{t,w}(\boldsymbol{\lambda}_{t+1})\right) \leq \frac{2TQ}{w} +V_{1,T}
\end{align}
or all together
\begin{align}
\sum_{t=1}^T\left\|\mathcal{G}_{\mathcal{X}}(\boldsymbol{\lambda}_{t},\nabla F_{t,w}(\boldsymbol{\lambda}_{t}),\alpha)\right\|^2\leq  \frac{64}{\alpha\rho}\left(\frac{2TQ}{w} +V_{1,T}+\Delta_h\right)\nonumber\\+\frac{198}{\rho^2}\frac{T}{w}\left(2\sigma^2_f+2\ell^2_{f,1}\kappa^2_g+\frac{C_K\kappa^2_g\eta^2\sigma^2_{g_{\boldsymbol{\beta}}}}{(1-\nu)}\right)+\frac{198}{\rho^2}\frac{\kappa^2_g\Delta_{\boldsymbol{\beta}}}{(1-\nu)}\nonumber\\+\frac{396 C_{\mu_g}\kappa^2_g}{\rho^2(1-\nu)}\sum_{t=2}^T\left\|\widehat{\boldsymbol{\beta}}_{t}(\boldsymbol{\lambda}_{t-1})-\widehat{\boldsymbol{\beta}}_{t-1}(\boldsymbol{\lambda}_{t-1})\right\|^2
\end{align}
which  dividing by $T$ and recalling we imposed regularity constraints of $H_{2,T}=o(T)$, as well as $V_{1,T}=o(T)$,  implies the bilevel local regret of our SOBBO algorithm is  sublinear on the order of
\begin{align}
    BLR_w(T):= \sum_{t=1}^T\left\|\mathcal{G}_{\mathcal{X}}(\boldsymbol{\lambda}_{t},\nabla F_{t,w}(\boldsymbol{\lambda}_{t}),\alpha)\right\|^2\leq \nonumber\\O\left(\frac{T}{w}\left(1+\kappa^2_g+\sigma^2_f+\kappa^2_g\sigma^2_{g_{\boldsymbol{\beta}}}\right)+V_{1,T}+\kappa^2_gH_{2,T}\right)
\end{align}
\end{proof}

\section{Bilevel Local Regret Comparison}\label{sec:appendix_C}
In this section, we provide a detailed comparison of the bilevel local regret achieved by OBBO relative to online bilevel benchmarks of OAGD (\cite{tarzanagh2024online}) and SOBOW (\cite{sobow}).

\subsection{OAGD} 
We provide a restatement of the bilevel local regret presented in Theorem 9 of \cite{tarzanagh2024online} using our notation.
\begin{theorem}\label{thrm:oagd}(Theorem 9 in \cite{tarzanagh2024online}) Suppose Assumptions \ref{assump:rel_smoothness}, \ref{assump:strong_convex_g_t}, \ref{assump:bounded_hyperparm}, and \ref{assump:bounded_F_t}. Then  the OAGD algorithm of \cite{tarzanagh2024online} for an inner step size of $\eta=\frac{2}{\ell_{g,1}+\mu_g}$, inner iteration count of $K=1$, and outer step size of $\alpha\leq \min\left\{\frac{1}{8\ell_{F,1}},\frac{1}{2\sqrt{2}L_{\boldsymbol{\beta}}M_f(\kappa^2_g-1)^{1/2}}\right\}$ satisfies 
\begin{align}
   \sum_{t=1}^T\left\|\nabla F_{t,w}(\boldsymbol{\lambda}_t)\right\|^2\leq\frac{16}{\alpha}\left(\frac{2TQ}{w}+2Q+\ell_{f,0}H_{1,T}\right)\nonumber\\+10M^2_f(\kappa_g-1)^2\left(\frac{\left\|\boldsymbol{\beta}_1-\widehat{\boldsymbol{\beta}}_1(\boldsymbol{\lambda}_1\right\|^2}{2(\kappa_g+1)}+2H_{2,T}\right)=  O\left(\frac{T}{w}+H_{1,T}+\kappa^4_gH_{2,T}\right)
\end{align} 
\end{theorem}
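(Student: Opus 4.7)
The proof proceeds along classical lines for online bilevel analysis: a one-step descent inequality for the time-smoothed objective, a hypergradient-approximation bound driven by the inner-level tracking error, a telescoped path-variation argument, and absorption of the iterate-difference terms into the regret via a restrictive step-size choice. I begin by using $\ell_{F,1}$-smoothness of $F_t$ (Lemma \ref{lem:hypergrad_bound}), inherited by the window-average $F_{t,w}$, to write
\begin{align}
F_{t,w}(\boldsymbol{\lambda}_{t+1}) - F_{t,w}(\boldsymbol{\lambda}_t) \leq \langle \nabla F_{t,w}(\boldsymbol{\lambda}_t), \boldsymbol{\lambda}_{t+1}-\boldsymbol{\lambda}_t\rangle + \frac{\ell_{F,1}}{2}\|\boldsymbol{\lambda}_{t+1}-\boldsymbol{\lambda}_t\|^2. \nonumber
\end{align}
Substituting the OAGD outer update $\boldsymbol{\lambda}_{t+1}-\boldsymbol{\lambda}_t = -\alpha\,\widetilde{\nabla}f_{t,w}(\boldsymbol{\lambda}_t,\boldsymbol{\beta}_{t+1})$ and splitting the inner product by Young's inequality reorganizes the right-hand side into a negative multiple of $\|\nabla F_{t,w}(\boldsymbol{\lambda}_t)\|^2$ plus a positive multiple of the hypergradient error $\|\widetilde{\nabla}f_{t,w}(\boldsymbol{\lambda}_t,\boldsymbol{\beta}_{t+1}) - \nabla F_{t,w}(\boldsymbol{\lambda}_t)\|^2$; the latter collapses via Lemma \ref{lem:hypergrad_bound} to $M_f^2\|\boldsymbol{\beta}_{t+1}-\widehat{\boldsymbol{\beta}}_t(\boldsymbol{\lambda}_t)\|^2$.

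\textbf{Inner-level tracking.} For OAGD with $K=1$ and $\eta=2/(\ell_{g,1}+\mu_g)$, one gradient step contracts the inner error as $\|\boldsymbol{\beta}_{t+1}-\widehat{\boldsymbol{\beta}}_t(\boldsymbol{\lambda}_t)\|^2 \leq \bigl(\frac{\kappa_g-1}{\kappa_g+1}\bigr)^2\|\boldsymbol{\beta}_t-\widehat{\boldsymbol{\beta}}_t(\boldsymbol{\lambda}_t)\|^2$. Expanding $\|\boldsymbol{\beta}_t-\widehat{\boldsymbol{\beta}}_t(\boldsymbol{\lambda}_t)\|^2$ via $(1+\delta)$-Young against the previous-step target and invoking Lemma \ref{lem:hypergrad_bound} to split $\|\widehat{\boldsymbol{\beta}}_t(\boldsymbol{\lambda}_t)-\widehat{\boldsymbol{\beta}}_{t-1}(\boldsymbol{\lambda}_{t-1})\|^2$ into $\kappa_g^2\|\boldsymbol{\lambda}_t-\boldsymbol{\lambda}_{t-1}\|^2$ and $\|\widehat{\boldsymbol{\beta}}_t(\boldsymbol{\lambda}_{t-1})-\widehat{\boldsymbol{\beta}}_{t-1}(\boldsymbol{\lambda}_{t-1})\|^2$ produces a strict contraction for $\delta$ tuned so that the ratio $(1+\delta)\bigl(\frac{\kappa_g-1}{\kappa_g+1}\bigr)^2 < 1$. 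Unrolling yields
\begin{align}
\sum_{t=1}^T \|\boldsymbol{\beta}_{t+1}-\widehat{\boldsymbol{\beta}}_t(\boldsymbol{\lambda}_t)\|^2 \leq O\!\left(\tfrac{\|\boldsymbol{\beta}_1-\widehat{\boldsymbol{\beta}}_1(\boldsymbol{\lambda}_1)\|^2}{\kappa_g+1} + \kappa_g^2\sum_{t=2}^{T}\|\boldsymbol{\lambda}_t-\boldsymbol{\lambda}_{t-1}\|^2 + H_{2,T}\right), \nonumber
\end{align}
where $H_{2,T}$ aggregates the optimal-inner path variation as in \eqref{eq:inner_level_variation}.

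\textbf{Function variation.} The telescoped left-hand side $\sum_{t=1}^{T}\bigl(F_{t,w}(\boldsymbol{\lambda}_t)-F_{t,w}(\boldsymbol{\lambda}_{t+1})\bigr)$ is handled by mirroring Lemma \ref{lem:bounded_time_varying}, except that OAGD's time-smoothing evaluates previous \emph{functions} at the current iterate; this forces a direct comparison of $f_{t-i}$ at the same $\boldsymbol{\lambda}$ but different $\widehat{\boldsymbol{\beta}}_{t-i}(\boldsymbol{\lambda})$, which by $\ell_{f,0}$-Lipschitzness of $f_t$ in $\boldsymbol{\beta}$ telescopes to $\ell_{f,0}H_{1,T}$ rather than $V_{1,T}$. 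The window-boundary piece is bounded by $2Q/w$ via Assumption \ref{assump:bounded_F_t}, producing a cumulative $2TQ/w + 2Q + \ell_{f,0}H_{1,T}$.

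\textbf{Assembly and main obstacle.} Summing the descent inequality, substituting the hypergradient-error bound, and plugging in the cumulative inner-tracking estimate yields a raw inequality in which $\|\boldsymbol{\lambda}_{t+1}-\boldsymbol{\lambda}_t\|^2 = \alpha^2\|\widetilde{\nabla}f_{t,w}\|^2$ and $\kappa_g^2\|\boldsymbol{\lambda}_t-\boldsymbol{\lambda}_{t-1}\|^2$ both appear with positive $M_f^2$-scaled coefficients, while $\|\nabla F_{t,w}(\boldsymbol{\lambda}_t)\|^2$ appears negatively. The critical step, and the principal technical obstacle, is to route these iterate-difference terms back into $\|\nabla F_{t,w}(\boldsymbol{\lambda}_t)\|^2$ (up to hypergradient error) and to choose $\alpha$ small enough that the resulting coefficient on $\|\nabla F_{t,w}(\boldsymbol{\lambda}_t)\|^2$ remains strictly positive after the absorption. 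The two conditions $\alpha\leq 1/(8\ell_{F,1})$ and $\alpha\leq 1/\bigl(2\sqrt{2}\,L_{\boldsymbol{\beta}}M_f(\kappa_g^2-1)^{1/2}\bigr)$ are precisely what make the smoothness term and the inner-tracking feedback term, respectively, each consume at most a constant fraction of the regret budget. After this rearrangement the leading constants $16/\alpha$ and $10M_f^2(\kappa_g-1)^2$ emerge, and the $O$-statement $O(T/w+H_{1,T}+\kappa_g^4 H_{2,T})$ follows from $M_f=O(\kappa_g^2)$ in Lemma \ref{lem:hypergrad_bound}.
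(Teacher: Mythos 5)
This statement is not proved in the paper at all: it is a verbatim restatement (in the paper's notation) of Theorem 9 of \cite{tarzanagh2024online}, included in Appendix \ref{sec:appendix_C} purely so that the $\kappa_g^4$ dependence of OAGD can be contrasted with the $\kappa_g^2$ dependence of OBBO. There is therefore no in-paper proof to compare yours against; the only fair comparison is with the external OAGD analysis and with the paper's own analogous argument for OBBO (Lemmas \ref{lem:bounded_time_varying}--\ref{lem:deterministic_cumulative_hypergradient_error} and Theorem \ref{thrm:non_convex_regret}). Measured against those, your reconstruction has the right architecture: the $\ell_{F,1}$-smoothness descent step, the single-step contraction $\left(\frac{\kappa_g-1}{\kappa_g+1}\right)^2$ obtained from $\eta=\frac{2}{\ell_{g,1}+\mu_g}$ with $K=1$, the $(1+\delta)$-Young unrolling of the inner tracking error against $\kappa_g^2\|\boldsymbol{\lambda}_t-\boldsymbol{\lambda}_{t-1}\|^2$ and $H_{2,T}$, and the absorption of the iterate-difference feedback into the regret under the two step-size constraints. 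This is exactly the template the paper itself follows for OBBO, so the sketch is credible.

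Two points deserve more care if you intend this as an actual proof rather than a sketch. First, your ``function variation'' step attributes the $\ell_{f,0}H_{1,T}$ term to comparing $f_{t-i}$ at the same $\boldsymbol{\lambda}$ but different optimal inner solutions; however $F_{t+1}(\boldsymbol{\lambda})-F_{t}(\boldsymbol{\lambda})=f_{t+1}(\boldsymbol{\lambda},\widehat{\boldsymbol{\beta}}_{t+1}(\boldsymbol{\lambda}))-f_{t}(\boldsymbol{\lambda},\widehat{\boldsymbol{\beta}}_{t}(\boldsymbol{\lambda}))$ reflects drift in both the function $f_t$ and the minimizer $\widehat{\boldsymbol{\beta}}_t(\boldsymbol{\lambda})$, and only the latter is controlled by $\ell_{f,0}H_{1,T}$ via Lipschitzness in $\boldsymbol{\beta}$; the OAGD analysis handles the former piece separately (this is also why their bound carries $H_{1,T}$ where the paper's carries $V_{1,T}$, a distinction Lemma \ref{lem:bounded_time_varying} makes explicit). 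Second, you do not track where the specific constants $16/\alpha$ and $10M_f^2(\kappa_g-1)^2$ and the particular threshold $\frac{1}{2\sqrt{2}L_{\boldsymbol{\beta}}M_f(\kappa_g^2-1)^{1/2}}$ come from, so the quantitative form of the stated bound is asserted rather than derived; only the order $O(T/w+H_{1,T}+\kappa_g^4H_{2,T})$ genuinely follows from your argument together with $M_f=O(\kappa_g^2)$ from Lemma \ref{lem:hypergrad_bound}.
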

Following the literature on dynamic regret, \cite{tarzanagh2024online} considers the setting where the inner level path variation of $H_{1,T}$ and $H_{2,T}$ are sublinear (i.e., $H_{1,T}=o(T)$, $H_{2,T}=o(T)$) which enforces that the amount of nonstationarity  cannot grow faster than or equal to time itself. In such a setting, the rate of bilevel local regret achieved by OAGD is sublinear when properly selecting the window size such that $w=o(T)$. Note as the  constant $M_f$ in \cite{tarzanagh2024online} has a quadratic dependency on $\kappa_g$, that is $M_f=O(\kappa^2_g)$, the total  dependency of  the condition number $\kappa_g$ in the sublinear rate of bilevel local regret of OAGD  is fourth order.

\subsection{SOBOW}
We restate the bilevel local regret of Theorem 5.7 from \cite{sobow} with our notation.
\begin{theorem}\label{thrm:sobow}(Theorem 5.7 in \cite{sobow})
Suppose Assumptions \ref{assump:rel_smoothness}, \ref{assump:strong_convex_g_t}, \ref{assump:bounded_hyperparm},  \ref{assump:bounded_F_t} and furthermore Assumption 5.4 of \cite{sobow}. Let  the inner step size of $\eta\leq\frac{1}{\ell_{g,1}}$, decay parameter of $\nu\in\left(1-\frac{\alpha\mu_g}{2}\right)$, and outer step size of $\alpha\leq \min\left\{\frac{1}{4\ell_{F,1}},\frac{\mu^2_g\ell_{F,1}W(1-\nu)(\nu-1+\alpha\mu_g/2}{24\ell_{g,1}^4G_2\nu}\right\}$. Then we have
\begin{align}
     \sum_{t=1}^T\left\|\nabla F_{t,w}(\boldsymbol{\lambda}_t)\right\|^2 \leq C_1\left(\frac{2TQ}{w}+V_{1,T}\right) \nonumber\\+C_2\left(\frac{1}{2}+\beta \ell_{F,1}\right)\frac{G_2G_4}{G_3}H_{2,T}+C_3 =O\left(\frac{T}{w}+V_{1,T}+\kappa^3_gH_{2,T}\right)
\end{align}
where  $G_1=O(\kappa^2_g), G_2=O(\kappa^2_g), G_3=O(1), G_4=O(\kappa_g), \beta \ell_{F,1}=O(1)$ and   constants $C_1,C_2,C_3\in \mathbb{R}$ are from Theorem 5.7 in \cite{sobow}. 
\end{theorem}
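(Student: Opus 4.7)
The plan is to mirror the deterministic argument of Theorem \ref{thrm:non_convex_regret_paper} while carefully tracking the stochastic fluctuations. First I would invoke $\ell_{F,1}$-smoothness of $F_{t,w}$ (which follows from Lemma \ref{lem:hypergrad_bound} summed over the window) to obtain the one-step descent bound
\begin{align}
F_{t,w}(\boldsymbol{\lambda}_{t+1})-F_{t,w}(\boldsymbol{\lambda}_t)\leq \langle\nabla F_{t,w}(\boldsymbol{\lambda}_t),\boldsymbol{\lambda}_{t+1}-\boldsymbol{\lambda}_t\rangle+\tfrac{\ell_{F,1}}{2}\|\boldsymbol{\lambda}_{t+1}-\boldsymbol{\lambda}_t\|^2,\nonumber
\end{align}
rewrite both terms using the generalized projection $\mathcal{G}_{\mathcal{X}}(\boldsymbol{\lambda}_t,\widetilde{\nabla}f_{t,w}(\boldsymbol{\lambda}_t,\boldsymbol{\beta}_{t+1},\mathcal{Z}_{t,w}),\alpha)=\alpha^{-1}(\boldsymbol{\lambda}_t-\boldsymbol{\lambda}_{t+1})$, and split the gradient into $\widetilde{\nabla}f_{t,w}$ and the error $\widetilde{\nabla}f_{t,w}-\nabla F_{t,w}$. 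Applying Lemma \ref{lem:gen_projection_bound_one} with $\boldsymbol{q}=\widetilde{\nabla}f_{t,w}$ converts the first piece into $\alpha\rho\|\mathcal{G}_\mathcal{X}\|^2+h(\boldsymbol{\lambda}_{t+1})-h(\boldsymbol{\lambda}_t)$, while Young's inequality handles the cross term and produces a hypergradient error residual scaled by $1/\rho$.

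Next I would combine the resulting inequality with the triangle-type bound
\begin{align}
\|\mathcal{G}_\mathcal{X}(\boldsymbol{\lambda}_t,\nabla F_{t,w},\alpha)\|^2\le 2\|\mathcal{G}_\mathcal{X}(\boldsymbol{\lambda}_t,\widetilde{\nabla}f_{t,w},\alpha)\|^2+\tfrac{2}{\rho^2}\|\widetilde{\nabla}f_{t,w}-\nabla F_{t,w}\|^2,\nonumber
\end{align}
obtained from Lemma \ref{lem:gen_projection_bound_two}, so that the surrogate regret on the right becomes a bound on the true $BLR_w(T)$ after choosing $\alpha\le 3\rho/(4\ell_{F,1})$. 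Taking expectations over $\bar{\zeta}_{t,K+1}$ and $\mathcal{Z}_{t,w}$, telescoping $t=1,\dots,T$, and using a stochastic analogue of Lemma \ref{lem:bounded_time_varying} (which holds by the same argument since the expectation of $f_t(\cdot,\cdot,\epsilon)$ recovers $F_t$) yields
\begin{align}
\sum_{t=1}^T(F_{t,w}(\boldsymbol{\lambda}_t)-F_{t,w}(\boldsymbol{\lambda}_{t+1}))\le \tfrac{2TQ}{w}+V_{1,T}.\nonumber
\end{align}

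The key step is plugging in Lemma \ref{lem:stochastic_cumulative_hypergradient_error_paper} (in its cumulative form, Lemma \ref{lem:stochastic_hypergradient_error}) to control $\sum_t\mathbb{E}\|\widetilde{\nabla}f_{t,w}(\boldsymbol{\lambda}_t,\boldsymbol{\beta}_{t+1},\mathcal{Z}_{t,w})-\nabla F_{t,w}(\boldsymbol{\lambda}_t)\|^2$. This produces a feedback term proportional to $BLR_w(T)$, an inner-path term $H_{2,T}$, the initialization error $\Delta_{\boldsymbol{\beta}}$, and two stochastic residuals: $\sigma_f^2/w$ from the variance of the outer stochastic gradient across the window, and $\sigma_{g_{\boldsymbol{\beta}}}^2/s$ from the variance of the inner SGD iterates, plus a bias term $\ell_{f,1}^2\kappa_g^2(1-\mu_g/\ell_{g,1})^{2m}$ from the truncated Neumann series in Lemma \ref{lem:stochastic_gradient_estimate_bias}. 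Choosing $\alpha\le \rho\sqrt{1-\nu}/(\kappa_g^2\sqrt{72C_{\mu_g}})$ ensures the feedback coefficient $8C_{\mu_g}\kappa_g^4\alpha^2/(\rho^2(1-\nu))\le 1/9$ so that it can be absorbed on the left-hand side, while $s=w$ and $m=\log(w)/\log(1-\mu_g/\ell_{g,1})+1$ tune the stochastic residuals so that $\sigma_{g_{\boldsymbol{\beta}}}^2/s=O(1/w)$ and the bias becomes $O(\kappa_g^2/w)$.

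Assembling these pieces and dividing by $T$ produces the claimed $O(T/w(1+\kappa_g^2+\sigma_f^2+\kappa_g^2\sigma_{g_{\boldsymbol{\beta}}}^2)+V_{1,T}+\kappa_g^2 H_{2,T})$ rate, which is sublinear under the regularity assumptions $V_{1,T}=o(T)$, $H_{2,T}=o(T)$, $w=o(T)$. The main obstacle I anticipate is the bookkeeping around the two nested expectations: the inner loop randomness $\bar{\zeta}_{t,K+1}$ is entangled with the window-averaged randomness $\mathcal{Z}_{t,w}$, and the Lemma \ref{lem:stochastic_hypergradient_error} bound contains itself (via a discounted sum of past hypergradient errors) inside the cumulative error, so one has to re-index carefully, exchange summation order, use $\sum_{j=0}^{\infty}\nu^j=1/(1-\nu)$, and verify that the self-referential term can be absorbed before the $BLR_w(T)$ feedback term is also absorbed --- essentially two successive absorption steps with disjoint constants, which is the delicate part of the argument.
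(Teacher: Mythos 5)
Your proposal does not prove the stated theorem. The statement you were given is Theorem \ref{thrm:sobow}, which is a restatement (in the paper's notation) of Theorem 5.7 of \cite{sobow}: a \emph{deterministic} regret bound for the benchmark algorithm SOBOW, included in Appendix \ref{sec:appendix_C} purely for comparison, with rate $O\left(\frac{T}{w}+V_{1,T}+\kappa_g^3 H_{2,T}\right)$, proved under Assumption 5.4 of \cite{sobow} and the step-size conditions $\eta\leq\frac{1}{\ell_{g,1}}$ and $\alpha\leq\min\left\{\frac{1}{4\ell_{F,1}},\cdot\right\}$. What you have sketched instead is the proof of the paper's own \emph{stochastic} result, Theorem \ref{thrm:sobbo_regret_minimization}: your argument is organized around the stochastic window-averaged estimator $\widetilde{\nabla}f_{t,w}(\boldsymbol{\lambda}_t,\boldsymbol{\beta}_{t+1},\mathcal{Z}_{t,w})$, the nested expectations over $\bar{\zeta}_{t,K+1}$ and $\mathcal{Z}_{t,w}$, the variance terms $\sigma_f^2$ and $\sigma^2_{g_{\boldsymbol{\beta}}}$, the batch-size choices $s=w$ and $m=\log(w)/\log\left(1-\frac{\mu_g}{\ell_{g,1}}\right)+1$, and the Bregman machinery of Lemmas \ref{lem:gen_projection_bound_one} and \ref{lem:gen_projection_bound_two}, and it lands on the rate $O\left(\frac{T}{w}\left(1+\kappa_g^2+\sigma_f^2+\kappa_g^2\sigma^2_{g_{\boldsymbol{\beta}}}\right)+V_{1,T}+\kappa_g^2H_{2,T}\right)$. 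None of this matches the target: the SOBOW theorem has no stochasticity, no Bregman divergences (SOBOW uses Euclidean updates, so $\rho$ and the generalized projection play no role there), a different hypergradient-error decomposition (one expressed in terms of cumulative differences of outer-level variables, as the paper itself emphasizes when contrasting its Lemma \ref{lem:deterministic_hypergradient_error_paper} with Theorem 5.6 of \cite{sobow}), different step-size hypotheses, and a strictly worse $\kappa_g^3$ dependence on $H_{2,T}$ rather than the $\kappa_g^2$ you derive.

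Concretely, the gap is that you have proved the wrong statement about the wrong algorithm. A correct treatment of Theorem \ref{thrm:sobow} would either (i) simply cite Theorem 5.7 of \cite{sobow} and verify that the constants $C_1,C_2,C_3$ and the orders $G_1=O(\kappa_g^2)$, $G_2=O(\kappa_g^2)$, $G_3=O(1)$, $G_4=O(\kappa_g)$, $\beta\ell_{F,1}=O(1)$ translate into the displayed $O\left(\frac{T}{w}+V_{1,T}+\kappa_g^3H_{2,T}\right)$ bound under the paper's notation --- which is all the paper itself does --- or (ii) reproduce SOBOW's own analysis, which hinges on their single-loop update and their Assumption 5.4, neither of which appears in your argument. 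Your sketch, even if every step were filled in, would establish Theorem \ref{thrm:sobbo_regret_minimization_paper}, not this theorem; in particular it cannot recover the $\kappa_g^3$ coefficient on $H_{2,T}$ that the statement asserts, because the $\kappa_g^2$ improvement is precisely what distinguishes the paper's Bregman-based analysis from SOBOW's.
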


The  work of  \cite{sobow}, similarly inspired by the dynamic regret literature, considers  the setting of sublinear $H_{2,T}$ and  $V_{1,T}$-- where the former term is second order inner level path variation and the latter term measures the variation in evaluations of the outer level objective function.  In such a setting, the rate of bilevel local regret achieved by SOBOW is sublinear when the window size is properly selected such that $w=o(T)$. Recalling the dependencies of $\kappa_g$ in the terms of $G_1,G_2,G_3,G_4, \beta \ell_{F,1}$ in \cite{sobow}, we remark the total  dependency of  the condition number $\kappa_g$ in the sublinear rate of bilevel local regret of SOBOW  is third order.

\subsection{OBBO}
We restate the sublinear rate of bilevel local regret achieved by OBBO in Theorem \ref{thrm:non_convex_regret_paper} below.

\begin{theorem}\label{thrm:obbo}
    Suppose Assumptions \ref{assump:rel_smoothness}, \ref{assump:strong_convex_g_t}, \ref{assump:bounded_hyperparm}, \ref{assump:bounded_F_t}, and \ref{assump:continuity_phi_t}. Let the inner step size of $\eta < \min{\left(\frac{1}{\ell_{g,1}},\frac{1}{\mu_g}\right)}$, outer step size of $  \alpha\leq \min{\left\{\frac{3\rho}{4\ell_{F,1}},\frac{\rho\sqrt{(1-\nu)}}{\kappa_g\sqrt{108C_{\mu_g}L_{\boldsymbol{\beta}}}}\right\}}$, and inner iteration count $K=\frac{\log{(T)}}{\log{\left((1-\eta\mu_g)^{-1}\right)}}+1$. For simplicity, assume $\phi_t(\boldsymbol{\lambda})=\phi(\boldsymbol{\lambda})=\frac{1}{2}\left\|\boldsymbol{\lambda}\right\|^2$, and $h(\boldsymbol{\lambda})=0$. Then the bilevel local regret of our OBBO algorithm  satisfies 
    \begin{align}
\sum_{t=1}^T\left\|\nabla F_{t,w}(\boldsymbol{\lambda}_t)\right\|^2\leq  \frac{64}{\alpha\rho}\left(\frac{2TQ}{w} +V_{1,T}\right)\nonumber\\+\frac{297}{\rho^2}\left(\frac{\Delta_{\boldsymbol{\beta}} L_{\boldsymbol{\beta}}}{(1-\nu)} +L^2_3\right)+\frac{1188L_{\boldsymbol{\beta}}C_{\mu_g}}{\rho^2(1-\nu)}H_{2,T}= O\left(\frac{T}{w}+V_{1,T}+\kappa_g^2H_{2,T}\right)
\end{align}
\end{theorem}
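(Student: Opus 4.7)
The plan is to derive a per-step descent inequality for the time-smoothed objective $F_{t,w}$, telescope it across $t=1,\ldots,T$, and then control the resulting hypergradient-error term using the novel decomposition of Lemma \ref{lem:deterministic_hypergradient_error_paper}. First, I would invoke the $\ell_{F,1}$-Lipschitz gradient property of $F_t$ (Lemma \ref{lem:hypergrad_bound}), average over the window, and obtain
\begin{align}
F_{t,w}(\boldsymbol{\lambda}_{t+1})-F_{t,w}(\boldsymbol{\lambda}_t)\leq \langle \nabla F_{t,w}(\boldsymbol{\lambda}_t),\boldsymbol{\lambda}_{t+1}-\boldsymbol{\lambda}_t\rangle+\tfrac{\ell_{F,1}}{2}\|\boldsymbol{\lambda}_{t+1}-\boldsymbol{\lambda}_t\|^2.\nonumber
\end{align}
Substituting $\boldsymbol{\lambda}_{t+1}-\boldsymbol{\lambda}_t = -\alpha\,\mathcal{G}_\mathcal{X}(\boldsymbol{\lambda}_t,\widetilde{\nabla}f_{t,w}(\boldsymbol{\lambda}_t,\boldsymbol{\beta}_{t+1}),\alpha)$, adding and subtracting $\widetilde{\nabla}f_{t,w}$ in the inner product, and applying Lemma \ref{lem:gen_projection_bound_one} (to produce $\rho\|\mathcal{G}_\mathcal{X}\|^2 + h(\boldsymbol{\lambda}_{t+1})-h(\boldsymbol{\lambda}_t)$ on the RHS) together with a weighted Young's inequality on the error term yields a descent inequality of the form
\begin{align}
F_{t,w}(\boldsymbol{\lambda}_{t+1})-F_{t,w}(\boldsymbol{\lambda}_t) \leq -c_1\alpha\rho\,\|\mathcal{G}_\mathcal{X}(\cdot,\widetilde{\nabla}f_{t,w},\alpha)\|^2 + \tfrac{c_2\alpha}{\rho}\,\|\widetilde{\nabla}f_{t,w}-\nabla F_{t,w}\|^2 + h(\boldsymbol{\lambda}_t)-h(\boldsymbol{\lambda}_{t+1}),\nonumber
\end{align}
provided $\alpha \leq 3\rho/(4\ell_{F,1})$.

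Next, I would convert the estimator-based generalized gradient into the true one using Lemma \ref{lem:gen_projection_bound_two}, absorbing the cost into an extra $\|\widetilde{\nabla}f_{t,w}-\nabla F_{t,w}\|^2/\rho^2$ term. Summing from $t=1$ to $T$, the regularizer $h$ telescopes, and Lemma \ref{lem:bounded_time_varying} gives $\sum_t [F_{t,w}(\boldsymbol{\lambda}_t)-F_{t,w}(\boldsymbol{\lambda}_{t+1})] \leq 2TQ/w + V_{1,T}$. This reduces the problem to bounding $\sum_{t=1}^T \|\widetilde{\nabla}f_{t,w}(\boldsymbol{\lambda}_t,\boldsymbol{\omega}_t^K)-\nabla F_{t,w}(\boldsymbol{\lambda}_t)\|^2$.

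For this cumulative hypergradient error, I would apply Lemma \ref{lem:deterministic_hypergradient_error_paper} pointwise and sum. After exchanging the order of double summation and using $\sum_{j=0}^{\infty}\nu^j = 1/(1-\nu)$, the cumulative error is upper bounded by a constant plus $A/(1-\nu)$ times the cumulative error itself, $B/(1-\nu)$ times the cumulative bilevel local regret, and $C/(1-\nu)$ times the inner path variation $H_{2,T}$. With the choice $K = \log T /\log((1-\eta\mu_g)^{-1})+1$, the initial/decaying term contributes $O(1)$, and with $\alpha \leq \rho\sqrt{1-\nu}/(\kappa_g\sqrt{108 C_{\mu_g} L_{\boldsymbol{\beta}}})$ the self-referential coefficient $A/(1-\nu)$ stays below $1$, so the cumulative hypergradient error can be solved for explicitly in terms of the cumulative bilevel local regret and $H_{2,T}$.

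Plugging this bound back into the telescoped descent inequality gives the bilevel local regret on the LHS and a multiple of itself (scaled by $\alpha^2$) on the RHS; choosing $\alpha$ as in the theorem ensures the self-referential term can be absorbed into the LHS, leaving $BLR_w(T) \leq O(T/w + V_{1,T} + \kappa_g^2 H_{2,T})$. The main obstacle will be the careful bookkeeping of constants so that both absorption steps (one for the cumulative hypergradient error, one for the bilevel local regret) go through simultaneously under a single choice of $\alpha$; the key insight is that both conditions only impose upper bounds on $\alpha$ scaling like $\rho/\kappa_g$, so the minimum over the two stated thresholds suffices. The $\kappa_g^2$ factor in front of $H_{2,T}$ arises because $L_{\boldsymbol{\beta}} C_{\mu_g} = O(\kappa_g^2)$ in the constant $C$ of Lemma \ref{lem:deterministic_hypergradient_error_paper}, and propagates through the division by $(1-\nu)$.
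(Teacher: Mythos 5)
Your proposal is correct and follows essentially the same route as the paper: the $\ell_{F,1}$-descent inequality combined with Lemmas \ref{lem:gen_projection_bound_one} and \ref{lem:gen_projection_bound_two}, telescoping with Lemma \ref{lem:bounded_time_varying}, and then bounding the cumulative time-smoothed hypergradient error by summing the decomposition of Lemma \ref{lem:deterministic_hypergradient_error_paper} and performing the two absorption steps under the stated choices of $\alpha$ and $K$ (the paper packages the second of these as Lemma \ref{lem:deterministic_cumulative_hypergradient_error}). The only implicit step you should make explicit is the Jensen/Young's inequality passing from $\|\partial f_{t,w}/\partial\boldsymbol{\lambda}-\nabla F_{t,w}(\boldsymbol{\lambda}_t)\|^2$ to the window average of the per-round errors before applying the pointwise lemma, but this is routine and matches the paper.
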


As in the work of \cite{sobow}, we consider the setting of sublinear  $H_{2,T}$ and $V_{1,T}$.  In such a setting, the rate of bilevel local regret achieved by OBBO is sublinear when properly selecting the window size such that $w=o(T)$. Further, we remark that as $L_{\boldsymbol{\beta}}=O(\kappa^2_g)$, the total  dependency of  the condition number $\kappa_g$ in the sublinear rate of bilevel local regret of OBBO  is second order. Compared to the  regret achieved by OAGD and SOBOW in Theorem \ref{thrm:oagd} and \ref{thrm:sobow}  this is a second-order and first-order improvement respectively.

\subsection{SOBBO}
We restate the sublinear rate of bilevel local regret achieved by SOBBO in Theorem \ref{thrm:sobbo_regret_minimization_paper} below.

\begin{theorem}\label{thrm:obbo}
      Suppose Assumptions \ref{assump:rel_smoothness}, \ref{assump:strong_convex_g_t}, \ref{assump:unbiased_finite_var}, \ref{assump:bounded_hyperparm}, \ref{assump:bounded_F_t}, and \ref{assump:continuity_phi_t}. Let the inner step size $\eta\leq\frac{2}{\ell_{g,1}+\mu_g}$, the inner iteration count $K\geq 1$,  the outer step size $\alpha\leq \min\left\{\frac{3\rho}{4\ell_{F,1}},\frac{\rho\sqrt{(1-\nu)}}{\kappa^2_g\sqrt{72C_{\mu_g}}}\right\}$, and inner and outer level batch sizes of $s=w$ and $m=\log{(w)}/\log{\left(1-\frac{\mu_g}{\ell_{g,1}}\right)}+1$ respectively.  For simplicity, assume $\phi_t(\boldsymbol{\lambda})=\phi(\boldsymbol{\lambda})=\frac{1}{2}\left\|\boldsymbol{\lambda}\right\|^2$, and $h(\boldsymbol{\lambda})=0$. Then the  bilevel local regret of  SOBBO satisfies
    \begin{align}
\sum_{t=1}^T\left\|\nabla F_{t,w}(\boldsymbol{\lambda}_t)\right\|^2\leq  \frac{64}{\alpha\rho}\left(\frac{2TQ}{w} +V_{1,T}+\Delta_h\right)\nonumber\\+\frac{198}{\rho^2}\frac{T}{w}\left(2\sigma^2_f+2\ell^2_{f,1}\kappa^2_g+\frac{C_K\kappa^2_g\eta^2\sigma^2_{g_{\boldsymbol{\beta}}}}{(1-\nu)}\right)+\frac{198}{\rho^2}\frac{\kappa^2_g\Delta_{\boldsymbol{\beta}}}{(1-\nu)}+\frac{396 C_{\mu_g}\kappa^2_g}{\rho^2(1-\nu)}\sum_{t=2}^TH_{2,T}\nonumber\\= O\left(\frac{T}{w}\left(1+\kappa^2_g+\sigma^2_f+\kappa^2_g\sigma^2_{g_{\boldsymbol{\beta}}}\right)+V_{1,T}+\kappa^2_gH_{2,T}\right)
\end{align}
\end{theorem}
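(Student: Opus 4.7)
The plan is to follow the same descent-lemma-plus-telescoping scheme used for the deterministic OBBO proof of Theorem \ref{thrm:non_convex_regret_paper}, but with the stochastic time-smoothed gradient $\widetilde{\nabla}f_{t,w}(\boldsymbol{\lambda}_t,\boldsymbol{\beta}_{t+1},\mathcal{Z}_{t,w})$ playing the role of the surrogate gradient, and with the richer expected hypergradient decomposition of Lemma \ref{lem:stochastic_cumulative_hypergradient_error_paper} replacing its deterministic counterpart. First I would use Assumption A (specifically the $\ell_{F,1}$-smoothness of $F_{t,w}$, inherited term-by-term from Lemma \ref{lem:hypergrad_bound}) to write the standard descent inequality
\begin{align}
F_{t,w}(\boldsymbol{\lambda}_{t+1})-F_{t,w}(\boldsymbol{\lambda}_t)\le -\alpha\bigl\langle \nabla F_{t,w}(\boldsymbol{\lambda}_t),\mathcal{G}_{\mathcal{X}}(\boldsymbol{\lambda}_t,\widetilde{\nabla}f_{t,w},\alpha)\bigr\rangle+\tfrac{\alpha^2\ell_{F,1}}{2}\bigl\|\mathcal{G}_{\mathcal{X}}(\boldsymbol{\lambda}_t,\widetilde{\nabla}f_{t,w},\alpha)\bigr\|^2,\nonumber
\end{align}
and then split the inner product into a $\widetilde{\nabla}f_{t,w}$ piece handled by Lemma \ref{lem:gen_projection_bound_one} (yielding an $\alpha\rho\|\mathcal{G}_{\mathcal{X}}\|^2$ term plus the telescoping regularizer $h(\boldsymbol{\lambda}_{t+1})-h(\boldsymbol{\lambda}_t)$) and a stochastic error piece $\widetilde{\nabla}f_{t,w}-\nabla F_{t,w}$ handled by Young's inequality and Lemma \ref{lem:gen_projection_bound_two}.

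Next I would replace the surrogate-projection norm $\|\mathcal{G}_{\mathcal{X}}(\cdot,\widetilde{\nabla}f_{t,w},\cdot)\|^2$ by the target $\|\mathcal{G}_{\mathcal{X}}(\cdot,\nabla F_{t,w},\cdot)\|^2$ at the cost of another $\rho^{-2}$ times the hypergradient error term, again by Lemma \ref{lem:gen_projection_bound_two}. The step-size restriction $\alpha\le 3\rho/(4\ell_{F,1})$ absorbs the quadratic $\alpha^2\ell_{F,1}$ coefficient so that a clean fraction (roughly $3\alpha\rho/16$) of $\|\mathcal{G}_{\mathcal{X}}(\boldsymbol{\lambda}_t,\nabla F_{t,w}(\boldsymbol{\lambda}_t),\alpha)\|^2$ remains on the left-hand side after rearrangement. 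Summing from $t=1,\dots,T$, the $F_{t,w}$ gap telescopes only partially and is controlled by Lemma \ref{lem:stochastic_bounded_time_varying}, giving $\tfrac{2TQ}{w}+V_{1,T}$, while the regularizer telescopes fully to $h(\boldsymbol{\lambda}_1)-h(\boldsymbol{\lambda}_{T+1})$.

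The crux is then handling the cumulative expected hypergradient error term. I would take expectations with respect to the inner-loop samples $\bar{\zeta}_{t,K+1}$ and outer-level samples $\mathcal{Z}_{t,w}$ and plug in Lemma \ref{lem:stochastic_cumulative_hypergradient_error_paper}; this reintroduces (i) a discounted cumulative bilevel local regret, (ii) a discounted cumulative hypergradient error, and (iii) a discounted cumulative inner-level path variation, plus two stochastic terms. The self-referential (ii) piece is absorbed by choosing $\alpha\le\rho\sqrt{1-\nu}/(\kappa_g^2\sqrt{72C_{\mu_g}})$ so that the implicit coefficient is bounded below by a constant (as in the deterministic Lemma \ref{lem:deterministic_cumulative_hypergradient_error} at a factor like $8/9$), and (i) is absorbed on the left-hand side using the same step-size condition. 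The term (iii) contributes the $\kappa_g^2 H_{2,T}$ summand via the definition of $H_{2,T}$. The two remaining stochastic contributions are the outer-level variance $\tfrac{T\sigma_f^2}{w}$ arising from averaging $w$ independent $\mathcal{E}_{t-i}$-samples in $\widetilde{\nabla}f_{t,w}$, and the inner-level variance $\tfrac{T C_K\kappa_g^2\eta^2\sigma_{g_{\boldsymbol{\beta}}}^2}{s(1-\nu)}$ which becomes $O(T\kappa_g^2\sigma_{g_{\boldsymbol{\beta}}}^2/w)$ after choosing $s=w$; the Neumann-series bias term $\ell_{f,1}^2\kappa_g^2(1-\mu_g/\ell_{g,1})^{2m}$ from Lemma \ref{lem:stochastic_gradient_estimate_bias} is rendered $O(1/w^2)$, and hence absorbable into $O(T/w)$, by the choice $m=\log(w)/\log(1-\mu_g/\ell_{g,1})+1$.

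The main obstacle I anticipate is the careful bookkeeping required to certify that, after all three error contractions (self-referential hypergradient error, decaying initial error $\nu^{t-1}\Delta_{\boldsymbol{\beta}}$, and geometric series $\sum_{j\ge0}\nu^j<\tfrac{1}{1-\nu}$), the cross-dependence between the hypergradient error recursion and the local-regret recursion closes with a strictly positive residual coefficient on $\sum_t\|\mathcal{G}_{\mathcal{X}}(\boldsymbol{\lambda}_t,\nabla F_{t,w}(\boldsymbol{\lambda}_t),\alpha)\|^2$; this is precisely what forces the two-sided minimum on $\alpha$ and the batch-size scalings $s=w$, $m=\Theta(\log w)$. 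Once these constants are pinned down, dividing through yields the stated bound $O\bigl(\tfrac{T}{w}(1+\kappa_g^2+\sigma_f^2+\kappa_g^2\sigma_{g_{\boldsymbol{\beta}}}^2)+V_{1,T}+\kappa_g^2 H_{2,T}\bigr)$, and sublinearity follows by selecting $w=o(T)$ under the assumed sublinear comparator sequences.
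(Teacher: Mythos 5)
Your proposal is correct and follows essentially the same route as the paper's proof: the $\ell_{F,1}$-smoothness descent inequality combined with Lemmas \ref{lem:gen_projection_bound_one} and \ref{lem:gen_projection_bound_two}, absorption of the self-referential cumulative hypergradient error and the discounted regret term via the two-sided condition on $\alpha$, telescoping with Lemma \ref{lem:stochastic_bounded_time_varying}, and the batch-size choices $s=w$ and $m=\Theta(\log w)$ to tame the inner-level variance and the Neumann-series bias. The only difference is organizational: the paper packages the contraction of the cumulative expected hypergradient error into a separate lemma (Lemma \ref{lem:stochastic_cumulative_hypergradient_error}) before substituting it into the descent recursion, whereas you carry that bookkeeping inline.
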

Following the setup for the deterministic case, we consider the setting of sublinear  $H_{2,T}$ and $V_{1,T}$.  In such a setting, the rate of bilevel local regret achieved by SOBBO is sublinear when properly selecting the window and \emph{ batch} sizes such that $w=o(T)$ and $s=o(T)$. Due to SOBBO only having access to noisy gradient samples, a sublinear rate of gradient samples is required. Assuming the above conditions are satisfied,  SOBBO achieves a sublinear rate of bilevel local regret with the sublinear rate further generalizing the deterministic result to finite outer and inner variances $\sigma^2_f,\sigma^2_{g_{\boldsymbol{\beta}}}$.
 
 \begin{table}[htp]
    \centering
    \begin{tabular}{|c|c|}
    \hline
       \textbf{Algorithm}  & $BLR_{w}(T)$ \\ \hline 
          OAGD &  $O(T/w+H_{1,T}+\boldsymbol{\kappa^4_g}H_{2,T})$   \\ \hline
          SOBOW  &  $O(T/w+V_{1,T}+\boldsymbol{\kappa^3_g}H_{2,T})$   \\ \hline
          OBBO &  $O(T/w+V_{1,T}+\boldsymbol{\kappa^2_g}H_{2,T})$   \\ \hline
        SOBBO &  $O\left(T/w\left(1+\mathbf{\kappa^2_g}+\sigma^2_f+\mathbf{\kappa^2_g}\sigma^2_{g_{\boldsymbol{\beta}}}\right)+V_{1,T}+\mathbf{\kappa^2_g}H_{2,T}\right)$   \\ \hline
    \end{tabular}
    \caption{Bilevel local regret, $BLR_{w}(T)$, of OBBO vs.  online bilevel benchmarks SOBOW from \cite{sobow} and OAGD from \cite{tarzanagh2024online}. Note the first and second order-wise improvement OBBO offers in terms of the condition number $\kappa_g$ dependency. Bilevel local regret for SOBBO is also included, generalizing the deterministic case. The  $BLR_{w}(T)$ is reported in online rounds $T$ , window parameter $w$, comparator sequences $V_{1,T},H_{1,T}, H_{2,T}$ , condition number $\kappa_g$, and finite  outer and inner variances $\sigma^2_f,\sigma^2_{g_{\boldsymbol{\beta}}}$, respectively.}
    \label{tab:label_table}
\end{table}

\section{Additional Experimental Details and Results}\label{sec:appendix_D}
\subsection{Online Hyperparameter Optimization}

\textbf{Market Impact} This dataset consists of equity price time series. In particular, this dataset contains time series corresponding to 440 significant market impact events annotated by experts from the components of the S\&P 500 index between January 2021 and December 2022. For each annotated event, there is a corresponding sequence of 600 training-validation subsets with equal length of 700 observations constructed on a rolling basis, see a sample of this time series split in Figure \ref{fig:sample_exp_design}. Additionally, for each annotated event, there is a test set of 120  observations held out for evaluation such that the last  corresponding training subset goes up to the  annotation. We consider the task of time series forecasting on the post-annotation test set given the available training-validation subsets, and quantify performance by the mean-squared error.

\begin{minipage}[c]{\linewidth}
    \centering
\includegraphics[width=.33\textwidth,height=45mm]{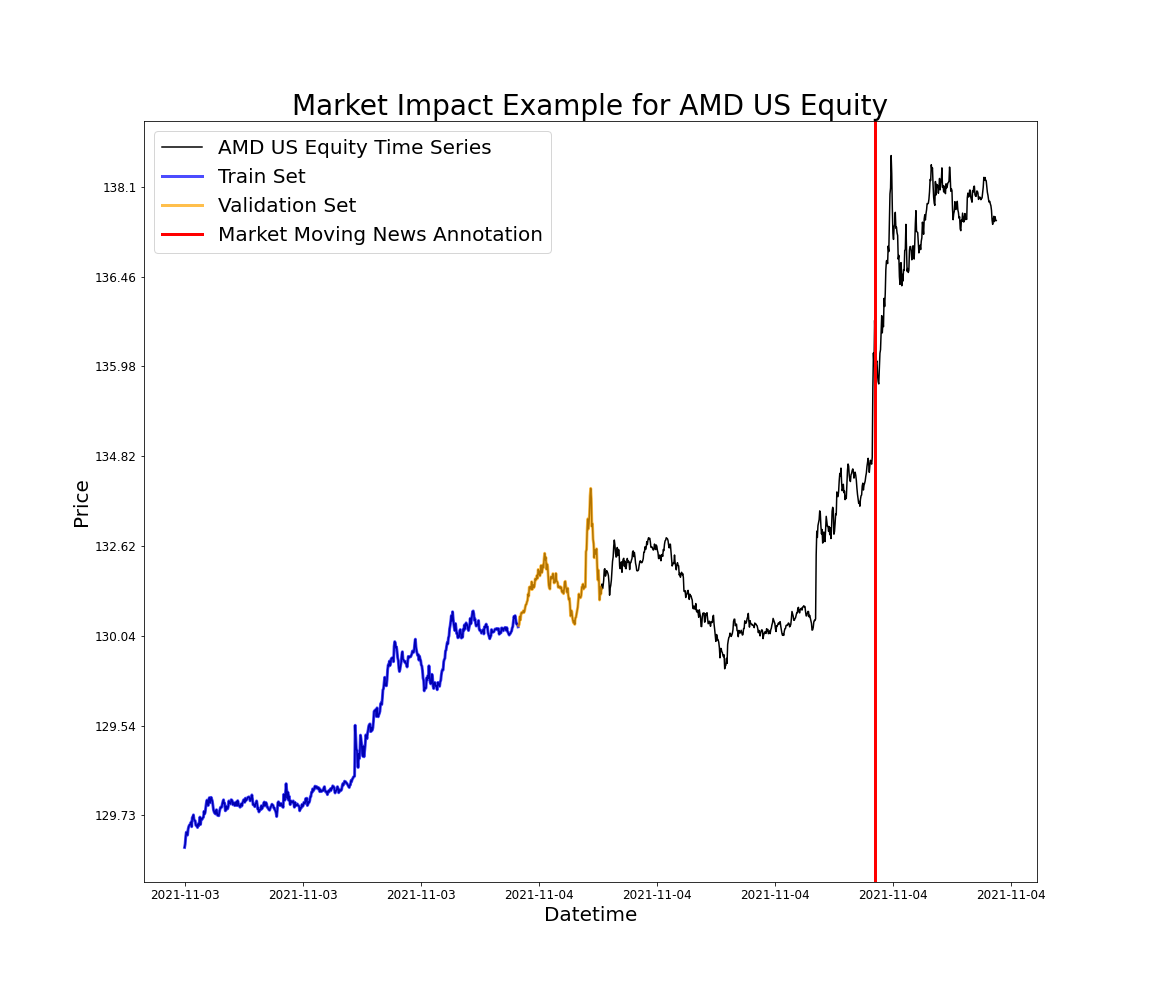}\includegraphics[width=.33\textwidth,height=45mm]{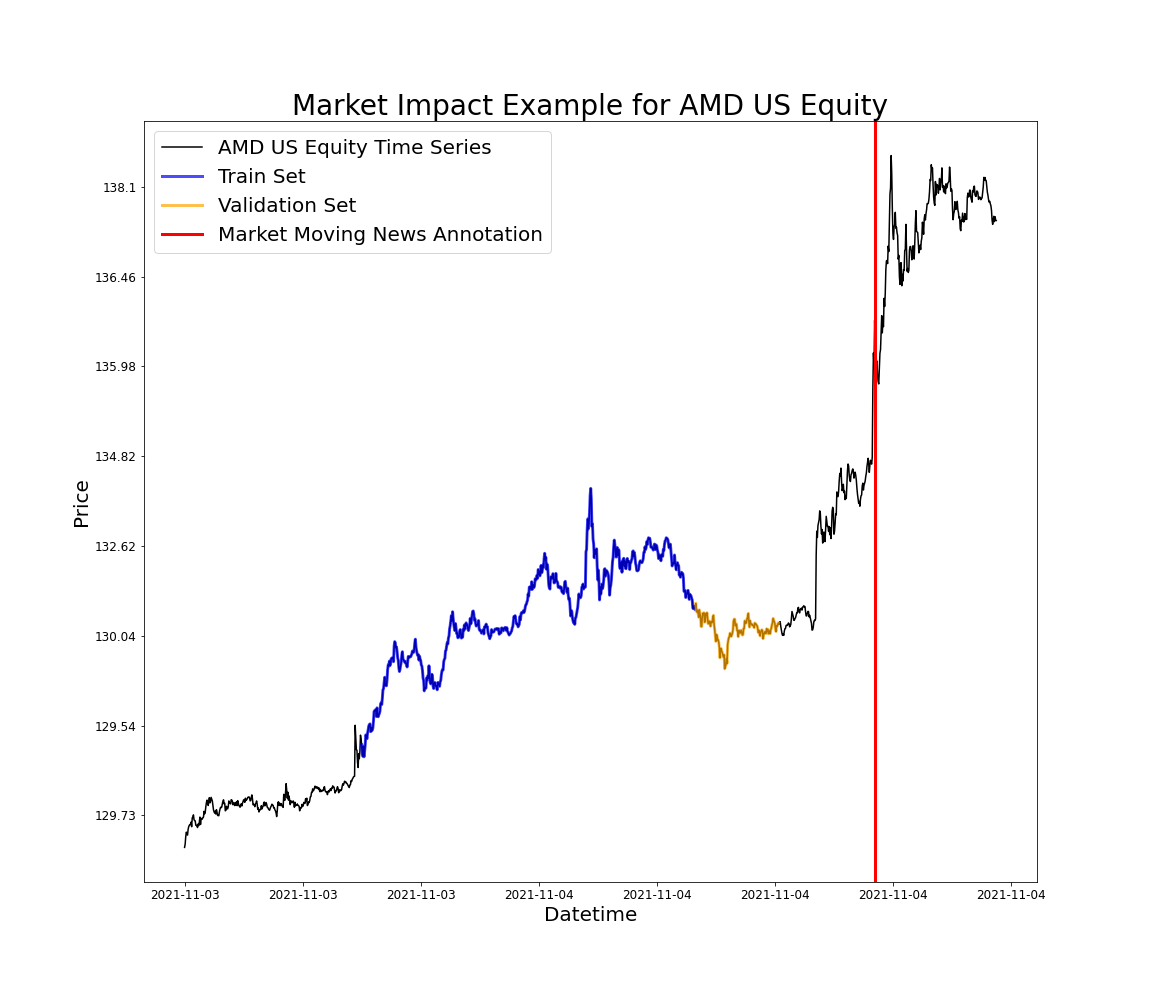}\includegraphics[width=.33\textwidth,height=45mm]{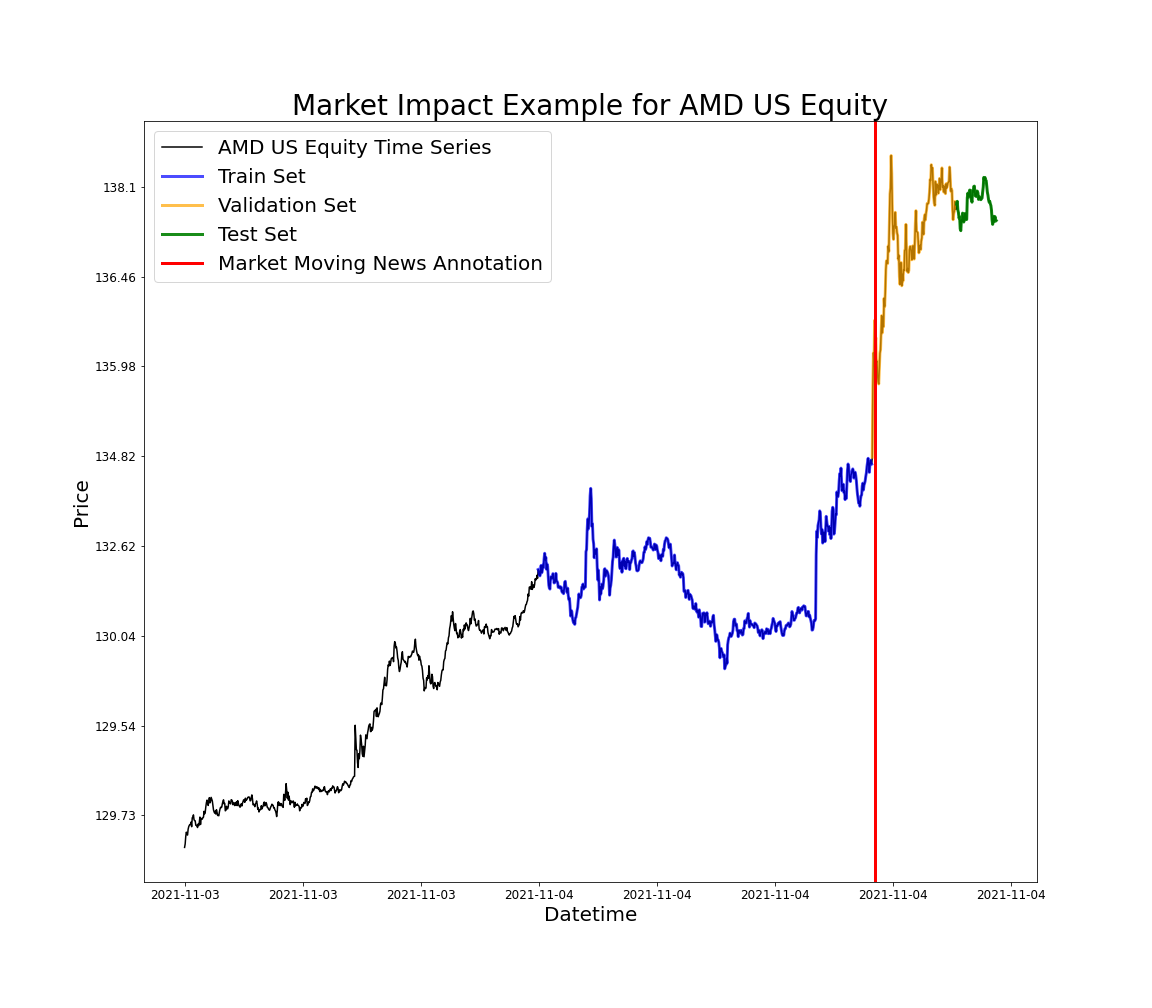}
    \captionof{figure}{Sample training-validation subsets for AMD U.S. Equity with annotated market event on 11-08-2021. }\label{fig:sample_exp_design}
  \end{minipage}
\\

We consider a smoothing spline model of linear order where the inner level variables are  B-spline coefficients  and the outer level variable is a positive regularization hyperparameter, respectively fitted on the train and validation datasets. The simplicity of such a model allows us to use closed-form hypergradients, instead of an inner gradient descent loop. All algorithms and window configurations have  the outer learning rate set at $\alpha=0.001$. For OBBO we use the  reference function of  $\phi_t(\boldsymbol{\lambda})=\frac{1}{2}\boldsymbol{\lambda}^T\mathbf{H}_t\boldsymbol{\lambda}$ such that $\mathbf{H}_t$ is an adaptive matrix of averaged gradient squares with coefficient 0.9, previously applied in prior works (\cite{huang2022enhanced},\cite{huang2021super}). Default coefficients from PyTorch \cite{paszke2019pytorch} are used for ADAM ($\beta_1=0.9,\beta_2=0.999$) as well as SGDM ($\beta_1=0.9$) with gradient clipping applied to all algorithms using the threshold on the gradient norm of $\left\|\nabla f_{t,w}\right\|^2\leq1000$.

In the left panel of Figure \ref{fig:full_local_regret}, note the significant improvement in the  median cumulative local regret achieved by OBBO relative to  online bilevel  (OAGD, SOBOW) and offline general purpose (ADAM, SGDM) benchmark algorithms across 440 U.S. markets. This empirical improvement in cumulative local regret further justifies our theoretical results provided in Theorem \ref{thrm:non_convex_regret_paper} and Theorem \eqref{thrm:sobbo_regret_minimization_paper}. Further in the middle and right panels of Figure \ref{fig:full_local_regret} we visualize  stability metrics across algorithms of (i) the gradient norm at $t=T$, and (ii) the forecasting mean-squared error on a test set. Specifically we see OBBO often achieves  a smaller gradient norm at $t=T$ and a smaller forecasting loss relative to online and offline benchmarks.

\begin{minipage}[c]{\linewidth}
    \centering
\includegraphics[width=.33\textwidth]{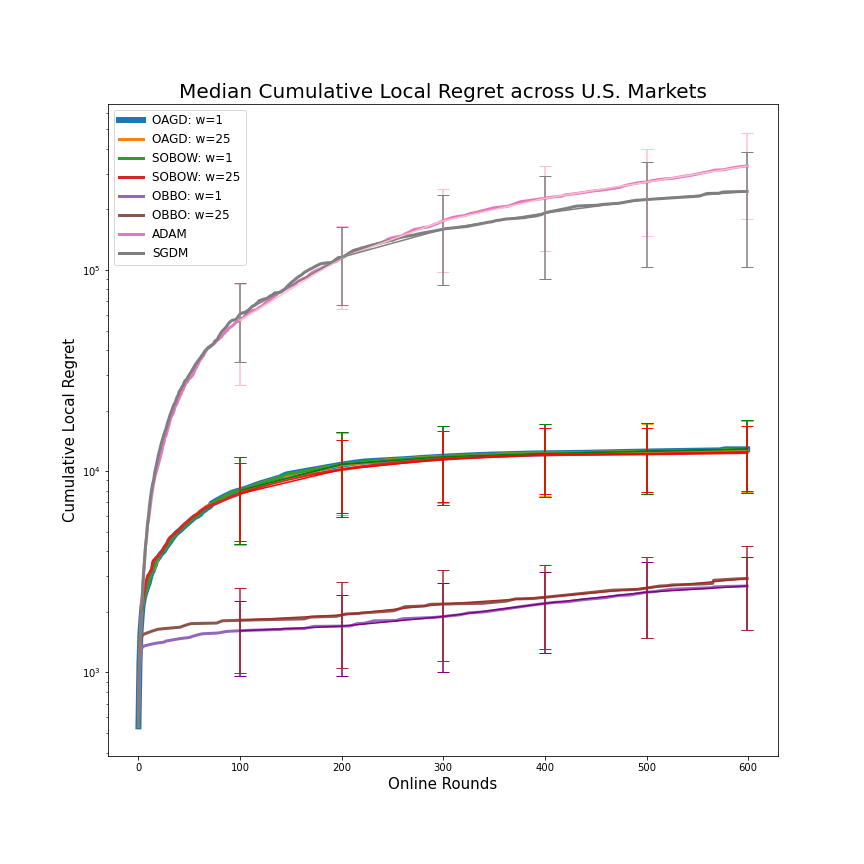}\includegraphics[width=.33\textwidth]{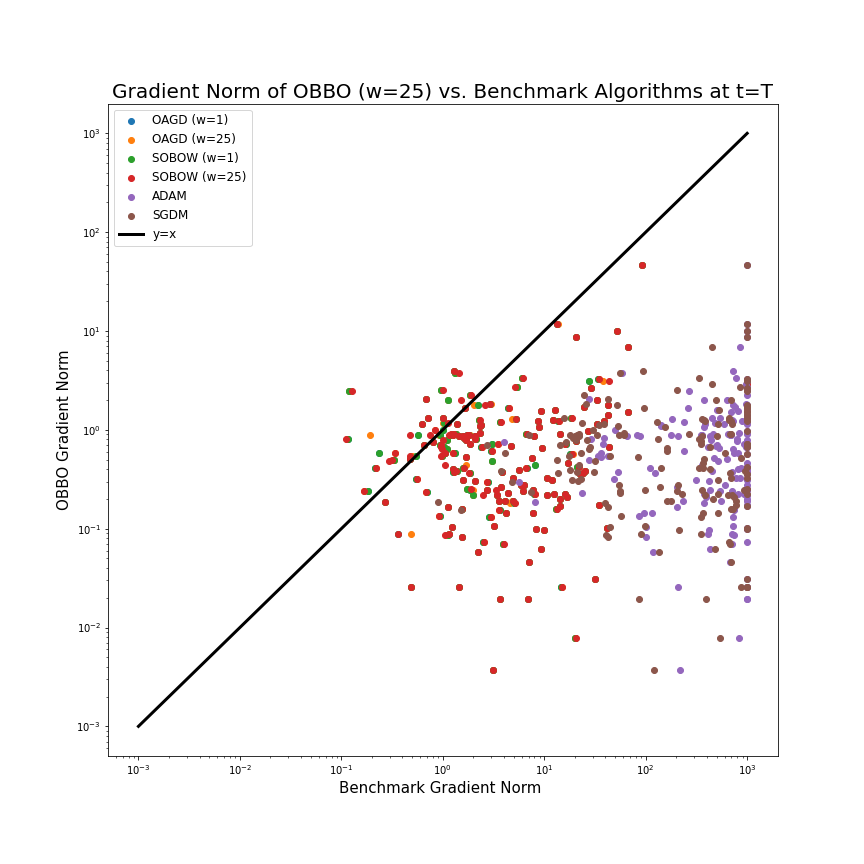}\includegraphics[width=.33\textwidth]{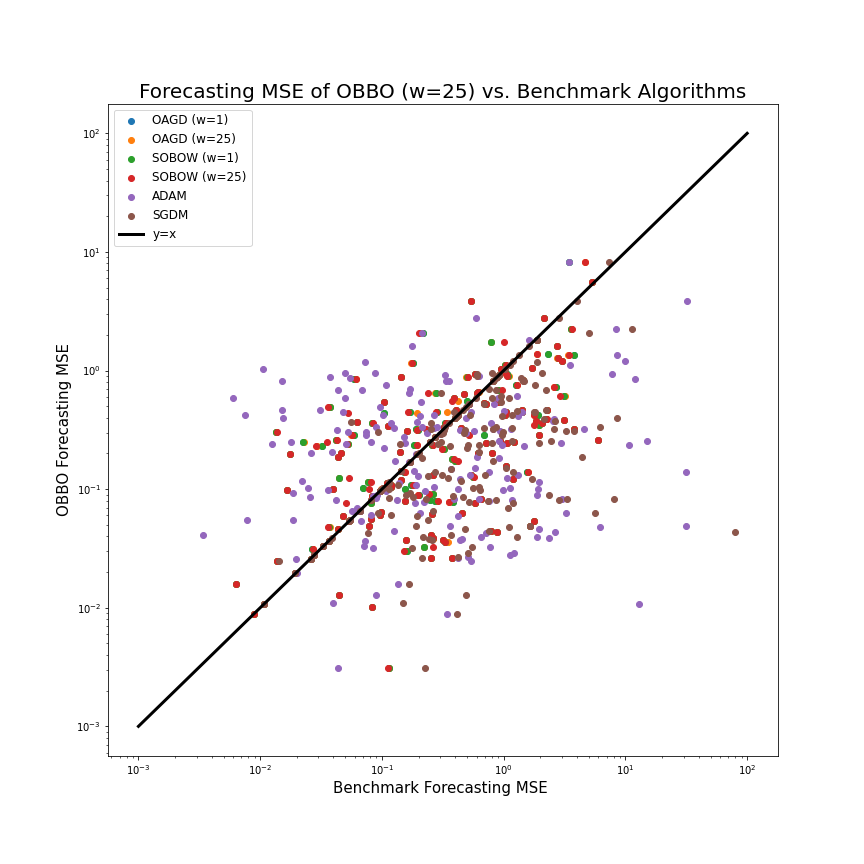}
    \captionof{figure}{\textbf{Left Panel}: Median cumulative local regret of OBBO vs. online and offline benchmark algorithms across 440 U.S. markets with window size parameter $w=1,25$ and median deviation bars  plotted every 100 rounds. \textbf{Middle Panel}: Gradient norm of OBBO (w=25) vs. online and offline benchmark algorithms  at  $t=T$ with $y=x$ line plotted to visualize the  improvement OBBO offers in achieving a smaller gradient norm. \textbf{Right Panel}: Forecasting mean-squared error of OBBO (w=25) vs. online and offline benchmark algorithms with $y=x$ line plotted to visualize the improvement OBBO offers in forecasting  loss on a test set.}\label{fig:full_local_regret}
  \end{minipage}\hfill\\
  
In Figure \ref{fig:results_exp}, we include forecasts generated from OBBO vs. benchmarks algorithms across a sample of training-validation subsets for the AMD U.S. equity time series. Note how OBBO is quicker to adapt to the annotated market impact event and achieves a better fit (i.e., smaller forecasting loss) relative to the benchmarks on the post-annotation test set. Both of the aforementioned improvements are exhibited across the Market Impact dataset and are not particularly sensitive to experiment design (e.g., number of subsets) or hyperparameters (e.g., window size). Descriptive statistics of forecasting loss aggregated across samples from the Market Impact dataset are in Table \ref{tab:table_3}.

\begin{minipage}[c]{\linewidth}
    \centering
\includegraphics[width=.33\textwidth,height=45mm]{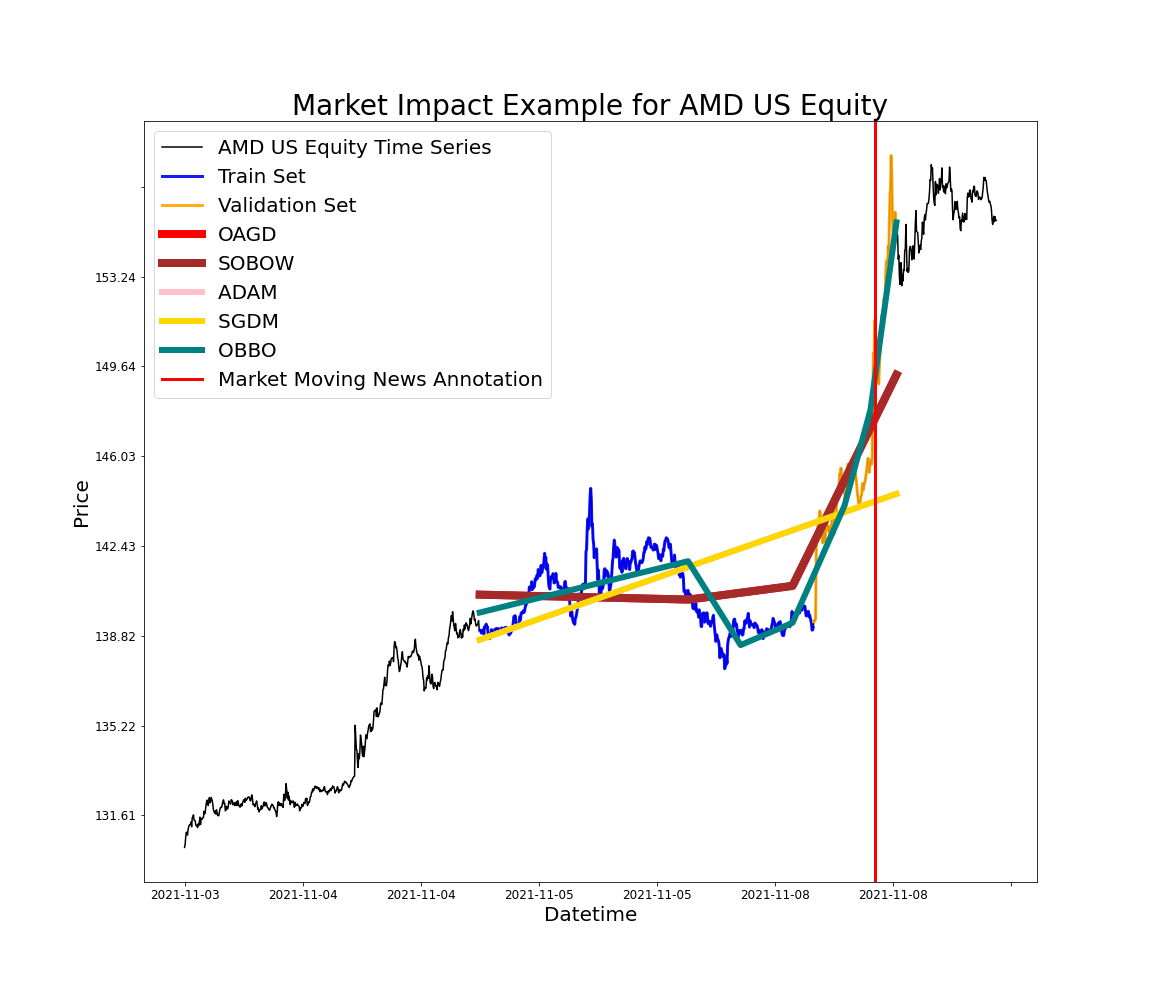}\includegraphics[width=.33\textwidth,height=45mm]{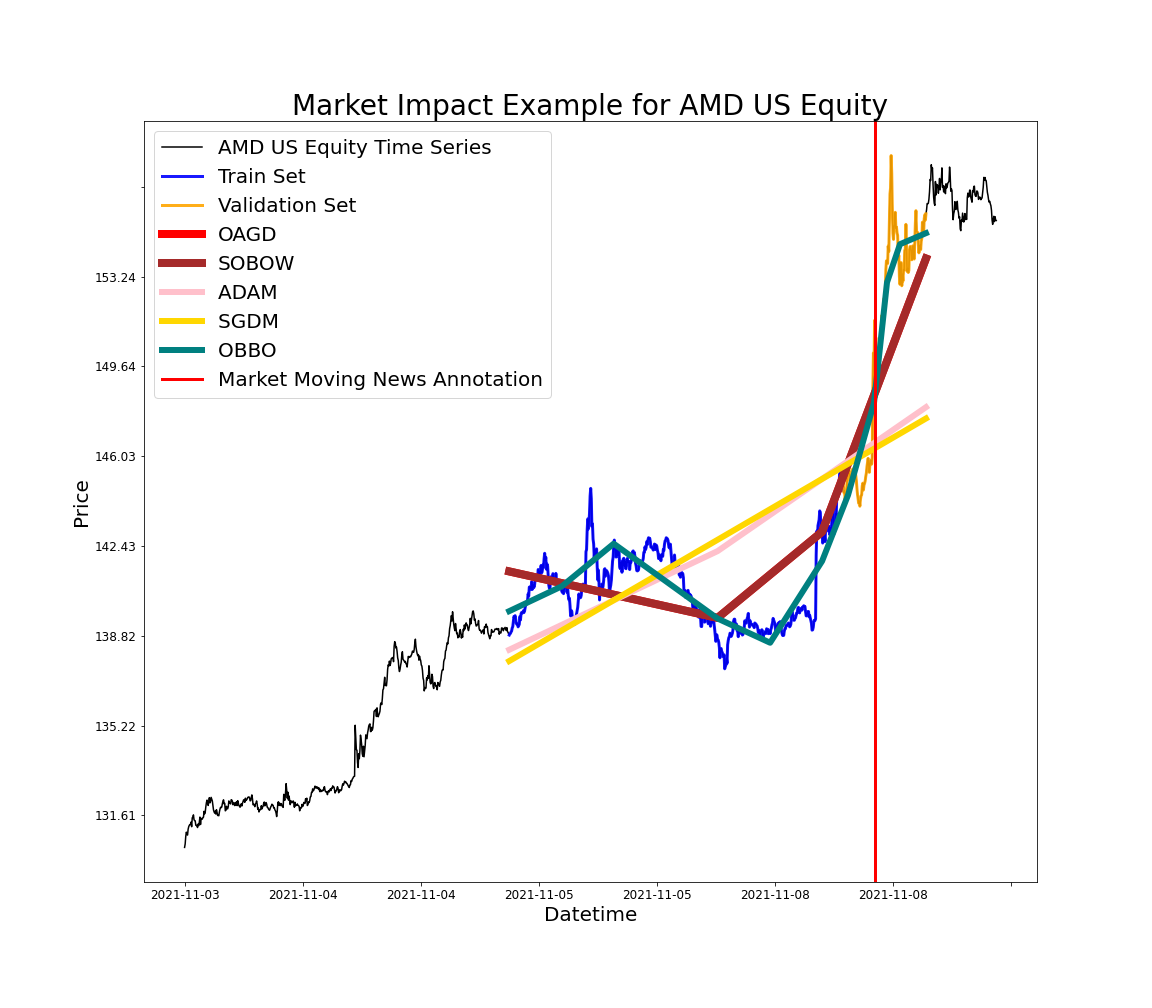}\includegraphics[width=.33\textwidth,height=45mm]{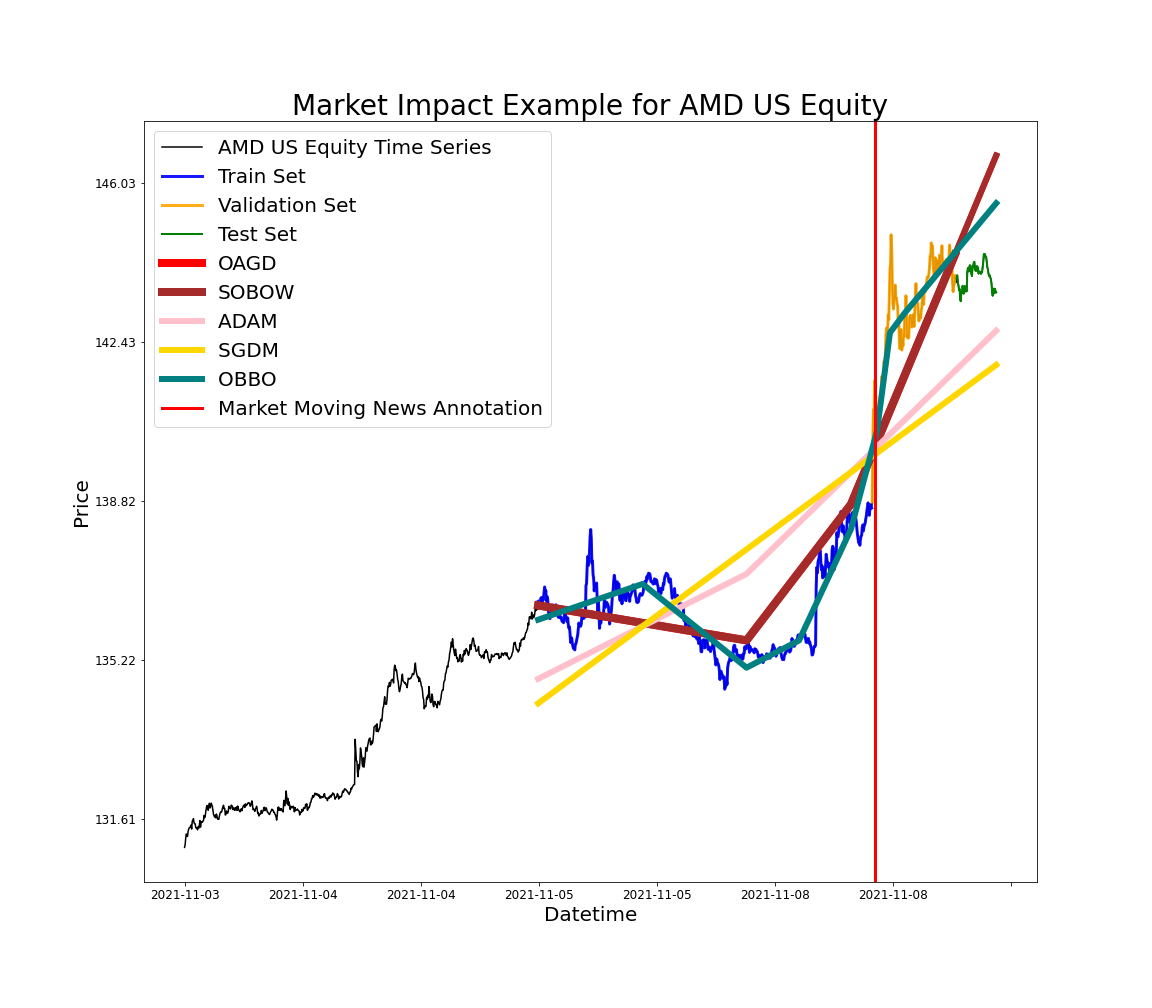}
    \captionof{figure}{Example forecasts generated with OBBO vs. online (w=25) and offline benchmark algorithms. Note how OBBO achieves a better fit (i.e., smaller loss) relative to benchmarks on the post-annotation test set.}\label{fig:results_exp}
  \end{minipage}

\begin{table}
     \begin{tabular}{lcccc}
      \toprule
      & \multicolumn{4}{c}{Forecasting Loss across U.S. Markets}\\
      \cmidrule(lr){2-5}
      \textbf{Algorithm}    &   Mean Loss & Standard Error  &    Median Loss & Median Absolute Deviation   \\
      \midrule
      \textbf{OBBO}
      & 0.661   & 0.055 & 0.205 & 0.150 \\
      \textbf{OAGD}
      & 0.707  & 0.053 & 0.265 & 0.209 \\
      \textbf{SOBOW}
      & 0.689 & 0.053  & 0.273 & 0.215 \\
        \textbf{Adam}
      &1.265   & 0.176 & 0.267 & 0.230 \\
      \textbf{SGDM}
      & 0.872 & 0.078  & 0.401 & 0.286 \\
      \bottomrule
    \end{tabular}

  \caption{Statistics of forecasting mean-squared error across U.S. markets for window parameter $w=25$. }\label{tab:table_3}
\end{table}

\subsection{Online Meta-Learning}

\textbf{FC100}
The FC100 dataset is constructed from the CIFAR100 dataset for few-shot learning tasks. Originally introduced within \cite{oreshkin2018tadam}, this dataset has been previously utilized in online meta-learning experiments such as in the OBO work of \cite{tarzanagh2024online}. The dataset contains 100 classes, split into 60:20:20 classes for meta-training, meta-validation and meta-testing respectively. Samples are transformed into tasks via the  procedure of \cite{tarzanagh2024online} resulting in 20,000, 600, and 600 training-validation-test tasks.

As in the experiment setup of \cite{tarzanagh2024online}, we consider a 4-layer convolutional neural network with each layer containing 64 filters. The CNN utilized has 4 convolutional blocks such that there is $3\times3$ convolution, batch normalization, ReLU activation, and $2\times2$ max pooling.  Inner and outer learning rates are set as $\eta=0.1$ and $\alpha=1e-4$. Following \cite{tarzanagh2024online},  we use the hypergradient estimate of $\widetilde{\nabla}f_t(\boldsymbol{\lambda},\boldsymbol{\beta})$  computed via a fixed point approach as in \cite{grazzi2020iteration}. For OBBO we use the function of  $\phi_t(\boldsymbol{\lambda})=\frac{1}{2}\boldsymbol{\lambda}^T\mathbf{H}_t\boldsymbol{\lambda}$ such that $\mathbf{H}_t$ is an adaptive matrix of averaged gradient squares with coefficient 0.9.

In the left panel of Figure \ref{fig:meta_learn_regret}, OBBO achieves a significant improvement in cumulative bilevel local regret relative to benchmark algorithms of OAGD and SOBOW across samples from the FC100 dataset. In the right panel of Figure \ref{fig:meta_learn_regret}, the histogram displays how OBBO achieves smaller evaluated gradient norms across iterations vs. the benchmark algorithms of OAGD and SOBOW. In the left  panel of Figure \ref{fig:meta_learn_train_test}, we report higher training accuracy achieved with OBBO. In the right panel of Figure \ref{fig:meta_learn_train_test}, OBBO outperforms  test accuracy relative to SOBOW while achieving OAGD performance with a 10x ($w=10$) computationally cheaper update. All results are averaged across 5 random seeds.

\begin{figure}[h]
  \begin{minipage}[c]{\linewidth}
    \centering
\includegraphics[width=.5\textwidth,,height=60mm]{meta-learning/regret.png}\includegraphics[width=.5\textwidth,height=60mm]{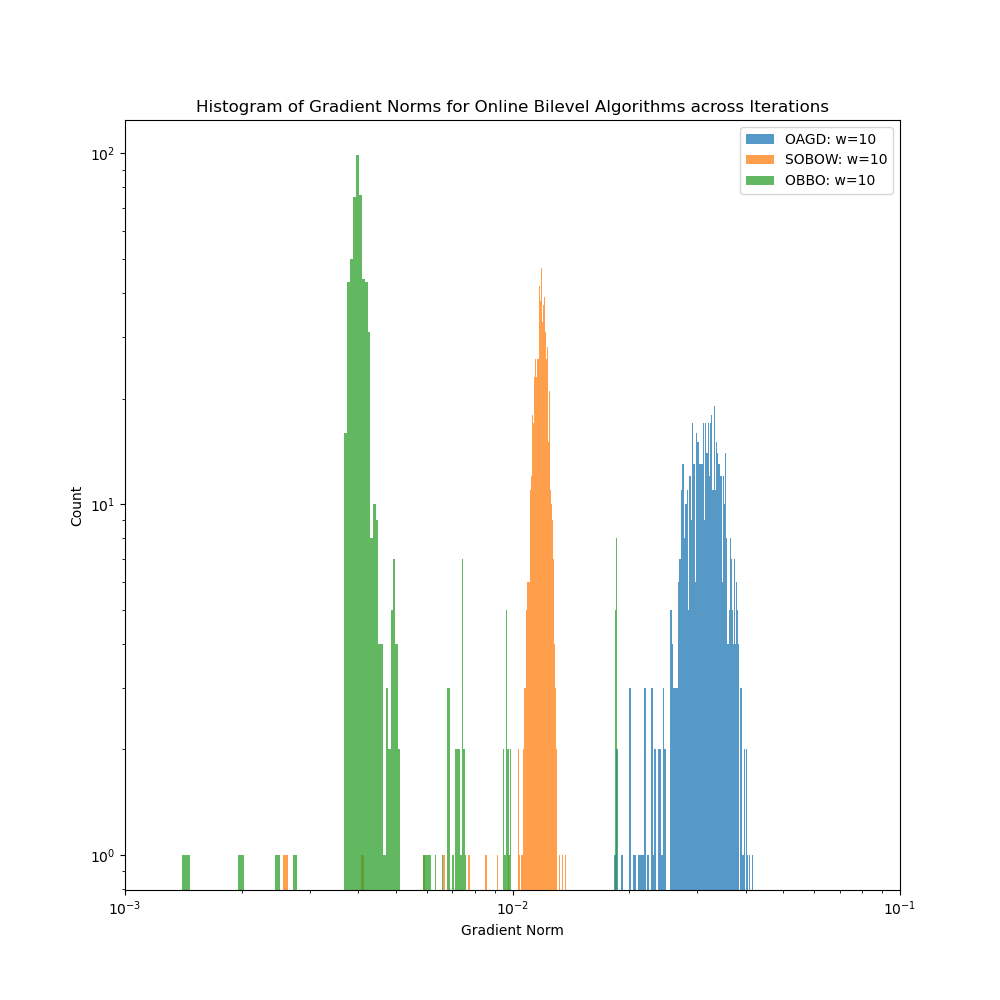} \captionof{figure}{\textbf{Left Panel}: Significant improvement with OBBO on in cumulative bilevel local regret.\\ \textbf{Right Panel}: Gradient norms across iterations are smaller for OBBO than  benchmarks with same initialization.}\label{fig:meta_learn_regret}
\end{minipage}
  \end{figure}

  \begin{figure}
      \centering
      \includegraphics[width=\textwidth,height=70mm]{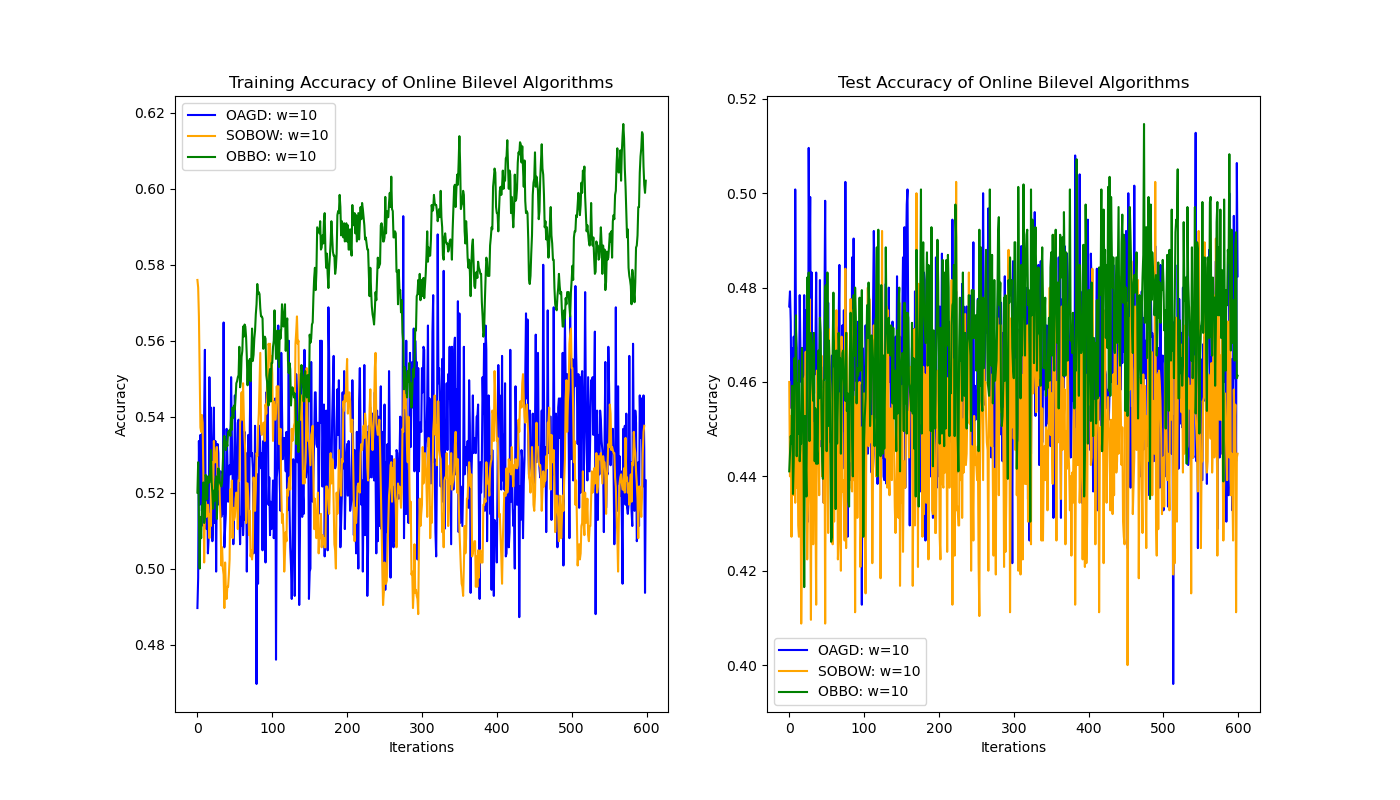}
      \caption{\textbf{Left Panel}: Higher training accuracy achieved with OBBO. \textbf{Right Panel}: Test accuracy: OBBO outperforms SOBOW while achieving OAGD performance with 10x ($w=10$) computationally cheaper update.}
      \label{fig:meta_learn_train_test}
  \end{figure}

\newpage

\end{document}